\newcounter{abb}
\newcommand{\Fig}[1]{\refstepcounter{abb} \centering \footnotesize \emph{Fig.~\arabic{abb}:} #1}
\def\colour{\colour}
\newcommand{\sym}{\upsigma}
\newcommand{\spec}{\mathrm{spec}}		
\newcommand{\res}{\mathrm{res}}		
\newcommand{\rest}[1]{{{\lvert_{}}_{}}_{#1}}
\newcommand{\R}{\mathbb{R}}
\newcommand{\C}{\mathbb{C}}
\newcommand{\N}{\mathbb{N}}
\newcommand{\In}{\mathbb{Z}}
\newcommand{\cbrac}[1]{\left(#1\right)}
\newcommand{\bbrac}[1]{\left[#1\right]}
\newcommand{\dbrac}[1]{\left\{#1\right\}}
\newcommand{\modulus}[1]{|#1|}
\newcommand{\norm}[1]{\| #1 \|}			
\newcommand{\set}[1]{\dbrac{#1}}
\newcommand{\Lp}[2][{}]{{\rm L}^{#2}_{\rm #1}}		
\newcommand{\Ck}[2][{}]{{\rm C}^{#2}_{\rm #1}}		
\newcommand{\Hard}[2][{}]{{\rm H}^{#2}_{\rm #1}}		
\newcommand{\SobH}[2][{}]{\Hard[#1]{#2}}
\newcommand{\checkH}{\check{\mathrm{H}}}	
\newcommand{\hatH}{\hat{\mathrm{H}}}		
\newcommand{\embed}{\hookrightarrow}		
\DeclareMathOperator{\id}{id}
\newcommand{\nul}{\mathrm{ker}}
\newcommand{\ran}{\mathrm{ran}}
\newcommand{\dom}{\mathrm{dom}}
\newcommand{\coker}{\mathrm{coker}}
\DeclareMathOperator{\indx}{index}
\newcommand{\intersect}{\cap} 
\newcommand{\close}[1]{\overline{#1}} 
\newcommand{\union}{\cup} 
\DeclareMathOperator{\spt}{spt}		
\DeclareMathOperator{\sgn}{sgn}		
\newcommand{\inprod}[1]{\left\langle #1 \right\rangle}	
\newcommand{\ext}{\mathcal{E}}
\newcommand{\ad}{\ast}
\newcommand{\cH}{\mathcal{H}}
\newcommand{\cB}{\mathcal{B}}
\DeclareMathOperator{\graph}{graph}
\newcommand{\comp}{\, \circ\, }
\newcommand{\dagstar}{\ast}
\newcommand{\rvline}{\hspace*{-\arraycolsep}\vline\hspace*{-\arraycolsep}}
\newcommand{\PI}{\pi^{\frac32}}
\newcommand{\DD}{\mathscr{D}}
\newcommand{\Ind}{\indx}
\renewcommand{\Re}{\mathrm{Re}\ }
\renewcommand{\epsilon}{\varepsilon}
\renewcommand{\implies}{\Rightarrow}
\renewcommand{\imath}{\mathrm{i}}
\renewcommand{\SS}{S^{\frac32}}
 \newtheorem{theorem}{Theorem}[section]
 \newtheorem{lem}[theorem]{Lemma}
 \newtheorem{cor}[theorem]{Corollary}
 \newtheorem{proposition}[theorem]{Proposition}
 \theoremstyle{definition}
 \newtheorem{definition}[theorem]{Definition}
 \newtheorem{remark}[theorem]{Remark}
\author{Christian Bär}
\author{Lashi Bandara}
\title[BVPs for general first-order elliptic differential operators]
{Boundary value problems for general first-order elliptic differential operators}
\date{\today}
\address{Christian Bär, 
Institut für Mathematik,
Universität Potsdam, 
D-14476, Potsdam, Germany
}
\urladdr{\href{https://www.math.uni-potsdam.de/baer}{https://www.math.uni-potsdam.de/baer}}
\email{\href{mailto:cbaer@uni-potsdam.de}{cbaer@uni-potsdam.de}}
\address{Lashi Bandara, 
Institut für Mathematik,
Universität Potsdam, 
D-14476, Potsdam, Germany
}
\urladdr{\href{http://www.math.uni-potsdam.de/~bandara}{http://www.math.uni-potsdam.de/~bandara}}
\email{\href{mailto:lashi.bandara@uni-potsdam.de}{lashi.bandara@uni-potsdam.de}}
\dedicatory{Dedicated to Werner Ballmann on the occasion of his 68$^\mathrm{th}$ birthday}
\keywords{Elliptic differential operators of first order, elliptic boundary conditions, boundary regularity, Fredholm property, $H^\infty$-functional calculus, maximal regularity, Rarita-Schwinger operator}
\subjclass[2010]{35J56, 58J05, 58J32}
\setlist{noitemsep, topsep=-\parskip,leftmargin=*, listparindent=0pt}
\begin{document}

\maketitle

\begin{abstract}
We study boundary value problems for first-order elliptic differential operators on manifolds with compact boundary.
The adapted boundary operator need not be selfadjoint and the boundary condition need not be pseudo-local.

We show the equivalence of various characterisations of elliptic boundary conditions and demonstrate how the boundary conditions traditionally considered in the literature fit in our framework.
The regularity of the solutions up to the boundary is proven.
We show that imposing elliptic boundary conditions yields a Fredholm operator if the manifold is compact.
We provide examples which are conveniently treated by our methods.

Link to video abstract: \href{https://vimeo.com/523211595}{https://vimeo.com/523211595}
\end{abstract} 

\tableofcontents
\setlength{\parskip}{.3\baselineskip}

\section{Introduction}

First-order elliptic operators are abundant in the context of physics and geometry.
In particular, index theory is a topic of significance  where first-order elliptic differential operators play a key role.
While these questions were firstly focused on closed manifolds, it was apparent that the case of manifolds with boundary is useful, particularly for applications.
Traditionally, when dealing with second-order elliptic operators, one imposes local boundary conditions which are conditions on the function and its derivatives at all points of the boundary.
Dirichlet- and Neumann boundary conditions are prominent examples.
Local elliptic boundary conditions can also be defined for operators of arbitrary order and are known as Lopatinsky-Schapiro conditions.
For first-order operators however they cannot always be imposed; depending on the operator and the underlying manifold with boundary there can be topological obstructions against their existence.

Atiyah, Patodi, and Singer overcame this problem in their pioneering work \cite{APS-Ann,APS1,APS2,APS3} by introducing \emph{non-local} boundary conditions.
These boundary conditions were phrased in terms of the spectral decomposition of an operator on the boundary.
Since the work of Atiyah-Patodi-Singer, a plethora of results have been obtained in different settings under various assumptions. 
Much of this work has focused on the so-called pseudo-local case, where the boundary condition is obtained as the range of a pseudodifferential projector on the boundary. 
To name a few relevant references, see \cite{BBC, BLZ, B, BL2001, G96, Melrose, RS, S01, S04} by Ballmann, Booß-Bavnbek, Boutet de Monvel, Brüning, Carron, Grubb, Lesch, Melrose, Rempel, Schulze, and Zhu. 
There has also been effort to understand such questions in non-smooth settings as well as for general elliptic operators, for instance \cite{KM1, KM2} by  Krainer and Mendoza.

In \cite{BB12} by Bär and Ballmann, a framework is constructed to study boundary value problems for first-order elliptic operators which induce a selfadjoint operator on the boundary. 
Although this is a restriction, they are able to account for a large class of operators, including all Dirac type operators.
A tremendous boon of their analysis is that they are able to capture all possible boundary conditions. 
In particular, they are able to go beyond those that are pseudo-local.
Such conditions are significant and arise naturally. 
For instance, the so-called \emph{matching} boundary conditions are not pseudo-local but are extremely useful to study relative index theory.

In the present paper, we show how to study boundary value problems for a general first-order elliptic differential operator, i.e.\ we drop the assumption that the adapted boundary operator be selfadjoint.
This is necessary for the study of more exotic, but geometrically natural operators such at the Rarita-Schwinger operator on $3/2$-spinors. 

More generally,  we address aspects of the research program sketched out in \cite{BL2009} by  Booß-Bavnbek and Lesch to close the gap in the understanding of boundary value problems between the geometric Dirac type operator situation and general first-order elliptic case.
We define the boundary trace map on the maximal domain of the operator and understand its topology given by the spectral subspaces of the adapted operator on the boundary. 
The minimal domain is the kernel of the boundary trace map and hence, we are able to understand the totality of possible boundary conditions. 
In particular, we introduce a very general notion of an elliptic boundary condition through a host of different characterisations.
We also include an important characterisation of ellipticity of a boundary condition in the language of Fredholm pairs, as initially demonstrated by \cite{BS} by Braverman and Shi in the context of selfadjoint boundary operators.
Under a coercivity assumption on the operator at infinity, we  demonstrate that elliptic boundary conditions are Fredholm. 
Moreover, we also understand the  boundary regularity of solutions.

We emphasise that although we are guided by \cite{BB12}, the methods and techniques used here are vastly different. 
The analysis in  \cite{BB12} is carried out in the spirit of the Fourier series through concentrating the analysis to each eigenspace of the boundary adapted operator and reconstructing the global picture through orthogonality.
In our more general case, the adapted operators  may not be realised as a selfadjoint operator.
There, the eigenspaces alone may not sum to the total space, and to make matters worse, there may be non-orthogonality of the generalised eigenspaces.

Therefore, the basis of our approach is to understand the adapted operator on the boundary. 
We show that up to the addition of a real constant, such an operator is an invertible bisectorial operator.
This observation is of paramount importance and is stronger than the so-called parametric ellipticity along the imaginary axis that was observed in \cite{BL2009}.
Using the work of Grubb in \cite{Grubb}, we obtain spectral projectors to the generalised eigenspaces of the boundary operator, as pseudo-differential operators of order zero. 
This allows us to build the modulus operator as an invertible sectorial operator. 

The engine of our work is the $H^\infty$-functional calculus and its quadratic estimates characterisation, pioneered by McIntosh in \cite{Mc86}.
We show that our model operator enjoys this functional calculus.
Then, this allows us to access the so-called \emph{maximal regularity} for sectorial operators, which is obtained through the $H^\infty$-functional calculus and underpins much of the analysis.
These methods are used in a different context of non-smooth coefficient first-order factorisations of divergence form equations, see e.g.\ \cite{AAH08,AAMc10, AA11,AA12} by Auscher, Axelsson (Rosén), Hofmann, and McIntosh. 

The structure of the paper is as follows. 
In Section~\ref{Sec:Setup}, we describe the key assumptions as well as the results, so that the results may be accessed with the minimal amount of technicalities.
Section~\ref{Sec:Examples} illustrates examples of some adapted operators to the boundary which exhibits phenomena that cannot be captured by the framework in \cite{BB12}. 
In particular, this section shows through explicit calculation that the adapted boundary operator for the Rarita-Schwinger operator cannot be chosen in such a way that it becomes selfadjoint.
In Section~\ref{Sec:OpTheory}, we study the operator theory, particularly the spectral theory,  of the boundary adapted operator.
There, we demonstrate that the boundary adapted operator is, up to the addition of a real perturbation, an invertible bisectorial operator. 
This simple but important observation allows us in Section~\ref{Sec:Hinfty} to show the existence of the $H^\infty$-functional calculus for the modulus  operator.
Moreover, we also demonstrate the connections between fractional powers of the modulus to fractional Sobolev scales.
Then, in Section~\ref{Sec:ModelOp}, we consider the reduction to the cylinder, and study the properties of the so-called model operator. 
In this section, we also demonstrate how maximal regularity is used in order to obtain higher regularity results on the cylinder.
Following this, in Section~\ref{Sec:MaximalDom}, we return to the general manifold situation and study the maximal domain of the operator.
We conclude the paper with Section~\ref{Sec:BVPs} where we consider boundary value problems.
An appendix~\ref{Sec:Appendix} is included that captures some of the abstract tools needed in various parts of the paper.

\textbf{Acknowledgements.}
We thank Andreas Ros\'en for useful discussions. 
Penelope Gehring also deserves acknowledgement for carefully reading through the manuscript and providing feedback.
The second author was supported by SPP2026 from the German Research Foundation (DFG).

\section{Setup and results}
\label{Sec:Setup}

In this section, we list background assumptions, necessary definitions, as well as the results which we obtain in this paper. 
The proofs of many of these results are technical in nature and we defer these to later points of the paper. 
Here, we present the necessary details to enable a working knowledge of the framework and results so that it can be readily applied.

\subsection{Notation}
Let $\cB$ be a Banach space and  $T: \dom(T)\subset\cB \to \cB$ an unbounded operator.
The objects $\dom(T)$, $\ran(T)$, $\nul(T)$, $\spec(T)$, and $\res(T)$ denote the domain, range, null space, spectrum and resolvent set of $T$, respectively.
The operator $T$ is said to be \emph{invertible} if it is injective, $\ran(T)$ is dense, and $T^{-1}$ is bounded.
In this case, we often write $T^{-1}$ to denote the unique bounded extension of $T^{-1}$ on $\ran(T)$ to all of $\cB$. 
The graph norm of $T$ is $\norm{\cdot}^2_T := \norm{\cdot}^2 + \norm{T\cdot}^2$ on $\dom(T)$.

For $M$ a smooth manifold with smooth boundary $\partial M$, by $K \Subset M$ we mean that $K \subset M$, the interior $\mathring{K} \neq \emptyset$ and that $K$ is compact.
On $(E,h^E) \to M$ a Hermitian vector bundle, the support of a section $u$ will denoted by $\spt u$.
We equip $M$ with a smooth measure $\mu$.
For $p \in [1,\infty)$,  we denote measurable sections $u$ such that $\int_{M} \modulus{u}_{h^E}^p\ d\mu < \infty$ by  $\Lp{p}(M;E)$.
This is a Banach space with norm $\norm{u}_{\Lp{p}} = (\int_{M} \modulus{u}^p_{h^E}\ d\mu)^{1/p}$. 
Since $h^E$ is Hermitian, the norm in the case of $p = 2$ polarises and hence $\Lp{2}(M;E)$ is a Hilbert space with inner product 
$$ 
\inprod{u,v} = \int_{M} h^E(u,v)\ d\mu.
$$
The space $\Lp{\infty}(M;E)$ is defined similarly, namely as the space consisting of measurable sections $u$, each of which for there exists a $C > 0$ with $|u(x)|_{h^E(x)} \leq C$ for almost every $x \in M$.
The $\Lp{\infty}$-norm is then the infimum over such constants $C$.

The following table lists the notations for significant function spaces we will require in this paper:
\renewcommand{\arraystretch}{1.2}
\begin{table}[h!]
\begin{centre}
\begin{tabular}{|l|l|}
\hline
$\Ck{k}(M;E)$		& $k$-times continuously differentiable sections \\ 
\hline
$\Ck[c]{k}(M;E)$ 		& sections $u \in \Ck{k}(M;E)$ such that $\spt u$ compact in $M$ \\
\hline
$\Ck[cc]{k}(M;E)$		& sections $u \in \Ck[c]{k}(M;E)$ such that $\spt u \subset M \setminus \partial M$ \\
\hline
$\SobH{k}(K;E)$ & $\Lp{2}$-Sobolev space of order $k$ on the compact subset $K\subset M$ \\
\hline
$\SobH[loc]{k}(M;E)$	& $u \in \Lp{2}(M;E)$ s.~ t.\ $u\rest{K} \in \SobH{k}(K;E)$  for every compact $K \subset M$ \\
\hline
\end{tabular}
\end{centre}
\end{table} 
\renewcommand{\arraystretch}{1}

We emphasise that for $\Ck[c]{k}(M;E)$ as well as $\SobH[loc]{k}(M;E)$, the support of sections may touch the boundary $\partial M$.

For a first-order differential operator $D: \Ck{\infty}(M;E) \to \Ck{\infty}(M;F)$, where $(F,h^F) \to M$ is another Hermitian bundle, we denote the \emph{principal symbol} by  $\sym_D(x,\xi)$.
In our convention, this symbol is given by the expression $[D, f](x)$ where $df(x) = \xi$, $[\cdot,\cdot]$ is the commutator, and $f$ acts by multiplication.

There exists a formal adjoint $D^\dagger$ of $D$ on the domains $\Ck[cc]{\infty}(M;F)$ and $\Ck[cc]{\infty}(M;E)$, respectively. 
Denoting these operators $D_{cc}$ and $D^\dagger_{cc}$, we can obtain the maximal operators as: 
$$ D_{\max} = (D^\dagger_{cc})^\ad \quad \text{and}\quad D^\dagger_{\max} = (D_{cc})^\ad,$$
where $\ad$ denotes the adjoint of an unbounded operator in the Hilbert space $\Lp{2}(M;E)$ or $\Lp{2}(M;F)$, respectively.
The  operator $D$ is said to be  \emph{complete} if the subspace of compactly supported sections in $\dom(D_{\max})$ is dense in $\dom(D_{\max})$ in the graph norm  of $D$ in $\Lp{2}(M;E)$.

\subsection{The framework and results}
\label{Sec:SR}

The following are the background assumptions we use throughout this article.
These are to be assumed throughout the paper unless specified otherwise.
\begin{enumerate}[(S1)] 
\label{StdSetup}
\item $M$ is a smooth manifold with smooth compact boundary $\Sigma = \partial M$;
\item $\vec{T}$ is a smooth interior vectorfield along $\partial M$;
\item $\mu$ is a smooth volume measure on $M$ and $\nu$  is the smooth volume measure on $\Sigma$, induced by $\mu$ and $\vec{T}$; 
\item $(E,h^E),\ (F,h^F) \to M$ are Hermitian vector bundles over $M$;
\item $D$ is a first-order elliptic differential operator from $E$ to $F$; 
\item $D$ and $D^\dagger$ are complete.  
\end{enumerate}

The interior co-vectorfield to $\vec{T}$ is given by $\tau$.
This is the $1$-form that satisfies $\tau(x) (T(x)) = 1$ and $\tau(x)\rest{T_x\Sigma} = 0$.

\begin{definition}[Adapted boundary operator]
An operator $A$ is   said to be an \emph{adapted boundary operator} for $D$ if its principal symbols is given by 
$$\sym_{A}(x,\xi) = \sym_{D}(x,\tau(x))^{-1} \comp \sym_{D}(x,\xi)$$
for all $x\in\partial M$ and $\xi\in T_x^*\partial M$.
By $\tilde{A}$, we denote an adapted boundary operator for $D^\dagger$ which satisfies the definition with $D$ replaced by $D^\dagger$.
\end{definition}

Such operators always exist and are elliptic differential operators of order $1$.
They are unique up to an operator of order zero. 
In what is to follow, we will always fix such an operator (unless specified otherwise).

\begin{definition}[Admissible spectral cut]
\label{Def:SpecCut}
If $r \in \R$ is a number such that the vertical line $l_r = \set{ \zeta \in \C: \Re \zeta = r}$ does \emph{not} intersect the spectrum of $A$, we say that $r$ is an \emph{admissible  cut}.
\end{definition} 

Any adapted operator $A$, being an elliptic operator on a compact manifold without boundary, has discrete spectrum.
Hence all real numbers but a countable set of exceptions are admissible spectral cuts.
Given such a cut $r$, we define the operator $A_r := A - r$ which is then invertible.

We are able to obtain spectral projectors 
$$\chi^{\pm}(A_r): \Lp{2}(\Sigma;E) \to \Lp{2}(\Sigma;E)$$ 
to the spectral subspaces corresponding to eigenvalues with positive and negative real parts respectively.
These are pseudo-differential operators of order zero  and therefore we obtain 
$$\SobH[\pm]{s}(\Sigma;E) = \chi^{\pm}(A_r)\SobH{s}(\Sigma;E)$$ 
as closed subspaces of the Sobolev spaces $\SobH{s}(\Sigma;E)$ for all $s \in \R$.
Note that these projectors are, in general, non-orthogonal.

The fundamental space for the analysis of boundary value problems is the following so-called \emph{check} space corresponding to $A$ 
$$ \checkH(A) := \SobH[-]{\frac{1}{2}}(A_r) \oplus \SobH[+]{-\frac{1}{2}}(A_r),$$
with norm
$$ \norm{u}_{\checkH(A)}^2 = \norm{\chi^-(A_r)u}_{\SobH{\frac{1}{2}}}^2 + \norm{\chi^+(A_r)u}_{\SobH{-\frac{1}{2}}}^2.$$
We write the left hand side of these definitions independent of the cut parameter $r$ since the subspace $\checkH(A)$ is independent of the cut parameter $r$ and any two norms corresponding to two admissible spectral cuts are comparable.
In practice,  it is often useful to choose a cut that is convenient to the problem at hand.

We set $\hatH(A) := \checkH(-A)$ and define $\sym_0: \Ck{\infty}(\Sigma;E) \to \Ck{\infty}(\Sigma;F)$ by $\sym_0(x):=\sigma_D(x,\tau(x))$. 
The  homomorphism field $(\sym_0^{-1})^\ast$ induces an isomorphism between $\hatH(A^\dagstar)$ and $\checkH(\tilde{A})$ and $\beta(u,v) := - \inprod{\sym_0 u, v}_{\Lp{2}(\Sigma;F)}$ for $u \in \Ck[c]{\infty}(\Sigma;E)$ and $v \in \Ck[c]{\infty}(\Sigma;F)$ extends to a perfect pairing between $\checkH(A)$ and $\checkH(\tilde{A})$. For convenience, we will denote this extension simply as $-\inprod{\sym_0 u,v}$. 

\begin{theorem} 
\label{Thm:Ell}
The following hold:
\begin{enumerate}[(i)]
\item 
\label{Thm:Ell1}
The space $\Ck[c]{\infty}(M;E)$ is dense in $\dom(D_{\max})$ with respect to the corresponding graph norm;
\item 
\label{Thm:Ell2}
The trace maps $\Ck[c]{\infty}(M;E) \to \Ck{\infty}(\Sigma;E)$ given by $u \mapsto u\rest{\Sigma}$ extend uniquely to surjective bounded linear maps $\dom( D_{\max}) \to \checkH(A)$; 
\item 
\label{Thm:Ell3}
The spaces
\begin{align} 
	\dom(D_{\max}) \intersect \SobH[loc]{1}(M;E) &= \set{ u \in \dom(D_{\max}): u\rest{\Sigma} \in \SobH{\frac{1}{2}}(\Sigma;E)}, \notag\\ 
\label{Eq:MaxDom}
\end{align}
\item 
\label{Thm:Ell4}
For all $u \in \dom(D_{\max})$ and $v \in \dom((D^\dagger)_{\max})$,
\begin{equation}
\label{Eq:MaxDInt}
\inprod{D_{\max} u, v}_{\Lp{2}(M;F)} - \inprod{u, (D^\dagger)_{\max}v}_{\Lp{2}(M;E)} = -\inprod{\sym_0 u\rest{\Sigma}, v\rest{\Sigma}}.
\end{equation}
\end{enumerate}
The corresponding statements hold for $D^\dagger$ on replacing $E$  by $F$. 
\end{theorem}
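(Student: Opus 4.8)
The strategy is to reduce everything to a collar neighbourhood of $\Sigma$ and to the "model operator" on the half-cylinder $Z = [0,\infty) \times \Sigma$, exactly as announced in the introduction. First I would use completeness of $D$ (assumption (S6)) together with a cut-off argument to reduce part \eqref{Thm:Ell1} to the case of sections supported in a collar $(-\epsilon, 0] \times \Sigma \cong U \subset M$; on the interior, density of $\Ck[cc]{\infty}$ in the maximal domain is standard (Friedrichs mollification), so the only new content is near the boundary. Fixing a collar identification, one writes $D = \sym_0(\partial_t + A + \text{lower order})$ after a bundle isomorphism over $\Sigma$, so that up to a zeroth-order perturbation (which does not affect maximal domains or trace spaces) $D$ is conjugate to the model operator $D_0 = \partial_t + A$ studied in Section~\ref{Sec:ModelOp}. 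The heart of the matter is therefore the model operator on the cylinder.

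For the model operator, the plan is to exploit the spectral splitting $\Lp{2}(\Sigma;E) = \close{\ran\,\chi^-(A_r)} \oplus \close{\ran\,\chi^+(A_r)}$ and the $H^\infty$-calculus for the modulus $|A_r|$ established in Section~\ref{Sec:Hinfty}. On the negative spectral subspace, $-A_r$ generates a bounded analytic semigroup $e^{-t|A_r|}\chi^-$, so solutions $u(t) = e^{-tA_r}u_0$ of $D_0 u = 0$ with $u_0 \in \ran\,\chi^-$ decay and their trace at $t=0$ lands in $\SobH[-]{1/2}$ by the standard trace estimate $\int_0^\infty \norm{|A_r|^{1/2} e^{-t|A_r|} \phi}^2\,dt \sim \norm{\phi}^2$; dually, on the positive subspace one gets the $\SobH[+]{-1/2}$ component. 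This produces the bounded surjective trace map $\dom((D_0)_{\max}) \to \checkH(A)$ of part \eqref{Thm:Ell2}, with a bounded right inverse given by $\phi \mapsto e^{-tA_r}\chi^-\phi$ on the negative part and by solving the backward problem (using maximal regularity) on the positive part. Part \eqref{Thm:Ell3} then follows because membership in $\SobH[loc]{1}$ is, via the model operator, equivalent to $u_0 \in \dom(|A_r|) = \SobH{1}(\Sigma;E)$ combined with $D_{\max}u \in \Lp2$, and the trace of such $u$ is exactly $u_0 \in \SobH{1/2}$; conversely a half-space regularity estimate (again from the $H^\infty$-calculus / maximal regularity on the cylinder) upgrades $u\rest{\Sigma} \in \SobH{1/2}$ to $u \in \SobH[loc]{1}$.

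Part \eqref{Thm:Ell4} is then obtained by a density-and-limit argument: for $u \in \Ck[c]{\infty}(M;E)$, $v \in \Ck[c]{\infty}(M;F)$ the Green formula $\inprod{Du,v} - \inprod{u, D^\dagger v} = -\inprod{\sym_0 u\rest\Sigma, v\rest\Sigma}_{\Lp2(\Sigma)}$ is the classical integration-by-parts identity (with the convention for $\sym_0$ fixed in the setup); the left-hand side is continuous in $u$ for the $D_{\max}$-graph norm and in $v$ for the $(D^\dagger)_{\max}$-graph norm, the right-hand side is continuous for the $\checkH(A) \times \checkH(\tilde A)$ pairing $\beta$ described just before the theorem, and all three extend by the density statement \eqref{Thm:Ell1} (applied to both $D$ and $D^\dagger$) and the boundedness of the trace maps \eqref{Thm:Ell2}. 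The symmetric statement for $D^\dagger$ is immediate since all hypotheses are symmetric in $D \leftrightarrow D^\dagger$, with $A$ replaced by $\tilde A$ (and one uses the isomorphism $(\sym_0^{-1})^\ast: \hatH(A^\ast) \to \checkH(\tilde A)$ recorded above to match conventions).

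The main obstacle is the trace theorem \eqref{Thm:Ell2}, specifically surjectivity onto the full $\checkH(A) = \SobH[-]{1/2} \oplus \SobH[+]{-1/2}$ with its non-orthogonal splitting. On the negative spectral part the decaying semigroup gives the lift directly, but on the positive part one must solve $\partial_t u + A_r u = 0$ on the cylinder with prescribed (rough, $\SobH{-1/2}$) boundary data and $\Lp2$ control — this is where maximal regularity for the sectorial operator $|A_r|$ is essential, and where the analysis genuinely departs from the eigenvalue-by-eigenvalue Fourier method of \cite{BB12}, since the generalised eigenspaces need neither span nor be mutually orthogonal. Keeping track of the zeroth-order perturbation terms (which shift $A$ but not its principal symbol, hence not $\checkH(A)$) through the collar identification is the remaining bookkeeping point, handled by the perturbation stability of the $H^\infty$-calculus.
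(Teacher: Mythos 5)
Your plan is in the right family — collar reduction, model operator on the half-cylinder, semigroup lifts, quadratic estimates from the $H^\infty$-calculus — and that is indeed the skeleton of Sections~\ref{Sec:ModelOp}--\ref{Sec:MaximalDom}. But the core construction you propose for the trace lift (part~\ref{Thm:Ell2}) has a sign error that is not a typo but a real gap.

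You propose the right inverse $\phi\mapsto e^{-tA_r}\chi^-(A_r)\phi$ on the negative spectral subspace, and you assert that these ``solutions $u(t)=e^{-tA_r}u_0$ of $D_0u=0$ with $u_0\in\ran\chi^-$ decay.'' They do not: on $\ran\chi^-(A_r)$ the operator $A_r$ has spectrum with strictly negative real part, so $e^{-tA_r}\chi^-$ is the \emph{growing} semigroup. One can see the failure quantitatively: for an eigenvector $v$ with eigenvalue $-\mu$ ($\mu>0$) and any cutoff $\eta$ supported in $[0,T_c]$,
\begin{equation*}
\int_0^{T_c}\norm{A_r\,e^{-tA_r}v}^2\,dt \;\simeq\; \mu\,|v|^2\, e^{2T_c\mu},
\end{equation*}
which is exponentially larger than the desired bound $\norm{v}_{\SobH{1/2}}^2\simeq\mu|v|^2$. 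So this lift is not even close to bounded $\SobH[-]{1/2}\to\dom(D_{\max})$. What the paper actually does (see \eqref{Eq:BdyExt} and Proposition~\ref{Prop:ExtEst}) is to use the \emph{single} operator $\ext u = \eta(t)e^{-t|A_r|}u$, built from the sectorial modulus $|A_r|$, applied to \emph{all} of $u$. On $\ran\chi^+$ this agrees with $e^{-tA_r}$ and is a decaying solution of $D_0u=0$; on $\ran\chi^-$ it equals $e^{+tA_r}\chi^-u$, which decays but is \emph{not} a solution of $D_0u=0$ — the extra term $-2\eta(t)|A_r|e^{-t|A_r|}\chi^-u$ appearing in $D_{0,r}\ext u$ is exactly what forces the $\chi^-$-component of the check space to carry the stronger $\SobH{1/2}$-norm. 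This asymmetry is not a cosmetic convention; it is the mechanism behind the definition of $\checkH(A)$, and your proposal has it backwards. Relatedly, you invoke ``solving the backward problem (using maximal regularity) on the positive part,'' but on $\ran\chi^+$ the decaying solution $e^{-tA_r}\chi^+\phi$ gives the lift directly; maximal regularity (Lemma~\ref{Lem:WReg}, operator $S_{0,r}$) enters in the paper for the inhomogeneous problem and the regularity upgrade (Lemma~\ref{Lem:BdyEll} and Theorem~\ref{Thm:HighReg}), not for the trace surjectivity.

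Two smaller points. Your characterisation in part~\ref{Thm:Ell3} conflates $u_0\in\dom(|A_r|)=\SobH{1}(\Sigma;E)$ with the trace condition; membership of $u$ in $\SobH[loc]{1}$ corresponds to $u|_\Sigma\in\SobH{1/2}$, and the paper's converse direction proceeds by subtracting $\ext(\chi^+(A_r)u|_\Sigma)$ and applying Lemma~\ref{Lem:H1D-D}, which is a specific device you don't reproduce. Finally, the collar remainder $R_t$ in Lemma~\ref{Lem:Lem4.1BB} is not a zeroth-order perturbation of $A$: it is a first-order operator whose first-order part is $O(t)$, controlled via the bound $\norm{R_tu}\lesssim t\norm{Au}+\norm{u}$ and Lemma~\ref{Lem:BdyEll}, not by ``perturbation stability of the $H^\infty$-calculus.''
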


Higher elliptic regularity is then described by the following theorem:
\begin{theorem}
\label{Thm:HighReg}
The following holds:
\begin{multline*}
\dom(D_{\max}) \cap \SobH[loc]{k+1}(M;E) \\
	=  \set{u \in \dom(D_{\max}): Du \in \SobH[loc]{k}(M;F)\ \text{and}\ \chi^{+}(A_r)(u\rest{\Sigma}) \in \SobH{k + \frac{1}{2}}(\Sigma;E)}.
\end{multline*}
\end{theorem}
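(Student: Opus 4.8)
The plan is to prove the two inclusions separately, with ``$\subseteq$'' routine and ``$\supseteq$'' by induction on $k$. For ``$\subseteq$'' I would note that if $u\in\dom(D_{\max})\cap\SobH[loc]{k+1}(M;E)$ then $Du\in\SobH[loc]{k}(M;F)$ because $D$ has smooth coefficients, while near $\Sigma$ the section $u$ is genuinely of class $\SobH{k+1}$, so the classical trace theorem gives $u\rest{\Sigma}\in\SobH{k+\frac12}(\Sigma;E)$ (this trace coincides with the $\checkH(A)$-valued trace of Theorem~\ref{Thm:Ell}(ii) by density); since $\chi^{+}(A_r)$ is a pseudo-differential operator of order zero, it preserves $\SobH{k+\frac12}$. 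The base case $k=0$ of ``$\supseteq$'' reduces to Theorem~\ref{Thm:Ell}(iii): the condition $Du\in\SobH[loc]{0}(M;F)$ is automatic for $u\in\dom(D_{\max})$, and since $u\rest{\Sigma}\in\checkH(A)$ always has $\chi^{-}(A_r)(u\rest{\Sigma})\in\SobH{\frac12}$ by the very definition of $\checkH(A)$, the condition $\chi^{+}(A_r)(u\rest{\Sigma})\in\SobH{\frac12}$ is equivalent to $u\rest{\Sigma}\in\SobH{\frac12}$.

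For the inductive step I would assume the statement at level $k-1$ and take $u\in\dom(D_{\max})$ with $Du\in\SobH[loc]{k}(M;F)$ and $\chi^{+}(A_r)(u\rest{\Sigma})\in\SobH{k+\frac12}(\Sigma;E)$. Since $\SobH[loc]{k}\subseteq\SobH[loc]{k-1}$ and $\SobH{k+\frac12}\subseteq\SobH{k-\frac12}$, the inductive hypothesis already yields $u\in\SobH[loc]{k}(M;E)$; in particular $k\ge1$ and $u\rest{\Sigma}\in\SobH{k-\frac12}(\Sigma;E)$. Interior elliptic regularity upgrades this to $u\in\SobH[loc]{k+1}(M\setminus\Sigma;E)$, so it remains only to gain one derivative in a collar $\Sigma\times[0,\varepsilon)$; after multiplying by a cutoff that is $1$ near $\Sigma$, I may assume $u$ is supported in the collar without changing its boundary trace. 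In collar coordinates $(y,t)$ with $\partial_t$ transversal one has $D=\sym_D(\cdot,dt)\,\partial_t+P_t$, where $P_t$ is a smooth family of first-order operators tangential to the slices $\{t=\mathrm{const}\}$ and $\sym_D(\cdot,dt)$ is invertible on a small enough collar (along $\Sigma$ one has $dt=\tau$, so it equals $\sym_0$, which is invertible by ellipticity).

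The heart of the induction has two halves. \emph{Tangential regularity:} for a vector field $X$ on the collar tangent to the slices, with $\widetilde{X}:=X\rest{\Sigma}$ a vector field on $\Sigma$, I would observe $Xu\in\dom(D_{\max})$ with $D(Xu)=X(Du)+[D,X]u\in\SobH[loc]{k-1}(M;F)$ (as $Du\in\SobH[loc]{k}$ and $[D,X]$ has order one applied to $u\in\SobH[loc]{k}$), while $(Xu)\rest{\Sigma}=\widetilde{X}(u\rest{\Sigma})$ and
\[
\chi^{+}(A_r)\bigl((Xu)\rest{\Sigma}\bigr)
 = \widetilde{X}\,\chi^{+}(A_r)(u\rest{\Sigma}) + [\chi^{+}(A_r),\widetilde{X}](u\rest{\Sigma})
 \in \SobH{(k-1)+\frac12}(\Sigma;E),
\]
since $\widetilde{X}\colon\SobH{k+\frac12}\to\SobH{k-\frac12}$ and the commutator $[\chi^{+}(A_r),\widetilde{X}]$, being a pseudo-differential operator of order $0$ rather than $1$, maps $\SobH{k-\frac12}$ to itself. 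Applying the inductive hypothesis to $Xu$ gives $Xu\in\SobH[loc]{k}(M;E)$; letting $X$ run through a tangential frame shows every tangential first derivative of $u$ lies in $\SobH[loc]{k}$. (Rigorously this is run with tangential difference quotients, the commutator bounds being uniform in the translation parameter, and one passes to the limit.) \emph{Normal regularity:} solving the equation for the transversal derivative, $\partial_t u=\sym_D(\cdot,dt)^{-1}\bigl(Du-P_tu\bigr)$, where $Du\in\SobH[loc]{k}$ by hypothesis and $P_tu\in\SobH[loc]{k}$ because $P_t$ is first order and tangential and the tangential derivatives of $u$ have just been controlled; hence $\partial_t u\in\SobH[loc]{k}$. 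As $u$, its tangential derivatives and $\partial_t u$ all lie in $\SobH[loc]{k}$, I conclude $u\in\SobH[loc]{k+1}$ near $\Sigma$, which together with the interior estimate closes the induction.

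I expect the real obstacle to be conceptual rather than computational. The scheme hinges on $\chi^{\pm}(A_r)$ being genuine order-zero pseudo-differential operators on $\Sigma$ — this is exactly what makes $[\chi^{+}(A_r),\widetilde{X}]$ drop to order $0$ and lets the $\chi^{+}$-condition descend through the induction — and in the present non-selfadjoint setting there is no eigenfunction expansion to fall back on, so one is genuinely relying on the spectral theory of $A_r$ from Section~\ref{Sec:OpTheory} (bisectoriality, Grubb's parametrix for the spectral projectors). The deeper input is the base case, Theorem~\ref{Thm:Ell}(iii), whose proof runs through the $H^\infty$-functional calculus and maximal regularity for the sectorial modulus operator $|A_r|$; indeed, the near-boundary gain of one derivative can equivalently be read off from the higher-regularity statement for the model operator on the cylinder in Section~\ref{Sec:ModelOp}, where rewriting $Du=f$ as an inhomogeneous evolution equation exhibits the $\chi^{+}$-part of the Cauchy datum as the free data for the semigroup propagating into $M$ — hence its regularity must be prescribed — while the complementary $\chi^{-}$-part is automatically smoothed, and maximal regularity is precisely the mechanism upgrading $\Lp{2}$-in-$t$ to one more $t$-derivative, with iteration yielding the full $\SobH{k+1}$ bound. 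The rest — the localisation, the difference-quotient bookkeeping, handling the non-orthogonal splitting within the Sobolev scale, and compatibility of the various trace maps — is routine but needs care.
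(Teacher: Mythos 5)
Your argument is correct, but it takes a genuinely different route from the paper's. Both proofs reduce by cutoff and induction to a compactly supported $u\in\SobH{k}(Z_{[0,\rho]};E)$ and both ultimately rest on the same base case, Theorem~\ref{Thm:Ell}~\ref{Thm:Ell3}. From there the paper does \emph{not} bootstrap via tangential derivatives: it decomposes $u = u_0 + u_1$ with $u_1(t) = \exp(-t|A_r|)\chi^+(A_r)u(0)$ and $u_0 = S_{0,r}\sym_0^{-1}D_{0,r}u$ (Proposition~\ref{Prop:RegInv}). The regularity of $u_1$ is read off directly from the quadratic estimates of Lemma~\ref{Lem:ModAFC} using $\chi^+(A_r)u(0)\in\SobH{k+\frac12}$, and the regularity of $u_0$ comes from the isomorphism $(D-\sym_0R_0)\colon \SobH{k+1}(Z;E;B_0)\to\SobH{k}(Z;F)$ of Lemma~\ref{Lem:RegInv3}, which in turn is a maximal-regularity fact about $S_{0,r}$ (Lemma~\ref{Lem:SReg}). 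Your proof instead runs the classical Nirenberg scheme: tangential difference quotients plus the reduction to the inductive hypothesis (with the order-zero pseudodifferential nature of $\chi^+(A_r)$ from Proposition~\ref{Prop:AProj} doing the work in the commutator $[\chi^+(A_r),\widetilde X]$), followed by solving $\partial_t u=\sym_D(\cdot,dt)^{-1}(Du - P_tu)$ for the normal derivative. What each buys: your route confines all the semigroup/functional-calculus machinery to the base case and replaces it, for $k\ge1$, by elementary differential/pseudodifferential manipulations — arguably lighter conceptually, though you still owe a careful verification that the difference-quotient commutators $[\chi^+(A_r),\delta_h]$ are bounded $\SobH{k-\frac12}\to\SobH{k-\frac12}$ uniformly in $h$ (standard for classical $\psi$DOs, but not free). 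The paper's route keeps the maximal-regularity mechanism visible at every $k$ and produces an explicit Duhamel-type representation of $u$, which is cleaner and avoids difference-quotient bookkeeping entirely, at the cost of invoking Lemmas~\ref{Lem:SReg} and \ref{Lem:RegInv3} throughout the induction.
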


As a consequence of Theorem~\ref{Thm:Ell}~\ref{Thm:Ell2}, the following is a notion of boundary condition.
\begin{definition}[Boundary condition and the associated operator]
\label{Def:BC}
A \emph{boundary condition} for $D$ is a \emph{closed}  linear subspace $B \subset \checkH(A)$. 
The domains of the operators are
\begin{align*}
\dom(D_{B,\max}) &= \set{ u \in \dom(D_{\max}): u\rest{\Sigma} \in B},\\
\dom(D_{B}) &= \set{u \in \dom(D_{\max}) \intersect \SobH[loc]{1}(\Sigma;E): u\rest{\Sigma} \in B},
\end{align*}
and similarly for the formal adjoint $D^\dagger$ with $A$ replaced by $\tilde{A}$.
\end{definition}

For any boundary condition $D_B$, the operator $D_B$ is a closed map between $D_{cc}$ and $D_{\max}$.
Given any closed extension $D_c$ of $D_{cc}$, there exists a boundary condition $B$, given by $B = \set{ u\rest{\Sigma}: u \in \dom(D_c)}$, so that $D_c = D_{B,\max}$.
Moreover, a boundary condition $B$ satisfies  $B \subset \SobH{\frac{1}{2}}(\Sigma;E)$ if and only if $D_{B} = D_{B,\max}$.
See Proposition~\ref{Prop:ClosedExt} for an elaborate description of these statements as well as their proofs.

Given a boundary condition $B$, the associated \emph{adjoint boundary condition} is denoted by $B^\ad$ for the operator $D_{B,\max}^\ad$.
This space is given by:
$$B^\ad := \set{v \in \checkH(\tilde{A}): \inprod{ \sym_0 u, v} = 0\quad \forall u \in B},$$
and this is a closed subspace of $\checkH(\tilde{A})$.
As aforementioned, the symbol $\sym_0^\ast: \checkH(\tilde{A}) \to \hatH(A^\dagstar)$ is an isomorphism and we have that $\sym_0^\ast B^\ad \subset \hatH(A^\dagstar)$ is closed.
In applications, it is easier to work with this latter space so as to only have to consider the operator $A$ rather than both $A$ and $\tilde{A}$  
simultaneously. 

The following decomposition of a boundary condition is useful in characterising elliptic boundary conditions.

\begin{definition}[Elliptically decomposed boundary condition]
\label{Def:EllBC}
Let $B \subset \SobH{\frac{1}{2}}(\Sigma;E)$ be a boundary condition.
Let $r \in \R$ be an admissible spectral cut and suppose:
\begin{enumerate}[(i)]
\item  \label{Def:EllBC:First} \label{Def:EllBC:MutualComp}
$W_{\pm}$, $V_{\pm}$ are mutually complementary subspaces of $\Lp{2}(\Sigma;E)$ such that 
	$$V_\pm \oplus W_\pm = \chi^{\pm}(A_r) \Lp{2}(\Sigma;E)$$
\item  \label{Def:EllBC:FiniteDim}
$W_{\pm}$ are finite dimensional with $W_{\pm}, W_{\pm}^\ast \subset \SobH{\frac{1}{2}}(\Sigma;E)$, and 
\item  \label{Def:EllBC:Last} \label{Def:EllBC:Bddmap}
there exists a bounded linear map $g: V_- \to V_+$ with  $g(V_-^{\frac{1}{2}}) \subset V_+^{\frac{1}{2}}$ and $g^\ast((V_{+}^\ast)^\frac{1}{2}) \subset (V_-^\ast)^\frac{1}{2}$  
such that 
	$$B = W_+ \oplus \set{v + gv: v \in V_{-}^{\frac{1}{2}}}.$$
\end{enumerate}
Then, we say that $B$ can be \emph{elliptically decomposed with respect to~$r$}. 
\end{definition}
Here, given a subspace $V\subset\Lp{2}(\Sigma;E)$, we put $V^s:=V\cap \SobH{s}(\Sigma;E)$ for $s\in\R$.

\begin{remark}\label{Rem:*Spaces}
The spaces $W_\pm^\ast$ and $V_{\pm}^\ast$ are defined by $Q_{\pm}^\ast\Lp{2}(\Sigma;E)$ and $P_{\pm}^\ast \Lp{2}(\Sigma;E)$, where $Q_\pm$ and $P_\pm$ are the unique projectors with respect to the splitting of the space $\Lp{2}(\Sigma;E) = V_- \oplus W_- \oplus V_+ \oplus W_+$ so that $W_{\pm} = Q_{\pm}^\ast\Lp{2}(\Sigma;E)$ and $V_{\pm} = P_{\pm}^\ast \Lp{2}(\Sigma;E)$.
In particular, $\dim(W_\pm)=\dim(W_\pm^*)$.
\end{remark}

The notion of \emph{Fredholm pairs} is also useful in characterising elliptic boundary conditions.
Recall that a pair $(X, Y)$ of closed subspaces of a Hilbert space $Z$ is called a Fredholm pair if  $X + Y$ is closed in $Z$ and $X \cap Y$ and $\coker(X,Y) := Z/(X + Y)$ are finite dimensional.
The index of a Fredholm pair $(X,Y)$ is 
$$ \indx(X,Y) = \dim(X \cap Y) - \dim \coker(X,Y) \in \In.$$

\begin{definition}[Fredholm pair decomposition]
\label{Def:FP}
Let $B \subset \SobH{\frac12}(\Sigma;E)$ and let $r \in \R$ be an admissible spectral cut. 

Suppose: 
\begin{enumerate}[(i)] 
\item 
\label{Def:FP1} 
$B$ is closed subspace of $\SobH{\frac12}(\Sigma;E)$,
\item 
\label{Def:FP2}
$(\chi^+(A_r)\SobH{\frac12}(\Sigma;E), B)$ and $(\chi^-(A_r^\ast)\SobH{\frac12}(\Sigma;E), B^{\perp, \SobH{\frac12}(\Sigma;E)})$ are Fredholm pairs in $\SobH{\frac12}(\Sigma;E)$ (where $B^{\perp, \SobH{\frac12}(\Sigma;E)}$ denotes the annihilator of $B$ in $\SobH{\frac12}(\Sigma;E)$), and 
\item 
\label{Def:FP3} $\indx(\chi^+(A_r)\SobH{\frac12}(\Sigma;E), B) = -\indx(\chi^-(A_r^\ast)\SobH{\frac12}(\Sigma;E), B^{\perp, \SobH{\frac12}(\Sigma;E)}).$
\end{enumerate}
Then, we say that $B$ is \emph{Fredholm-pair decomposed with respect to $r$}.
\end{definition}

The following is an important theorem that provides useful criteria to determine elliptic boundary conditions. 
This theorem also illustrates that this notion of ellipticity agrees with previous notions in the literature.

\begin{theorem}
\label{Thm:EllEquiv}
Let $B \subset \SobH{\frac{1}{2}}(\Sigma;E)$ be a subspace. 
Then the following are equivalent:
\begin{enumerate}[(i)]
\item \label{Thm:EllEquiv:Ellbdy} 
$\dom(D_{B,\max}) \subset \SobH[loc]{1}(M;E)\quad\text{and}\quad \dom(D_{B^\ad,\max}^\dagger) \subset \SobH[loc]{1}(M;F)$.
\item \label{Thm:EllEquiv:Closed}
$B$ is a closed subset of $\checkH(A)$ and $B^{\ad} \subset \SobH{\frac{1}{2}}(\Sigma;F)$,
\item  \label{Thm:EllEquiv:Admiss}
with respect to every admissible spectral cut $r \in \R$, $B$ can be elliptically decomposed,
\item  \label{Thm:EllEquiv:Admiss2}
with respect to some admissible spectral cut $r \in \R$, $B$ can be elliptically decomposed,
\item \label{Thm:EllEquiv:Admiss3}
with respect to every admissible spectral cut $r \in \R$, $B$ can be Fredholm pair decomposed,
\item \label{Thm:EllEquiv:Admiss4}
with respect to some admissible spectral cut $r \in \R$, $B$ can be Fredholm pair decomposed.
\end{enumerate}
Moreover, whenever one of the equivalent statements \ref{Thm:EllEquiv:Closed}-\ref{Thm:EllEquiv:Admiss4} is satisfied, 
\begin{equation}
\sym_0^\ast(B^\ad) = W_-^\ast \oplus \set{ u - g^\ast u: u \in (V_+^\ast)^{\frac{1}{2}}}.
\label{eq:B*}
\end{equation}
\end{theorem}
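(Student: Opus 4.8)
The plan is to close the circle $\ref{Thm:EllEquiv:Admiss2}\Rightarrow\ref{Thm:EllEquiv:Ellbdy}\Rightarrow\ref{Thm:EllEquiv:Closed}\Rightarrow\ref{Thm:EllEquiv:Admiss4}\Rightarrow\ref{Thm:EllEquiv:Admiss2}$, to prove separately, for each admissible cut $r$, that $B$ is elliptically decomposed with respect to $r$ if and only if $B$ is Fredholm-pair decomposed with respect to $r$ (which immediately yields $\ref{Thm:EllEquiv:Admiss}\Leftrightarrow\ref{Thm:EllEquiv:Admiss3}$ and $\ref{Thm:EllEquiv:Admiss2}\Leftrightarrow\ref{Thm:EllEquiv:Admiss4}$), and to observe that being Fredholm-pair decomposed is insensitive to the choice of cut (so $\ref{Thm:EllEquiv:Admiss3}\Leftrightarrow\ref{Thm:EllEquiv:Admiss4}$). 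Since $\ref{Thm:EllEquiv:Admiss}\Rightarrow\ref{Thm:EllEquiv:Admiss2}$ is trivial, these pieces give all six equivalences, and formula \eqref{eq:B*} will drop out of the first of them. Throughout I fix an admissible cut $r$, abbreviate $\chi^{\pm}:=\chi^{\pm}(A_r)$, and work in $\SobH{1/2}(\Sigma;E)$ with the topological splitting $\SobH{1/2}(\Sigma;E)=\chi^-\SobH{1/2}(\Sigma;E)\oplus\chi^+\SobH{1/2}(\Sigma;E)$ coming from the order-zero pseudo-differential projectors $\chi^\pm$.

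\emph{The fixed-cut equivalence and the formula for $B^\ad$.} Given an elliptic decomposition $B=W_+\oplus\{v+gv:v\in V_-^{1/2}\}$ as in Definition~\ref{Def:EllBC}, one reads off directly that $B\cap\chi^+\SobH{1/2}(\Sigma;E)=W_+$ and that $B+\chi^+\SobH{1/2}(\Sigma;E)$ is closed with quotient isomorphic to $W_-$, so $(\chi^+\SobH{1/2}(\Sigma;E),B)$ is a Fredholm pair of index $\dim W_+-\dim W_-$. Dualising via the $\Lp{2}$-pairing (which turns the annihilator of $\chi^+\SobH{1/2}(\Sigma;E)$ into $\chi^-(A_r^\ast)\SobH{1/2}(\Sigma;E)$) and via the perfect pairing $\beta=-\inprod{\sym_0\cdot,\cdot}$ together with the bundle isomorphism $\sym_0^\ast\colon\checkH(\tilde A)\to\hatH(A^\dagstar)=\checkH(-A^\dagstar)$ (which carries $B^\ad$ to its transported form), one finds that annihilating $W_+$ and every $v+gv$ forces $\sym_0^\ast(B^\ad)=W_-^\ast\oplus\{u-g^\ast u:u\in(V_+^\ast)^{1/2}\}$ — this is \eqref{eq:B*}, the sign of $g^\ast$ coming from the flip $\hatH(A^\dagstar)=\checkH(-A^\dagstar)$ inherent in the relation between the symbols of $D$, $A$ and $\tilde A$ — and in particular that the second pair of Definition~\ref{Def:FP}~\ref{Def:FP2} is Fredholm with index of the opposite sign, so Definition~\ref{Def:FP}~\ref{Def:FP3} holds (here one uses $\dim W_\pm=\dim W_\pm^\ast$ from Remark~\ref{Rem:*Spaces}). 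Conversely, from a Fredholm-pair decomposition one puts $W_+:=B\cap\chi^+\SobH{1/2}(\Sigma;E)$, chooses a finite-dimensional complement $W_-\subset\chi^-\SobH{1/2}(\Sigma;E)$ of $B+\chi^+\SobH{1/2}(\Sigma;E)$, takes complements $V_\pm$ of $W_\pm$ in $\chi^\pm\SobH{1/2}(\Sigma;E)$, and lets $g$ be the graph map of $B$ over $V_-^{1/2}$; here the real work is that the second Fredholm pair and the index identity are exactly what force the relevant sums to be topologically closed, force $W_\pm^\ast\subset\SobH{1/2}(\Sigma;E)$, and promote $g$ from a map bounded on $\SobH{1/2}$ to one with the two $\ast$-dual mapping properties of Definition~\ref{Def:EllBC}~\ref{Def:EllBC:Bddmap}. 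Running this for every admissible $r$ gives $\ref{Thm:EllEquiv:Admiss}\Leftrightarrow\ref{Thm:EllEquiv:Admiss3}$ and $\ref{Thm:EllEquiv:Admiss2}\Leftrightarrow\ref{Thm:EllEquiv:Admiss4}$.

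\emph{Cut-independence and the two regularity implications.} Passing from $r$ to an admissible cut $r'$ replaces $\chi^\pm(A_r)$ by $\chi^\pm(A_{r'})$, whose difference is the finite-rank spectral projector of $A$ onto the generalised eigenspaces with real part strictly between $r$ and $r'$; a Fredholm pair stays a Fredholm pair under such a perturbation, and the index identity of Definition~\ref{Def:FP}~\ref{Def:FP3} is preserved since the two reference spaces are perturbed by mutually dual finite-rank pieces, so $\ref{Thm:EllEquiv:Admiss4}\Rightarrow\ref{Thm:EllEquiv:Admiss3}$. For $\ref{Thm:EllEquiv:Admiss2}\Rightarrow\ref{Thm:EllEquiv:Ellbdy}$: if $B$ is elliptically decomposed with respect to $r$, then for $u\in\dom(D_{B,\max})$ the component $\chi^-(u\rest{\Sigma})$ lies automatically in $\SobH{1/2}(\Sigma;E)$, hence equals some $v\in V_-^{1/2}$, whence $\chi^+(u\rest{\Sigma})=w_++gv\in W_+\oplus V_+^{1/2}\subset\SobH{1/2}(\Sigma;E)$ because $g(V_-^{1/2})\subset V_+^{1/2}$; so $u\rest{\Sigma}\in\SobH{1/2}(\Sigma;E)$ and $u\in\SobH[loc]{1}(M;E)$ by Theorem~\ref{Thm:Ell}~\ref{Thm:Ell3}. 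By \eqref{eq:B*} (and the fact that $\sym_0^\ast$ is a bundle isomorphism intertwining the spectral projectors) $B^\ad$ is then elliptically decomposed as a boundary condition for $\tilde A$, and the same argument applied to $D^\dagger$ gives $\dom(D_{B^\ad,\max}^\dagger)\subset\SobH[loc]{1}(M;F)$. For $\ref{Thm:EllEquiv:Ellbdy}\Rightarrow\ref{Thm:EllEquiv:Closed}$: by Theorem~\ref{Thm:Ell}~\ref{Thm:Ell2} the trace of $\dom(D_{B,\max})$ is the $\checkH(A)$-closure of $B$, and Theorem~\ref{Thm:Ell}~\ref{Thm:Ell3} with $\dom(D_{B,\max})\subset\SobH[loc]{1}(M;E)$ forces this closure into $\SobH{1/2}(\Sigma;E)$; comparing the $D$-graph norm with the $\SobH[loc]{1}$-topology then shows $B$ is closed in $\checkH(A)$, and the same for $D^\dagger$ and $B^\ad$ gives $B^\ad\subset\SobH{1/2}(\Sigma;F)$.

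\emph{The main obstacle.} What remains is $\ref{Thm:EllEquiv:Closed}\Rightarrow\ref{Thm:EllEquiv:Admiss4}$, which I expect to be the hard step, since the Fredholm-pair structure must be manufactured from the bare hypotheses that $B$ is closed in $\checkH(A)$ with $B,B^\ad\subset\SobH{1/2}$. The approach is a Braverman--Shi-type argument: with $B$ closed in $\SobH{1/2}(\Sigma;E)$, write the kernel and cokernel of $(\chi^+\SobH{1/2}(\Sigma;E),B)$ as $B\cap\chi^+\SobH{1/2}(\Sigma;E)$ and $(\chi^+\SobH{1/2}(\Sigma;E))^\perp\cap B^\perp$ inside $\SobH{1/2}(\Sigma;E)$, identify the first reference annihilator with $\chi^-(A_r^\ast)\SobH{1/2}(\Sigma;E)$ and, via $\beta$ and $\sym_0^\ast$, identify $B^\perp$ with the transported $B^\ad$; then $B\subset\SobH{1/2}(\Sigma;E)$ makes the kernel finite dimensional and $B^\ad\subset\SobH{1/2}(\Sigma;F)$ makes the cokernel finite dimensional, while closedness of $\chi^+\SobH{1/2}(\Sigma;E)+B$ follows from finiteness of its codimension together with closedness of the two summands. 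The analogous computation on the $D^\dagger$-side gives the second pair, and the index identity follows from the Hilbert-space duality $\indx(X,Y)=-\indx(X^\perp,Y^\perp)$ for Fredholm pairs combined with the above correspondences. The technical core here is to keep the order-zero projectors $\chi^\pm$ (Grubb's construction) under sufficient control that these abstract finiteness and closedness statements genuinely hold in the $\SobH{1/2}$-topology; I would isolate the needed abstract facts about Fredholm pairs and complemented subspaces in the appendix. Combining everything, $\ref{Thm:EllEquiv:Ellbdy}\Rightarrow\ref{Thm:EllEquiv:Closed}\Rightarrow\ref{Thm:EllEquiv:Admiss4}\Rightarrow\ref{Thm:EllEquiv:Admiss2}\Rightarrow\ref{Thm:EllEquiv:Ellbdy}$ closes the circle, $\ref{Thm:EllEquiv:Admiss}\Leftrightarrow\ref{Thm:EllEquiv:Admiss3}$, $\ref{Thm:EllEquiv:Admiss2}\Leftrightarrow\ref{Thm:EllEquiv:Admiss4}$, $\ref{Thm:EllEquiv:Admiss3}\Leftrightarrow\ref{Thm:EllEquiv:Admiss4}$ and the trivial $\ref{Thm:EllEquiv:Admiss}\Rightarrow\ref{Thm:EllEquiv:Admiss2}$ slot in the remaining statements, and \eqref{eq:B*} has been obtained en route.
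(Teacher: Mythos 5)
Your overall routing of the implications differs from the paper's, which runs the cycle \ref{Thm:EllEquiv:Ellbdy}$\Rightarrow$\ref{Thm:EllEquiv:Closed}$\Rightarrow$\ref{Thm:EllEquiv:Admiss}$\Rightarrow$\ref{Thm:EllEquiv:Admiss2}$\Rightarrow$\ref{Thm:EllEquiv:Ellbdy} together with \ref{Thm:EllEquiv:Closed}\&\ref{Thm:EllEquiv:Admiss}$\Rightarrow$\ref{Thm:EllEquiv:Admiss3}$\Rightarrow$\ref{Thm:EllEquiv:Admiss4}$\Rightarrow$\ref{Thm:EllEquiv:Closed}, whereas you want to close \ref{Thm:EllEquiv:Admiss2}$\Rightarrow$\ref{Thm:EllEquiv:Ellbdy}$\Rightarrow$\ref{Thm:EllEquiv:Closed}$\Rightarrow$\ref{Thm:EllEquiv:Admiss4}$\Rightarrow$\ref{Thm:EllEquiv:Admiss2}. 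Several of your pieces are essentially sound and match the paper: \ref{Thm:EllEquiv:Admiss2}$\Rightarrow$\ref{Thm:EllEquiv:Ellbdy} via the graph map and Theorem~\ref{Thm:Ell}~\ref{Thm:Ell3}; \ref{Thm:EllEquiv:Ellbdy}$\Rightarrow$\ref{Thm:EllEquiv:Closed}; the derivation of the Fredholm pair $(\chi^+\SobH{\frac12},B)$ from an elliptic decomposition; and the cut-independence of the Fredholm-pair criterion via finite-rank perturbation. But there is one genuine gap and one lesser inaccuracy.

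\textbf{The genuine gap is the step from a Fredholm-pair decomposition back to an elliptic decomposition at a fixed cut} (your ``fixed-cut equivalence'' in the reverse direction, equivalently \ref{Thm:EllEquiv:Admiss4}$\Rightarrow$\ref{Thm:EllEquiv:Admiss2}). You define $W_+:=B\cap\chi^+\SobH{\frac12}$, choose a complement $W_-$, take complements $V_\pm$ and a graph map $g$, and then assert that ``the real work'' (closedness of the right spaces, $W_\pm^\ast\subset\SobH{\frac12}$, the two $\ast$-dual mapping properties of $g$) is ``forced'' by the second Fredholm pair and the index identity. This is precisely the hard core of the theorem, and the proposal contains no argument for it. Observe that Definition~\ref{Def:EllBC} requires $V_\pm\oplus W_\pm=\chi^\pm(A_r)\Lp{2}(\Sigma;E)$, i.e.\ $V_\pm$ must be closed subspaces of $\Lp{2}$, not of $\SobH{\frac12}$; so one must produce $V_\pm\subset\Lp{2}$, show $V_\pm^{\frac12}=V_\pm\cap\SobH{\frac12}$ is the graph projection of $B$, and extend $g$ from $V_-^{\frac12}\to V_+^{\frac12}$ to a bounded $\Lp{2}$ map $V_-\to V_+$ with the stated $\ast$-dual property. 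In the paper this is exactly the content of the implication \ref{Thm:EllEquiv:Closed}$\Rightarrow$\ref{Thm:EllEquiv:Admiss}, built up through Lemmas~\ref{Lem:AdjDecomp}--\ref{Lem:DualAnn}: the specific choice of $W_\pm$, $W_\pm^\ast$, $V_\pm$, $V_\pm^\ast$ in \eqref{Eq:Spaces} (using $A_r^\ast$ as well as $A_r$ to keep the non-self-adjoint projectors under control), a Peetre/H\"ormander compactness argument (Proposition~\ref{Prop:HormPeet}) for the finite-dimensionality of $W_\pm$, the annihilator computations in Lemma~\ref{Lem:Gspaces}, and a complex interpolation step $[(V_\pm)^{\frac12},(V_\pm)^{-\frac12}]_{\frac12}=V_\pm$ to extend $g=-h_0^\ast$. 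None of this is automatic from the abstract Fredholm-pair data, and your proposal does not supply a substitute; the non-orthogonality of the generalised eigenspaces of $A$ is exactly what makes this step heavy.

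A second, smaller issue is in your \ref{Thm:EllEquiv:Closed}$\Rightarrow$\ref{Thm:EllEquiv:Admiss4}: you invoke the duality $\indx(X,Y)=-\indx(X^{\perp},Y^{\perp})$ to get the index identity of Definition~\ref{Def:FP}~\ref{Def:FP3}. But that duality places the annihilator pair in the dual space $\SobH{-\frac12}$, whereas Definition~\ref{Def:FP}~\ref{Def:FP2} demands the second pair in $\SobH{\frac12}$. Passing between these two Sobolev scales is not free; in the paper's proof of \ref{Thm:EllEquiv:Admiss4}$\Rightarrow$\ref{Thm:EllEquiv:Closed} one has to compare indices across $\checkH(A)$, $\hatH(A^\dagstar)$, $\SobH{\frac12}$ and $\SobH{-\frac12}$ using the auxiliary Lemmas~\ref{Lem:SumDirSum}--\ref{Lem:FPCon}, and in \ref{Thm:EllEquiv:Closed}\&\ref{Thm:EllEquiv:Admiss}$\Rightarrow$\ref{Thm:EllEquiv:Admiss3} the identity is instead read off by counting $\dim W_\pm$ and $\dim W_\pm^\ast$ from the explicit decompositions \eqref{eq:B*} and Remark~\ref{Rem:*Spaces}. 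So the index identity does not simply drop out by abstract duality. Finally, a minor remark: the sign $u-g^\ast u$ in \eqref{eq:B*} is just the usual graph/annihilator duality for the graph $\{v+gv\}$, not a consequence of the $\hatH=\checkH(-\cdot)$ flip.
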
 

\begin{definition}[Elliptic boundary condition] 
\label{Def:EllBCOp}
If one and hence all assertions in Theorem~\ref{Thm:EllEquiv} hold true, we call $B$ an \emph{elliptic boundary condition}.
\end{definition}

Note that for an elliptic boundary condition $B$, we have that $B^{\ad}$ is an elliptic boundary condition for $D^\dagger$.

The following is a useful notion to allow for regularity of solutions up to the boundary. 
\begin{definition}[$(s+\frac{1}{2})$-(semi)regular boundary condition]
\label{Def:SemiRegular}
For $s \geq \frac{1}{2}$, we say an elliptic boundary condition $B$ is \emph{$(s+\frac{1}{2})$-semiregular} w.r.t.\ an admissible spectral cut $r$ if $W_+\subset \SobH{s}(\Sigma;E)$ and $g(V_-^s)\subset V_+^s$.
Here, $W_\pm$, $V_\pm$ and $g$ are as in Definition~\ref{Def:EllBC}.

If, in addition, $B^*$ is also $(s+\frac{1}{2})$-semiregular w.r.t.\ $r$, then we say that $B$ is \emph{$(s+\frac{1}{2})$-regular} w.r.t.\ $r$.
\end{definition}

It turns out that (semi)regularity is independent of the choice of admissible spectral cut $r$, cf.\ Lemma~\ref{Lem:RegEquiv}.

The main result along these lines is the following higher boundary regularity result.
\begin{theorem}[Higher boundary regularity]
\label{Thm:HBR}
Let $m \in \N$ and $B$ be an elliptic boundary condition that is $m$-semiregular. 
Then, for $k \in \set{0, 1, \dots, m-1}$ and $u \in \dom(D_B)$, we have that $D_{\max} u \in \SobH[loc]{k}(M;F)$ implies $u \in \SobH[loc]{k+1}(M;E)$.
\end{theorem}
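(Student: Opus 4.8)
\emph{Proof plan.} The plan is to turn the conclusion $u\in\SobH[loc]{k+1}(M;E)$, via Theorem~\ref{Thm:HighReg}, into the single statement that the outgoing part $\chi^{+}(A_r)(u\rest{\Sigma})$ of the boundary trace lies in $\SobH{k+\frac12}(\Sigma;E)$ for a fixed admissible spectral cut $r$, and then to produce this from the elliptic decomposition of $B$ and its $m$-semiregularity once one knows that the \emph{incoming} part $\chi^{-}(A_r)(u\rest{\Sigma})$ has the regularity dictated by $D_{\max}u$. I would argue by induction on $k\in\set{0,\dots,m-1}$. For $k=0$ nothing is to be shown, since $u\in\dom(D_B)\subset\SobH[loc]{1}(M;E)$ by definition. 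For the step, assume the statement for $k-1$ and take $u\in\dom(D_B)$ with $D_{\max}u\in\SobH[loc]{k}(M;F)$; as $D_{\max}u\in\SobH[loc]{k-1}(M;F)$ and $k-1\le m-1$, the inductive hypothesis yields $u\in\SobH[loc]{k}(M;E)$, hence $u\rest{\Sigma}\in\SobH{k-\frac12}(\Sigma;E)$. Since $B$ is elliptic, Theorem~\ref{Thm:EllEquiv} provides an elliptic decomposition $B=W_{+}\oplus\set{v+gv:v\in V_{-}^{\frac12}}$ with respect to $r$; writing $u\rest{\Sigma}=w+v_{0}+gv_{0}$ with $w\in W_{+}$, $v_{0}\in V_{-}^{\frac12}$, and using $W_{+},\,gV_{-}\subset\chi^{+}(A_r)\Lp{2}(\Sigma;E)$ and $V_{-}\subset\chi^{-}(A_r)\Lp{2}(\Sigma;E)$, one has $v_{0}=\chi^{-}(A_r)(u\rest{\Sigma})$ and $\chi^{+}(A_r)(u\rest{\Sigma})=w+gv_{0}$.

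The analytic heart is then the assertion that $D_{\max}u\in\SobH[loc]{k}(M;F)$ forces $\chi^{-}(A_r)(u\rest{\Sigma})\in\SobH{k+\frac12}(\Sigma;E)$. To see this, localise near $\Sigma$ in a collar $\Sigma\times[0,\epsilon)$ and cut off by $\psi\equiv1$ near $\Sigma$; then $\psi u$ solves $D_{0}(\psi u)=f$ on the model half-cylinder, where $D_{0}=\sym_0(\partial_t+A)$ is the model operator of Section~\ref{Sec:ModelOp} and $f$ is built from $\psi\,D_{\max}u\in\SobH[loc]{k}$, the commutator $[D,\psi]u$ (supported where $u\in\SobH[loc]{k+1}$ by interior elliptic regularity), and the first-order error $(D-D_{0})(\psi u)\in\SobH[loc]{k-1}$, whose coefficients vanish on $\Sigma$ — here one uses $u\in\SobH[loc]{k}(M;E)$ from the inductive hypothesis. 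Projecting by the zeroth-order operator $\chi^{-}(A_r)$, which commutes with $A$, the incoming component satisfies an evolution equation whose unique decaying solution can be written through the analytic semigroup generated by (a real shift of) the $\chi^{-}$-part of $-A_r$. By the bisectoriality of Section~\ref{Sec:OpTheory} this operator is sectorial with a bounded $H^\infty$-functional calculus (Section~\ref{Sec:Hinfty}), so the maximal-regularity and $\Lp{2}$-trace estimates for it — the ones behind the proof of Theorem~\ref{Thm:HighReg} — show that the incoming component is of class $\SobH{k+1}$ on the half-cylinder (the boundary-vanishing error causing no loss because of the extra power of $t$), so that its restriction $\chi^{-}(A_r)(u\rest{\Sigma})$ lies in $\SobH{k+\frac12}(\Sigma;E)$.

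It remains to transfer this through $B$. By the previous paragraph $v_{0}\in V_{-}^{k+\frac12}$. Since $B$ is $m$-semiregular we have $W_{+}\subset\SobH{m-\frac12}(\Sigma;E)$ and $g(V_{-}^{m-\frac12})\subset V_{+}^{m-\frac12}$, and hence, by interpolation between this level and the level $\frac12$ (equivalently, by order-monotonicity of semiregularity, cf.\ Lemma~\ref{Lem:RegEquiv}), $g(V_{-}^{s})\subset V_{+}^{s}$ for all $\frac12\le s\le m-\frac12$. As $k\le m-1$ this gives $gv_{0}\in V_{+}^{k+\frac12}\subset\SobH{k+\frac12}(\Sigma;E)$ and $w\in W_{+}\subset\SobH{m-\frac12}(\Sigma;E)\subset\SobH{k+\frac12}(\Sigma;E)$, so that $\chi^{+}(A_r)(u\rest{\Sigma})=w+gv_{0}\in\SobH{k+\frac12}(\Sigma;E)$. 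Theorem~\ref{Thm:HighReg} now yields $u\in\SobH[loc]{k+1}(M;E)$, completing the induction.

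\emph{Main obstacle.} The crux is the incoming-trace gain in the second paragraph: the localisation to the model operator and the maximal-regularity argument producing the extra half-derivative, the delicate points being the first-order error $D-D_{0}$ (handled using the vanishing of its coefficients on $\Sigma$ together with the inductive upgrade of the Sobolev order of $u$) and the $t$-dependence of the adapted boundary operator. Most of this is already done in Sections~\ref{Sec:ModelOp}–\ref{Sec:MaximalDom}; the genuinely new work here is chaining those estimates with the order-monotonicity of semiregularity in the transfer step.
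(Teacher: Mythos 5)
Your overall strategy coincides with the paper's: reduce via Theorem~\ref{Thm:HighReg} to showing $\chi^+(A_r)(u\rest\Sigma) \in \SobH{k+\frac12}(\Sigma;E)$, first proving $\chi^-(A_r)(u\rest\Sigma) \in \SobH{k+\frac12}(\Sigma;E)$ from the cylinder machinery and then transferring to the outgoing part through the elliptic decomposition $B = W_+ \oplus \{v+gv : v \in V_-^{\frac12}\}$ using $(k+\frac12)$-semiregularity. The induction setup and the transfer step are essentially as in the paper.

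The ``analytic heart'' of your proposal, however, has a genuine gap. You propose to write $D_0(\psi u) = f$ with $f$ built from $\psi\, D_{\max}u$, $[D,\psi]u$ and $(D-D_0)(\psi u)$, then gain a derivative by maximal regularity. But $(D-D_0)$ is first order and $\psi u$ is only in $\SobH{k}$, so $(D-D_0)(\psi u) \in \SobH{k-1}$, and the appeal to ``coefficients vanishing on $\Sigma$'' plus ``an extra power of $t$'' does not close this: multiplication by $t$ of an element of $\SobH{k-1}(Z_{[0,\rho]})$ does not produce an element of $\SobH{k}(Z_{[0,\rho]})$, because the $t$-factor does not control the top-order $t$-derivative (writing $(D-D_0) = tS_t + R^{(0)}_t$ with $S_t$ of order one, $\partial_t^a D^b_\Sigma(tS_t u)$ contains the term $t\,\partial_t^a D^b_\Sigma S_t u$ of total order $k+1$, which $t$ does not tame). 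The paper avoids this term entirely: it sets $v := \chi^-(A_r)\,\eta\, u$ with $\eta$ a cutoff ($\eta(0)=1$, $\eta(\rho)=0$), observes that $v(\rho)=0$ \emph{and} $\chi^+(A_r)v(0)=0$, and invokes Proposition~\ref{Prop:RegInv}~(i) to obtain the exact identity $v = S_{0,r}\sym_0^{-1}D_{0,r}v$ with no residual term; then $v \in \SobH{k+1}(Z_{[0,\rho]};E)$ because $S_{0,r}:\SobH{k} \to \SobH{k+1}$ by Lemma~\ref{Lem:SReg}, and $\chi^-(A_r)(u\rest\Sigma) = v(0) \in \SobH{k+\frac12}(\Sigma;E)$ by the trace theorem. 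You should replace your $\psi u$-plus-error bookkeeping by this cleaner route.

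A further remark: your appeal to Lemma~\ref{Lem:RegEquiv} for ``order-monotonicity of semiregularity'' is misplaced, since that lemma concerns independence of the spectral cut, not monotonicity in the Sobolev order. The correct justification for $g(V_-^s)\subset V_+^s$ at intermediate orders $\tfrac12\le s\le m-\tfrac12$ is indeed the complex interpolation you mention (analogous to the interpolation argument near the end of the proof of Theorem~\ref{Thm:EllEquiv}); the paper leaves this step implicit when asserting that $k<m$ makes $B$ $(k+\tfrac12)$-semiregular, so your flag here is actually a useful observation.
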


The following are two significant types of boundary conditions that arise in applications. 
\begin{definition}[Local boundary condition]
\label{Def:LocalBC}
A boundary condition $B \subset \SobH{\frac{1}{2}}(\Sigma;E)$ is a \emph{local boundary condition} if there exists a subbundle $E' \subset E\rest{\Sigma}$ such that
$$ B = \close{\SobH{\frac{1}{2}}(\Sigma;E')}^{\checkH(A)}.$$
\end{definition}

\begin{definition}[Pseudo-local boundary condition]
\label{Def:PLocalBC}
Let $P$ be a classical pseudo-differential projector of order zero (not necessarily orthogonal).
Then, 
$$ B = \close{P\, \SobH{\frac{1}{2}}(\Sigma;E)}^{\checkH(A)} $$
is called a pseudo-local boundary condition.
\end{definition}

The following is a useful tool that characterises pseudo-local conditions.
\begin{theorem}
\label{Thm:PL}
For a pseudo-local boundary condition $B = \close{P\, \SobH{\frac{1}{2}}(\Sigma;E)}^{\checkH(A)} $, the following are equivalent: 
\begin{enumerate}[(i)]
\item 
\label{Thm:PL1}
$B$ is an elliptic boundary condition and $B = P\, \SobH{\frac12}(\Sigma;E)$.
\item 
\label{Thm:PL2}
For some/every admissible spectral cut $r \in \R$, the operator
	$$P - \chi^{+}(A_r): \Lp{2}(\Sigma;E) \to \Lp{2}(\Sigma;E)$$
	is a Fredholm operator.
\item 
\label{Thm:PL3}
For some/every admissible spectral cut $r \in \R$, the operator
	$$P - \chi^{+}(A_r): \Lp{2}(\Sigma;E) \to \Lp{2}(\Sigma;E)$$
	is elliptic.
\item 
\label{Thm:PL4} 
For every $\xi \in T_x^\ast \Sigma\setminus \set{0}$, $x \in \Sigma$, the principal symbol $\sym_{P}(x,\xi): E_x \to E_x$ restricts to an isomorphism from the sum of the generalised eigenspaces of $\imath\sym_{A_r}(x,\xi)$ to the eigenvalues with negative real part onto the image $\sym_{P}(x,\xi)(E_x)$ and, similarly, $\sym_{P^\ast}(x,\xi)$ restricts to an isomorphism from the sum of the generalised eigenspaces of $\imath\sym_{A_r^*}(x,\xi)$ to the eigenvalues with negative real part onto $\sym_{P^*}(x,\xi)(E_x)$.
\end{enumerate} 
\end{theorem}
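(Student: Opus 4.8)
The plan is to establish the cycle of implications by exploiting the equivalence, already available from Theorem~\ref{Thm:EllEquiv}, between ellipticity of $B$ and the Fredholm-pair decomposition, together with the dictionary between Fredholm pairs of the form $(\chi^+(A_r)\Lp{2}, \ran P)$ and Fredholm properties of the operator $P - \chi^+(A_r)$ on $\Lp{2}(\Sigma;E)$. The first step is to record the standard fact that for two (possibly non-orthogonal) bounded projectors $P, Q$ on a Hilbert space, $P - Q$ is Fredholm if and only if $(\ran Q, \nul P)$ is a Fredholm pair, and that in that case $\indx(P-Q)$ computes the index of that pair; applying this with $Q = \chi^+(A_r)$ and noting $\nul(\chi^+(A_r)) = \chi^-(A_r)\Lp{2}(\Sigma;E)$, I get the equivalence of \ref{Thm:PL2} with a Fredholm-pair condition at the $\Lp{2}$-level. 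To pass between this $\Lp{2}$-statement and the $\SobH{\frac12}$-level Fredholm-pair decomposition of Definition~\ref{Def:FP}, I would use that $P$ and $\chi^\pm(A_r)$ are order-zero pseudodifferential operators, hence bounded on all Sobolev scales and commuting with the inclusions up to smoothing, so that the kernel, cokernel and closedness of the range of $P - \chi^+(A_r)$ are the same whether computed in $\Lp{2}$ or in $\SobH{\frac12}$ (an elliptic-regularity/interpolation argument: a solution of $(P-\chi^+(A_r))u = f$ with $f \in \SobH{\frac12}$ and $u \in \Lp{2}$ has $u \in \SobH{\frac12}$ modulo the finite-dimensional kernel, since the difference is elliptic on the relevant subspaces). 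This also handles the adjoint pair: $(P-\chi^+(A_r))^\ast = P^\ast - \chi^+(A_r)^\ast$, and $\chi^+(A_r)^\ast = \id - \chi^-(A_r^\ast)$ up to the appropriate identification, so Fredholmness of $P - \chi^+(A_r)$ is equivalent to Fredholmness of $P^\ast - \chi^-(A_r^\ast)$, which in turn encodes the second Fredholm pair in Definition~\ref{Def:FP}, while the index identity \ref{Def:FP3} comes for free from $\indx((P-\chi^+(A_r))^\ast) = -\indx(P-\chi^+(A_r))$. This establishes \ref{Thm:PL2}~$\Leftrightarrow$ ``$B$ is Fredholm-pair decomposed w.r.t.~$r$'', which by Theorem~\ref{Thm:EllEquiv} is equivalent to $B$ being an elliptic boundary condition; the supplementary claim $B = P\,\SobH{\frac12}(\Sigma;E)$ in \ref{Thm:PL1} follows because, once $B$ is elliptic, $B \subset \SobH{\frac12}$, and $P$ being order zero maps $\SobH{\frac12}$ into $\SobH{\frac12}$, so $P\,\SobH{\frac12}$ is already closed in $\checkH(A)$ by the closed-range theorem applied to the Fredholm operator — no extra closure is needed.

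Next I would treat \ref{Thm:PL2}~$\Leftrightarrow$~\ref{Thm:PL3}. Since $P - \chi^+(A_r)$ is a classical order-zero $\psi$DO on the closed manifold $\Sigma$, it is Fredholm on $\Lp{2}(\Sigma;E)$ precisely when it is elliptic, i.e.\ when its principal symbol $\sym_P(x,\xi) - \sym_{\chi^+(A_r)}(x,\xi)$ is invertible for all $(x,\xi)$ with $\xi \neq 0$; this is the standard elliptic-implies-Fredholm theorem for $\psi$DOs on compact manifolds together with its converse via the symbol calculus. The only point requiring care here is that $\chi^+(A_r)$ is the spectral projector onto eigenvalues with \emph{positive real part}, and one needs to identify its principal symbol as the projector onto the sum of generalised eigenspaces of the symbol $\sym_{A_r}(x,\xi)$ with positive real part — which I would get from the construction of $\chi^\pm(A_r)$ via the Cauchy integral of the resolvent around the relevant part of the spectrum (following Grubb, as cited in the introduction), since the principal symbol of the resolvent is the resolvent of the principal symbol. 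For \ref{Thm:PL3}~$\Leftrightarrow$~\ref{Thm:PL4}, I would then do the pointwise linear algebra: fix $(x,\xi)$, write $a := \imath\,\sym_{A_r}(x,\xi)$ (the factor $\imath$ turns ``positive real part of $\sym_{A_r}$'' into ``negative imaginary part'', matching the convention in \ref{Thm:PL4}; note $\sym_{A_r} = \sym_A$ since $r$ is a scalar, and the symbol of $A$ has no purely imaginary eigenvalues by the bisectoriality observation in the introduction, so the generalised eigenspaces split $E_x$), and let $\pi^-$ be the spectral projector of $a$ onto eigenvalues with negative real part, i.e.\ $\pi^- = \sym_{\chi^+(A_r)}(x,\xi)$ up to the sign bookkeeping. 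Then invertibility of $\sym_P(x,\xi) - \pi^-$ is, by an elementary lemma on differences of projectors (the same one used in the first paragraph, now finite-dimensional), equivalent to $\sym_P(x,\xi)$ restricting to an isomorphism from $\ran \pi^-$ onto $\ran \sym_P(x,\xi)$ \emph{and} $\nul \sym_P(x,\xi) \cap \nul \pi^- = 0$ with matching dimensions — and the latter condition, upon passing to adjoints, is exactly the stated condition on $\sym_{P^\ast}(x,\xi)$ and the generalised eigenspaces of $\imath\sym_{A_r^\ast}(x,\xi)$. I would phrase this finite-dimensional equivalence as a short lemma (projectors $p, q$ on a finite-dimensional space: $p - q$ invertible $\iff$ $p|_{\ran q}\colon \ran q \to \ran p$ iso and $(1-p)|_{\nul q}\colon \nul q \to \nul p$ iso), which makes both the $\psi$DO-level and the symbol-level arguments instances of the same statement.

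The ``some/every'' clauses in \ref{Thm:PL2}--\ref{Thm:PL4} are handled uniformly: for two admissible cuts $r, r'$, the difference $\chi^+(A_r) - \chi^+(A_{r'})$ is the spectral projector onto the finitely many eigenvalues of $A$ with real part between $r$ and $r'$, hence a finite-rank operator, so $P - \chi^+(A_r)$ and $P - \chi^+(A_{r'})$ differ by something finite-rank (in particular compact) and are simultaneously Fredholm; and their principal symbols agree since a finite-rank $\psi$DO has order $-\infty$, so ellipticity is likewise cut-independent. The main obstacle I anticipate is the careful identification of the principal symbol of the spectral projector $\chi^+(A_r)$ with the spectral projector of the principal symbol $\sym_{A_r}(x,\xi)$ onto the correct half-plane, including getting the sign and the factor of $\imath$ consistent with the conventions in the statement of \ref{Thm:PL4}; this rests on the fact, emphasised in the introduction, that $A$ (up to a real shift) is an invertible bisectorial operator whose symbol therefore has spectrum off the imaginary axis, so that the relevant Cauchy contour can be chosen symbol-independently and the symbolic calculus for the Riesz projector goes through. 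Everything else is either a direct invocation of Theorem~\ref{Thm:EllEquiv}, standard elliptic $\psi$DO theory on the compact manifold $\Sigma$, or the elementary projector lemma.
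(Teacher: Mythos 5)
Your plan to prove \ref{Thm:PL2}$\Leftrightarrow$\ref{Thm:PL1} by translating Fredholmness of $P-\chi^+(A_r)$ into a Fredholm pair condition and then invoking criterion \ref{Thm:EllEquiv:Admiss4} of Theorem~\ref{Thm:EllEquiv} is a genuinely different route from the paper's, but the dictionary you invoke is incorrect as stated: for bounded projectors $P,Q$ on a Hilbert space, $P-Q$ Fredholm is \emph{not} equivalent to $(\ran Q,\nul P)$ being a Fredholm pair (take $P=Q$ a nontrivial projector on an infinite-dimensional space; the pair $(\ran P,\nul P)$ is always Fredholm, yet $P-Q=0$ is not). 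The correct statement, which follows from the block decomposition used in Lemma~\ref{Lem:ProjDiff} (domain $\ran Q\oplus\nul Q$, target $\nul P\oplus\ran P$, on which $P-Q$ is block-diagonal with blocks $(P-1)|_{\ran Q}$ and $P|_{\nul Q}$), is that $P-Q$ is Fredholm if and only if \emph{both} $(\ran P,\ran Q)$ and $(\nul P,\nul Q)$ are Fredholm pairs. Matching these two pairs and their indices with the two pairs of Definition~\ref{Def:FP} (the second of which involves $\chi^-(A_r^*)$ and the annihilator $B^{\perp,\SobH{\frac12}}$) requires a duality argument you have not supplied; in particular, your assertion that the index identity ``comes for free'' from the adjoint of $P-\chi^+(A_r)$ is not established, since Kato's duality of Fredholm pairs relates a pair in a space to the pair of annihilators in the \emph{dual}, whereas Definition~\ref{Def:FP} places the second pair back in $\SobH{\frac12}(\Sigma;E)$.

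The second serious gap is the claim that $B=P\SobH{\frac12}(\Sigma;E)$. Your argument --- ``once $B$ is elliptic, $B\subset\SobH{\frac12}$, so $P\SobH{\frac12}$ is already closed in $\checkH(A)$'' --- is circular: $B$ is \emph{defined} as the $\checkH(A)$-closure of $P\SobH{\frac12}$, and whether that closure stays inside $\SobH{\frac12}$ (equivalently, whether $P\SobH{\frac12}$ is closed in $\checkH(A)$) is exactly what must be shown. The closed-range theorem does not help because a closed subspace of $\SobH{\frac12}$ need not be closed in the weaker $\checkH(A)$-topology. The paper proves \ref{Thm:PL3}$\Rightarrow$\ref{Thm:PL1} precisely by constructing a parametrix $R$ with $R(P-\chi^+(A_r))=1+S$, $S$ smoothing, and deducing the norm equivalence $\norm{u}_{\SobH{\frac12}}\simeq\norm{u}_{\checkH(A)}$ on $B$; a parallel argument with $P^*$ yields $\sym_0^*B^*\subset\SobH{\frac12}$, verifying \ref{Thm:EllEquiv:Closed} directly, and \ref{Thm:PL1}$\Rightarrow$\ref{Thm:PL2} is done via the graph form $B=W_+\oplus\{v+gv\}$ and an explicit block computation. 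You also have sign slips to repair: the finite-dimensional projector lemma you state has $\ran$ and $\nul$ swapped relative to Lemma~\ref{Lem:ProjDiff} (try $p=\mathrm{diag}(1,0)$, $q=\mathrm{diag}(0,1)$); by Grubb's theorem, $\sym_{\chi^+(A_r)}(x,\xi)$ projects onto the generalised eigenspaces of $\imath\sym_{A_r}(x,\xi)$ with \emph{positive}, not negative, real part; and $(P-\chi^+(A_r))^*=-((1-P^*)-\chi^-(A_r^*))$, not $P^*-\chi^-(A_r^*)$. Your treatment of \ref{Thm:PL2}$\Leftrightarrow$\ref{Thm:PL3} via H\"ormander and of the some/every clauses via finite-rank perturbations is correct and matches the paper.
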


In condition~\ref{Thm:PL4} we have to consider generalised eigenspaces rather than eigenspaces because the principal symbol $\sym_{A}(x,\xi)$ is not necessarily diagonalisable, cf.\ the example in \eqref{eq:nondiag} below.

An immediate and significant consequence is that every such condition is $\infty$-regular.
\begin{cor}
\label{Cor:InfReg}
Every pseudo-local elliptic boundary condition is $\infty$-regular.
In particular, if $D_B v \in \Ck{\infty}(M;F)$ then $v \in \Ck{\infty}(M;E)$. 
That is, $v$ is smooth \emph{up to} the boundary.
\end{cor}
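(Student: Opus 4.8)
The plan is to upgrade the elliptic decomposition furnished by Theorem~\ref{Thm:EllEquiv} to one that is manifestly assembled from order-zero pseudo-differential operators, so that its $\infty$-semiregularity becomes transparent, and then to feed this into the higher boundary regularity Theorem~\ref{Thm:HBR}. Fix an admissible spectral cut $r$. By Theorem~\ref{Thm:PL}, the pseudo-local elliptic boundary condition $B$ satisfies $B = P\,\SobH{\frac{1}{2}}(\Sigma;E)$, and $T := P - \chi^{+}(A_r)$ is an elliptic pseudo-differential operator of order zero on $\Lp{2}(\Sigma;E)$; hence $T$ is Fredholm on $\SobH{s}(\Sigma;E)$ for every $s$, $\nul T \subset \Ck{\infty}(\Sigma;E)$, $\ran T$ is closed of finite codimension on each Sobolev scale, and $T$ has a parametrix $S$ of order zero with $ST = \id + K$ for a smoothing operator $K$. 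The same applies to $T^{\ast} = P^{\ast} - \chi^{+}(A_r^{\ast})$, using $\chi^{\pm}(A_r)^{\ast} = \chi^{\pm}(A_r^{\ast})$.

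First I would build the decomposition by hand. Since $\chi^{+}(A_r)\chi^{-}(A_r) = 0$, we have $\chi^{-}(A_r)P = \chi^{-}(A_r)T$, so $V_- := \chi^{-}(A_r)P\,\Lp{2}(\Sigma;E) = \chi^{-}(A_r)T\,\Lp{2}(\Sigma;E)$ is closed of finite codimension in $\chi^{-}(A_r)\Lp{2}(\Sigma;E)$; I would take $W_-$ to be a finite-dimensional \emph{smooth} complement of $V_-$ there (available because $\chi^{-}(A_r)\Ck{\infty}(\Sigma;E)$ is dense in $\chi^{-}(A_r)\Lp{2}(\Sigma;E)$). Put $W_+ := P\,\Lp{2}(\Sigma;E) \cap \chi^{+}(A_r)\Lp{2}(\Sigma;E) = \nul T \cap \chi^{+}(A_r)\Lp{2}(\Sigma;E) \subset \Ck{\infty}(\Sigma;E)$ and let $V_+$ be a complement of $W_+$ in $\chi^{+}(A_r)\Lp{2}(\Sigma;E)$, chosen so that the four pieces satisfy $W_\pm, W_\pm^{\ast} \subset \SobH{\frac{1}{2}}(\Sigma;E)$ as required by Definition~\ref{Def:EllBC}. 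For $b \in B$ one has $Pb = b$, hence $Tb = b - \chi^{+}(A_r)b = \chi^{-}(A_r)b$, so
\begin{equation*}
	b = S\,\chi^{-}(A_r)b - Kb, \qquad \chi^{+}(A_r)b = \chi^{+}(A_r)\,S\,\chi^{-}(A_r)b - \chi^{+}(A_r)Kb .
\end{equation*}
As $Kb \in \Ck{\infty}(\Sigma;E)$ and $\nul T \subset \Ck{\infty}(\Sigma;E)$, this shows that $v := \chi^{-}(A_r)b$ runs over $V_-^{\frac{1}{2}}$ and pins down $b$ modulo $W_+$; I would then set $g := (\id - \pi_{W_+})\,\chi^{+}(A_r)\,S\big|_{V_-}$, where $\pi_{W_+}$ is the finite-rank projection onto $W_+$ induced by the splitting $\Lp{2}(\Sigma;E) = V_- \oplus W_- \oplus V_+ \oplus W_+$, obtaining a bounded $g\colon V_- \to V_+$ — modified by a correction with values in $\Ck{\infty}(\Sigma;E)$ — for which $B = W_+ \oplus \set{v + gv : v \in V_-^{\frac{1}{2}}}$. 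The remaining clauses of Definition~\ref{Def:EllBC}, including the mapping properties of $g^{\ast}$, should drop out because every operator in sight is of order zero.

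Since $\pi_{W_+}$ has finite-dimensional smooth range and $\chi^{+}(A_r)S$ is of order zero, $g$ maps $V_- \cap \SobH{s}(\Sigma;E)$ into $V_+ \cap \SobH{s}(\Sigma;E)$ for all $s$, while $W_+ \subset \Ck{\infty}(\Sigma;E)$; this is exactly $(s+\frac{1}{2})$-semiregularity of $B$ for every $s \geq \frac{1}{2}$, that is, $\infty$-semiregularity. By Theorem~\ref{Thm:PL} together with \eqref{eq:B*}, the adjoint condition $B^{\ad}$ is, under the isomorphism $\sym_0^{\ast}$, again pseudo-local and elliptic with projector $\id - P^{\ast}$; repeating the argument verbatim gives that $B^{\ad}$ is $\infty$-semiregular, so $B$ is $\infty$-regular, and by Lemma~\ref{Lem:RegEquiv} this is independent of $r$. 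For the ``in particular'' claim, let $v \in \dom(D_B)$ with $D_B v = D_{\max}v \in \Ck{\infty}(M;F)$; for each $k \in \set{0,1,2,\dots}$ I would apply Theorem~\ref{Thm:HBR} with $m = k+1$, which is legitimate since $B$ is $m$-semiregular: from $D_{\max}v \in \SobH[loc]{k}(M;F)$ we get $v \in \SobH[loc]{k+1}(M;E)$, whence $v \in \bigcap_j \SobH[loc]{j}(M;E) = \Ck{\infty}(M;E)$ by the Sobolev embedding theorem.

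The hard part will be the construction in the second paragraph. One has to realise the elliptic decomposition so that it simultaneously meets every requirement of Definition~\ref{Def:EllBC} — in particular that the finite-dimensional pieces $W_\pm$ and their companions $W_\pm^{\ast}$ all lie in $\SobH{\frac{1}{2}}(\Sigma;E)$ and that the relevant Fredholm-pair indices balance — while keeping $g$ visibly made out of order-zero operators; organising the finite-dimensional kernel and cokernel of $T$ so that neither property is lost is the delicate point. A secondary subtlety is checking that the $\sym_0^{\ast}$-image of $B^{\ad}$ really is the pseudo-local condition with projector $\id - P^{\ast}$, and not merely some auxiliary subspace, which is where \eqref{eq:B*} is used.
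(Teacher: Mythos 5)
Your strategy is sound and leads to the same conclusion, but it takes a detour the paper does not need. The paper proves $\infty$-semiregularity by checking the equivalent characterisation in Lemma~\ref{Lem:RegEquiv}~\ref{Lem:RegEquiv3.1}: for $u\in B=P\,\SobH{\frac12}(\Sigma;E)$ one has $Pu=u$, hence $(P-\chi^+(A_r))u=\chi^-(A_r)u$, so $\chi^-(A_r)u\in\SobH{s}(\Sigma;E)$ forces $u\in\SobH{s}(\Sigma;E)$ by the ellipticity of $P-\chi^+(A_r)$ — one line, no decomposition built. The symmetric argument for $P^\ast-\chi^+(A_r^\ast)$ handles $B^\ad$, giving $\infty$-regularity, and Theorem~\ref{Thm:HBR} plus Sobolev embedding gives the ``in particular'' claim, as you say.

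Where your version is shakier is exactly where you flag difficulty: the formula $g=(\id-\pi_{W_+})\chi^+(A_r)S|_{V_-}$ is only correct up to a smoothing correction $-\chi^+(A_r)K b$ that depends on $b$, not just on $v=\chi^-(A_r)b$, and you leave unresolved how to absorb it cleanly. The clean fix is precisely the paper's argument: the \emph{actual} $g$ sends $v$ to $\chi^+(A_r)b$ where $b$ is the unique element of $B\cap W_+^{\perp}$ with $\chi^-(A_r)b=v$; since $Tb=v\in\SobH{s}$, ellipticity of $T$ gives $b\in\SobH{s}$ and hence $gv=\chi^+(A_r)b\in\SobH{s}$. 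Once you notice this, the explicit construction of $W_\pm$, $V_\pm$, the parametrix bookkeeping, and the verification that your hand-built decomposition meets every clause of Definition~\ref{Def:EllBC} all become superfluous — Theorem~\ref{Thm:EllEquiv} already gives you \emph{some} elliptic decomposition, and Lemma~\ref{Lem:RegEquiv} lets you certify its $(s+\frac12)$-semiregularity without ever exhibiting $g$ concretely. So your approach is correct in outline but pays a high cost for information (an explicit $g$ made of order-zero operators) that the theorem does not require.
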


By Remark~\ref{Rem:EigenvalueCounting}, $\imath\sym_{A_r}(x,\xi)$ has as many eigenvalues (counted with algebraic multiplicities) with positive real part as those with negative real part if $\dim M\ge3$.
Thus, if $P$ is the projector of an elliptic pseudo-local boundary condition, the rank of $\sym_{P}(x,\xi)$ must be precisely half the dimension of $E_x$.

There is a classical concept of \emph{local} elliptic boundary value problem known as Lopatinsky-Schapiro boundary conditions, see Section~20.1 in \cite{H94} for details.
For first order operators this condition reduces to considering a subbundle $E'\subset E|_{\Sigma}$ and setting $B=H^{\frac12}(\Sigma;E')$.
We consider the subbundle $F':=(\sym_0^{-1})^*(E')\subset F|_{\Sigma}$.
Then $B^*=\SobH{\frac12}(\Sigma;(F')^\perp)$.

\begin{cor}
\label{Cor:LS}
If $(D,B)$ and $(D^\dagger,B^*)$ form elliptic boundary value problems in the sense of Lopatinsky and Schapiro, then $B=H^{\frac12}(\Sigma;E')$ and $B^\perp=H^{\frac12}(\Sigma;(E')^\perp)$  are elliptic boundary conditions for $D$ in the sense of Definition~\ref{Def:EllBCOp} and so are $B^*=\SobH{\frac12}(\Sigma;(F')^\perp)$ and $(B^*)^\perp=\SobH{\frac12}(\Sigma;F')$ for $D^\dagger$.

Moreover, these boundary conditions are pseudo-local and hence $\infty$-regular.
\end{cor}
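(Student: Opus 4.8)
The plan is to reduce Corollary~\ref{Cor:LS} to Theorem~\ref{Thm:PL} by verifying that a Lopatinsky--Schapiro condition is precisely a pseudo-local condition whose projector symbol satisfies criterion~\ref{Thm:PL4}. First I would recall the classical Lopatinsky--Schapiro condition for a first-order operator: for each $x\in\Sigma$ and each $\xi\in T_x^\ast\Sigma\setminus\set{0}$, the space of bounded-as-$t\to\infty$ solutions of the ODE $\partial_t w(t) + \imath\sym_A(x,\xi)w(t)=0$ restricts isomorphically onto $E_x'$ under evaluation at $t=0$; but this space of bounded solutions is exactly the sum of the generalised eigenspaces of $\imath\sym_A(x,\xi)$ to eigenvalues with negative real part (using that $A_r$ has no spectrum on the line $l_r$ with, here, $r=0$ admissible, and that $\sym_A=\sym_{A_r}$ since the two differ by the $0$-order multiple of the identity; if $0$ is not admissible one replaces $D$ by a harmless $0$-order perturbation or, more simply, notes that the symbol statement is unaffected by the choice of $r$). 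Hence the Lopatinsky--Schapiro condition for $(D,E')$ says precisely that projection onto $E_x'$ along its chosen complement restricts isomorphically from the negative generalised eigenspace of $\imath\sym_A(x,\xi)$ onto $E_x'$, i.e.\ condition~\ref{Thm:PL4} holds for $P$ = the bundle projection onto $E'$. The same reasoning applied to $(D^\dagger,F')$ (equivalently, via the isomorphism $(\sym_0^{-1})^\ast$, to the complementary condition) yields the second half of~\ref{Thm:PL4}.

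Next I would set up the pseudo-local projector explicitly: choose any smooth bundle metric and let $P$ be the fibrewise orthogonal (or any smooth) projection of $E|_\Sigma$ onto the subbundle $E'$, extended to a classical order-zero pseudo-differential operator (a multiplication operator, hence trivially classical) on $\Sigma$. Then $B=H^{\frac12}(\Sigma;E')$ is exactly $P\,\SobH{\frac12}(\Sigma;E)$, which is already closed in $\SobH{\frac12}(\Sigma;E)$, so the closure in Definition~\ref{Def:PLocalBC} changes nothing and $B=\close{P\,\SobH{\frac12}(\Sigma;E)}^{\checkH(A)}$. With~\ref{Thm:PL4} verified, Theorem~\ref{Thm:PL} gives that $B$ is an elliptic boundary condition and pseudo-local, and Corollary~\ref{Cor:InfReg} then gives $\infty$-regularity. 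For $B^\perp = H^{\frac12}(\Sigma;(E')^\perp)$ one simply repeats the argument with the projector $\id - P$ onto $(E')^\perp$; criterion~\ref{Thm:PL4} for $\id-P$ follows from the one for $P$ by a linear-algebra complementation (if a map restricts isomorphically from a subspace onto the range of $P$ then the complementary projection restricts isomorphically from the same subspace onto the range of $\id-P$, once one knows the negative generalised eigenspace has the right dimension, namely half of $\dim E_x$ when $\dim M\ge 3$, as recorded after Corollary~\ref{Cor:InfReg}). For $D^\dagger$ with $B^\ast=\SobH{\frac12}(\Sigma;(F')^\perp)$ and $(B^\ast)^\perp=\SobH{\frac12}(\Sigma;F')$, note that the formula~\eqref{eq:B*} together with $B=H^{\frac12}(\Sigma;E')$ and $F'=(\sym_0^{-1})^\ast(E')$ identifies $B^\ast$ as the stated local condition for $D^\dagger$; since $B$ is elliptic, $B^\ast$ is automatically an elliptic boundary condition for $D^\dagger$ (as remarked after Definition~\ref{Def:EllBCOp}), and one applies the already-established equivalence to $D^\dagger$ and the subbundle $(F')^\perp$ (resp.\ $F'$) to conclude it is pseudo-local and $\infty$-regular.

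The main obstacle I anticipate is bookkeeping around the spectral cut: the symbol condition~\ref{Thm:PL4} refers to generalised eigenspaces of $\imath\sym_{A_r}(x,\xi)$, and one must be careful that the Lopatinsky--Schapiro boundary-ODE condition, which is cut-independent, matches this for an admissible $r$; the clean way is to observe that $\sym_{A_r}=\sym_A$ for all $r$ (subtracting the scalar $r$ does not change the first-order symbol), so there is genuinely no $r$-dependence at the symbol level and~\ref{Thm:PL4} is literally the Lopatinsky--Schapiro condition. A second, more cosmetic, point is to make sure the hypothesis ``$(D^\dagger,B^\ast)$ form an elliptic boundary value problem in the sense of Lopatinsky--Schapiro'' is consistent with — indeed implied by — the elliptic Lopatinsky--Schapiro condition for $(D,E')$; this is the classical fact that the Lopatinsky--Schapiro condition for the adjoint problem with the annihilator subbundle is equivalent to that for the original problem, which I would cite from Section~20.1 of \cite{H94} rather than reprove. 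Everything else is a direct invocation of Theorems~\ref{Thm:PL} and Corollaries~\ref{Cor:InfReg}, so the proof should be short.
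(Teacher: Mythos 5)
Your overall strategy --- reduce to Theorem~\ref{Thm:PL} via symbol-level criteria --- is the paper's strategy. The paper verifies condition~\ref{ProjDiff3} of Lemma~\ref{Lem:ProjDiff} rather than condition~\ref{Thm:PL4} directly, but these are equivalent, so that deviation is not itself a problem. Two concrete issues, however:

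\textbf{(1) Sign error on bounded solutions.} You assert that the space of bounded-as-$t\to\infty$ solutions of $\partial_t w+\imath\sym_A(x,\xi)w=0$ is the sum of generalised eigenspaces of $\imath\sym_A(x,\xi)$ to eigenvalues with \emph{negative} real part. It is the eigenvalues with \emph{positive} real part: for $w(t)=\exp(-t\imath\sym_A(x,\xi))w(0)$, decay requires the exponent to have negative real part, so $\imath\sym_A$ must have positive real part on the invariant subspace containing $w(0)$. This is exactly why the paper has to apply the Lopatinsky--Schapiro condition at $-\xi$ (not $\xi$) to land in the space $\ker(\sym_{\chi^+(A_r)}(x,\xi))$ that appears in~\ref{Thm:PL4}. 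Your argument then ends up using the right eigenspace, but the mechanism (the $\pm\xi$ swap) is invisible in your writeup; it needs to be spelled out, especially since in dimension $2$ the positive and negative eigenspaces can have different dimensions, as the paper records in Remark~\ref{Rem:EigenvalueCounting}.

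\textbf{(2) The ``linear-algebra complementation'' argument for $B^\perp$ is false.} You claim that if $P$ restricts to an isomorphism from a subspace $W$ onto $PE_x$, then $\id-P$ restricts to an isomorphism from the same $W$ onto $(\id-P)E_x$, once dimensions match. A counterexample: $E=\C^2$, $P=\mathrm{diag}(1,0)$, $W=PE_x=\mathrm{span}(e_1)$; here $P|_W$ is an isomorphism onto $PE$, but $(\id-P)|_W\equiv 0$. The two statements are genuinely independent conditions (this independence is precisely what the remark after Lemma~\ref{Lem:ProjDiff} illustrates, and it is why~\ref{Thm:PL4} requires \emph{two} separate isomorphism hypotheses rather than one). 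To handle $B^\perp$, one should instead note the algebraic identity $(\id-P)-\sym_{\chi^+(A_r)}(x,\xi)=-\bigl(P-\sym_{\chi^+(A_r)}(x,-\xi)\bigr)$, and apply the already-verified invertibility at $-\xi$; this is exactly what the paper does. Relatedly, your verification of the second half of~\ref{Thm:PL4} (the condition involving $P^*$ and $\sym_{A_r^*}$) is dismissed as ``the same reasoning''; because the projector $\tilde{P}$ for $(D^\dagger,B^*)$ lives on $F|_\Sigma$, one must conjugate the ODE and the projector through $\sym_0^*$, and the translation of the orthogonal projector is not merely conjugation. The paper does this carefully; your sketch does not, and given the sign issue above this step deserves the detail.

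Both gaps are repairable, and once repaired the argument coincides in substance with the paper's.
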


The following condition is required to yield Fredholm operators for elliptic boundary conditions. 
This condition automatically  holds for manifolds that are compact. 

\begin{definition}[Coercive at infinity]
The operator $D$ is said to be \emph{coercive at infinity} if there exists a compact subset $K \subset M$ and a constant $C$ such that 
$$ \norm{u}_{\Lp{2}(M;E)} \leq C \norm{D u}_{\Lp{2}(M;E)}$$
for all $u \in \Ck[c]{\infty}(M;E)$ with $\spt u \subset M \setminus K$.
\end{definition}

\begin{theorem}[Fredholmness]
\label{Thm:Fredholm}
Let $D$ and $D^\dagger$ be coercive at infinity. 
Suppose that  $B$ an elliptic boundary condition for $D$.
Then, the following hold. 
\begin{enumerate}[(i)]
\item  \label{Thm:Fredholm:1}
$D_B$ is a Fredholm operator and 
	$$ \Ind(D_B) = \dim \ker D_B - \dim \ker D^\dagger_{B^\ast} \in \In.$$
\item \label{Thm:Fredholm:2} 
Let $C$  be  a complementary subspace to $B$ in $\checkH(A)$ with an associated projection $\check{P}:\checkH(A) \to \checkH(A)$ with kernel $B$ and image $C$.
	Then
	$$\check{L}: \dom(D_{\max}) \to \Lp{2}(M;F) \oplus C,\quad \check{L}u := (D_{\max}u, \check{P}u\rest{\Sigma})$$
	is a Fredholm operator with the same index as $D_{B,\max} = D_B$.
\item  \label{Thm:Fredholm:3}
If $B' \subset B$ is another elliptic boundary condition, then $\dim \faktor{B}{B'} < \infty$ and 
	$$\Ind(D_B) = \Ind (D_{B'}) + \dim\faktor{B}{B'}.$$
\end{enumerate} 
\end{theorem}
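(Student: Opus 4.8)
The plan is to reduce everything to a single Fredholm statement about the map $\check L$ and then harvest the three assertions from it. First I would establish part~\ref{Thm:Fredholm:2}. Since $D$ and $D^\dagger$ are coercive at infinity, the operator $D_{\max}$ on the closed extension with \emph{any} elliptic boundary condition has finite-dimensional kernel and closed range; the standard argument is to combine the a~priori estimate $\norm{u}_{\Lp2}\le C\norm{Du}_{\Lp2}$ outside a compact $K$ with interior elliptic regularity and the trace estimate of Theorem~\ref{Thm:Ell}~\ref{Thm:Ell2}, using that the inclusion $\SobH{1}(K';E)\hookrightarrow\Lp2$ is compact for a slightly larger compact $K'$, to get $\norm{u}_{\Lp2}\le C(\norm{D_{\max}u}_{\Lp2}+\norm{\check P u\rest\Sigma}_C)$ modulo a compact perturbation (Rellich). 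This gives semi-Fredholmness of $\check L$; the cokernel is handled by the analogous estimate for $D^\dagger$ with the adjoint boundary condition $B^\ad$, identifying $\coker\check L$ with $\ker$ of the corresponding map for $D^\dagger_{B^\ad}$ via the integration-by-parts formula \eqref{Eq:MaxDInt}. This simultaneously yields the index formula $\Ind=\dim\ker D_B-\dim\ker D^\dagger_{B^\ad}$ once we know $\ker\check L=\ker D_{B,\max}=\ker D_B$ (the last equality by elliptic boundary regularity, Theorem~\ref{Thm:EllEquiv}~\ref{Thm:EllEquiv:Ellbdy}, which forces $\dom(D_{B,\max})\subset\SobH[loc]1$) and $\coker\check L\cong\ker D^\dagger_{B^\ad}$.

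Next, part~\ref{Thm:Fredholm:1}: the point is that $D_B$, as a map $\dom(D_B)\to\Lp2(M;F)$, has the same kernel and cokernel as $\check L$. The kernel coincides by the regularity remark above. For the cokernel, one writes $\check L = \iota\circ(D_{B,\max}\oplus 0) + (0\oplus\check P(\cdot)\rest\Sigma)$ and observes that the second summand has finite-dimensional range exactly when $C$ is... no — rather, one shows directly that $\ran D_{B,\max}$ is closed of finite codimension by the same coercivity-plus-Rellich estimate applied with the boundary term absorbed (here one uses that on $\dom(D_B)$ the graph norm controls a $\SobH1_{loc}$ norm near $\Sigma$, by ellipticity of the boundary condition, so the boundary term is itself controlled by a compact perturbation of $\norm{D_{\max}u}$). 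Then $\coker D_B\cong\coker\check L$ follows from a short diagram chase, and the index formula transfers.

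Finally part~\ref{Thm:Fredholm:3}: if $B'\subset B$ are both elliptic, choose admissible cut $r$ and use the elliptic decompositions (Theorem~\ref{Thm:EllEquiv}~\ref{Thm:EllEquiv:Admiss}) of $B$ and $B'$; the inclusion $B'\subset B$ forces compatible decompositions so that $B/B'$ is isomorphic to a quotient of finite-dimensional pieces $W_+$ together with a quotient of graph spaces $\{v+gv\}$, whence $\dim(B/B')<\infty$. For the index, note $D_{B'}$ is the restriction of $D_B$ to $\{u\in\dom(D_B):u\rest\Sigma\in B'\}$, which is the preimage under the (surjective, by Theorem~\ref{Thm:Ell}~\ref{Thm:Ell2}) trace map $\dom(D_{B})\to B$ of the subspace $B'$; a general lemma on restricting a Fredholm operator to the preimage of a finite-codimension subspace of the "target of a surjective auxiliary map" gives $\Ind(D_{B'})=\Ind(D_B)-\dim(B/B')$. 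One must check the trace map $\dom(D_B)\to B$ is onto $B$ — this is exactly surjectivity of $\dom(D_{\max})\to\checkH(A)$ restricted appropriately, together with $B\subset\SobH{1/2}$ and ellipticity so that preimages lie in $\dom(D_B)$.

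The main obstacle I expect is part~\ref{Thm:Fredholm:1}/\ref{Thm:Fredholm:2}: pinning down the correct a~priori estimate that makes $\check L$ (and hence $D_B$) semi-Fredholm when $M$ is noncompact. The delicate points are (a) that coercivity at infinity only controls the $\Lp2$ norm away from a compact set by $\norm{Du}_{\Lp2}$, so one needs a careful cutoff argument to patch this with interior elliptic estimates and the boundary trace estimate without losing closed range, and (b) that the boundary term $\check P u\rest\Sigma$ must genuinely be shown to be a \emph{compact} perturbation — this rests on the fact that for an elliptic boundary condition the graph norm of $D_B$ dominates $\SobH{1}$ locally near $\Sigma$ (Theorem~\ref{Thm:EllEquiv}), so that the trace lands in $\SobH{1/2}(\Sigma;E)$, which embeds compactly into $\checkH(A)$ since $\Sigma$ is compact. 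Once these two estimates are in place, the rest is bookkeeping with \eqref{Eq:MaxDInt}, the surjectivity of the trace, and finite-dimensional linear algebra.
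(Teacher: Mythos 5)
Your route differs from the paper's in logical order for parts~\ref{Thm:Fredholm:1} and~\ref{Thm:Fredholm:2}: you propose to prove the Fredholmness of $\check L$ in~\ref{Thm:Fredholm:2} directly from a coercivity/Rellich estimate and then deduce~\ref{Thm:Fredholm:1} by matching kernels and cokernels, whereas the paper proves~\ref{Thm:Fredholm:1} first by importing Theorem~8.5 of \cite{BB12} (whose proof, as the paper notes, only needs $\dom(D_B)\subset\SobH[loc]{1}$ and $\dom(D^\dagger_{B^\ast})\subset\SobH[loc]{1}$, available here by Theorem~\ref{Thm:EllEquiv}~\ref{Thm:EllEquiv:Ellbdy}) together with the orthogonal decompositions $\Lp{2}(M;E)=\ker D_B\oplus\ran D^\dagger_{B^\ast}$ and $\Lp{2}(M;F)=\ker D^\dagger_{B^\ast}\oplus\ran D_B$, and then derives~\ref{Thm:Fredholm:2} by quoting the abstract Proposition~A.1 of \cite{BB12}. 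Your from-scratch estimate and cokernel identification via \eqref{Eq:MaxDInt} are essentially a reconstruction of what those cited results contain, so this part of your plan is sound in outline.

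There is, however, a real gap in your proof of~\ref{Thm:Fredholm:3}. You claim $\dim(B/B')<\infty$ follows because the inclusion $B'\subset B$ ``forces compatible decompositions so that $B/B'$ is isomorphic to a quotient of finite-dimensional pieces $W_+$ together with a quotient of graph spaces.'' That quotient of graph spaces is not obviously finite-dimensional: with the canonical spaces of \eqref{Eq:Spaces} one finds $\chi^-(A_r)B=V_-^{\frac12}$ and $\chi^-(A_r)B'=(V_-')^{\frac12}$, and the graph quotient is essentially $V_-^{\frac12}/(V_-')^{\frac12}=\chi^-(A_r)B/\chi^-(A_r)B'$, whose finite-dimensionality is exactly what you are trying to prove (up to the $W_+$ correction) — the argument is circular. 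The paper avoids this by extracting finite-dimensionality from the Fredholm structure: choosing complements $C\subset C'$ of $B$ and $B'$, the commutative-diagram argument in the proof of \ref{Thm:Fredholm:3} shows $\id\oplus\iota$ (hence $\iota\colon C\hookrightarrow C'$) is Fredholm, giving $\dim(B/B')=\dim(C'/C)=-\Ind(\iota)<\infty$ and the index formula in one stroke. An alternative repair of your approach is to use the Fredholm-pair characterisation \ref{Thm:EllEquiv:Admiss4} directly: both $(\chi^+(A_r)\SobH{\frac12},B)$ and $(\chi^+(A_r)\SobH{\frac12},B')$ are Fredholm pairs, so with $X:=\chi^+(A_r)\SobH{\frac12}(\Sigma;E)$ the natural map $B/B'\to (B+X)/(B'+X)$ has finite-dimensional target and (by the modular law $(B'+X)\cap B = B'+(X\cap B)$) kernel $\cong (X\cap B)/(X\cap B')$, which is finite-dimensional; hence $\dim(B/B')<\infty$.
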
 

\section{Examples}
\label{Sec:Examples}

We look at a few examples in order to see how an adapted boundary operator $A$ can look like.
First we give a very simple example showing that the principal symbol of $A$ need not be diagonalisable.

Then we consider a Dirac operator on $M$.
For an orthonormal boundary transversal, the adapted boundary operator is essentially the Dirac operator of the boundary (up to zero-order terms).
In particular, it can be chosen to be selfadjoint.
We show in a simple example that for non-orthonormal transversal the adapted boundary operator no longer has real spectrum.

Finally, we consider the Rarita-Schwinger operator on $M$ and find that the eigenspaces of $A$ do not span $\Lp{2}(\Sigma;S^{\frac32}\Sigma)$; 
the generalised eigenspaces are larger than the true eigenspaces.

\subsection{Nondiagonalisable principal symbols}
\label{subsec:nondiag}

The following example shows that the principal symbol of a first-order elliptic operator need not be diagonalisable, not even if it is an induced boundary operator.
To start, let $M=\R^2$ whose Cartesian coordinates we denote by $t$ and $x$.
The operator acts on $\C^2$-valued functions, i.e.\ the bundle $E\to M$ is the trivial complex vector bundle of rank $2$.
Put
$$
D= \begin{pmatrix} -\imath & 1 \\ 0 & -\imath\end{pmatrix}\frac{\partial}{\partial t} + \begin{pmatrix} 1 & 0 \\ 0 & 1 \end{pmatrix}\frac{\partial}{\partial x}.
$$
Since 
$$
\det(\sym_D((t,x),(\xi_1,\xi_2)))
=
\det\begin{pmatrix} -\imath\xi_1+\xi_2 & \xi_1 \\ 0 & -\imath\xi_1 + \xi_2 \end{pmatrix}
=
(\xi_2-\imath\xi_1)^2
\ne
0
$$
for $(\xi_1,\xi_2)\neq(0,0)$, the operator $D$ is elliptic.

Now we restrict to $\{t\ge0\}$ and consider the operator $A$ induced on the boundary $\Sigma=\{t=0\}$.
We find
\begin{equation}
A
=
\begin{pmatrix} -\imath & 1 \\ 0 & -\imath\end{pmatrix}^{-1}\cdot\begin{pmatrix} 1 & 0 \\ 0 & 1 \end{pmatrix}\frac{\partial}{\partial x}
=
\begin{pmatrix} \imath & 1 \\ 0 & \imath\end{pmatrix}\frac{\partial}{\partial x}.
\label{eq:nondiag}
\end{equation}
We see that the principal symbol $\sym_A(\xi)=\xi\cdot\begin{pmatrix} \imath & 1 \\ 0 & \imath\end{pmatrix}$ is nondiagonalisable.

\subsection{Dirac operator and non-orthogonal boundary transversal}
Let $M$ be a compact Riemannian spin manifold with compact boundary $\Sigma$.
Let $\tau_0$ be the conormal along $\Sigma$ whose length is normalised to $1$ w.r.t.\ the Riemannian metric.
Let $D$ be the Dirac operator acting on spinors on $M$.
The adapted boundary operator $A$ on $\Sigma$ can be chosen in such a way that it is essentially the Dirac operator on $\Sigma$.
In particular, it is then selfadjoint and has real spectrum. 
The Atiyah-Patodi-Singer boundary conditions $B=\chi^-(A)\SobH{\frac12}(\Sigma;E)$ then are the most prominent example of an elliptic boundary condition.

More specifically, let $M=S^1\times [0,\infty)$ so that $\Sigma=S^1$ with length $2\pi$.
If we denote the standard coordinates on $M$ by $(\theta,t)$, then the interior unit normal covector field is given by $\tau_0=dt$.
With the appropriate choice of spin structure, the spinor bundle is the trivial $\C^2$-bundle and the Dirac operator is given by
$$
D= \begin{pmatrix} 0 & -1 \\ 1 & 0\end{pmatrix}\frac{\partial}{\partial t} + \begin{pmatrix} 0 & \imath \\ \imath & 0\end{pmatrix}\frac{\partial}{\partial \theta}.
$$
As an adapted boundary operator we can choose
$$
A_0
= 
\begin{pmatrix} 0 & -1 \\ 1 & 0\end{pmatrix}^{-1} \begin{pmatrix} 0 & \imath \\ \imath & 0\end{pmatrix}\frac{\partial}{\partial \theta}
=
\begin{pmatrix}  \imath & 0 \\ 0 & -\imath\end{pmatrix}\frac{\partial}{\partial \theta}.
$$
This operator has the eigenvalues $k\in\mathbb{Z}$ with multiplicity $2$.
The corresponding eigenspinors are given by 
$$
\theta\mapsto \begin{pmatrix} e^{-\imath k \theta} \\ 0 \end{pmatrix}
\text{ and }
\theta\mapsto \begin{pmatrix} 0 \\ e^{\imath k \theta}\end{pmatrix}.
$$
Now, fix a parameter $\alpha\in\R$ and put $\tau=\tau_0+\alpha d\theta$.
From 
\begin{align*}
\sym_D(x,d\theta)^{-1}\sym_D(x,\tau)
&=
\sym_D(x,d\theta)^{-1}\big(\sym_D(x,\tau_0)+\alpha\sym_D(x,d\theta)\big) \\
&=
\begin{pmatrix} 0 & \imath \\ \imath & 0\end{pmatrix}^{-1}\begin{pmatrix} 0 & -1 \\ 1 & 0\end{pmatrix} + \alpha \begin{pmatrix} 1 & 0 \\ 0 & 1\end{pmatrix} \\
&=
\begin{pmatrix} \alpha-\imath & 0 \\ 0 & \alpha+\imath \end{pmatrix}
\end{align*}
we get the adapted boundary operator
$$
A = \begin{pmatrix} \alpha-\imath & 0 \\ 0 & \alpha+\imath \end{pmatrix}^{-1}\frac{\partial}{\partial \theta}
= \frac{1}{\alpha^2+1}\begin{pmatrix} 1-\imath\alpha & 0 \\ 0 & 1+\imath\alpha \end{pmatrix} A_0.
$$
Thus, for $\alpha\neq0$, the operator $A$ has the eigenvalues $\frac{1-\imath\alpha}{\alpha^2+1}k$ and $\frac{1+\imath\alpha}{\alpha^2+1}k$ for $k\in\mathbb{Z}$ with multiplicity $1$.
\begin{center}
\includegraphics[scale=0.6]{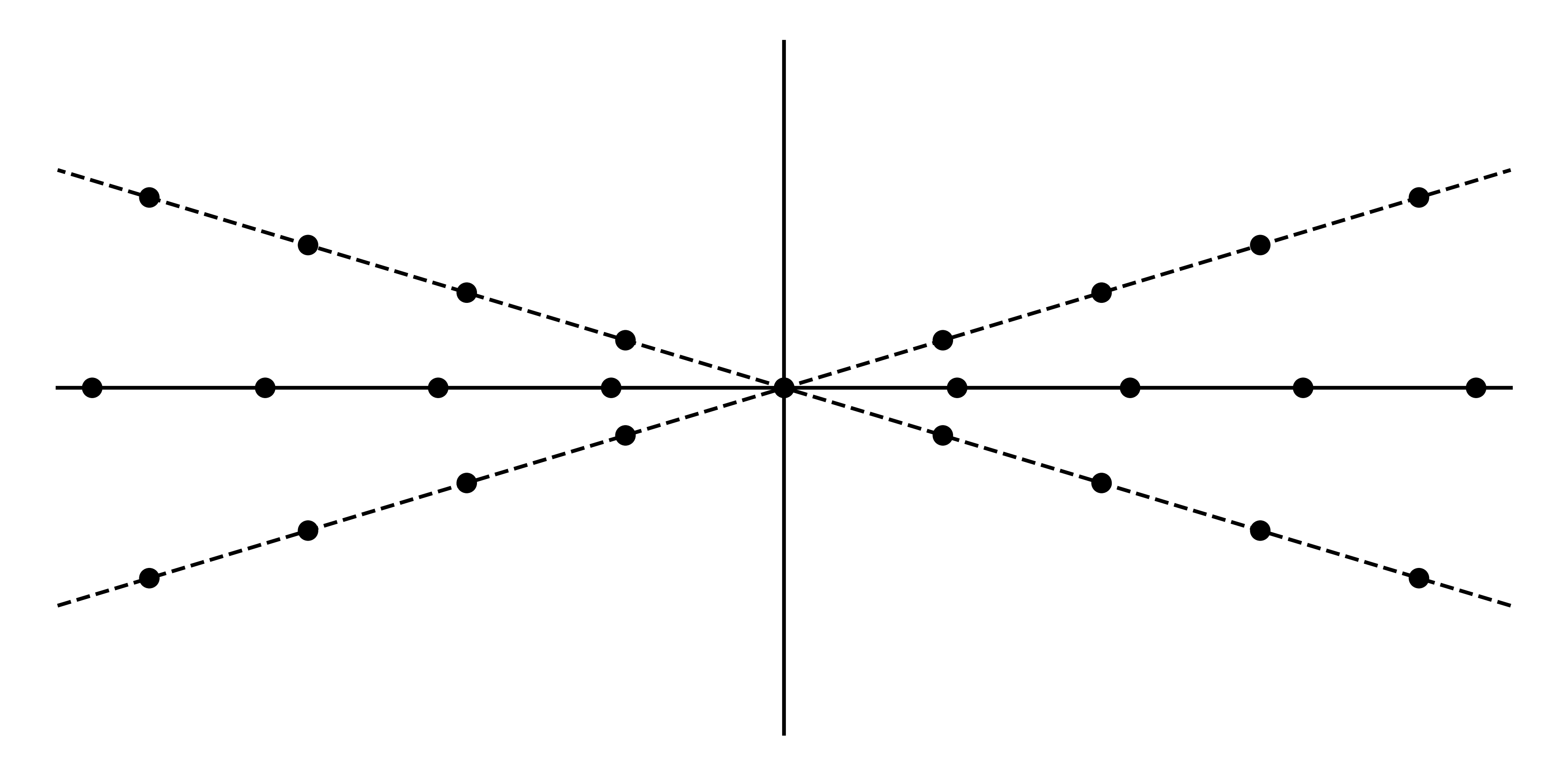}

\Fig{Spectrum of $A_0$ and of $A$ (dashed lines)}
\end{center}

\subsection{The Rarita-Schwinger operator}

If $D$ is the Dirac operator on $M$, then the adapted boundary operator can be chosen to be selfadjoint.
For other geometrically natural operators this is no longer the case.
We discuss the Rarita-Schwinger operator as an example.
Since this operator is much less known as compared to the Dirac operator, we include some basics in the discussion, see \cite{Bures1999,HS2018, Wang1991} for further aspects concerning this operator.

Let $M$ be a Riemannian spin manifold of dimension $n\ge3$.
We denote its complex spinor bundle by $SM$.
Clifford multiplication will be denoted by $\gamma:SM\otimes TM\to SM$, i.e.\ $\gamma(\phi\otimes\xi)=\xi\cdot\phi$.
Clifford multiplication is characterised by the relation $\eta\cdot\xi\cdot\phi+\xi\cdot\eta\cdot\phi+2\langle\xi,\eta\rangle\phi=0$.
For $\Phi=\phi\otimes\xi\in S_xM\otimes T_xM$ and $\eta\in T_xM$ we write $\Phi(\eta):=\langle\xi,\eta\rangle\phi$.
Now
$$
\iota: SM\to SM\otimes TM, \quad \iota(\phi) = -\frac1n \sum_{j=1}^n e_j\cdot\phi \otimes e_j
$$
defines an embedding of $SM$ into $SM\otimes TM$.
Here $e_1,\ldots,e_n$ is a local orthonormal tangent frame and $\iota$ is independent of the choice of this frame.
We then have
$$
\iota(\phi)(\eta) = -\frac1n \sum_{j=1}^n \langle e_j,\eta\rangle e_j\cdot\phi = -\frac1n \eta\cdot\phi
$$
and
$$
\gamma(\iota(\phi)) = -\frac1n \sum_{j=1}^n e_j\cdot e_j\cdot\phi = \phi.
$$
The bundle of $\frac32$-spinors is defined as
$$
\SS M := \ker(\gamma) = \iota(SM)^\perp \subset SM\otimes TM.
$$
It is naturally a Hermitian vector bundle over $M$.
Now, $\iota^*=\frac1n\gamma$ and $\iota\circ\gamma$ is the orthogonal projection $SM\otimes TM\to \iota(SM)$.
The complementary orthogonal projection $\PI:SM\otimes TM\to \SS M$ is given by 
$$
\PI(\Phi)=\Phi-\iota(\gamma(\Phi)), \text{ i.e. }\quad  \PI(\Phi)(\eta) = \Phi(\eta) + \frac1n \eta\cdot\gamma(\Phi).
$$
Now let $\DD:C^\infty(M;SM\otimes TM)\to C^\infty(M;SM\otimes TM)$ be the twisted Dirac operator with coefficients in $TM$.
We define the \emph{Rarita-Schwinger operator} by restricting to the subbundle $\SS M\subset SM\otimes TM$, i.e.
$$
D := \PI\circ\mathscr{D}|_{\SS M}.
$$
Its principal symbol is easily computed for $\Phi\in \SS_xM$ to be
\begin{equation}
\sym_D(x,\xi^\flat)(\Phi) 
= \PI(\sym_\mathscr{D}(x,\xi^\flat)(\Phi))
= \PI((\xi\otimes1)\Phi)
= (\xi\otimes1)\Phi + 2\iota(\Phi(\xi)).
\label{eq:RSsymbol}
\end{equation}
Here $\xi^\flat$ is the covector corresponding to the tangent vector $\xi$ under the isomorphism $TM\to T^*M$ induced by the Riemannian metric.
For $\xi\in T_xM\setminus\{0\}$ put 
\begin{align*}
\SS(\xi) 
&:= 
\{\Phi\in\SS_xM : \Phi(\xi)=0\},\\
\tilde S^{\frac32}(\xi)
&:=
\{\PI(\phi\otimes\xi) : \phi\in S_xM\}.
\end{align*}
Then $\SS_xM= \SS(\xi)\oplus\tilde S^{\frac32}(\xi)$ is an orthogonal decomposition which is respected by $\sym_D(x,\xi^\flat)$.
On $\SS(\xi)$ the principal symbol $\sym_D(x,\xi^\flat)(\Phi)$ simply acts by $\xi\otimes 1$ as one can see from \eqref{eq:RSsymbol}.
For $\PI(\phi\otimes\xi)\in\tilde{S}^{\frac32}$ we have 
$$
\sym_D(x,\xi^\flat)(\Phi)(\PI(\phi\otimes\xi))
=
\tfrac{n-2}{n}\,\PI(\xi\phi\otimes\xi).
$$
Thus, with respect to this splitting, the square of $\sym_D(x,\xi^\flat)$ is given by
$$
\sym_D(x,\xi^\flat)^2
=
-|\xi|^2\cdot
\begin{pmatrix}
1 & 0 \\ 0 & (\frac{n-2}{n})^2
\end{pmatrix}.
$$
In particular, $D$ is elliptic.

Next, we consider adapted boundary operators.
For the sake of simplicity, we restrict ourselves to the case $n=3$.
Let $\tau=\vec{T}^\flat\in T^*_xM$ be the interior unit conormal where $x\in\Sigma=\partial M$.
For $\xi \in T_x\Sigma$ the principal symbol of an adapted boundary operator is given by $\sym_A(x,\xi^\flat)=\sym_D(x,\tau)^{-1}\sym_D(x,\xi^\flat)$.
We first determine the spectrum of $\sym_A(x,\xi^\flat)$ and assume for now that $|\xi|=1$.
We choose $\eta\in T_x\Sigma$ in such a way that $\vec{T},\xi,\eta$ form an orthonormal basis of $T_xM$ and that $\vec{T}\cdot\xi\cdot\eta$ acts as $1$ by Clifford multiplication.

Let $\Phi\in S^{\frac32}_xM\setminus\{0\}$ be an eigenvector of $\sym_A(x,\xi^\flat)$ to the eigenvalue $\lambda\in\C$, i.e.\ 
\begin{equation}
\sym_D(x,\xi^\flat)\Phi=\lambda\sym_D(x,\tau)\Phi.
\label{eq:RSsymEV}
\end{equation}
We write $\Phi=\phi_0\otimes\vec{T} + \phi_1\otimes\xi + \phi_2\otimes \eta$ where $\phi_j\in S_xM$.
The relation $\gamma(\Phi)=0$ is equivalent to $\phi_2=\eta\vec{T}\phi_0+\eta\xi\phi_1$, i.e.\
$$
\Phi = \phi_0\otimes\vec{T} + \phi_1\otimes\xi - (\vec{T}\eta\phi_0+\xi\eta\phi_1) \otimes \eta .
$$
A straightforward computation yields
\begin{align*}
\sym_D(x,\xi^\flat)\Phi
&=
(\xi\phi_0-\tfrac23 \vec{T}\phi_1)\otimes \vec{T} + \tfrac13 \xi\phi_1\otimes \xi + (\phi_0+\tfrac13\eta\phi_1)\otimes \eta, \\
\sym_D(x,\tau)\Phi
&=
\tfrac13\vec{T}\phi_0\otimes\vec{T} + (\vec{T}\phi_1-\tfrac23\xi\phi_0)\otimes\xi + (\tfrac13\eta\phi_0-\phi_1)\otimes\eta .
\end{align*}
Comparing the coefficients of the $\vec{T}$ and $\xi$-terms, \eqref{eq:RSsymEV} implies
\begin{align*}
\phi_1 
&=
-\frac{3\vec{T}\xi+\lambda}{2}\phi_0, \\
\phi_0
&=
\frac{3\vec{T}\xi-\frac1\lambda}{2}\phi_1.
\end{align*}
This shows that if $\phi_0=0$, then $\phi_1=0$ and hence $\Phi=0$.
Thus $\phi_0\neq0$.
Combining the two equations gives 
$$
\phi_0 = \frac{10+(\frac3\lambda - 3\lambda)\vec{T}\xi}{4}\phi_0,
$$
hence
\begin{equation}
(2+(\lambda-\tfrac1\lambda)\eta)\phi_0=0.
\label{eq:RSEV1}
\end{equation}
In a suitable spinor basis, Clifford multiplication by $\eta$ is given by the matrix $\begin{pmatrix}0&\imath\\ \imath&0\end{pmatrix}$.
Thus \eqref{eq:RSEV1} says that the matrix
\begin{equation}
\begin{pmatrix}
2 & (\lambda-\tfrac1\lambda)\imath\\
(\lambda-\tfrac1\lambda)\imath & 2
\end{pmatrix}
\label{eq:RSsymbolMatrix}
\end{equation}
is singular.
Its determinant is given by $(\lambda+\tfrac1\lambda)^2$ and has to vanish.
Thus $\lambda=\pm\imath$.

If $\xi$ is no longer be assumed to have unit length, then by linearity the eigenvalues of $\sym_A(x,\xi^\flat)$ are $\pm |\xi|\imath$.
Note that for $\lambda=\pm\imath$, the matrix in \eqref{eq:RSsymbolMatrix} has rank $1$.
Hence, the geometric multiplicity of the eigenvalue $\lambda=\pm|\xi|\imath$ is $1$, while the algebraic multiplicities must add up to $4$, the dimension of $S^{\frac32}_xM$ (except for $\xi=0$).
Indeed, the algebraic multiplicities are $2$.

We compute the spectrum of $A$ for the example where $\Sigma$ is a flat $2$-torus.
Let $\Sigma=\R^2/\Gamma$ where $\Gamma$ is a lattice.
We assume that $\Sigma$ carries the flat metric induced from $\R^2$ and the spin structure with respect to which the spinor bundle $S\Sigma$ has trivial holonomy, cf.\ \cite{F84}.
Then $S^{\frac32}M$ has also trivial holonomy and the space of parallel sections has dimension $4$.
As adapted boundary operator, we choose 
$$
A = \sym_A(dx^1)\frac{\partial}{\partial x^1} + \sym_A(dx^2)\frac{\partial}{\partial x^2}.
$$
Let $k=(k_1,k_2)\in\Gamma^*$, the dual lattice.
Then $V_k = \{e^{2\pi\imath\langle k,x\rangle}\Phi : \Phi\text{ parallel}\}$ is an $A$-invariant subspace of $\Lp{2}(\Sigma;S^{\frac32}\Sigma)$.
Indeed, 
$$
A(e^{2\pi\imath\langle k,x\rangle}\Phi)=2\pi\imath\, e^{2\pi\imath\langle k,x\rangle}\sym_A(x,k_1dx^1+k_2dx^2)\Phi.
$$
Thus, restricted to the $4$-dimensional space $V_k$, the operator $A$ has the eigenvalues $\pm 2\pi|k|$, with multiplicity~$1$ each.
By Fourier analysis we know $\Lp{2}(\Sigma;S^{\frac32}\Sigma)=\bigoplus_{k\in\Gamma^*}V_k$.
Thus $A$ has real spectrum but the eigenspaces do not span all of $\Lp{2}(\Sigma;S^{\frac32}\Sigma)$.
The generalised eigenspaces, however,  do.

\section{Operator theory of the boundary adapted operator}
\label{Sec:OpTheory}

Recall the principal symbols $\sym_D$ and $\sym_A$ for the operator $D$ and the induced operator $A$ on the boundary from \S\ref{Sec:SR}.

Let $S _\omega = \set{\lambda  \in \C\setminus\{0\}: \arg \lambda \in [0,\omega)\cup(\pi-\omega,\pi+\omega)\cup(2\pi-\omega,2\pi)}$ be the open bisector of angle $\omega$ with vertex at the origin in $\C$.

\begin{center}
\includegraphics[scale=0.6]{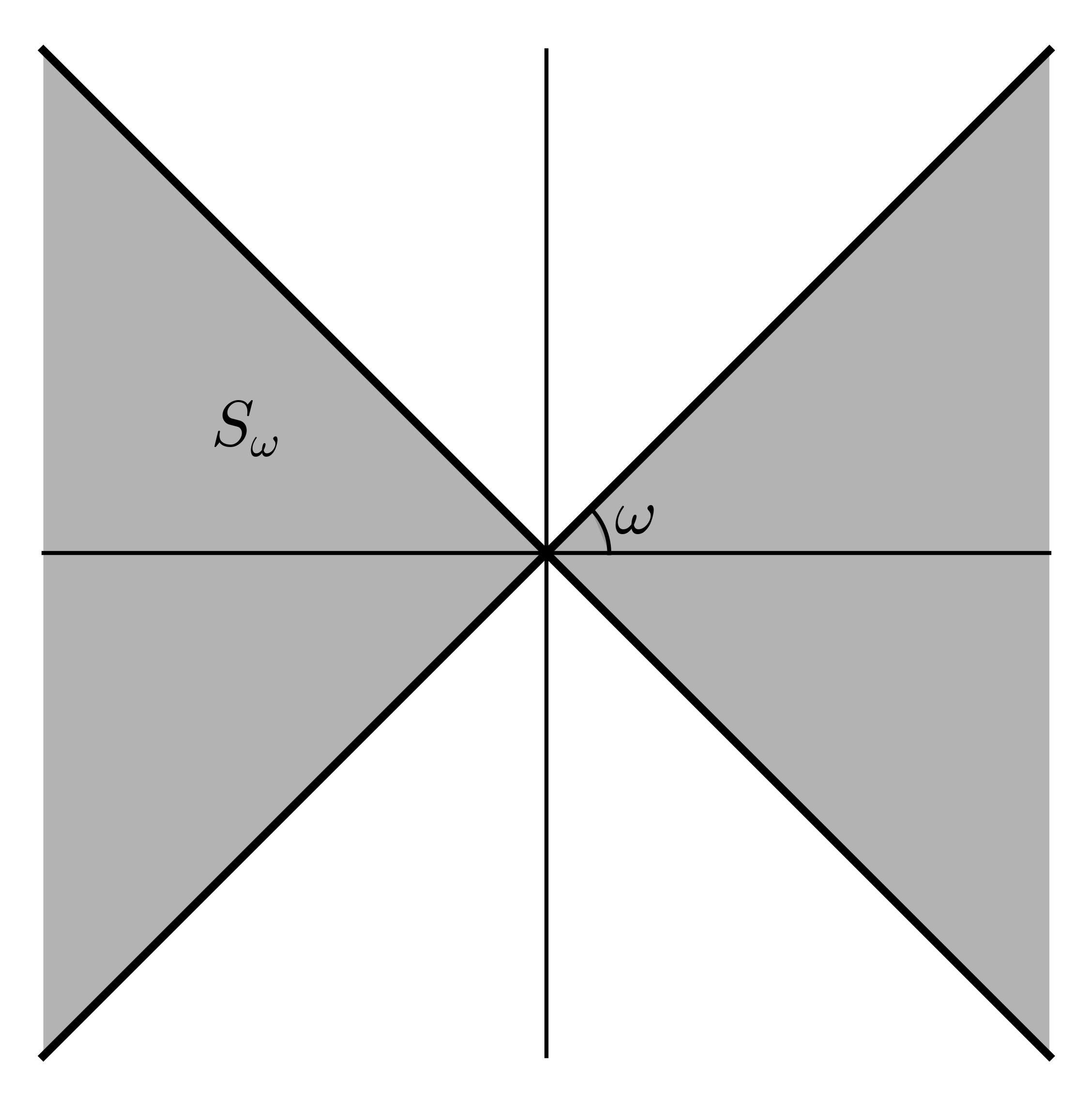}

\Fig{Bisector $S_\omega$}
\end{center}

We make the following central observation. 
\begin{lem}
\label{Lem:SymSpec}
There exists a $\nu>0$ such that 
$$\spec(\sym_A(x, \xi)) \cap S_{\nu} = \emptyset,$$
for all $(x,\xi) \in T^*\Sigma$ such that $\xi \neq 0$. 
\end{lem}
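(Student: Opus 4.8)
The goal is to show that the principal symbol $\sym_A(x,\xi)$ has spectrum bounded away from the bisector $S_\omega$ uniformly in $(x,\xi)$ with $\xi \neq 0$. The key structural fact is the identity $\sym_A(x,\xi) = \sym_D(x,\tau(x))^{-1}\sym_D(x,\xi)$ together with ellipticity of $D$. First I would reduce the problem to the unit cosphere bundle $S^*\Sigma = \{(x,\xi) \in T^*\Sigma : |\xi| = 1\}$: by homogeneity of the principal symbol, $\sym_A(x,t\xi) = t\,\sym_A(x,\xi)$ for $t > 0$, so $\spec(\sym_A(x,t\xi)) = t\cdot\spec(\sym_A(x,\xi))$, and since a bisector $S_\omega$ is invariant under positive scaling, it suffices to control the spectrum on the compact set $S^*\Sigma$.

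**The main step.** On $S^*\Sigma$, I want to show that no eigenvalue of $\sym_A(x,\xi)$ is real. Suppose $\lambda \in \R$ is an eigenvalue of $\sym_A(x,\xi)$ with eigenvector $v \neq 0$; this means $\sym_D(x,\xi)v = \lambda\,\sym_D(x,\tau(x))v$, equivalently $\sym_D(x,\xi - \lambda\tau(x))v = 0$ (using that the full symbol is linear in the covector argument). Since $\xi \in T^*_x\Sigma$ is nonzero and tangent to the boundary while $\tau(x)$ is the conormal, the covector $\xi - \lambda\tau(x)$ is nonzero for every real $\lambda$ (its tangential part is $\xi \neq 0$). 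Ellipticity of $D$ then forces $\sym_D(x,\xi - \lambda\tau(x))$ to be invertible, so $v = 0$, a contradiction. Hence $\spec(\sym_A(x,\xi)) \cap \R = \emptyset$ for all $(x,\xi) \in S^*\Sigma$.

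**From pointwise to uniform.** Now I would promote this pointwise statement to a uniform one by compactness. The set $S^*\Sigma$ is compact, and the map $(x,\xi) \mapsto \sym_A(x,\xi)$ is continuous into the (finite-dimensional) endomorphism bundle, hence the minimum over $(x,\xi) \in S^*\Sigma$ and over $\mu \in \spec(\sym_A(x,\xi))$ of the distance $\mathrm{dist}(\mu, \R)$ is attained and, by the previous step, strictly positive; call it $\delta > 0$. One should also bound $|\mu|$ from above on the compact set, say by $R$. Then $\spec(\sym_A(x,\xi))$ is contained in the set $\{\mu : \delta \le |\mathrm{Im}\,\mu|,\ |\mu| \le R\}$, which is disjoint from a bisector $S_\nu$ for $\nu$ small enough (choose $\nu$ so that $\tan\nu < \delta/R$, so that every point of $S_\nu$ within distance $R$ of the origin has imaginary part of modulus less than $\delta$). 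Rescaling back by homogeneity gives $\spec(\sym_A(x,\xi)) \cap S_\nu = \emptyset$ for all nonzero $\xi$, which is the claim.

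**Expected obstacle.** The mathematical content is routine once the reduction is in place; the only genuinely delicate point is the continuity/semicontinuity argument needed to extract a uniform $\delta$ — one must be slightly careful because eigenvalues do not depend continuously on a matrix in a way that survives crossings, but the \emph{spectrum as a set} does depend continuously in the Hausdorff metric, which is all that is needed. Alternatively, and perhaps more cleanly, one can avoid eigenvalue tracking entirely: the condition ``$S_\nu \cap \spec(\sym_A(x,\xi)) = \emptyset$'' is equivalent to ``$\sym_A(x,\xi) - \zeta$ invertible for all $\zeta \in S_\nu$'', and for this it is enough to observe that $\sym_A(x,\xi) - \zeta = \sym_D(x,\tau(x))^{-1}\big(\sym_D(x,\xi) - \zeta\,\sym_D(x,\tau(x))\big) = \sym_D(x,\tau(x))^{-1}\sym_D(x,\xi - \zeta\tau(x))$, together with a uniform lower bound on $|\sym_D(x,\eta)w|$ for $\eta$ in a suitable conic neighbourhood of $T^*\Sigma \setminus 0$ in $T^*M \setminus 0$ (which exists by ellipticity and compactness of $\Sigma$). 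That is the route I would ultimately take.
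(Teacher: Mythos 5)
Your proposal is correct and follows essentially the same route as the paper: show by ellipticity that $\sym_A(x,\xi)$ has no real eigenvalue (via $\sym_D(x,\xi-\lambda\tau)v=0$ with $\xi-\lambda\tau\neq 0$), then use compactness of the unit cosphere bundle and continuity of the spectrum to obtain a uniform bisector, and finish by homogeneity. The paper's version of the compactness step is formulated slightly more compactly (the union of all spectra over $S^*\Sigma$ is a compact set avoiding $\R$, hence avoids some $S_\nu$), while you unpack it with an explicit $\delta$ and $R$; the alternative ``invertibility in a conic neighbourhood'' route you sketch at the end is a legitimate variant but is not the one the paper takes, and you did not need it.
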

\begin{proof}
We first show, by contradiction, that the spectrum of the principal symbol of $A$ satisfies:
\begin{equation}
\spec(\sym_{A}(x, \xi)) \intersect \R = \emptyset
\label{eq:symAnoRealSpec}
\end{equation}
for any $0 \neq \xi\in T^*_x\Sigma$.

Suppose that the conclusion is false for some $\xi \neq 0$ . 
Then, there exists $t \in \R$ and $v \in E_x\setminus\{0\}$ such that $\sym_A(x, \xi)v = t v$. 
This is equivalent to $\sym_{D}(x, \xi)v = t \sym_D(x, \tau)v = \sym_D(x, t \tau)v$.
Then $\xi-t\tau\neq0$ and
$$ \sym_D(x, \xi-t\tau)v =  \sym_{D}(x, \xi)v -t\sym_{D}(x,\tau)v = 0.$$ 
But this contradicts the ellipticity of $D$ and proves \eqref{eq:symAnoRealSpec}.

Since the spectra of $\sym_{A}(x,\xi)$ depend continuously on $(x,\xi)$ and the unit cosphere bundle of $\Sigma$ is compact, the set of all eigenvalues of all $\sym_{A}(x,\xi)$ with $x\in\Sigma$ and $\|\xi\|=1$ is a compact subset of $\C$.
It avoids the real axis, hence we can find a $\nu>0$ such that $\bigcup_{\|\xi\|=1}\spec(\sym_A(x, \xi)) \cap S_{\nu} = \emptyset$.
Since $S_\nu$ is conic and $t\mapsto \sym_A(x, t\xi)$ is homogeneous of degree 1, the claim follows.
\end{proof} 

\begin{remark}
\label{Rem:EigenvalueCounting}
We note that if $\dim M\ge3$, for fixed $(x,\xi)$, the number of eigenvalues (counted with algebraic multiplicities) of $\sym_{A}(x,\xi)$ with positive imaginary part equals that of the eigenvalues with negative imaginary part.
Namely, in this case $\dim T_x^*\Sigma\ge2$ and hence we can joint $\xi$ with $-\xi$ by a continuous path $\xi(t)$ of nonvanishing covectors. 
Since $\sym_{A}(x,\xi(t))$ never has real eigenvalues, the number of eigenvalues with positive imaginary part does not change as $t$ varies.
But $\sym_{A}(x,-\xi)=-\sym_{A}(x,\xi)$ because $A$ is of first order.

In dimension $2$ this is no longer true, see Subsection~\ref{subsec:nondiag} for a counter-example.
\end{remark}

Throughout, we regard the operator $A$ as an unbounded operator on $\Lp{2}(\Sigma;E)$ and we obtain the following information on the spectrum of the operator $A$.
Let $B_R=\{\lambda\in\C: |\lambda|\le R\}$.

\begin{proposition} 
\label{Prop:ASpec} 
The $\Lp{2}$-spectrum of the operator  $A$  satisfies the following: 
\begin{enumerate}[(i)]
\item\label{ASpec1}
there exists $R > 0$ and $\omega \in [0, \pi/2)$ such that  $\spec(A) \subset \overline{S_{\omega}} \cup B_{R}$; 
\item\label{ASpec2}
there exists $C > 0$ such that for $\zeta \in \C \setminus (S_\omega \cup B_R)$ the resolvent bound $\modulus{\zeta}\norm{(A - \zeta)^{-1}} \leq C$ holds;
\item\label{ASpec3}
the spectrum $\spec(A)$ is discrete. 
\end{enumerate}
\end{proposition}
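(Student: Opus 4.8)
The plan is to obtain \ref{ASpec1} and \ref{ASpec2} from parameter-dependent ellipticity of $A$ along the rays close to the imaginary axis, and then to deduce \ref{ASpec3} at once.

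I would first dispose of \ref{ASpec3}. Since $A$ is an elliptic differential operator of order one on the \emph{closed} manifold $\Sigma$, the elliptic a priori estimate $\norm{u}_{\SobH{1}}\le C(\norm{Au}_{\Lp{2}}+\norm{u}_{\Lp{2}})$ holds, $\dom(A)=\SobH{1}(\Sigma;E)$, and — as $\SobH{1}(\Sigma;E)\embed\Lp{2}(\Sigma;E)$ compactly — the operator $A$ has compact resolvent as soon as $\res(A)\neq\emptyset$; an operator with compact resolvent has discrete spectrum. Hence \ref{ASpec3} follows once \ref{ASpec1}--\ref{ASpec2} have exhibited points of $\res(A)$.

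For \ref{ASpec1} and \ref{ASpec2} the input is Lemma~\ref{Lem:SymSpec}. Recall that for a first-order operator the principal symbol $\sym_A(x,\xi)=[A,f]$ (with $df=\xi$) differs by the factor $\imath$ from the symbol governing the parameter-dependent elliptic calculus, so the quantity to watch is $\imath\,\sym_A(x,\xi)$. By Lemma~\ref{Lem:SymSpec} the spectrum of $\sym_A(x,\xi)$, for $0\neq\xi\in T_x^*\Sigma$, misses $S_\nu$; rotating by $\imath$, the spectrum of $\imath\,\sym_A(x,\xi)$ lies in the bisector $\close{S_{\pi/2-\nu}}$ about the \emph{real} axis. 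Consequently, for every $\theta$ within $\nu$ of $\pm\tfrac\pi2$ the operator $\imath\,\sym_A(x,\xi)-te^{\imath\theta}$ is invertible for all $(x,\xi,t)$ with $\xi\neq0$ (and also for $\xi=0,\ t>0$, and — by ellipticity of $A$ — for $t=0,\ \xi\neq0$); that is, $A$ is parameter-elliptic along every ray $\{te^{\imath\theta}:t\ge0\}$ with $\theta$ in an open neighbourhood of $\pm\tfrac\pi2$. By compactness of the unit cosphere bundle of $\Sigma$ these invertibilities carry bounds uniform for $\theta$ in any closed subarc, so the standard parameter-dependent parametrix construction for elliptic operators on closed manifolds (Agranovich--Vishik; see also Grubb~\cite{Grubb}) provides an $R>0$ and, for each fixed $\nu_0\in(0,\nu)$, a constant $C$ such that every $\zeta$ with $\min(\modulus{\arg\zeta-\tfrac\pi2},\modulus{\arg\zeta-\tfrac{3\pi}2})\le\nu_0$ and $\modulus\zeta\ge R$ lies in $\res(A)$ and satisfies $\norm{(A-\zeta)^{-1}}_{\Lp{2}\to\Lp{2}}\le C\modulus\zeta^{-1}$ and $\norm{(A-\zeta)^{-1}}_{\Lp{2}\to\SobH{1}}\le C$. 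The region so described is $\C\setminus(S_\omega\union B_R)$ (up to boundary rays) with $\omega:=\tfrac\pi2-\nu_0\in[\tfrac\pi2-\nu,\tfrac\pi2)$; this gives \ref{ASpec2}, and passing to complements gives $\spec(A)\subset\close{S_\omega}\union B_R$, i.e.\ \ref{ASpec1}.

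The main obstacle, I expect, is conceptual rather than computational: one must account for the factor $\imath$, noting that although the eigenvalues of $\sym_A(x,\xi)$ may well lie on the imaginary axis (as for the Rarita--Schwinger operator, whose operator spectrum nevertheless accumulates along the \emph{real} axis), the eigenvalues of $\imath\,\sym_A(x,\xi)$ then lie on the real axis, so it is precisely the rays near the imaginary axis that are available for the resolvent. The remaining work lies in invoking the parameter-dependent parametrix with the correct homogeneity weight for $\zeta$ (weight one, as $A$ is first order) and in checking uniformity of the constants over the relevant arc of directions; this is routine.
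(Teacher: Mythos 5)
Your proof is correct and takes essentially the same route as the paper: the paper likewise transfers Lemma~\ref{Lem:SymSpec} through the factor of $\imath$ to obtain parameter-dependent ellipticity along rays near the imaginary axis, cites a standard reference for the resulting resolvent estimate (Theorem~9.3 in \cite{Shubin} rather than Agranovich--Vishik/Grubb, but it is the same parametrix result), and then derives \ref{ASpec3} from the Rellich embedding once the resolvent set is known to be nonempty. The only small slip is that the paper's reference \cite{Grubb} is the sectorial-projection note, not a source for the Agmon/Agranovich--Vishik estimate.
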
 
\begin{proof} 
Let $\nu\in(0,\pi/2)$ be as in Lemma~\ref{Lem:SymSpec}.
Choose $\eta\in(0,\nu)$.
From $\spec(\imath\sym_A(x, \xi)) \cap \imath S_{\nu} = \emptyset$ for all $(x,\xi)$ we conclude $\spec(\imath\sym_A(x, \xi)) \cap \overline{\imath S_{\eta}} = \emptyset$ whenever $\xi\neq 0$.
Now \ref{ASpec1} and \ref{ASpec2} follow from Theorem~9.3 in \cite{Shubin} with $\omega=\pi/2-\eta$.

Assertion~\ref{ASpec3} simply follows from the ellipticity of $A$ along with the Rellich embedding: $\SobH{1}(\Sigma;E) \embed \Lp{2}(\Sigma;E)$ is compact, which gives that $(A - \lambda)^{-1}$ is a compact operator. 
\end{proof}  

To obtain useful operators associated to $A$ needed for the study of boundary value problems, we recall the important class of operators that are known as \emph{bi-sectorial}.
Let $T: \dom(T) \subset \mathcal{B} \to \mathcal{B}$ be a densely-defined and closed operator on a Banach space $\mathcal{B}$. 
Given $0 < \omega < \pi/2$, it is called \emph{$\omega$-bisectorial} if $\spec(T) \subset \overline{S_{\omega}}$ and for each $\mu > \omega$, there exists a $C_\mu$ such that for $\zeta \not\in \overline{S_{\mu}}$, 
$$ \norm{(\zeta - T)^{-1}} \leq \frac{C_\mu}{\modulus{\zeta}}.$$

Put $\C_+:=\{\lambda\in\C: \Re\lambda>0\}$.
If the condition holds with $S_\omega$ and $S_{\mu}$ replaced with $S_{\omega+} = S_\omega \cap \C_+$ and $S_{\mu+}$ respectively, then $T$ is said to be \emph{$\omega$-sectorial}.
To avoid confusion, let us remark that in older literature, the notion $\omega$-sectorial (in the context of a Hilbert space) contains the extra condition on the numerical range  of $T$.  
Such operators are now called \emph{Kato-sectorial}.
See Remark~\ref{Rem:KatoSec} for further details.

For any $r \in \R$ and $\epsilon > 0$, let $L_{\epsilon,r} = \union_{\modulus{s -r} \leq \epsilon} l_s$ be the closed vertical strip of width $2\epsilon$ centred about $l_r$. 
\begin{lem}
\label{Lem:l_r}
For any  admissible spectral cut $r$ there exists an $\epsilon_r > 0$ such that $L_{r,\epsilon} \subset \res(A)$.
\end{lem}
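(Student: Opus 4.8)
The plan is to combine the two facts about $\spec(A)$ already at hand: admissibility of the cut (Definition~\ref{Def:SpecCut}) says the line $l_r$ misses $\spec(A)$, while Proposition~\ref{Prop:ASpec} tells us that $\spec(A)$ is confined, $\spec(A) \subset \close{S_\omega} \union B_R$ for some $\omega \in [0,\pi/2)$ and $R > 0$, and discrete, hence closed in $\C$. Writing $\close{S_\omega} = \set{\lambda \in \C : \modulus{\Im \lambda} \leq \tan\omega\cdot\modulus{\Re \lambda}}$, one sees that a thin vertical strip around $l_r$ can meet $\close{S_\omega}\union B_R$ only inside a bounded region; this is precisely what makes a \emph{uniform} strip possible even though $l_r$ is unbounded, and it is the one place where Proposition~\ref{Prop:ASpec}~\ref{ASpec1} is genuinely used rather than just the openness of $\res(A)$.

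First I would fix any auxiliary half-width, say $\epsilon_0 = 1$, and note that if $\modulus{\Re\zeta - r} \leq \epsilon_0$ and $\zeta \in \close{S_\omega}$ then $\modulus{\zeta} \leq (\modulus{r} + \epsilon_0)/\cos\omega =: R_1$. Hence, setting $R' := \max(R, R_1) + 1$ (so in particular $R' > \modulus{r}$), the ``outer'' region $\Omega_\infty := \set{\zeta \in \C : \modulus{\Re\zeta - r} \leq \epsilon_0,\ \modulus{\zeta} > R'}$ is disjoint from $\close{S_\omega}\union B_R$, and therefore $\Omega_\infty \subset \res(A)$ by Proposition~\ref{Prop:ASpec}~\ref{ASpec1}.

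Next I would handle the bounded remainder. The set $K := l_r \intersect B_{2R'}$ is compact and, by admissibility, disjoint from the closed set $\spec(A)$, so $d := \mathrm{dist}(K, \spec(A)) > 0$. I claim $\epsilon_r := \min(\epsilon_0, d/2)$ does the job: let $\zeta$ satisfy $\modulus{\Re\zeta - r} \leq \epsilon_r$. If $\modulus{\zeta} > R'$ then $\zeta \in \Omega_\infty \subset \res(A)$. If $\modulus{\zeta} \leq R'$, then $p := r + \imath\,\Im\zeta \in l_r$ with $\modulus{p} \leq \modulus{r} + R' \leq 2R'$, so $p \in K$, and consequently $\mathrm{dist}(\zeta,\spec(A)) \geq d - \modulus{\zeta - p} = d - \modulus{\Re\zeta - r} \geq d/2 > 0$; hence $\zeta \in \res(A)$. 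Either way $\zeta \in \res(A)$, i.e.\ $L_{r,\epsilon_r} \subset \res(A)$, which is the assertion.

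I do not anticipate any real obstacle here: once the confinement of the spectrum at infinity is invoked, everything reduces to the elementary fact that a compact subarc of $l_r$ keeps positive distance from the closed set $\spec(A)$, after which the two half-widths $\epsilon_0$ and $d/2$ are simply intersected. (The same splitting, feeding Proposition~\ref{Prop:ASpec}~\ref{ASpec2} into $\Omega_\infty$ and using boundedness of the resolvent on the remaining compact portion of the strip, would in fact also yield a uniform resolvent bound on $L_{r,\epsilon_r}$, should that be needed later.)
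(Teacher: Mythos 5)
Your proof is correct and uses essentially the same ingredients as the paper's: confinement of the spectrum (Proposition~\ref{Prop:ASpec}~\ref{ASpec1}) to control the far part of the strip, plus discreteness (Proposition~\ref{Prop:ASpec}~\ref{ASpec3}) together with admissibility to control the near part. The paper's version is slightly more compressed — it simply notes that $\spec(A)\cap L_{1,r}$ is finite and sets $\epsilon_r$ to half the minimum of $|\Re\lambda_i - r|$ — whereas you replace the reduction to finitely many eigenvalues by the general fact that a compact set ($l_r$ truncated to a ball) disjoint from the closed set $\spec(A)$ keeps positive distance from it, but this is just a more explicit route to the same bound.
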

\begin{proof}
By Proposition~\ref{Prop:ASpec}~\ref{ASpec1} and \ref{ASpec3}, the intersection $\spec(A) \intersect L_{1,r}$ is finite, $\spec(A) \intersect L_{1,r} = \set{\lambda_1, \dots, \lambda_k}$. 
If the intersection is empty put $\epsilon_r:=1$, otherwise $\epsilon_r := \frac{1}{2}\min_{1\le i \le k}{ \modulus{\Re \lambda_i - r}}$ does the job.
\end{proof}

From this point onward, we fix an admissible spectral cut $r$.
\begin{proposition}
\label{Prop:A_rSec}
There exist $0<\omega_r < \pi/2$ and $\epsilon_r>0$ such that $A_r$ is invertible and $\omega_r$-bisectorial and $\spec(A_r) \subset \overline{S_{\omega_r}} \setminus L_{\epsilon_r,0}$.
\end{proposition}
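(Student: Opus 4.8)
The plan is to combine the three structural facts already in hand: the sectoriality-up-to-a-ball statement of Proposition~\ref{Prop:ASpec}, the strip-avoidance statement of Lemma~\ref{Lem:l_r}, and the elementary observation that translating $A$ by the real constant $r$ shifts its spectrum by $-r$. First I would fix the admissible cut $r$ and invoke Lemma~\ref{Lem:l_r} to obtain an $\epsilon_r>0$ with $L_{r,\epsilon_r}\subset\res(A)$; equivalently $\spec(A_r)\cap L_{0,\epsilon_r}=\emptyset$, since $\spec(A_r)=\spec(A)-r$. This already gives invertibility of $A_r$: the vertical line $l_0$ lies in $\res(A_r)$, so in particular $0\in\res(A_r)$, and because $\spec(A)$ is discrete (Proposition~\ref{Prop:ASpec}~\ref{ASpec3}) the operator $A_r$ is closed, densely defined, injective with bounded everywhere-defined inverse; the required density of the range is automatic once $0\in\res(A_r)$.

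Next I would produce the bisectoriality. By Proposition~\ref{Prop:ASpec}~\ref{ASpec1}--\ref{ASpec2} there are $R>0$, $\omega\in[0,\pi/2)$ and $C>0$ with $\spec(A)\subset\overline{S_\omega}\cup B_R$ and the resolvent bound $|\zeta|\,\|(A-\zeta)^{-1}\|\le C$ off $S_\omega\cup B_R$. The ball $B_R$ of exceptional spectrum is the obstruction to $A$ itself being bisectorial, and the point is that after shifting by $r$ and using the strip $L_{0,\epsilon_r}$ that is now free of spectrum, one can ``reabsorb'' this ball into a slightly larger bisector. Concretely: $\spec(A_r)\subset(\overline{S_\omega}-r)\cup(B_R-r)$, and this set avoids the strip $L_{0,\epsilon_r}$. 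Since $\spec(A_r)$ is discrete, $\spec(A_r)\cap (B_{R+|r|})$ is a finite set of points, all lying off the imaginary axis (they avoid $L_{0,\epsilon_r}\supset l_0$); hence there is an angle $\theta\in(0,\pi/2)$ such that every one of these finitely many points lies in $S_\theta$, and one may further enlarge $\theta$ past $\omega$ so that $\overline{S_\omega}-r$ is contained in $\overline{S_{\omega_r}}$ for a suitable $\omega_r\in(\omega,\pi/2)$ outside a compact set, while the finitely many remaining points are swallowed by $S_{\omega_r}$ as well. This yields $\spec(A_r)\subset\overline{S_{\omega_r}}$, and combined with the strip-avoidance, $\spec(A_r)\subset\overline{S_{\omega_r}}\setminus L_{\epsilon_r,0}$ after possibly shrinking $\epsilon_r$.

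It remains to upgrade the spectral containment to the resolvent estimate defining $\omega_r$-bisectoriality: for each $\mu>\omega_r$ one needs $C_\mu$ with $\|(\zeta-A_r)^{-1}\|\le C_\mu/|\zeta|$ for $\zeta\notin\overline{S_\mu}$. For $|\zeta|$ large this follows from the translated version of Proposition~\ref{Prop:ASpec}~\ref{ASpec2}: writing $\zeta-A_r=(\zeta+r)-A$ and noting $(\zeta+r)\notin S_\omega\cup B_R$ once $|\zeta|$ is large enough and $\zeta\notin\overline{S_\mu}$, the bound $|\zeta+r|\,\|(\zeta+r-A)^{-1}\|\le C$ gives $\|(\zeta-A_r)^{-1}\|\le C/|\zeta+r|\le 2C/|\zeta|$. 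For $\zeta$ in a bounded region outside $\overline{S_\mu}$ — which, by the spectral containment just proved, is at positive distance from $\spec(A_r)$ and also bounded away from $0$ — the resolvent is continuous and hence bounded there, say by $K$; enlarging $C_\mu$ to handle $|\zeta|\le K'$ finishes the estimate. Patching the large-$|\zeta|$ and bounded-$|\zeta|$ regimes with a single constant gives the claim, and the analogous argument applied to $A^\dagger$ / with $E$ replaced by $F$ covers the parenthetical remark implicit in the framework. The main obstacle, and the only genuinely non-bookkeeping point, is the reabsorption step: one must be careful that the finitely many ``bad'' eigenvalues of $A_r$ near the origin can simultaneously be placed inside a bisector of angle strictly less than $\pi/2$ and strictly greater than $\omega$, \emph{and} kept out of a uniform strip around $l_0$ — this is exactly where the admissibility of the cut $r$ (forcing $l_r\cap\spec(A)=\emptyset$, i.e. no eigenvalue of $A_r$ on $l_0$) is essential, and where one simply takes $\omega_r$ large enough and $\epsilon_r$ small enough to accommodate all finitely many exceptions at once.
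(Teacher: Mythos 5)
Your proof is correct and takes essentially the same route as the paper's: invertibility from $l_0 \subset \res(A_r)$, absorption of the shifted sector and exceptional ball into $\overline{S_{\omega_r}}$ by exploiting the spectrum-free strip $L_{\epsilon_r,0}$, and a resolvent estimate split between a large-$|\zeta|$ regime handled by Proposition~\ref{Prop:ASpec}~\ref{ASpec2} applied to $\zeta+r$ and a bounded regime handled by compactness and continuity of the resolvent. The paper reads the spectral containment directly off a picture and is a bit tighter with the constants; note also that invoking discreteness for the absorption step is unnecessary, since the bounded region $B_{R_0}\setminus L_{\epsilon_r,0}$ already lies inside a bisector of angle $\arctan(R_0/\epsilon_r)<\pi/2$ by elementary geometry, with no need to enumerate eigenvalues.
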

\begin{proof}
Let $R$ and $\omega$ be as in Proposition~\ref{Prop:ASpec} and $r$ and $\epsilon_r$ as in Lemma~\ref{Lem:l_r}. 
Then the spectrum of $A$ is contained in the area shaded in grey in Figure~\ref{fig:omegar}.

\begin{center}
\includegraphics[scale=0.6]{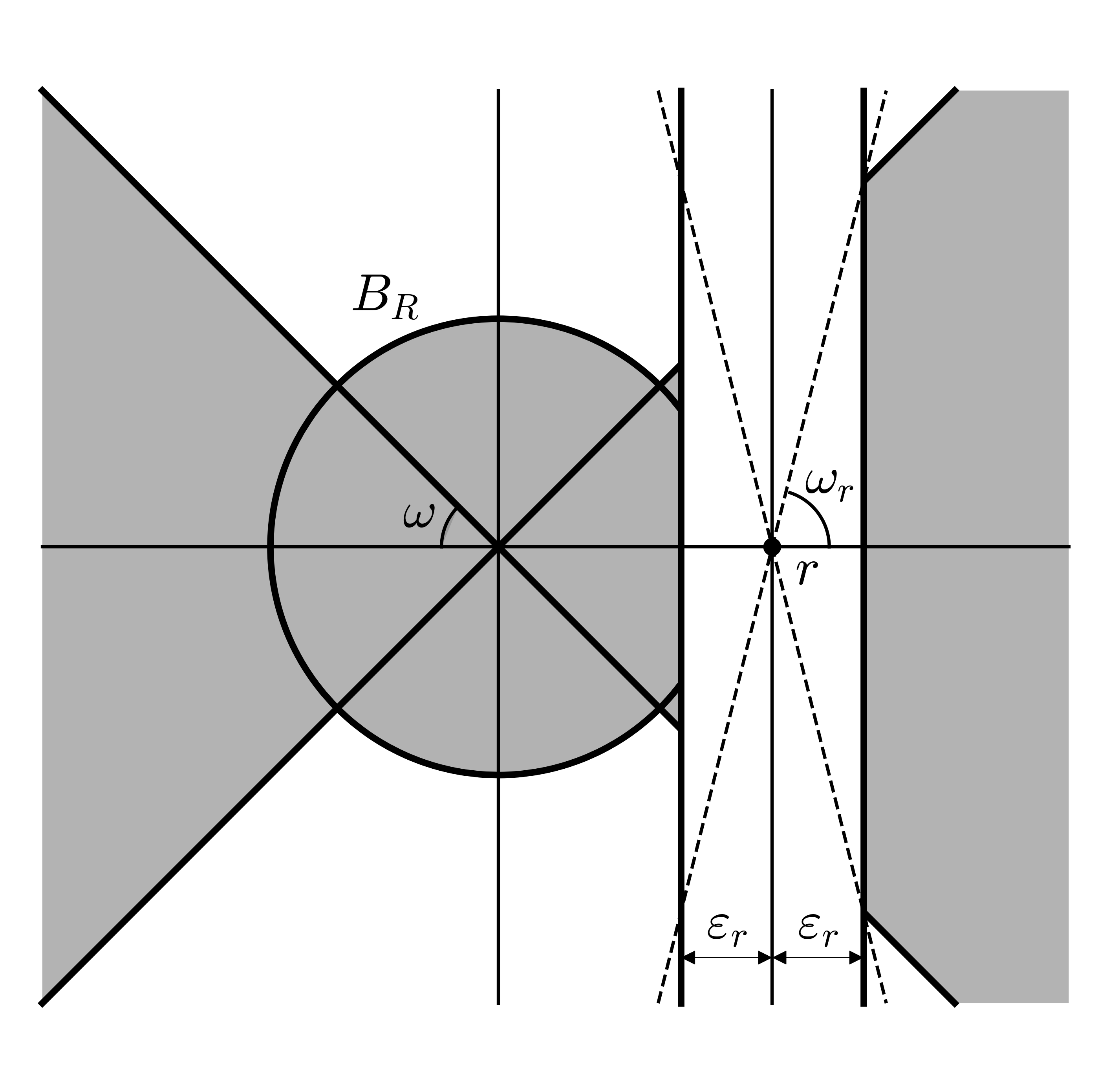}

\Fig{Spectrum of $A$}
\label{fig:omegar}
\end{center}

Thus the spectrum of $A_r$ is contained in $\overline{S_{\omega_r}} \setminus L_{\epsilon_r,0}$, with $\omega_r$ as indicated in the picture.
In particular, $0\notin\spec(A_r)$, hence $A_r$ is invertible.

It remains to show the resolvent estimate for $\zeta\in\C\setminus\overline{S_{\omega_r}}$.
If, in addition, $\zeta+r\notin \overline{S_\omega}\cup B_R$ then Proposition~\ref{Prop:ASpec}~\ref{ASpec2} yields the resolvent bound
$$
\norm{(\zeta - A_r)^{-1}} 
=
\norm{((\zeta+r) - A)^{-1}} 
\le
\frac{C}{|\zeta+r|}
\le
\frac{|\zeta+r|+|r|}{|\zeta+r|}\frac{C}{|\zeta|}
\le
\bigg(1+\frac{|r|}{R}\bigg)\frac{C}{|\zeta|}.
$$
On the other hand, $Z:=((\overline{S_\omega}\cup B_R)-r) \cap (\C\setminus\overline{S_{\omega_r}})$ is a compact set contained in the resolvent set of $A_r$.
Hence there exists a constant $C_1>0$ such that $|\zeta|\le C_1$ and $\norm{(\zeta - A_r)^{-1}}\le C_1$ for all $\zeta\in Z$.
This implies, for those $\zeta$:
\begin{equation*}
\norm{(\zeta - A_r)^{-1}}
\le
C_1
\le
\frac{C_1^2}{|\zeta|}.
\qedhere
\end{equation*}
\end{proof}

To define spectral projectors, consider 
$$ \chi^{\pm}(\zeta) = 
	\begin{cases} 	1,  	& \text{if }\pm \Re \zeta > 0, \\
			0,	& \text{otherwise.}
	\end{cases}
$$

\begin{proposition} 
\label{Prop:AProj} 
For $r$ an admissible spectral cut,  the projection $\chi^{+}(A_r)$ to the  spectral subspace associated to the eigenvalues of the right half open plane is a pseudo-differential operator of order $0$.
Similarly, $\chi^-(A_r)$, the projection to the spectral subspace of the left half open plane, is also a pseudo-differential operator of order~$0$.
\end{proposition}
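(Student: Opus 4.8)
The plan is to realise the spectral projector $\chi^+(A_r)$ via the Riesz--Dunford integral and then identify this integral as a pseudo-differential operator of order zero by building a suitable parametrix along the contour. First I would fix, using Proposition~\ref{Prop:A_rSec}, a sector angle $\omega_r\in(0,\pi/2)$ and a strip width $\epsilon_r>0$ with $\spec(A_r)\subset \overline{S_{\omega_r}}\setminus L_{\epsilon_r,0}$, so that the part of the spectrum of $A_r$ lying in the right half-plane $\C_+$ is separated from the imaginary axis by the forbidden strip $L_{\epsilon_r,0}$. One chooses a closed contour $\Gamma$ in $\res(A_r)$ that runs up the imaginary axis (say along $l_0$, which is admissible for $A_r$ by construction) and closes around the right-half-plane part of $\spec(A_r)$ inside the open sector $S_{\omega_r+}$; by the resolvent bound $\norm{(\zeta-A_r)^{-1}}\le C_{\mu}/|\zeta|$ valid for $\zeta\notin\overline{S_{\mu_r}}$ with $\omega_r<\mu_r<\pi/2$, the resolvent decays like $|\zeta|^{-1}$ along the two rays of $\Gamma$ going to infinity, which is exactly what is needed for the contour integral to converge (in fact one can deform $\Gamma$ to stay within a fixed sector where the $|\zeta|^{-1}$ bound holds). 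Then
\begin{equation*}
\chi^+(A_r) = \frac{1}{2\pi\imath}\int_{\Gamma}(\zeta - A_r)^{-1}\,d\zeta,
\end{equation*}
and this is the standard Riesz projector onto the spectral subspace of $A_r$ corresponding to eigenvalues in $\C_+$; that it agrees with the $\chi^+(A_r)$ of the statement is immediate from the functional calculus for operators with discrete spectrum, and one argues similarly for $\chi^-(A_r)$ using a contour enclosing the left-half-plane part, or simply $\chi^-(A_r) = \id - \chi^+(A_r)$.

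The substantive step is to show this operator lies in $\Psi^0(\Sigma;E)$. Here I would invoke Grubb's parametrix construction for elliptic differential operators with a spectral parameter (the reference \cite{Grubb} cited in the introduction, or equivalently the Seeley-type analysis of resolvents). The key input is the parametric ellipticity of $A_r - \zeta$ along the contour: for $(x,\xi)$ with $\xi\neq 0$ and $\zeta\in\Gamma$, the symbol $\sym_{A_r}(x,\xi) - \zeta = \sym_{A}(x,\xi) - (\zeta+r)$ is invertible, because $\spec(\sym_A(x,\xi))$ avoids $S_\nu$ by Lemma~\ref{Lem:SymSpec} and $\Gamma$ (after the deformation above) lies, away from a bounded region, inside $S_{\omega_r+}\subset S_\nu$, while the bounded region is handled by $|\zeta+r|$ being large compared to $|\sym_A(x,\xi)|$ when $|\xi|$ is small — more precisely, one uses the homogeneity $\sym_A(x,t\xi)=t\,\sym_A(x,\xi)$ and the fact that, for $\zeta$ in the compact piece of $\Gamma$, the ray $\{s\xi : s>0\}$ is mapped by $\sym_A$ off the point $\zeta+r$ for all $s$. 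This makes $A_r-\zeta$ an elliptic element of Grubb's calculus with parameter $\zeta$, so $(\zeta-A_r)^{-1}$ is a parameter-dependent pseudo-differential operator of order $-1$ whose symbol has the appropriate decay in $\zeta$; integrating the symbol over $\Gamma$, using the $|\zeta|^{-1}$ control to justify interchanging the integral with the oscillatory-integral quantisation, produces a classical pseudo-differential operator, and a symbol-degree count shows it has order $0$ (the order $-1$ of the resolvent symbol is compensated by the $d\zeta$-integration over a contour of "length" growing linearly).

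The main obstacle I anticipate is the careful bookkeeping at the bounded part of the contour — i.e.\ verifying parametric ellipticity uniformly for $\zeta$ ranging over the compact arc of $\Gamma$ that wraps around the near-origin spectrum, where one cannot simply appeal to the sectorial resolvent estimate and must instead use the admissibility of the cut (the strip $L_{\epsilon_r,0}\subset\res(A_r)$) together with Lemma~\ref{Lem:SymSpec} to keep $\zeta+r$ off $\spec(\sym_A(x,\xi))$ for every $\xi\neq 0$. Once the contour is arranged so that $\{\zeta+r : \zeta\in\Gamma\}$ stays in a region meeting $\spec(\sym_A(x,\xi))$-free territory for all $x$ and all $\xi\ne 0$ simultaneously (possible by compactness of the cosphere bundle and conicity of $S_\nu$, exactly as in the proof of Lemma~\ref{Lem:SymSpec}), the rest is a routine application of the parameter-dependent symbolic calculus. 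A secondary point worth stating explicitly is that the resulting operator is independent, modulo $\Psi^{-\infty}$, of the choice of $\Gamma$ within $\res(A_r)$ by Cauchy's theorem, and that, because $A$ is only determined up to a zeroth-order term, $\chi^\pm(A_r)$ is likewise only canonical modulo lower-order corrections — but this does not affect the order-zero claim. Finally, $\chi^+(A_r)^2 = \chi^+(A_r)$ follows from the resolvent identity and a standard contour-doubling argument, confirming it is genuinely a (not necessarily orthogonal) projection.
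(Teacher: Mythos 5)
There is a genuine gap in your proposal, and it occurs right at the definition of the projector. You write
\begin{equation*}
\chi^+(A_r) = \frac{1}{2\pi\imath}\int_{\Gamma}(\zeta - A_r)^{-1}\,d\zeta,
\end{equation*}
and claim that the resolvent bound $\norm{(\zeta-A_r)^{-1}}\lesssim |\zeta|^{-1}$ along the unbounded branches of $\Gamma$ is ``exactly what is needed for the contour integral to converge''. It is not: a decay of order $|\zeta|^{-1}$ along an infinite contour gives $\int_1^\infty r^{-1}\,dr = \infty$, so this integral does not converge absolutely in operator norm. Moreover, because $\spec(A_r)$ is unbounded in $S_{\omega_r+}$, there is no way to deform $\Gamma$ into a bounded contour at the operator level (that deformation is only available for the principal symbol $\sym_{A_r}(x,\xi)$, whose spectrum is bounded for each fixed $(x,\xi)\neq 0$; at that level the bounded Cauchy integral does make sense and gives the symbol of the projector, but the operator-level integral is a different object). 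A secondary issue: the bisectorial resolvent bound you invoke is only guaranteed \emph{outside} $\overline{S_{\mu_r}}$, whereas the branches of $\Gamma$ wrapping around the right-hand spectrum would have to lie \emph{inside} $S_{\omega_r+}$.

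The paper's proof avoids this entirely: it cites Grubb's theorem that the projector is realised by the \emph{sectorial projection} integral
\begin{equation*}
\chi^+(A_r)u = \frac{\imath}{2\pi}\int_{l_0}\zeta^{-1}A_r(\zeta - A_r)^{-1}u\ d\zeta, \qquad u\in \Ck{\infty}(\Sigma;E),
\end{equation*}
whose integrand equals $(\zeta-A_r)^{-1}u - \zeta^{-1}u = \zeta^{-1}(\zeta-A_r)^{-1}A_ru$ on $\dom(A_r)$ and hence decays like $|\zeta|^{-2}$, making the integral absolutely convergent; Grubb's theorem then says this extends to an element of $\Psi^0(\Sigma;E)$. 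That modification of the integrand is the precise point of the reference \cite{Grubb} (``the sectorial projection defined from logarithms''), and it is the ingredient missing from your proposal. Your symbol-level discussion (parametric ellipticity from Lemma~\ref{Lem:SymSpec}, uniformity over the cosphere bundle, independence of the contour modulo $\Psi^{-\infty}$) is sound in spirit and is part of what Grubb's construction uses, but the operator-level definition must be the Grubb/Seeley one, not the naive Riesz--Dunford integral.
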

\begin{proof}
By Proposition~\ref{Prop:A_rSec}, $A_r$ is an invertible $\omega_r$-sectorial operator and also that $l_0 \subset \res(A_r)$.
The main theorem of Grubb in \cite{Grubb} asserts that $\chi^+(A_r)$ can be defined as the contour integral
\begin{equation}
\chi^+(A_r)u = \frac{\imath}{2\pi}\oint_{\set{\imath r: \infty > r > -\infty}} \zeta^{^-1}A (\zeta - A)^{-1} u \ d\zeta
\label{eq:chi+Ar}
\end{equation}
for $u\in \Ck{\infty}(\Sigma;E)$ and it extends to a pseudo-differential operator of order zero. 

The conclusion for $\chi^-(A_r)$ follows by analogy or simply from $\chi^{-}(A_r) = 1 - \chi^+(A_r)$.
\end{proof}

Recall the function $\sgn: \C \to \C$ given by:
$$ \sgn(\zeta) = 
\begin{cases} 			+1, 			&\text{if }\Re \zeta > 0, \\
				-1, 			&\text{if }\Re \zeta < 0, \\
				0, 			&\text{if }\Re \zeta = 0.
\end{cases}$$
Whenever $\Re\zeta \neq 0$, we have that  
$$\sgn(\zeta) = \frac{\zeta}{\sqrt{\zeta^2}} = \frac{\sqrt{\zeta^2}}{\zeta}=\chi^+(\zeta)-\chi^-(\zeta).$$
Thus, on using that $\chi^{\pm}(A)$ is $\Lp{2}(E_{\Sigma})$ bounded by Proposition~\ref{Prop:AProj} and $l_o\subset \res(A_r)$, define
\begin{equation}
\label{Eq:Amod}
\modulus{A_r} = \sqrt{A_r^2} = \sgn(A_r)A_r = (\chi^+(A_r)-\chi^-(A_r))A_r.
\end{equation}

For the remainder of this paper, we use the analyst's inequality $a \lesssim b$ to mean that $a \leq C b$, there exists some $C$.
Typically, the dependence of the implicit constant will be clear from context or made explicitly clear in the analysis.
We write $a \simeq b$ to mean $a \lesssim b$ and $b \lesssim a$.

\begin{lem}
\label{Lem:OpSect}
The operator $\modulus{A_r}$ is invertible and $\omega_r$-sectorial with $\dom(\modulus{A_r}) = \dom(A_r) = \SobH{1}(\Sigma;E)$ with the norm estimate $\norm{A_r u} \simeq \norm{\modulus{A_r}u}$,
\end{lem}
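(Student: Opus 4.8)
The plan is to reduce everything to the bisectoriality of $A_r$ from Proposition~\ref{Prop:A_rSec} together with the boundedness of the spectral projectors $\chi^\pm(A_r)$ from Proposition~\ref{Prop:AProj}. First I would observe that since $\chi^+(A_r)$ and $\chi^-(A_r)$ are complementary bounded projections commuting with $A_r$ that decompose $\Lp{2}(\Sigma;E)$ into the spectral subspaces associated to $S_{\omega_r}\cap\C_+$ and $S_{\omega_r}\cap(-\C_+)$, and $\sgn(A_r)=\chi^+(A_r)-\chi^-(A_r)$ is bounded with bounded inverse $\sgn(A_r)$ itself (since $\sgn(A_r)^2=1$). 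From \eqref{Eq:Amod}, $\modulus{A_r}=\sgn(A_r)A_r$, so $\modulus{A_r}$ is the composition of the closed operator $A_r$ with a bounded invertible operator that preserves $\dom(A_r)$; hence $\modulus{A_r}$ is closed with $\dom(\modulus{A_r})=\dom(A_r)=\SobH{1}(\Sigma;E)$, the last identity being elliptic regularity for $A$ on the closed manifold $\Sigma$. The norm equivalence $\norm{A_r u}\simeq\norm{\modulus{A_r}u}$ is then immediate from $\modulus{A_r}u=\sgn(A_r)A_r u$ and $A_r u=\sgn(A_r)\modulus{A_r}u$ together with boundedness of $\sgn(A_r)$ in both directions. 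Invertibility of $\modulus{A_r}$ follows from that of $A_r$ and of $\sgn(A_r)$.

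The substantive point is the $\omega_r$-sectoriality, i.e.\ that $\spec(\modulus{A_r})\subset\overline{S_{\omega_r+}}$ and the resolvent decay $\norm{(\zeta-\modulus{A_r})^{-1}}\le C_\mu/\modulus{\zeta}$ for $\zeta\notin\overline{S_{\mu+}}$, $\mu>\omega_r$. I would argue this via the spectral splitting: writing $\cH_\pm=\chi^\pm(A_r)\Lp{2}(\Sigma;E)$, the operator $A_r$ restricts to an invertible $\omega_r$-bisectorial operator on each $\cH_\pm$ with spectrum in $\overline{S_{\omega_r+}}$ on $\cH_+$ and in $-\overline{S_{\omega_r+}}$ on $\cH_-$; on $\cH_+$ we have $\modulus{A_r}=A_r$ and on $\cH_-$ we have $\modulus{A_r}=-A_r$. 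In both cases $\spec(\modulus{A_r}|_{\cH_\pm})\subset\overline{S_{\omega_r+}}$, so $\spec(\modulus{A_r})\subset\overline{S_{\omega_r+}}$. For the resolvent bound, for $\zeta\notin\overline{S_{\mu+}}$ one has $(\zeta-\modulus{A_r})^{-1}=(\zeta-A_r)^{-1}\chi^+(A_r)+(\zeta+A_r)^{-1}\chi^-(A_r)$; the first term is estimated by the bisectorial bound for $A_r$ at $\zeta$ (noting $\zeta\notin\overline{S_{\mu+}}\subset\overline{S_\mu}$), the second by the bisectorial bound for $A_r$ at $-\zeta$ (noting $-\zeta\notin-\overline{S_{\mu+}}$, and $-\overline{S_{\mu+}}\subset\overline{S_\mu}$ so $-\zeta\notin\overline{S_\mu}$ fails — here I must be a little careful). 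The clean way around this is to invoke bisectoriality of $A_r$ directly: $S_\mu=S_{\mu+}\cup(-S_{\mu+})$, so $\zeta\notin\overline{S_\mu}$ gives control of both $(\zeta-A_r)^{-1}$ and $(\zeta+A_r)^{-1}$ at once when $\zeta\notin\overline{S_{\mu}}$; and for $\zeta\in\overline{S_\mu}\setminus\overline{S_{\mu+}}$, i.e.\ $\zeta$ in the left-half bisector, one has $\Re\zeta<0$ so $\zeta$ lies in the resolvent set of $\modulus{A_r}$ at distance $\gtrsim\modulus{\zeta}$ from its spectrum in the right half-plane, giving the bound by a compactness/geometry argument on the two rays bounding $S_{\mu+}$.

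I expect the main obstacle to be precisely this last bookkeeping: translating the \emph{bisectorial} resolvent bounds for $A_r$ (valid outside $\overline{S_\mu}$, a double cone) into \emph{sectorial} bounds for $\modulus{A_r}$ (needed outside $\overline{S_{\mu+}}$, a single cone), since $\overline{S_{\mu+}}$ is strictly smaller than $\overline{S_\mu}$ and one must separately handle $\zeta$ with negative real part. The resolution is that on $\cH_-$ the operator $\modulus{A_r}=-A_r$ has spectrum in $\overline{S_{\omega_r+}}\subset\C_+$, so for $\Re\zeta\le 0$, $\zeta$ is uniformly far (relative to $\modulus\zeta$) from $\spec(-A_r|_{\cH_-})$; combined with the genuine bisectorial estimate on $\cH_+$ (where $A_r$ has spectrum in $\overline{S_{\omega_r+}}$ and $-\zeta$, having $\Re(-\zeta)\ge0$, needs the bisectorial bound), one assembles the sectorial resolvent estimate. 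Alternatively, and perhaps most cleanly, one can simply cite the abstract fact (to be placed in the Appendix, cf.\ Proposition~\ref{Prop:ASpec} and the surrounding discussion) that if $T$ is invertible $\omega$-bisectorial then $\sqrt{T^2}=\sgn(T)T$ is invertible $\omega$-sectorial, with $\dom(\sqrt{T^2})=\dom(T)$ and $\norm{Tu}\simeq\norm{\sqrt{T^2}u}$; the proof just given is the concrete instance of that fact.
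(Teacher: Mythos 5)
Your treatment of the domain equality, norm equivalence, and invertibility matches the paper's: boundedness of $\sgn(A_r)=\chi^+(A_r)-\chi^-(A_r)$ gives $\dom(\modulus{A_r})=\dom(A_r)=\SobH{1}(\Sigma;E)$, and the two-sided estimate $\norm{A_r u}\simeq\norm{\modulus{A_r}u}$ follows from $\modulus{A_r}=\sgn(A_r)A_r$ together with $A_r=\sgn(A_r)\modulus{A_r}$; invertibility of $\modulus{A_r}$ is the composition of the invertible $A_r$ with the involution $\sgn(A_r)$. Your spectral-subspace picture for sectoriality also reflects the paper's intent, and you correctly isolate the crux: for $\zeta$ in the left-half bisector $\overline{S_\mu}\setminus\overline{S_{\mu+}}$ the bisectorial resolvent bound for $A_r$ does not apply directly, so the estimate $\norm{(\zeta-\modulus{A_r})^{-1}}\lesssim 1/\modulus\zeta$ there needs a separate justification.

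The justification you offer, however, has a genuine gap. Being ``at distance $\gtrsim\modulus{\zeta}$ from $\spec(\modulus{A_r})$'' does \emph{not} by itself bound the resolvent norm for a non-selfadjoint operator: a $2\times 2$ Jordan block $\left(\begin{smallmatrix}1&n\\0&1\end{smallmatrix}\right)$ has spectrum $\{1\}$, yet its resolvent at $\zeta=0$ has norm $\sim n$, while $\operatorname{dist}(0,\{1\})=1$. Since the whole point of this paper is that $A_r$ (hence $\modulus{A_r}$) is \emph{not} selfadjoint or normal, the ``compactness/geometry argument'' you invoke is exactly the kind of reasoning that fails here, and it does not close the gap. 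To be fair, the paper's own proof is also terse at this point --- it records the eigenvalue correspondence $\lambda\mapsto\sgn(\lambda)\lambda$ and concludes sectoriality --- which, read literally as a statement about the $\Lp{2}$-operators, likewise pins down only the \emph{location} of the spectrum and not the resolvent decay. Your fallback of citing an abstract lemma (``invertible $\omega$-bisectorial with bounded spectral projections implies $\sgn(T)T$ invertible $\omega$-sectorial'') would be acceptable if such a lemma were proved or referenced, but the heuristic you give in its place is not a proof of it. The cleanest way to actually obtain the resolvent estimate, and what I believe the paper is implicitly doing, is to observe that by Proposition~\ref{Prop:AProj} $\modulus{A_r}=\sgn(A_r)A_r$ is a classical elliptic pseudo-differential operator of order $1$ whose principal symbol $\sgn(\sym_{A_r}(x,\xi))\,\sym_{A_r}(x,\xi)$ has eigenvalues $\sgn(\lambda)\lambda$ for $\lambda\in\spec(\sym_{A_r}(x,\xi))$, hence lying in $\overline{S_{\omega_r}}\cap\C_+$ away from $0$ when $\xi\neq 0$; one then re-invokes Shubin's Theorem~9.3 exactly as in the proof of Proposition~\ref{Prop:ASpec} to get the sectorial resolvent bound, with elliptic regularity giving the domain identification.
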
 
\begin{proof}
Since the projectors $\chi^{\pm}(A_r)$ are bounded, the domain equality $\dom(\modulus{A_r}) = \dom(A_r) = \SobH{1}(\Sigma;E)$ follows.
For the norm estimate, first note
$$
\norm{|A_r|u}
=
\norm{(\chi^+(A_r)-\chi^-(A_r))A_ru}
\le
\norm{\chi^+(A_r)A_ru} + \norm{\chi^-(A_r)A_ru}
\lesssim
\norm{A_ru}.
$$
The reverse inequality follows similarly from $A_r=\sgn(A_r)|A_r|$.

The operators $A_r$ and $|A_r|$ have the same eigenvectors and if $\lambda$ is an eigenvalue of $A_r$ then $\sgn(\lambda)\lambda$ is the corresponding eigenvalue of $|A_r|$. 
This shows $\omega_r$-sectoriality.
\end{proof}

Since we now have that $\modulus{A_r}$ is an $\omega_r$-sectorial operator, we obtain the existence of a bounded holomorphic semigroup via standard semigroup theory (c.f.\ \cite{Kato}).
We require this in later parts to define the boundary extension operator.

\begin{proposition}
\label{Prop:ExpHol}
The operator family $\zeta \mapsto \exp(-\zeta \modulus{A_r})$ is holomorphic on the sector $S_{\frac{\pi}{2} - \omega_r}\cap\C_+$ and for any $\epsilon \in (0, \pi/2 - \omega_r)$, it is uniformly bounded on $S_{\frac{\pi}{2} - \omega_r - \epsilon}\cap\C_+$.
For $u \in \Lp{2}(E_{\Sigma})$, $U(\zeta) = \exp(-\zeta \modulus{A_r})u$ solves the heat equation $\partial_\zeta U(\zeta) = - \modulus{A_r} U(\zeta)$ in $\Lp{2}(E_{\Sigma})$ with $\lim_{|\zeta| \to 0} U(\zeta) = u$.
\end{proposition}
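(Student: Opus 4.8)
The plan is to deduce everything from the fact, established in Lemma~\ref{Lem:OpSect}, that $\modulus{A_r}$ is an invertible $\omega_r$-sectorial operator on the Hilbert space $\Lp{2}(E_{\Sigma})$ with $\omega_r < \pi/2$. For such operators the standard theory of bounded holomorphic (analytic) semigroups applies directly: since the spectrum lies in $\overline{S_{\omega_r+}}$ and resolvent bounds $\norm{(\zeta - \modulus{A_r})^{-1}} \le C_\mu/\modulus{\zeta}$ hold outside any larger sector $\overline{S_{\mu+}}$ with $\omega_r < \mu < \pi/2$, the operator $-\modulus{A_r}$ generates an analytic semigroup of angle $\pi/2 - \omega_r$. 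I would cite \cite{Kato} (or equally the semigroup literature) for this. Concretely, one defines, for $\zeta$ in the open sector $S_{\pi/2 - \omega_r}\cap\C_+$,
$$
\exp(-\zeta\modulus{A_r}) = \frac{1}{2\pi\imath}\oint_{\Gamma} e^{-\zeta\lambda}(\lambda - \modulus{A_r})^{-1}\, d\lambda,
$$
where $\Gamma$ is a contour running along the boundary of a sector $S_{\mu+}$ with $\omega_r < \mu < \pi/2 - \modulus{\arg\zeta}$, oriented so that $\spec(\modulus{A_r})$ lies to its right; the hypothesis $\modulus{\arg\zeta} < \pi/2 - \omega_r$ guarantees such a $\mu$ exists and that $e^{-\zeta\lambda}$ decays along $\Gamma$ so the integral converges absolutely.

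Holomorphy in $\zeta$ on $S_{\pi/2-\omega_r}\cap\C_+$ then follows by differentiating under the integral sign, which is justified by the same absolute convergence (locally uniform in $\zeta$). For the uniform boundedness on the smaller sector $S_{\pi/2 - \omega_r - \epsilon}\cap\C_+$: for $\zeta$ in this sector one may choose the contour angle $\mu$ uniformly (say $\mu = \omega_r + \epsilon/2$), parametrise $\lambda = se^{\pm\imath\mu}$, and use $\modulus{e^{-\zeta\lambda}} = e^{-s\modulus{\zeta}\cos(\arg\zeta \pm \mu)}$ together with $\cos(\arg\zeta \pm \mu) \ge \cos(\pi/2 - \epsilon/2) = \sin(\epsilon/2) > 0$ and the resolvent bound $\norm{(\lambda - \modulus{A_r})^{-1}} \le C_\mu/s$; the resulting integral $\int_0^\infty e^{-s\modulus{\zeta}\sin(\epsilon/2)}\,\frac{C_\mu}{s}\,ds$ is handled in the usual way (splitting at $s = 1/\modulus{\zeta}$, or rescaling $s \mapsto s/\modulus{\zeta}$) to give a bound independent of $\zeta$. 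The semigroup property and the fact that $\partial_\zeta U(\zeta) = -\modulus{A_r}U(\zeta)$ (with $U(\zeta) \in \dom(\modulus{A_r})$ for $\zeta \neq 0$, since one can insert a factor $\lambda(\lambda-\modulus{A_r})^{-1}$ and still converge) are again standard consequences of the contour representation.

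The only genuinely non-routine point is the strong continuity at $0$, i.e.\ $\lim_{\modulus{\zeta}\to 0}U(\zeta) = u$ for every $u \in \Lp{2}(E_{\Sigma})$. For $u$ in the dense subspace $\dom(\modulus{A_r})$ one writes $U(\zeta) - u = \int_0^{\zeta}\partial_s U(s)\,ds = -\int_0^{\zeta}\exp(-s\modulus{A_r})\modulus{A_r}u\,ds$ (integrating along the ray to $\zeta$) and bounds this by $\modulus{\zeta}\sup_s\norm{\exp(-s\modulus{A_r})}\,\norm{\modulus{A_r}u} \to 0$, using the uniform boundedness just proved; then one extends to all of $\Lp{2}(E_{\Sigma})$ by a standard $\epsilon/3$-density argument, again invoking the uniform bound $\norm{\exp(-\zeta\modulus{A_r})} \le N$. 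I would expect this density/approximation step, and the careful bookkeeping of which sector angle $\mu$ to use as a function of $\zeta$ in the uniform-boundedness estimate, to be the parts most worth writing out; everything else is a direct appeal to the analytic-semigroup theory for sectorial operators in \cite{Kato}.
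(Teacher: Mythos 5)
Your proposal is correct and follows essentially the same route as the paper: both reduce the statement to the standard fact that a densely defined, invertible $\omega_r$-sectorial operator with $\omega_r<\pi/2$ generates a bounded holomorphic semigroup of angle $\pi/2-\omega_r$, and both reference \cite{Kato} for this; the paper simply cites Chapter~9~\S6 and Equation~(1.58) outright, while you unpack the Dunford contour integral, the resolvent estimate on a fixed subsector, and the density argument for strong continuity at $0$. The only imprecision is that the bound $\int_0^\infty e^{-s|\zeta|\sin(\epsilon/2)}\,\frac{C_\mu}{s}\,ds$ diverges as written; the ``usual way'' you allude to is really to deform the contour so it avoids the origin by an arc of radius comparable to $1/|\zeta|$ (this is immaterial here since $|A_r|$ is invertible, and the paper sidesteps it by citation), rather than splitting or rescaling the scalar integral itself.
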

\begin{proof}
By Lemma~\ref{Lem:OpSect}, we have that $\modulus{A_r}$ is a $\omega_r$-sectorial operator.
From Chapter~9, \S6 in \cite{Kato}, the semigroup $\zeta\mapsto\exp(-\zeta\modulus{A_r})$ is holomorphic on $S_{\frac\pi2-\omega_r}$ and is uniformly bounded on $S_{\frac\pi2 - \omega_r - \epsilon}$ for each $\epsilon \in (0, \pi/2 - \omega_r)$.

By holomorphicity of the semigroup, $U$ solves the heat equation.
Equation~(1.58) on page~491 of \cite{Kato} yields $\lim_{|\zeta| \to 0} U(\zeta) = U(0) = u$.
\end{proof} 

\begin{remark}
\label{Rem:KatoSec}
In \cite{Kato}, the definition of a sectorial operator is what is known in the literature today as \emph{Kato-sectorial}, which further assumes that the numerical range of the operator is also valued inside the sector. 
However, the hypothesis on operator in Chapter 9 \S6 of \cite{Kato} (c.f. page 490) is precisely the modern notion of sectoriality which we have assumed and this is what allows us to assert the conclusion in this proposition.
See Remark 7.3.3 in \cite{Haase} which further outlines these differences. 
\end{remark}

\section{$H^{\infty}$-functional calculus and fractional Sobolev spaces}
\label{Sec:Hinfty}

While in the previous section we showed the existence of the holomorphic semigroup, to treat certain estimates and properties of the semigroup in later parts, we will show that the operator $\modulus{A_r}$ has an $H^\infty$-functional calculus.
Although this takes some effort to establish, we will see that our labours are well worth the fruit they bear as this is a fundamental property that affords us with the ability to overcome some of the kerfuffles that would otherwise arise in our analysis.

To recall the notion of an $H^\infty$-functional calculus, let  $T$ be an $\omega$-sectorial operator on a Hilbert space $\cH$. 
For $\mu > \omega$, let $\Psi(S_{\mu}\cap\C_+)$ denote bounded the algebra of holomorphic functions $\psi: S_{\mu}\cap\C_+ \to \C$ for each of which there exists $\alpha > 0$ and $C > 0$ satisfying
$$\modulus{\psi(\zeta)} \leq C \min \set{ \modulus{\zeta}^\alpha, \modulus{\zeta}^{-\alpha}}.$$
Integrating along the unbounded contour 
$$
\gamma 
=
\set{ r e^{\imath \theta}: \infty > r > 0} \union \set{ r e^{-\imath \theta}: 0 < r < \infty}
$$
for $\theta \in (\omega, \mu)$ defines the following functional calculus: 
$$ 
\psi(T)u 
:= 
\frac{\imath}{2\pi} \oint_{\gamma} \psi(\zeta) (\zeta - T)^{-1}u\ d\zeta,
$$
for $u\in\cH$.
This is an absolutely convergent integral due to the decay of $\psi$ and the resolvent bounds on $T$.
The operator $\psi(T)$ is a bounded operator on $\cH$.
If there exists $C > 0$ such that for every $\psi \in \Psi(S_{\mu}\cap\C_+)$ satisfies: 
$$  \norm{\psi(T)} \leq C\,\norm{\psi}_\infty,$$
then we say that $T$ has an $H^\infty(S_{\mu}\cap\C_+)$, or simply an $H^\infty$-functional calculus.
This is due to the fact that for functions $f:(S_{\mu}\cap\C_+) \union\set{0} \to \C$  bounded and holomorphic on $S_{\mu}\cap\C_+$, we can define $f(T)$ as a bounded operator on $\cH$ when $T$ has an $H^\infty$-functional calculus. 
More significantly, via the pioneering work of McIntosh in \cite{Mc86}, $T$ has an $H^\infty$-calculus if and only if the following so-called \emph{quadratic estimates}
\begin{equation}
\label{Eq:QEst}
\int_{0}^\infty \norm{\psi(tT)u}^2\ \frac{dt}{t} \simeq \norm{u}^2
\end{equation}
hold for all $u \in \close{\ran(T)}$ and some non-zero $\psi \in \Psi(S_{\mu}\cap\C_+)$, or equivalently, for all such $\psi$.
Our goal is to establish quadratic estimates~\eqref{Eq:QEst} for $T=|A_r|$ and $\cH=\Lp{2}(\Sigma;E)$ by demonstrating the $H^\infty$-functional calculus by other means.
\begin{lem}
\label{Lem:AdjProj}
The following hold:
\begin{enumerate}[(i)]
\item\label{AdjProj1}
$\chi^{\pm}(A_r)^\ast$ commute with $A_r^\ast$ on $\dom(A_r^\ast)$, 
\item\label{AdjProj2} 
$\dom(A_r^\ast) = \SobH{1}(\Sigma;E)$ and $\chi^{\pm}(A_r^\ast)$ are bounded projectors,
\item\label{AdjProj3} 
$\chi^{\pm}(A_r)^\ast = \chi^{\pm}(A_r^\ast)$ and $\sgn(A_r)^*=\sgn(A_r^*)$, and 
\item\label{AdjProj4} 
$\chi^\pm(A_r^*)|_{\chi^\pm(A_r)\Lp{2}(\Sigma;E)}:\chi^\pm(A_r)\Lp{2}(\Sigma;E) \to \chi^\pm(A_r^*)\Lp{2}(\Sigma;E)$ and $\chi^\pm(A_r)|_{\chi^\pm(A_r^*)\Lp{2}(\Sigma;E)}:\chi^\pm(A_r^*)\Lp{2}(\Sigma;E) \to \chi^\pm(A_r)\Lp{2}(\Sigma;E)$ are isomorphisms.
\end{enumerate}
\end{lem}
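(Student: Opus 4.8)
The plan is to establish the four assertions in order, since each builds on the previous ones. Throughout I will freely use Proposition~\ref{Prop:AProj}, which tells us $\chi^\pm(A_r)$ are bounded pseudo-differential operators of order zero on $\Lp{2}(\Sigma;E)$, and Proposition~\ref{Prop:A_rSec}, which tells us $A_r$ is invertible and $\omega_r$-bisectorial with $l_0\subset\res(A_r)$. The key structural fact I would invoke is that $\chi^+(A_r)$ is given by the contour integral \eqref{eq:chi+Ar} over the imaginary axis $l_0$, so that all the objects $\chi^\pm(A_r)$, $\sgn(A_r)$, $|A_r|$ are built from the resolvent family $(\zeta-A_r)^{-1}$, and adjoints can be computed by conjugating the contour integral. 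Also $\dom(A_r)=\SobH{1}(\Sigma;E)$ by Lemma~\ref{Lem:OpSect}, and since $A_r^*$ is an adapted boundary operator of order one for the formal adjoint situation (it is elliptic of order one), the same elliptic regularity gives $\dom(A_r^*)=\SobH{1}(\Sigma;E)$.

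For \ref{AdjProj1}: the resolvent identity shows that $A_r$ commutes with $(\zeta-A_r)^{-1}$ on $\dom(A_r)$, hence $A_r$ commutes with $\chi^\pm(A_r)$ on $\dom(A_r)$; equivalently $\chi^\pm(A_r)\dom(A_r)\subset\dom(A_r)$ and $A_r\chi^\pm(A_r)=\chi^\pm(A_r)A_r$ there. Taking Hilbert-space adjoints of the bounded operator identity $A_r\chi^\pm(A_r)x=\chi^\pm(A_r)A_rx$, or more carefully arguing that $A_r^*$ is closed and $\chi^\pm(A_r)^*$ bounded, yields that $\chi^\pm(A_r)^*$ maps $\dom(A_r^*)$ into itself and commutes with $A_r^*$ there. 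For \ref{AdjProj2}: $\dom(A_r^*)=\SobH{1}(\Sigma;E)$ follows from elliptic regularity for the order-one elliptic operator $A_r^*$ on the closed manifold $\Sigma$; that $\chi^\pm(A_r^*)$ are bounded projectors is \emph{a priori} the same Grubb construction applied to $A_r^*$, after noting $A_r^*$ is also invertible $\omega_r$-bisectorial with $l_0\subset\res(A_r^*)$ (spectrum of $A_r^*$ is the complex conjugate of $\spec(A_r)$, so Proposition~\ref{Prop:A_rSec} applies verbatim). The first half of \ref{AdjProj3}, $\chi^\pm(A_r)^*=\chi^\pm(A_r^*)$, is then obtained by taking the adjoint of the contour integral \eqref{eq:chi+Ar}: conjugating $(\zeta-A_r)^{-1}$ gives $(\bar\zeta-A_r^*)^{-1}$, and reparametrising the imaginary-axis contour $\zeta=\imath s$ under $\zeta\mapsto\bar\zeta$ reverses orientation, producing exactly Grubb's formula for $\chi^+(A_r^*)$ (one must track the sign carefully — reversal of orientation cancels the complex conjugate of the $\imath/2\pi$ prefactor). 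Then $\sgn(A_r)^*=\chi^+(A_r)^*-\chi^-(A_r)^*=\chi^+(A_r^*)-\chi^-(A_r^*)=\sgn(A_r^*)$.

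For \ref{AdjProj4}: this is the only assertion requiring genuine work. Write $\chi^+:=\chi^+(A_r)$ and $\eta^+:=\chi^+(A_r^*)$, and consider $\eta^+|_{\chi^+\Lp{2}}:\chi^+\Lp{2}\to\eta^+\Lp{2}$ and $\chi^+|_{\eta^+\Lp{2}}:\eta^+\Lp{2}\to\chi^+\Lp{2}$. The idea is to show these are mutually inverse up to identity, i.e.\ that $\chi^+\eta^+=\chi^+$ on $\chi^+\Lp{2}$ and $\eta^+\chi^+=\eta^+$ on $\eta^+\Lp{2}$, equivalently $\chi^-\eta^+\chi^+=0$ and $\eta^-\chi^+\eta^+=0$ where $\chi^-=1-\chi^+$, $\eta^-=1-\eta^+$. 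To see $\chi^-\eta^+\chi^+=0$: since $\eta^+=\chi^+(A_r)^*$ by \ref{AdjProj3}, for $x\in\close{\ran(A_r)}=\Lp{2}$ (the range is all of $\Lp{2}$ since $A_r$ is invertible) one has $\inprod{\chi^-\eta^+\chi^+x,y}=\inprod{\chi^+x,(\eta^+)^*(\chi^-)^*y}=\inprod{\chi^+x,\chi^+(A_r)(\chi^-(A_r))^{**}y}$; using $\chi^+\chi^-=0$ this collapses to $0$. Symmetrically $\eta^-\chi^+\eta^+=0$. Hence $\chi^+\circ(\eta^+|_{\chi^+\Lp{2}})=\mathrm{id}$ on $\chi^+\Lp{2}$ and $\eta^+\circ(\chi^+|_{\eta^+\Lp{2}})=\mathrm{id}$ on $\eta^+\Lp{2}$, so both maps are bijections with bounded inverses, i.e.\ isomorphisms; the argument for the $\chi^-,\eta^-$ pair is identical. \textbf{The main obstacle} I anticipate is the careful bookkeeping in \ref{AdjProj3}: getting the orientation of the imaginary-axis contour and the $\imath/2\pi$ factor right when taking adjoints of \eqref{eq:chi+Ar}, and making sure the formal adjoint $A_r^*$ genuinely satisfies the hypotheses of Grubb's theorem so that $\chi^\pm(A_r^*)$ are \emph{defined} by the analogous contour integral in the first place — once that is pinned down, \ref{AdjProj4} is a short algebraic consequence.
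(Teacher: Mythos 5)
Your treatment of \ref{AdjProj1}--\ref{AdjProj3} matches the paper's proof in essence: \ref{AdjProj1} via the inner-product identity for adjoints of commuting operators, \ref{AdjProj2} by elliptic regularity and the bisectoriality of $A_r^\ast$ (whose spectrum is the conjugate of $\spec(A_r)$), and \ref{AdjProj3} by conjugating the contour integral \eqref{eq:chi+Ar} and reparametrising the imaginary axis. No issues there.

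Part \ref{AdjProj4}, however, has a genuine gap, and it is more than a bookkeeping issue. The relations you propose to prove --- that $\eta^+|_{\chi^+\Lp{2}}$ and $\chi^+|_{\eta^+\Lp{2}}$ are \emph{mutually inverse}, equivalently $\chi^-\eta^+\chi^+=0$ and $\eta^-\chi^+\eta^+=0$ --- are false in general. The identity $\chi^-\eta^+\chi^+=0$ says that $\eta^+=\chi^+(A_r)^\ast$ maps $\ran\chi^+(A_r)$ into itself; together with its mirror it forces $\ran\chi^+(A_r)=\ran\chi^+(A_r^\ast)$, which is exactly the selfadjoint situation the whole paper is designed to avoid. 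Already in $\C^2$ with $\chi^+=\bigl(\begin{smallmatrix}1&1\\0&0\end{smallmatrix}\bigr)$, $\eta^+=(\chi^+)^\ast=\bigl(\begin{smallmatrix}1&0\\1&0\end{smallmatrix}\bigr)$, $\chi^-=1-\chi^+$, one computes $\chi^-\eta^+\chi^+=\bigl(\begin{smallmatrix}-1&-1\\1&1\end{smallmatrix}\bigr)\ne0$ and $\chi^+\eta^+\chi^+=2\chi^+\ne\chi^+$. The flaw in the supporting computation is the adjoint of $\chi^-$: by \ref{AdjProj3}, $(\chi^-)^\ast=\chi^-(A_r)^\ast=\chi^-(A_r^\ast)=\eta^-$, not $\chi^-$, so $\inprod{\chi^+x,(\eta^+)^\ast(\chi^-)^\ast y}=\inprod{\chi^+x,\chi^+\eta^- y}$, which has no reason to vanish. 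Writing $(\chi^-(A_r))^{\ast\ast}$ there is precisely the place where orthogonality of $\chi^-(A_r)$ was tacitly assumed.

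The statement of \ref{AdjProj4} is nevertheless true, and the route the paper takes is through orthogonal complementarity of the \emph{ranges} rather than invariance. From \ref{AdjProj3} and the elementary identity $\ran P=(\ker P^\ast)^\perp$ for a bounded projector $P$ on a Hilbert space, one gets
$$\chi^\pm(A_r)\Lp{2}(\Sigma;E)=\bigl(\chi^\mp(A_r^\ast)\Lp{2}(\Sigma;E)\bigr)^\perp,$$
so $\chi^+(A_r)\Lp{2}$ and $\chi^-(A_r^\ast)\Lp{2}$ are complementary in $\Lp{2}(\Sigma;E)$, as are $\chi^+(A_r^\ast)\Lp{2}$ and $\chi^-(A_r^\ast)\Lp{2}$ (trivially). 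Applying Lemma~\ref{Lem:SubIsom} with $\cH_2=\chi^-(A_r^\ast)\Lp{2}$, $\cH_1=\chi^+(A_r^\ast)\Lp{2}$, $\cH_1'=\chi^+(A_r)\Lp{2}$ identifies $P_{\cH_1,\cH_2}$ with $\chi^+(A_r^\ast)$ itself and yields the claimed isomorphism; the remaining three cases are symmetric. Your maps $\eta^+|_{\chi^+\Lp{2}}$ and $\chi^+|_{\eta^+\Lp{2}}$ are indeed each isomorphisms, but they are not inverse to each other, and any argument premised on that fails outside the selfadjoint case.
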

\begin{proof} 
To prove \ref{AdjProj1}, note that for $u \in \dom(A_r^\ast)$ and $v \in \dom(A_r)$, 
\begin{align*}
\inprod{\chi^{\pm}(A_r)^\ast A_r^\ast u, v} 
&= 
\inprod{ A_r^\ast u, \chi^{\pm}(A_r) v}  = \inprod{ u, A_r \chi^{\pm}(A_r) v}\\
&=
\inprod{ u, \chi^{\pm}(A_r) A_r v} = \inprod{ \chi^{\pm}(A_r)^\ast u, A_r v},
\end{align*}
which shows that $\chi^{\pm}(A_r)^\ast u \in \dom(A_r^\ast)$.
On using the density of $\dom(A_r)$, we obtain \ref{AdjProj1}.

Assertion~\ref{AdjProj2} simply follows by applying the theory from section~\ref{Sec:OpTheory} to the operator $A_r^\ast$ in place of $A_r$ which is again elliptic.
In particular, by elliptic regularity theory, we have that $\dom(A_r^\ast) = \SobH{1}(\Sigma;E)$ and we can apply Proposition~\ref{Prop:AProj} to assert that $\chi^{\pm}(A_r^\ast)$ are bounded projectors.

We prove \ref{AdjProj3} for $\chi^{+}(A_r)$. 
From \eqref{eq:chi+Ar} we obtain
\begin{align*}
\chi^{+}(A_r)u 
&= \frac{\imath}{2\pi} \int_{\set{\imath r: \infty > r > -\infty}} \zeta^{-1} A_r(A_r - \zeta)^{-1}u\ d\zeta  \\ 
&= \frac{\imath}{2\pi} \int_{\infty}^{-\infty} (\imath r)^{-1} A_r(A_r - \imath r)^{-1}u\ d(\imath r) \\
&= \frac{-\imath}{2\pi} \int_{-\infty}^{\infty} A_r(A_r - \imath r)^{-1}u\ \frac{dr}{r}.
\end{align*}
On sending $\imath r \mapsto (-\imath r)$ and replacing $A_r$ with $A_r^\ast$, 
\begin{align*}
\chi^{+}(A_r^\ast) u 
&= \frac{\imath}{2\pi} \int_{\set{-\imath r: -\infty< r < \infty}} \zeta^{-1} A_r^\ast (A_r^\ast - \zeta)^{-1}u\ d\zeta \\
&= \frac{\imath}{2\pi} \int_{-\infty}^\infty (-\imath r)^{-1} A_r^\ast (A_r^\ast + \imath r)^{-1}u\ d(-\imath r) \\
&= \frac{\imath}{2\pi} \int_{-\infty}^\infty A_r^\ast(A_r^\ast + \imath r)^{-1}u\ \frac{dr}{r}.
\end{align*}
Then, observe that for $u \in \dom(A_r)$ and $v \in \dom(A_r^\ast)$,
\begin{align*}
\inprod{\chi^{+}(A_r)u,v} 
&= 
-\frac{\imath}{2\pi} \int_{-\infty}^\infty \inprod{A_r(A_r - \imath r)^{-1}u,v}\ \frac{dr}{r}  \\
&= -\frac{\imath}{2\pi} \int_{-\infty}^\infty \inprod{u, A_r^\ast (A_r^\ast + \imath r)^{-1}v}\ \frac{dr}{r}.
\end{align*}
Also,
$$\inprod{u, \chi^{+}(A_r^\ast)v} = -\frac{\imath}{2\pi} \int_{-\infty}^\infty \inprod{u, A_r^\ast (A_r^\ast + \imath r)^{-1}v}\ \frac{dr}{r},$$
which shows that $\inprod{\chi^{+}(A_r)u,v} = \inprod{u, \chi^{+}(A_r^\ast)v}$ and by the density of $\dom(A_r)$ and $\dom(A_r^\ast)$ coupled with \ref{AdjProj1}, we obtain $\chi^+(A_r)^*=\chi^+(A_r^*)$.
Moreover, 
$$
\chi^-(A_r)^*=(1-\chi^+(A_r))^*=1-\chi^+(A_r^*)=\chi^-(A_r^*).
$$
Since $\sgn(A_r)=\chi^+(A_r)-\chi^-(A_r)$, \ref{AdjProj3} follows.

It suffices to prove \ref{AdjProj4} for $\chi^+(A_r^*)|_{\chi^+(A_r)\Lp{2}(\Sigma;E)}:\chi^+(A_r)\Lp{2}(\Sigma;E) \to \chi^+(A_r^*)\Lp{2}(\Sigma;E)$, the other cases being analogous.
On the one hand, we have $\Lp{2}(\Sigma;E)=\chi^+(A_r)\Lp{2}(\Sigma;E)\oplus\chi^-(A_r)\Lp{2}(\Sigma;E)$ and on the other hand,
\begin{align*}
\Lp{2}(\Sigma;E)
&=
\ker(\chi^-(A_r)^*) \oplus \chi^-(A_r)\Lp{2}(\Sigma;E)\\
&=
\ker(\chi^-(A_r^*)) \oplus \chi^-(A_r)\Lp{2}(\Sigma;E)\\
&=
\chi^+(A_r^*)\Lp{2}(\Sigma;E)\oplus\chi^-(A_r)\Lp{2}(\Sigma;E).
\end{align*}
Lemma~\ref{Lem:SubIsom} with $\cH=\Lp{2}(\Sigma;E)$, $\cH_1=\chi^+(A_r^*)\Lp{2}(\Sigma;E)$, $\cH_1'=\chi^+(A_r)\Lp{2}(\Sigma;E)$, and $\cH_2=\chi^-(A_r)\Lp{2}(\Sigma;E)$ implies \ref{AdjProj4}.
\end{proof}

With this device in hand, we  are able to assert the existence of an $H^\infty$-functional calculus for both $\modulus{A_r}$ and $\modulus{A_r^\ast}$. 

\begin{proposition}
\label{Prop:ModAFC}
The operators $\modulus{A_r}$ and $\modulus{A_r}^\ast$ are invertible elliptic pseudo-differential operators of first order and they admit an $H^\infty$-functional calculus.
Moreover, $\modulus{A_r}^\ast = \modulus{A_r^\ast}$.
\end{proposition}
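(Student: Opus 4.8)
The statement bundles four claims for each of $\modulus{A_r}$ and $\modulus{A_r}^\ast$ --- that it is a $\Psi$DO, that it is elliptic of order $1$, that it is invertible, and that it admits an $H^\infty$-calculus --- together with the identity $\modulus{A_r}^\ast=\modulus{A_r^\ast}$. I would dispatch the bookkeeping parts first. For the identity, write $\modulus{A_r}=\sgn(A_r)A_r$ with $\sgn(A_r)=\chi^+(A_r)-\chi^-(A_r)$ bounded (Proposition~\ref{Prop:AProj}); since $A_r$ is closed and densely defined, $\modulus{A_r}^\ast=A_r^\ast\,\sgn(A_r)^\ast=A_r^\ast\,\sgn(A_r^\ast)$ by Lemma~\ref{Lem:AdjProj}~\ref{AdjProj3}, and because $\sgn(A_r^\ast)$ is an order-zero $\Psi$DO it preserves $\SobH{1}(\Sigma;E)=\dom(A_r^\ast)$ (Lemma~\ref{Lem:AdjProj}~\ref{AdjProj2}) and commutes there with $A_r^\ast$ (Lemma~\ref{Lem:AdjProj}~\ref{AdjProj1}), whence $\modulus{A_r}^\ast=\sgn(A_r^\ast)A_r^\ast=\modulus{A_r^\ast}$ with domain $\SobH{1}(\Sigma;E)$. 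So it suffices to treat $\modulus{A_r}$, with $\modulus{A_r^\ast}$ following as its adjoint. Now $\modulus{A_r}=\sgn(A_r)A_r$ is a $\Psi$DO of order $\le1$, being the composition of the order-zero $\Psi$DO $\sgn(A_r)$ with the first-order operator $A_r$; from $\sgn(A_r)^2=\id$ and $[\sgn(A_r),A_r]=0$ we get $\modulus{A_r}^2=A_r^2$, so the principal symbol of $\modulus{A_r}$ squares to the invertible symbol of $A_r^2$ and is therefore invertible, i.e.\ $\modulus{A_r}$ is elliptic of order exactly $1$; its adjoint $\modulus{A_r^\ast}$ is then also an elliptic $\Psi$DO of order $1$. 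Invertibility of both is Lemma~\ref{Lem:OpSect} together with passing to adjoints.

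The real content is the $H^\infty$-calculus, and my plan is to obtain it first for the \emph{bisectorial} operator $A_r$ and then transfer it to $\modulus{A_r}=\sqrt{A_r^2}$. By Proposition~\ref{Prop:A_rSec}, $A_r$ is invertible and $\omega_r$-bisectorial on the Hilbert space $\Lp{2}(\Sigma;E)$, so by McIntosh's characterisation it has a bounded $H^\infty$-calculus on every bisector $S_\mu$ with $\mu>\omega_r$ as soon as the quadratic estimate $\int_0^\infty\norm{\psi(tA_r)u}^2\,\tfrac{dt}{t}\simeq\norm{u}^2$ is verified for $u\in\close{\ran(A_r)}=\Lp{2}(\Sigma;E)$ and some nonzero $\psi\in\Psi(S_\mu)$. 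For this I would exploit the pseudodifferential structure of Section~\ref{Sec:OpTheory}: $A_r$ is a first-order elliptic differential operator on the closed manifold $\Sigma$ whose resolvent obeys the sharp bound $\modulus{\zeta}\norm{(\zeta-A_r)^{-1}}\le C$ off a bisector (Proposition~\ref{Prop:ASpec}), so $A_r$ is parameter-elliptic in Shubin's sense along every ray missing the bisector and $(\zeta-A_r)^{-1}$ is a genuinely good parameter-dependent $\Psi$DO. In that framework the dyadic pieces $\psi(2^k A_r)$ are almost orthogonal, which I would establish either by Cotlar--Stein estimates on their Schwartz kernels or by comparing $A_r$ with a fixed self-adjoint first-order elliptic operator $L$ on $\Sigma$ (for which the quadratic estimate is just the spectral theorem) and absorbing the difference; either way this yields the quadratic estimate. (Alternatively, one may simply invoke a theorem to the effect that an invertible elliptic $\Psi$DO of positive order with spectrum in a proper subsector has a bounded $H^\infty$-calculus on $L^2$.)

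The transfer step is clean. Fix $\mu\in(\omega_r,\pi/2)$ and let $f$ be bounded and holomorphic on the sector $S_\mu\cap\C_+$. Then $\tilde f(\zeta):=f(\sqrt{\zeta^2})$, with $\sqrt{\,\cdot\,}$ the branch with nonnegative real part, is bounded and holomorphic on the bisector $S_\mu$ --- since $\zeta\mapsto\sqrt{\zeta^2}$ maps $S_\mu$ into $S_\mu\cap\C_+$ --- with $\norm{\tilde f}_\infty\le\norm{f}_\infty$, and the composition rule for the holomorphic functional calculus together with $\modulus{A_r}=\sqrt{A_r^2}=\sgn(A_r)A_r$ (cf.\ \eqref{Eq:Amod}) gives $f(\modulus{A_r})=\tilde f(A_r)$. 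Hence $\norm{f(\modulus{A_r})}=\norm{\tilde f(A_r)}\lesssim\norm{\tilde f}_\infty\le\norm{f}_\infty$, so $\modulus{A_r}$ has a bounded $H^\infty$-calculus on $S_\mu\cap\C_+$ for every such $\mu$ (and hence for all admissible angles). Since on a Hilbert space the adjoint of a sectorial operator with bounded $H^\infty$-calculus again has one, $\modulus{A_r^\ast}=\modulus{A_r}^\ast$ inherits the $H^\infty$-calculus, completing the proof.

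The main obstacle is squarely the quadratic estimate for $A_r$. The operator is genuinely non-selfadjoint --- its principal symbol need not even be diagonalisable, cf.\ \eqref{eq:nondiag} --- so there is no spectral theorem to fall back on, and, crucially, the $H^\infty$-\emph{bound} is not a formal consequence of bisectoriality: the contour-integral definition only gives $\norm{\psi(A_r)}\lesssim\int_\gamma\modulus{\psi}\,\tfrac{\modulus{d\zeta}}{\modulus{\zeta}}$, which for $\psi\in\Psi(S_\mu)$ converges but is not controlled by $\norm{\psi}_\infty$. What makes the estimate go through is precisely the sharp parameter-elliptic pseudodifferential structure on the compact boundary (Lemma~\ref{Lem:SymSpec}, Propositions~\ref{Prop:ASpec} and~\ref{Prop:AProj}); converting that structure into the square-function bound via McIntosh-type harmonic analysis is where essentially all the effort lies, the remaining steps being formal.
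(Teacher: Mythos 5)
Your bookkeeping parts are fine and match the paper: the identity $\modulus{A_r}^\ast=\modulus{A_r^\ast}$ via Lemma~\ref{Lem:AdjProj}, the $\Psi$DO structure from Proposition~\ref{Prop:AProj}, ellipticity from invertibility, invertibility from Lemma~\ref{Lem:OpSect}, and the transfer of the $H^\infty$-calculus from the bisectorial $A_r$ to the sectorial $\modulus{A_r}=\sqrt{A_r^2}$ via the map $\zeta\mapsto\sqrt{\zeta^2}$ is a standard and correct reduction. Your overall strategy --- get the calculus for $A_r$ first, then pass to $\modulus{A_r}$ --- is sound in principle.

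However, there is a genuine gap precisely where you yourself locate the difficulty: the quadratic estimate for $A_r$ is never established. You offer three sketched routes --- Cotlar--Stein almost-orthogonality on Schwartz kernels, comparison with a fixed self-adjoint first-order elliptic operator, or ``simply invoke a theorem'' giving $H^\infty$-calculus for invertible elliptic $\Psi$DOs with sectorial spectrum --- but none is carried out, and the third is not even identified with a concrete citation (presumably you have in mind Seeley-type bounded imaginary powers plus McIntosh's theorem, but the angle bookkeeping is nontrivial and you do not do it). Since you explicitly acknowledge this is ``where essentially all the effort lies,'' the proposal as written does not prove the proposition.

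For comparison, the paper's proof avoids this entirely and is much shorter. It observes that $\modulus{A_r}$ and $\modulus{A_r}^\ast=\modulus{A_r^\ast}$ are \emph{both} invertible elliptic $\Psi$DOs of order one on the closed manifold $\Sigma$, so elliptic regularity forces $\dom(\modulus{A_r})=\dom(\modulus{A_r}^\ast)=\SobH{1}(\Sigma;E)$ with comparable graph norms $\norm{\modulus{A_r}u}\simeq\norm{\modulus{A_r}^\ast u}\simeq\norm{u}_{\SobH{1}}$. That coincidence of the operator domain with its adjoint's domain (with equivalent norms) is exactly the hypothesis of Corollary~5.5 in \cite{AMcN} (taken with $s=t=1$), which then yields the $H^\infty$-calculus for $\modulus{A_r}$ and $\modulus{A_r}^\ast$ in one stroke. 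This is the Auscher--McIntosh--Nahmod criterion ``$\dom(T^s)\cap\ran(T^t)=\dom((T^\ast)^s)\cap\ran((T^\ast)^t)$ with equivalent norms implies $H^\infty$-calculus on a Hilbert space'' --- a criterion you do not mention, and which completely bypasses any direct quadratic-estimate or Cotlar--Stein argument. You should learn it: it is the standard way to harvest $H^\infty$-calculus from elliptic regularity in this setting.
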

\begin{proof}
By Proposition~\ref{Prop:AProj} $\modulus{A_r}$ and $\modulus{A_r}^\ast$ are pseudo-differential operators of first order.
Since they are invertible they are elliptic.

In particular, $\dom(\modulus{A_r}) = \dom({\modulus{A_r}}^\ast)=\SobH{1}(\Sigma;E)$ and $\norm{\modulus{A_r}^\ast u} \simeq \norm{\modulus{A_r}u}\simeq \norm{u}_{\SobH{1}}$.
Corollary~5.5 in \cite{AMcN} (with $s = t = 1$) yields that $\modulus{A_r}$ and $\modulus{A_r}^\ast$ have an $H^\infty$-functional calculus.

From Lemma~\ref{Lem:AdjProj}, we have that 
\begin{equation*}
\modulus{A_r}^\ast = (\sgn(A_r)A_r)^\ast = A_r^\ast\sgn(A_r)^\ast  =\sgn(A_r)^\ast  A_r^\ast = \modulus{A_r^\ast}.
\qedhere
\end{equation*}
\end{proof}

\begin{cor}
\label{Cor:SobFrac} 
\label{Cor:NegPowers}
For any $s\in\R$ we have that 
$$
\SobH{s}(\Sigma;E) = \dom(\modulus{A_r}^s) = \dom({\modulus{A_r^\ast}}^s)= \dom(({\modulus{A_r}}^s)^\ast)= \dom(({\modulus{A_r}}^\ast)^s)
$$
with 
$\norm{u}_{\SobH{s}}  \simeq \norm{\modulus{A_r}^s} \simeq \norm{{\modulus{A_r^\ast}}^s}$.
\hfill$\Box$
\end{cor}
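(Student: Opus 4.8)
The plan is to identify $\dom(\modulus{A_r}^s)$ with $\SobH{s}(\Sigma;E)$ in three stages: non-negative integers $s$ by elliptic regularity, all $s\ge0$ by complex interpolation, and $s<0$ by duality (or directly by pseudo-differential order counting). Throughout, the coincidence of the four operators $\modulus{A_r}^s$, $\modulus{A_r^\ast}^s$, $(\modulus{A_r}^s)^\ast$, $(\modulus{A_r}^\ast)^s$, and hence of their domains, will be read off from $\modulus{A_r}^\ast=\modulus{A_r^\ast}$ (Proposition~\ref{Prop:ModAFC}) together with the standard identity $(T^s)^\ast=(T^\ast)^s$ valid for invertible sectorial $T$, so I will focus on $\modulus{A_r}^s$ itself.

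First I would handle integer exponents. By Proposition~\ref{Prop:ModAFC}, $\modulus{A_r}$ is an invertible elliptic pseudo-differential operator of order $1$ on the closed manifold $\Sigma$, so for $k\in\N$ the power $\modulus{A_r}^k$ is an invertible elliptic pseudo-differential operator of order $k$ and thus (being elliptic, with trivial kernel on $\Lp{2}$) restricts to a Banach space isomorphism $\SobH{k}(\Sigma;E)\to\Lp{2}(\Sigma;E)$. This gives $\dom(\modulus{A_r}^k)=\SobH{k}(\Sigma;E)$ with $\norm{u}_{\SobH{k}}\simeq\norm{\modulus{A_r}^ku}$, the estimate $\norm{u}\lesssim\norm{\modulus{A_r}^ku}$ coming from invertibility via $u=\modulus{A_r}^{-k}\modulus{A_r}^ku$; the identical reasoning applies to $\modulus{A_r^\ast}$, which is again an invertible elliptic pseudo-differential operator of order $1$.

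Next I would promote this to all $s\ge0$ by interpolation. Because $\modulus{A_r}$ admits an $H^\infty$-functional calculus (Proposition~\ref{Prop:ModAFC}), its imaginary powers $\modulus{A_r}^{\imath t}$ are bounded operators, uniformly on bounded $t$-intervals; hence $\modulus{A_r}$ has bounded imaginary powers, and for such operators the fractional power domains form the complex interpolation scale, $\dom(\modulus{A_r}^s)=[\dom(\modulus{A_r}^k),\dom(\modulus{A_r}^{k+1})]_\theta$ for $s=(1-\theta)k+\theta(k+1)$, with equivalent norms. By the integer case and the standard interpolation $[\SobH{k}(\Sigma;E),\SobH{k+1}(\Sigma;E)]_\theta=\SobH{s}(\Sigma;E)$ on a closed manifold, this yields $\dom(\modulus{A_r}^s)=\SobH{s}(\Sigma;E)$, with $\norm{u}_{\SobH{s}}\simeq\norm{\modulus{A_r}^su}$ again from invertibility of $\modulus{A_r}^s$. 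Repeating the argument for $\modulus{A_r^\ast}$ and invoking $(\modulus{A_r}^s)^\ast=(\modulus{A_r}^\ast)^s=\modulus{A_r^\ast}^s$ closes the case $s\ge0$.

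For $s<0$ I would interpret $\modulus{A_r}^s$ as the elliptic pseudo-differential operator of order $s$ supplied above, equivalently the bounded inverse of $\modulus{A_r}^{-s}$ extended along the Sobolev scale to distributional sections, and read $\dom(\modulus{A_r}^s)$ as $\{u:\modulus{A_r}^su\in\Lp{2}(\Sigma;E)\}$; ellipticity of order $s$ on $\Sigma$ then gives $\dom(\modulus{A_r}^s)=\SobH{s}(\Sigma;E)$ with matching norms. Alternatively this drops out of the case $-s>0$ by $\Lp{2}$-duality, since $\SobH{s}(\Sigma;E)$ is the $\Lp{2}$-dual of $\SobH{-s}(\Sigma;E)$ and $\modulus{A_r}^s$ is adjoint-inverse to $\modulus{A_r^\ast}^{-s}$, the remaining identifications following as above. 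The hard part, and the only step that is not bookkeeping, is the interpolation identity $\dom(\modulus{A_r}^s)=[\dom(\modulus{A_r}^k),\dom(\modulus{A_r}^{k+1})]_\theta$: this is precisely where the $H^\infty$-calculus is indispensable, through bounded imaginary powers, while the rest is elliptic regularity, duality, and standard Sobolev interpolation.
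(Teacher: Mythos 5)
Your proof is correct and follows exactly the route the paper leaves implicit: Proposition~\ref{Prop:ModAFC} plus the Auscher--McIntosh--Nahmod interpolation theorem for operators with bounded $H^\infty$-calculus (the very reference used to prove Proposition~\ref{Prop:ModAFC}) gives $\dom(\modulus{A_r}^s)=\SobH{s}(\Sigma;E)$ for $s\in[0,1]$, and the integer, iteration, duality, and $\modulus{A_r}^\ast=\modulus{A_r^\ast}$ bookkeeping extends this to all real $s$ exactly as you outline. The only point worth noting is that for $s<0$ one must, as you correctly do, interpret $\dom(\modulus{A_r}^s)$ via the extension of $\modulus{A_r}^s$ to the distributional Sobolev scale rather than as the $\Lp{2}$-operator domain (which would trivially be all of $\Lp{2}$).
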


\section{The model operator and boundedness of the extension}
\label{Sec:ModelOp}

In this section, we proceed by using a key lemma from \cite{BB12}  (c.f.\ Lemma~\ref{Lem:BdyDef}), which allows us to localise our considerations to a cylinder $[0, T) \times \Sigma$, for some $T > 0$  determined by \ref{StdSetup}.
As in Section~5 in \cite{BB12}, we analyse a simpler operator, the \emph{model operator}, on the infinite cylinder $[0, \infty) \times \Sigma$.
In later sections, we will relate this analysis back to the original operator $D$.

For convenience, define
$Z_{[0,r)} = [0, r) \times \Sigma$ for $r \in (0, \infty]$.

\begin{lem}[Lemma~2.4 in \cite{BB12}]
\label{Lem:BdyDef}
There exists a neighbourhood $U$ around $\Sigma$ in $M$, a constant $T_{c} > 0$ and a diffeomorphism $\Phi = (t,\phi): U \to Z_{[0,T_c)}$ such that 
\begin{enumerate}[(i)]
\item $\Sigma = t^{-1}(0)$,
\item $\Phi_{\Sigma} = \mathrm{id}_{\Sigma}$,
\item $d \Phi(\vec{T}) = \partial_t$ along $\Sigma$,
\item $\tau = dt$ along $\Sigma$, and 
\item ${\Phi_\ast}(\mu) = \modulus{dt} \otimes \nu$.
\end{enumerate} 
\end{lem}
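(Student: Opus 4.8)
The plan is to construct $\Phi$ in two stages: first a ``raw'' collar built from the flow of an extension of $\vec{T}$, which already realises (i)--(iv); then an explicit fibrewise reparametrisation of the normal coordinate which additionally forces the measure normalisation (v) while leaving (i)--(iv) intact. The underlying point is that (i)--(iv) only constrain $\Phi$ along $\Sigma$ and to first order there, whereas (v) is a condition on the whole neighbourhood, so the two demands decouple once we treat them in this order.

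\emph{Stage 1 (raw collar).} First I would extend $\vec{T}$ to a smooth vector field $X$ defined near $\Sigma$ in $M$ which is transverse to $\Sigma$ and inward-pointing (a partition-of-unity argument, then shrink the domain using continuity). Let $\theta_s$ be its flow. Since $\Sigma$ is compact, there is $\epsilon>0$ for which $\Psi_0\colon Z_{[0,\epsilon)}\to M$, $\Psi_0(s,y):=\theta_s(y)$, is a diffeomorphism onto an open neighbourhood $U_0$ of $\Sigma$; write $\Psi_0^{-1}=(t_0,\phi_0)$. Then $\Sigma=t_0^{-1}(0)$, $\phi_0|_\Sigma=\mathrm{id}_\Sigma$, and $d\Psi_0(\partial_{t_0})=X=\vec{T}$ along $\Sigma$, so (i)--(iii) hold for $(t_0,\phi_0)$. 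Moreover $dt_0(\vec{T})=1$ along $\Sigma$ while $dt_0|_{T\Sigma}=0$ since $t_0\equiv0$ on $\Sigma$; by the defining property of $\tau$ this forces $dt_0=\tau$ along $\Sigma$. Hence (iv) is automatic, and it persists under any later modification preserving (i)--(iii).

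\emph{Stage 2 (normalising the measure).} The densities $\Psi_0^{\ast}\mu$ and $|dt_0|\otimes\nu$ on $Z_{[0,\epsilon)}$ are both smooth and positive, so $\Psi_0^{\ast}\mu=a\,(|dt_0|\otimes\nu)$ for a smooth positive function $a$. Evaluating at $t_0=0$ and using $d\Psi_0(\partial_{t_0})=\vec{T}$, $d\Psi_0|_{T\Sigma}=\mathrm{id}$ together with the very definition of $\nu$ as the density induced on $\Sigma$ by $\mu$ and $\vec{T}$, one gets $a(0,\cdot)\equiv1$. Now put $\rho_y(s):=\int_0^s a(\sigma,y)\,d\sigma$ and $G(s,y):=(\rho_y(s),y)$. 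Each $\rho_y$ is an increasing diffeomorphism of $[0,\epsilon)$ onto $[0,\rho_y(\epsilon))$ with $\rho_y(0)=0$, $\rho_y'(0)=a(0,y)=1$, and $G$ is a smooth embedding of $Z_{[0,\epsilon)}$ into $\R\times\Sigma$ with Jacobian determinant $a$. By compactness of $\Sigma$, $T_c:=\inf_{y\in\Sigma}\rho_y(\epsilon)>0$ and $Z_{[0,T_c)}\subset G(Z_{[0,\epsilon)})$; set $U:=\Psi_0(G^{-1}(Z_{[0,T_c)}))$ and $\Phi:=G\circ\Psi_0^{-1}\colon U\to Z_{[0,T_c)}$, $\Phi=(t,\phi)$.

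\emph{Verification.} Properties (i), (ii) pass to $\Phi$ because $G$ fixes the $\Sigma$-factor and $\rho_y(s)=0\iff s=0$; (iii) holds because $dG(\partial_{t_0})=\rho_y'(0)\partial_t=\partial_t$ along $\Sigma$; and (iv) follows exactly as in Stage 1. For (v), compute $\Phi^{\ast}(|dt|\otimes\nu)=(\Psi_0^{-1})^{\ast}\bigl(G^{\ast}(|dt|\otimes\nu)\bigr)$; the change of variables $G$ inserts its Jacobian factor, so $G^{\ast}(|dt|\otimes\nu)=a\,(|dt_0|\otimes\nu)=\Psi_0^{\ast}\mu$, whence $\Phi^{\ast}(|dt|\otimes\nu)=\mu$, i.e.\ $\Phi_{\ast}\mu=|dt|\otimes\nu$. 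The only mildly delicate points are the identity $a(0,\cdot)\equiv1$, which is exactly what the phrase ``$\nu$ induced by $\mu$ and $\vec{T}$'' is designed to encode, and the use of compactness of $\Sigma$ to pass from the fibrewise image of $G$ to an honest cylinder $Z_{[0,T_c)}$; neither is a serious obstacle. Alternatively, since this is Lemma~2.4 of \cite{BB12}, one may simply invoke it, the measure normalisation there being obtained by the same reparametrisation.
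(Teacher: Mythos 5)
Your proof is correct, and it reconstructs the standard flow-box-plus-reparametrisation argument that is the content of Lemma 2.4 in \cite{BB12}; the present paper does not reprove the lemma but simply cites that reference. Both stages are sound: the identity $a(0,\cdot)\equiv 1$ is exactly what ``$\nu$ induced by $\mu$ and $\vec{T}$'' means, the Jacobian of $G$ is block-triangular with determinant $a$ so $G^{\ast}(|dt|\otimes\nu)=a\,(|dt_0|\otimes\nu)=\Psi_0^{\ast}\mu$, and $\rho_y'(0)=1$ ensures (iii)--(iv) survive the reparametrisation; compactness of $\Sigma$ legitimately gives the uniform $T_c>0$.
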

In effect, this lemma allows us to localise our problem to the cylinder $Z_{[0,T_c)}$, but we routinely consider the infinite cylinder $Z_{[0,\infty)}$ in order to not worry about the boundary at the end corresponding to the value $T_c$.
Define the model operator $D_0$ associated with $D$ given the adapted boundary operator $A$ in $Z_{[0,\infty)}$ is given by
\begin{equation}
\label{Eq:ModelOp} 
D_0 = \sym_0( \partial_t + A),
\end{equation}
where we recall $\sym_0(x) = \sym_D(x,\tau(x))$.
It will also be useful to consider the following operator
\begin{equation}
\label{Eq:ModelDual}
(\sym_0^{-1}D_0)^\dagger = -(\partial_t - A^\dagstar)
\end{equation}

Now, letting $r$ denote an admissible spectral cut as before, we define
\begin{equation}
\label{Eq:SobProj}
\SobH[\pm]{s}(A_r) := \chi^{\pm}(A_r) \SobH{s}(\Sigma;E) \subset \SobH{s}(\Sigma;E) 
\end{equation} 
as well as the space 
\begin{equation}
\label{Eq:HCheck}
\checkH(A_r) := \SobH[-]{\frac{1}{2}}(A_r) \oplus \SobH[+]{-\frac{1}{2}}(A_r),
\end{equation}
normed by
\begin{equation}
\label{Eq:HCheckNorm}
\norm{u}^{2}_{\checkH(A_r)} := \norm{\chi^{-}(A_r) u}^2_{\SobH{\frac{1}{2}}} + \norm{\chi^{+}(A_r)u}^2_{\SobH{-\frac{1}{2}}}.
\end{equation}
Similarly, define the space 
\begin{equation} 
\label{Eq:HHat}
\hatH(A_r) := \SobH[-]{-\frac{1}{2}}(A_r) \oplus \SobH[+]{\frac{1}{2}}(A_r) = \checkH(-A_r).
\end{equation} 

Analogous definitions can be made with $A_r^\ast$ in place of $A_r$.
\begin{lem}
\label{Lem:DualSob} 
For $\alpha \in \R$, the $\Lp{2}$-inner product induces a perfect pairing 
$$ \inprod{\cdot,\cdot}:  \SobH[\pm]{-\alpha}(A_r^\ast) \times \SobH[\pm]{\alpha}(A_r) \to \C$$ 
and $\SobH[\pm]{-\alpha}(A_r^\ast) \cong (\SobH[\pm]{\alpha}(A_r))^\ast$.
\end{lem}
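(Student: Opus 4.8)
The plan is to reduce the statement to two standard facts: the classical $\Lp{2}$-duality of Sobolev spaces on the closed manifold $\Sigma$, and the elementary functional-analytic fact that in a topological direct sum $Z = X \oplus Y$ the annihilator $Y^{\perp} \subset Z^\ast$ is, by restriction of functionals, canonically (topologically) isomorphic to $X^\ast$. The bridge between the two is the identity $\chi^{\pm}(A_r)^\ast = \chi^{\pm}(A_r^\ast)$ from Lemma~\ref{Lem:AdjProj}\ref{AdjProj3}, which says that spectral projection is compatible with passing to adjoints and hence with Sobolev duality. It suffices to treat the ``$+$'' case; the ``$-$'' case is symmetric (interchange $\chi^+$ and $\chi^-$, or replace $A_r$ by $-A_r$, using $\chi^{\pm}(-A_r)=\chi^{\mp}(A_r)$).

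\textbf{Steps.} First I would record that the $\Lp{2}(\Sigma;E)$ inner product extends to a perfect pairing $\SobH{-\alpha}(\Sigma;E)\times\SobH{\alpha}(\Sigma;E)\to\C$ inducing a topological isomorphism $\SobH{-\alpha}(\Sigma;E)\cong(\SobH{\alpha}(\Sigma;E))^\ast$; this is classical on a closed manifold and, if desired, can be read off from Corollary~\ref{Cor:SobFrac}. Next, since $\chi^{\pm}(A_r)$ are zero-order pseudodifferential operators (Proposition~\ref{Prop:AProj}) they are bounded on every $\SobH{s}(\Sigma;E)$, and the relations $\chi^++\chi^-=\id$, $(\chi^{\pm})^2=\chi^{\pm}$ (valid on smooth sections) extend by density; hence $\SobH{\alpha}(\Sigma;E)=\SobH[+]{\alpha}(A_r)\oplus\SobH[-]{\alpha}(A_r)$ is a topological direct sum of closed subspaces, and likewise $\SobH{-\alpha}(\Sigma;E)=\SobH[+]{-\alpha}(A_r^\ast)\oplus\SobH[-]{-\alpha}(A_r^\ast)$ (using that $\chi^{\pm}(A_r^\ast)$ are also bounded zero-order projectors, Lemma~\ref{Lem:AdjProj}\ref{AdjProj2}). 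Applying the functional-analytic fact with $Z=\SobH{\alpha}(\Sigma;E)$, $X=\SobH[+]{\alpha}(A_r)$, $Y=\SobH[-]{\alpha}(A_r)$, the key computation is to identify $Y^{\perp}$ inside $\SobH{-\alpha}(\Sigma;E)$: a section $w$ annihilates $Y=\chi^-(A_r)\SobH{\alpha}(\Sigma;E)$ iff $\inprod{w,\chi^-(A_r)v}=\inprod{\chi^-(A_r^\ast)w,v}=0$ for all $v\in\SobH{\alpha}(\Sigma;E)$, i.e.\ iff $\chi^-(A_r^\ast)w=0$, i.e.\ iff $w\in\chi^+(A_r^\ast)\SobH{-\alpha}(\Sigma;E)=\SobH[+]{-\alpha}(A_r^\ast)$. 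Thus $Y^{\perp}=\SobH[+]{-\alpha}(A_r^\ast)$, the $\Lp{2}$-pairing restricts to a perfect pairing $\SobH[+]{-\alpha}(A_r^\ast)\times\SobH[+]{\alpha}(A_r)\to\C$, and $\SobH[+]{-\alpha}(A_r^\ast)\cong(\SobH[+]{\alpha}(A_r))^\ast$; since all spaces here are Hilbert spaces, reflexivity then yields perfectness on both sides.

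\textbf{Main obstacle.} I do not expect a genuine obstacle; the content is that ``spectral projection commutes with Sobolev duality'' and the work is essentially bookkeeping. The one point that must be handled with care is the legitimacy of the formal-adjoint identity $\inprod{w,\chi^-(A_r)v}=\inprod{\chi^-(A_r^\ast)w,v}$ when $\alpha<0$, so that $w\in\SobH{-\alpha}(\Sigma;E)$ need not lie in $\Lp{2}(\Sigma;E)$: this is where one invokes density of $\Ck{\infty}(\Sigma;E)$ in all Sobolev spaces together with the $\SobH{s}$-continuity of $\chi^-(A_r)$ and $\chi^-(A_r^\ast)$, so that the identity, which holds for smooth sections, passes to the limit. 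A secondary check is that the topology $Y^{\perp}$ inherits as a subspace of $(\SobH{\alpha}(\Sigma;E))^\ast$ coincides with the subspace topology it carries as $\SobH[+]{-\alpha}(A_r^\ast)\subset\SobH{-\alpha}(\Sigma;E)$, which is immediate because the duality isomorphism of the first step is a topological isomorphism.
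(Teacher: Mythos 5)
Your proof is correct and rests on the same two pillars as the paper's: the classical Sobolev duality $\SobH{-\alpha}(\Sigma;E)\cong(\SobH{\alpha}(\Sigma;E))^\ast$ on a closed manifold, and the compatibility $\chi^\pm(A_r)^\ast=\chi^\pm(A_r^\ast)$ from Lemma~\ref{Lem:AdjProj}, which in both treatments is what lets spectral projection pass through the pairing. The packaging, however, is genuinely different and arguably cleaner: you invoke the abstract fact that in a topological direct sum $Z=X\oplus Y$ the restriction map gives a topological isomorphism $Y^{\perp}\cong X^\ast$, and then identify the annihilator $Y^{\perp}$ of $\SobH[-]{\alpha}(A_r)$ as $\SobH[+]{-\alpha}(A_r^\ast)$ via the adjoint-projector identity; everything is then handed off to the open mapping theorem hidden inside the direct-sum fact. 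The paper instead works directly with norms: it proves the Cauchy--Schwarz bound $\lvert\inprod{u,v}\rvert\lesssim\norm{u}_{\SobH{-\alpha}}\norm{v}_{\SobH{\alpha}}$ using fractional powers of $\modulus{A_r}$ and $\modulus{A_r^\ast}$ (Corollary~\ref{Cor:SobFrac}), then shows---using the Kato orthogonality $\chi^\pm(A_r^\ast)\Lp{2}\perp\chi^\mp(A_r)\Lp{2}$, which is exactly your annihilator identification in disguise---that the $\SobH{\pm\alpha}$-norms on the ranges of the projectors are still computed by suprema of the pairing over the complementary range, and then closes with a reflexivity argument as you do. What your route buys is that all the norm bookkeeping is absorbed into a single clean piece of functional analysis; what the paper's route buys is that the quantitative ingredients (the fractional-power machinery of Section~\ref{Sec:Hinfty}) are on display, which is consistent with how they are used elsewhere. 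Your remark about the care needed to extend the adjoint identity $\inprod{w,\chi^-(A_r)v}=\inprod{\chi^-(A_r^\ast)w,v}$ from smooth sections to general $w\in\SobH{-\alpha}$ by density is exactly the right thing to flag and corresponds in the paper to the restriction to $\alpha>0$ together with an initial density reduction.
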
 
\begin{proof}
Without loss of generality, we assume that $\alpha > 0$ and by the density of $\chi^{\pm}(A_r^\ast) \Lp{2}(\Sigma;E)$ in $\chi^{\pm}(A_r^\ast) \SobH{-\alpha}(\Sigma;E)$, assume that $u \in \chi^{\pm}(A_r^\ast)\Lp{2}(\Sigma;E)$ and $v \in \chi^{\pm}(A_r)\SobH{\alpha}(\Sigma;E)$.
Then, by Cauchy-Schwarz inequality and Corollary~\ref{Cor:SobFrac},
$$\modulus{\inprod{u,v}} = \modulus{ \inprod{\modulus{A_r}^{-\alpha}u, \modulus{A_r^\ast}^{\alpha} v} }  
	\lesssim \norm{u}_{\SobH{-\alpha}} \norm{v}_{\SobH{\alpha}}.$$
Now, note that $\inprod{\cdot,\cdot}: \SobH{-\alpha}(\Sigma;E) \times \SobH{\alpha}(\Sigma;E) \to \C$ is a duality, and in particular, 
$$ 
\norm{u}_{\SobH{-\alpha}} \simeq \sup_{0 \neq y \in \SobH{\alpha}(\Sigma;E)} \frac{\modulus{\inprod{u,y}}}{\norm{y}_{\SobH{\alpha}}}\quad\text{and}\quad  \norm{v}_{\SobH{\alpha}} \simeq \sup_{0 \neq x \in \SobH{-\alpha}(\Sigma;E)} \frac{\modulus{\inprod{x,v}}}{\norm{x}_{\SobH{-\alpha}}}.
$$
Note that by Section~4 page 156 in \cite{Kato}, we have that 
$$
\chi^{\pm}(A_r^\ast)\Lp{2}(\Sigma;E) = \ker(\chi^{\mp}(A_r^\ast)) \perp \chi^{\mp}(A_r)\Lp{2}(\Sigma;E) ,
$$ 
and therefore we obtain that $ \inprod{u,y} = \inprod{u, y + y'}$ for all $y' \in \chi^{\mp}(A_r)\SobH{\alpha}(\Sigma;E)$.
Then,
$$
\norm{u}_{\SobH{-\alpha}} \simeq \sup_{0 \neq y \in \SobH{\alpha}(\Sigma;E)} \frac{\modulus{\inprod{u,y}}}{\norm{y}_{\SobH{\alpha}}} = \sup_{0 \neq y \in \chi^{\pm}\SobH{\alpha}(\Sigma;E)} \frac{\modulus{\inprod{u,y}}}{\norm{y}_{\SobH{\alpha}}}.
$$
Similarly, we obtain the estimate
$$
\norm{v}_{\SobH{\alpha}} \simeq \sup_{0 \neq x \in \chi^{\pm}\SobH{-\alpha}(\Sigma;E)} \frac{\modulus{\inprod{x,v}}}{\norm{x}_{\SobH{-\alpha}}}.
$$
This shows that  $\inprod{\cdot,\cdot}: \chi^{\pm}(A_r^\ast) \SobH{-\alpha}(\Sigma;E) \times \chi^{\pm}(A_r) \SobH{\alpha}(\Sigma;E) \to \C$ is a duality.
Therefore $\chi^{\pm}(A_r^\ast) \SobH{-\alpha}(\Sigma;E)$ is embedded in $(\chi^{\pm}(A_r)\SobH{\alpha}(\Sigma;E))^\ast$.
However, since  $\SobH{-\alpha}(\Sigma;E)$ and $\SobH{\alpha}(\Sigma;E)$ are reflexive, and $\chi^{\pm}(A_r)$ and $\chi^{\pm}(A_r^\ast)$ are projectors, we have that their ranges $\chi^{\pm}(A_r)\SobH{-\alpha}(\Sigma;E)$ and $\chi^{\pm}(A_r)\SobH{\alpha}(\Sigma;E)$, as well as their counterparts with $A_r^\ast$ in place of $A_r$ are closed, and therefore reflexive. 
Hence the pairing induces an isomorphism $ \chi^{\pm}(A_r^\ast) \SobH{-\alpha}(\Sigma;E) \cong (\chi^{\pm}(A_r) \SobH{\alpha}(\Sigma;E))^\ast$.
\end{proof}

\begin{cor}
\label{Cor:Pairing} 
The $\Lp{2}$-inner product induces a perfect pairing 
$$
\inprod{\cdot,\cdot}:\checkH(A_r) \times \hatH(A_r^\ast) \to \C
$$
and $\checkH(A_r)^\ast \cong \hatH(A_r^\ast)$.
\hfill$\Box$
\end{cor}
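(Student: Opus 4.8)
The plan is to derive the corollary directly from Lemma~\ref{Lem:DualSob} by decomposing both spaces along their $\chi^\pm$-summands. Unwinding \eqref{Eq:HCheck} and \eqref{Eq:HHat} we have $\checkH(A_r)=\SobH[-]{\frac12}(A_r)\oplus\SobH[+]{-\frac12}(A_r)$ and $\hatH(A_r^\ast)=\SobH[-]{-\frac12}(A_r^\ast)\oplus\SobH[+]{\frac12}(A_r^\ast)$, each carrying the $\ell^2$-sum norm of \eqref{Eq:HCheckNorm} and its analogue. Applying Lemma~\ref{Lem:DualSob} once with the minus sign and $\alpha=\frac12$, and once with the plus sign and $\alpha=-\frac12$, gives perfect pairings $\SobH[-]{-\frac12}(A_r^\ast)\times\SobH[-]{\frac12}(A_r)\to\C$ and $\SobH[+]{\frac12}(A_r^\ast)\times\SobH[+]{-\frac12}(A_r)\to\C$, hence isomorphisms $\SobH[-]{-\frac12}(A_r^\ast)\cong(\SobH[-]{\frac12}(A_r))^\ast$ and $\SobH[+]{\frac12}(A_r^\ast)\cong(\SobH[+]{-\frac12}(A_r))^\ast$. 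Since the dual of a finite $\ell^2$-direct sum is the corresponding sum of duals,
$$
\checkH(A_r)^\ast\cong(\SobH[-]{\frac12}(A_r))^\ast\oplus(\SobH[+]{-\frac12}(A_r))^\ast\cong\SobH[-]{-\frac12}(A_r^\ast)\oplus\SobH[+]{\frac12}(A_r^\ast)=\hatH(A_r^\ast),
$$
and a direct sum of perfect pairings is perfect.

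The one point needing attention is that this identification is the one induced by the $\Lp{2}$-inner product, i.e.\ that for $u=u_-+u_+\in\checkH(A_r)$ and $v=v_-+v_+\in\hatH(A_r^\ast)$ the pairing equals $\inprod{u_-,v_-}+\inprod{u_+,v_+}$ with no off-diagonal contribution. By density it suffices to check this for $u,v\in\Lp{2}(\Sigma;E)$, where $\inprod{u,v}$ is the honest $\Lp{2}$-inner product and expands into four terms; the two cross terms vanish since, using $\chi^\pm(A_r)^\ast=\chi^\pm(A_r^\ast)$ from Lemma~\ref{Lem:AdjProj}~\ref{AdjProj3} together with $\chi^+(A_r)\chi^-(A_r)=\chi^-(A_r)\chi^+(A_r)=0$, one gets $\inprod{u_-,v_+}=\inprod{\chi^+(A_r)\chi^-(A_r)u_-,v_+}=0$ and symmetrically $\inprod{u_+,v_-}=0$.

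I do not expect a genuine obstacle: all the analytic substance sits in Lemma~\ref{Lem:DualSob} and Lemma~\ref{Lem:AdjProj}, and the proof is essentially bookkeeping — keeping track of which sign and which value of $\alpha$ feeds Lemma~\ref{Lem:DualSob} on each summand, and checking that the off-diagonal blocks of the $\Lp{2}$-pairing vanish. Perfectness in the remaining direction, $\hatH(A_r^\ast)^\ast\cong\checkH(A_r)$, then follows from the same direct-sum-of-perfect-pairings observation (equivalently, from reflexivity of all spaces in sight, being finite sums of closed subspaces of Sobolev spaces over the compact manifold $\Sigma$).
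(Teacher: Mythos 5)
Your proposal is correct and follows exactly the route the paper intends: the corollary is stated with a terminating $\Box$ precisely because it is an immediate consequence of Lemma~\ref{Lem:DualSob} applied once to each $\chi^\pm$-summand (with $\alpha=\frac12$ on the minus side, $\alpha=-\frac12$ on the plus side), plus the observation that the $\ell^2$-direct sum of perfect pairings is perfect. Your extra check that the off-diagonal blocks of the $\Lp{2}$-pairing vanish via $\chi^\pm(A_r)^\ast=\chi^\pm(A_r^\ast)$ and $\chi^+(A_r)\chi^-(A_r)=0$ is exactly the bookkeeping the paper leaves implicit, and it is carried out correctly.
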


Despite the fact that it may seem that the space $\checkH(A_r)$ depends on the cut $r$, in the following, we show that it is independent of the cut. 

\begin{proposition}
\label{Prop:CheckEquiv}
Let $r, q \in \R$ be two admissible spectral cuts.
Then, we have that $\checkH(A_r) = \checkH(A_q)$ with the norm equivalence $\norm{u}_{\checkH(A_r)} \simeq \norm{u}_{\checkH(A_q)}.$
\end{proposition}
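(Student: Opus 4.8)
The plan is to reduce the statement to a finite-rank perturbation argument, exploiting the discreteness of the spectrum of $A$ (Proposition~\ref{Prop:ASpec}~\ref{ASpec3}) together with the fact that the only difference between the two cuts is which finitely many generalised eigenspaces get assigned to the ``$+$'' versus the ``$-$'' side. Without loss of generality assume $q < r$. By Proposition~\ref{Prop:ASpec}, the set $\Lambda := \spec(A) \cap \{\zeta : q < \Re\zeta < r\}$ is finite. Let $P_\Lambda$ be the (bounded, finite-rank) Riesz spectral projector of $A$ onto the sum of the generalised eigenspaces associated with $\Lambda$; since these eigenspaces consist of smooth sections, $P_\Lambda$ maps $\SobH{s}(\Sigma;E)$ to itself continuously for every $s\in\R$ and its range lies in $\Ck{\infty}(\Sigma;E)$. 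The key algebraic identity is
$$
\chi^+(A_q) = \chi^+(A_r) + P_\Lambda, \qquad \chi^-(A_q) = \chi^-(A_r) - P_\Lambda,
$$
which follows from writing each projector as a contour integral of $\zeta \mapsto A(\zeta-A)^{-1}$ and deforming the vertical contour $l_q$ to $l_r$, picking up exactly the poles in $\Lambda$.

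Granting this identity, I would argue as follows. First, for the set-theoretic equality $\checkH(A_r) = \checkH(A_q)$: a distribution $u$ lies in $\checkH(A_r)$ iff $\chi^-(A_r)u \in \SobH{1/2}(\Sigma;E)$ and $\chi^+(A_r)u \in \SobH{-1/2}(\Sigma;E)$. Using the identity and the fact that $P_\Lambda u$ is automatically smooth whenever $u$ is any of the relevant Sobolev-regularity pieces (because $P_\Lambda$ has finite-dimensional smooth range, so $P_\Lambda$ extends to all of $\SobH{-1/2}(\Sigma;E)$ with image in $\Ck{\infty}$), one checks directly that $\chi^-(A_q)u = \chi^-(A_r)u - P_\Lambda u$ and $\chi^+(A_q)u = \chi^+(A_r)u + P_\Lambda u$ lie in $\SobH{1/2}$ and $\SobH{-1/2}$ respectively, and conversely. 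A slightly delicate point is that $u \in \checkH(A_r)$ is a priori only a sum of two distributions of finite order, so one must make sure $P_\Lambda u$ is well-defined on it; this is fine because $P_\Lambda$ extends continuously to $\SobH{-N}(\Sigma;E)$ for any $N$, its range being spanned by finitely many smooth sections paired against finitely many smooth sections (the generalised eigenfunctions of $A^\ast$).

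For the norm equivalence, I would use the closed graph theorem, or a direct estimate. The identity map $\checkH(A_r) \to \checkH(A_q)$ is well-defined by the first part; both spaces are Banach (each is a direct sum of closed subspaces of Sobolev spaces), so by the closed graph theorem it suffices to check the map is closed, which is immediate since convergence in either $\checkH$-norm implies convergence in, say, $\SobH{-1/2}(\Sigma;E)$, and the projectors and $P_\Lambda$ are continuous there. Alternatively, estimate directly:
$$
\norm{\chi^-(A_q)u}_{\SobH{1/2}} \leq \norm{\chi^-(A_r)u}_{\SobH{1/2}} + \norm{P_\Lambda u}_{\SobH{1/2}}
\lesssim \norm{\chi^-(A_r)u}_{\SobH{1/2}} + \norm{u}_{\checkH(A_r)},
$$
where the last step uses that $P_\Lambda$ factors through a fixed finite-dimensional space on which all Sobolev norms are equivalent, so $\norm{P_\Lambda u}_{\SobH{1/2}} \lesssim \norm{P_\Lambda u}_{\SobH{-1/2}} \lesssim \norm{u}_{\checkH(A_r)}$; the $\chi^+$ piece is handled the same way, and then symmetry in $q \leftrightarrow r$ gives the reverse inequality.

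The main obstacle I anticipate is not any single estimate but getting the contour-deformation identity $\chi^\pm(A_q) = \chi^\pm(A_r) \pm P_\Lambda$ cleanly justified at the level of the order-zero pseudodifferential operators defined by Grubb's formula \eqref{eq:chi+Ar} — one needs the resolvent bounds of Proposition~\ref{Prop:ASpec}~\ref{ASpec2} to control the integral at infinity while sliding the vertical contour across the strip $q \le \Re\zeta \le r$, and one needs to know that each pole contributes precisely the algebraic Riesz projector. Once that is in hand, everything else is the routine finite-rank-perturbation bookkeeping sketched above.
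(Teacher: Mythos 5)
Your proposal is correct and takes essentially the same approach as the paper: both identify the difference between the spectral projectors for the two cuts as a fixed finite-rank operator whose range is the (smooth, finite-dimensional) sum of generalised eigenspaces to eigenvalues with $q<\Re\lambda<r$, and then exploit the fact that all Sobolev norms are mutually equivalent on that range. The paper obtains this operator algebraically as $\chi^-(A_r)\chi^+(A_q)$ using commutativity of the projectors, whereas you obtain the same operator as $P_\Lambda$ via contour deformation, but this is a cosmetic rather than substantive difference.
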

\begin{proof}
Without loss of generality, assume that $q < r$ and fix  $u \in \Ck{\infty}(\Sigma;E)$ and recall that
$$ 
\norm{u}_{\checkH(A_q)}^2 
= 
\norm{\chi^{-}(A_q)u}^2_{\SobH{\frac{1}{2}}}  + \norm{\chi^{+}(A_q)u}^2_{\SobH{-\frac{1}{2}}}.
$$
On noting that $A_r, A_q, \chi^{\pm}(A_r), \chi^{\pm}(A_q)$ all commute on the domain $\dom(A_r) = \dom(A_q) = \SobH{1}(\Sigma;E)$ by functional calculus, and 
$$\chi^{-}(A_r)u
=
\chi^{-}(A_r)\cbrac{\chi^{-}(A_q)u + \chi^+(A_q)u} 
= 
\chi^{-}(A_q)u + \chi^{-}(A_r)\chi^+(A_q)u.
$$
Moreover,  $\chi^{-}(A_r)\chi^+(A_q)$ is a pseudo-differential projector of order zero and $\chi^{-}(A_r)\chi^+(A_q)\SobH{\alpha}(\Sigma;E)$ and $\chi^{-}(A_q)\SobH{\alpha}(\Sigma;E)$ are complementary subspaces for any $\alpha \in \R$. 
Note that these projectors are in general non-orthogonal in $\Lp{2}(\Sigma;E)$ and therefore, 
$$\norm{\chi^{-}(A_r)u}_{\SobH{\frac{1}{2}}} 
\simeq 
\norm{\chi^{-}(A_q)u}_{\SobH{\frac{1}{2}}} + \norm{\chi^{-}(A_r)\chi^+(A_q)u}_{\SobH{\frac{1}{2}}}.$$ 

Since the projector $\chi^{-}(A_r)\chi^+(A_q)$ has finite rank, 
\begin{align*}
\norm{\chi^{-}(A_r) \chi^{+}(A_q)u}_{\SobH{\frac{1}{2}}}
	&\simeq \norm{\chi^{-}(A_r) \chi^{+}(A_q)u}_{\SobH{-\frac{1}{2}}}.
\end{align*}

Since $q < r$, we have
$$
\chi^{+}(A_q)u
=
\chi^{+}(A_q)\cbrac{\chi^{-}(A_r)u + \chi^+(A_r)u} 
= 
\chi^{-}(A_r)\chi^{+}(A_q)u + \chi^+(A_r)u, 
$$
and similar to our earlier observation,
$$ 
\norm{\chi^{+}(A_q)u}_{\SobH{-\frac{1}{2}}} 
\simeq 
\norm{\chi^{+}(A_r)u}_{\SobH{-\frac{1}{2}}} + \norm{\chi^{-}(A_r)\chi^+(A_q)u}_{\SobH{-\frac{1}{2}}}.$$ 

Combining these estimates, 
\begin{align*}
\norm{u}_{\checkH(A_r)} &\simeq \norm{\chi^{-}(A_q)u}_{\SobH{\frac{1}{2}}} +  \norm{\chi^{-}(A_r)\chi^+(A_q)u}_{\SobH{\frac{1}{2}}} +  \norm{\chi^{+}(A_r)u}_{\SobH{-\frac{1}{2}}} \\
	&\simeq \norm{\chi^{-}(A_q)u}_{\SobH{\frac{1}{2}}} +  \norm{\chi^{-}(A_r)\chi^+(A_q)u}_{\SobH{-\frac{1}{2}}} +  \norm{\chi^{+}(A_r)u}_{\SobH{-\frac{1}{2}}} \\
	&\simeq \norm{\chi^{-}(A_q)u}_{\SobH{\frac{1}{2}}}  + \norm{\chi^{+}(A_q)u}_{\SobH{-\frac{1}{2}}} \\
	&\simeq \norm{u}_{\checkH(A_q)}.
\end{align*}
Since  $\Ck{\infty}(\Sigma;E)$ is dense in both $\checkH(A_q)$ and $\checkH(A_r)$, we have that $\checkH(A_q) = \checkH(A_r)$ with equivalence of norms.
\end{proof}

As a result of this Proposition, we simply write $\checkH(A)$ rather than $\checkH(A_r)$ and $\norm{\cdot}_{\checkH(A)}$ for the norm.

Now, recall  $T_c > 0$ given by Lemma~\ref{Lem:BdyDef}, and fix a smooth function $\eta:\R\to[0,1]$ to satisfy:
\begin{equation}
\label{Eq:Cutoff}
\eta(t) := \begin{cases} 	1,	&t \in [0,T_c/2], \\
				0, 	&t \in [2T_c/3, \infty).
	\end{cases}
\end{equation}
Then, for $u \in \Ck{\infty}(\Sigma;E)$, define the boundary extension operators by
\begin{equation}
\label{Eq:BdyExt}
(\ext u)(t,x) := \eta(t) (\exp(-t \modulus{A_r})u)(x)\quad \text{and}\quad (\ext^\ast u)(t,x) := \eta(t) (\exp(-t\modulus{A_r^\ast})u)(x) 
\end{equation}

The following lemma, which is a direct consequence of the fact that the operators $\modulus{A_r}$ and $\modulus{A_r^\ast}$ have an $H^\infty$-functional calculus, is the key observation required in the estimates we will prove.
\begin{lem}
\label{Lem:ModAFC}
For any $\alpha > 0$ and $\mu \in (0, \pi/2)$, the function
$$\zeta \mapsto \psi_\alpha(\zeta) = \zeta^\alpha \exp(-\zeta) \in  \Psi(S_\mu)$$ 
 and
$$ \int_{0}^\infty \norm{ t^\alpha \modulus{A_r}^\alpha \exp(-t \modulus{A_r}) u}^2\ \frac{dt}{t} \simeq  \norm{u}^2$$
for all $u \in \Lp{2}(\Sigma;E)$.
\end{lem}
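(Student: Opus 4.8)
The plan is to treat the two assertions in turn, with Proposition~\ref{Prop:ModAFC} doing the heavy lifting for the second one. The first assertion — that $\psi_\alpha(\zeta)=\zeta^\alpha\exp(-\zeta)$ lies in the function class (written $\Psi(S_\mu)$ in the statement, $\Psi(S_\mu\cap\C_+)$ in the definition) for every $\mu\in(0,\pi/2)$ — is elementary. Holomorphy of $\psi_\alpha$ on $S_\mu\cap\C_+$ is clear since $\zeta\mapsto\zeta^\alpha$ is holomorphic off the negative real axis. For the decay, on $\Re\zeta>0$ one has $|\exp(-\zeta)|=e^{-\Re\zeta}\le1$, so $|\psi_\alpha(\zeta)|\le|\zeta|^\alpha$ near the origin; and on $S_\mu\cap\C_+$ with $\mu<\pi/2$ one has $\Re\zeta\ge|\zeta|\cos\mu$, whence $|\psi_\alpha(\zeta)|\le|\zeta|^\alpha e^{-|\zeta|\cos\mu}\lesssim|\zeta|^{-\alpha}$ for large $|\zeta|$. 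Combining, $|\psi_\alpha(\zeta)|\le C\min\{|\zeta|^\alpha,|\zeta|^{-\alpha}\}$, so $\psi_\alpha\in\Psi(S_\mu\cap\C_+)$ with exponent $\alpha$.

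For the quadratic estimate, I would fix any $\mu\in(\omega_r,\pi/2)$, which is possible since $\omega_r<\pi/2$ by Proposition~\ref{Prop:A_rSec}. By Lemma~\ref{Lem:OpSect} and Proposition~\ref{Prop:ModAFC}, $\modulus{A_r}$ is an invertible $\omega_r$-sectorial operator on $\cH=\Lp{2}(\Sigma;E)$ admitting an $H^\infty(S_\mu\cap\C_+)$-functional calculus. McIntosh's theorem, as recalled around \eqref{Eq:QEst}, then gives $\int_0^\infty\norm{\psi(t\modulus{A_r})u}^2\,\frac{dt}{t}\simeq\norm{u}^2$ for every nonzero $\psi\in\Psi(S_\mu\cap\C_+)$ and every $u\in\close{\ran(\modulus{A_r})}$. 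Since $\modulus{A_r}$ is invertible, $\ran(\modulus{A_r})$ is dense, so $\close{\ran(\modulus{A_r})}=\cH$ and the estimate holds for all $u\in\Lp{2}(\Sigma;E)$. Applying it with $\psi=\psi_\alpha$ yields the claimed equivalence, once $\psi_\alpha(t\modulus{A_r})$ is identified with $t^\alpha\modulus{A_r}^\alpha\exp(-t\modulus{A_r})$.

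That identification is the only point needing care, and I expect it to be the main (minor) obstacle — it is pure bookkeeping with the functional calculus rather than a new estimate. Here $\modulus{A_r}^\alpha$ is the fractional power supplied by the sectorial calculus and $\exp(-t\modulus{A_r})$ is the holomorphic semigroup of Proposition~\ref{Prop:ExpHol}; the smoothing property of the semigroup gives $\exp(-t\modulus{A_r})\cH\subset\dom(\modulus{A_r}^\alpha)$ for $t>0$, so $t^\alpha\modulus{A_r}^\alpha\exp(-t\modulus{A_r})$ is a bounded operator. Multiplicativity of the $H^\infty$-calculus — composing $\zeta\mapsto t\zeta$ with $\psi_\alpha$ and factoring $\psi_\alpha(z)=z^\alpha e^{-z}$ — then gives $\psi_\alpha(t\modulus{A_r})=(t\modulus{A_r})^\alpha\exp(-t\modulus{A_r})=t^\alpha\modulus{A_r}^\alpha\exp(-t\modulus{A_r})$ as bounded operators on $\cH$, completing the argument. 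The substantive input, namely the existence of the $H^\infty$-calculus for $\modulus{A_r}$, is already available from Proposition~\ref{Prop:ModAFC}, so no further analytic work is needed.
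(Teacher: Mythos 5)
Your proposal is correct and follows essentially the same route as the paper: verify that $\psi_\alpha\in\Psi(S_\mu\cap\C_+)$ and then invoke the $H^\infty$-functional calculus of $\modulus{A_r}$ (Proposition~\ref{Prop:ModAFC}) together with McIntosh's quadratic-estimate characterisation \eqref{Eq:QEst}. The paper states all of this very tersely, while you spell out the decay estimate, the fact that invertibility gives $\close{\ran(\modulus{A_r})}=\Lp{2}(\Sigma;E)$, and the identification $\psi_\alpha(t\modulus{A_r})=t^\alpha\modulus{A_r}^\alpha\exp(-t\modulus{A_r})$; none of this is a different argument, just the details the paper leaves implicit.
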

\begin{proof}
We note that for $\modulus{\zeta} \to 0$ or $\modulus{\zeta} \to \infty$, $\modulus{\psi_\alpha(\zeta)} \to 0$ and it is easy to see that it is holomorphic. 
This shows that $\psi_\alpha \in \Psi(S_\mu)$ for any $\mu \in (0, \pi/2)$. 
By Proposition~\ref{Prop:ModAFC}, the $\omega_r$-sectorial invertible operators $\modulus{A_r}$ and $\modulus{A_r^\ast}$ have an $H^\infty$-functional calculus and so we obtain the required quadratic estimate in the conclusion.
\end{proof} 

Recall that for an operator $S$, the graph norm is given by $\norm{\cdot}_S^2 = \norm{S\cdot}^2 + \norm{\cdot}^2.$
\begin{proposition}
\label{Prop:ExtEst}
For $u \in \Ck{\infty}(\Sigma;E)$, 
$$ 
\norm{ \ext u}_{D_0} \lesssim \norm{u}_{\checkH(A)}
\quad \text{and}\quad 
\norm{\ext^\ast u}_{(\sym_0^{-1} D_0)^\dagger} \lesssim \norm{u}_{\hatH(A^\dagstar)}.
$$
\end{proposition}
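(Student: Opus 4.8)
The plan is to split $u$ using the spectral projections, write $(\partial_t+A)\ext u$ out explicitly, and estimate each term by one of the two summands of $\norm{u}_{\checkH(A)}^2=\norm{\chi^-(A_r)u}_{\SobH{\frac12}}^2+\norm{\chi^+(A_r)u}_{\SobH{-\frac12}}^2$; the two genuinely singular terms will be controlled by the quadratic estimate of Lemma~\ref{Lem:ModAFC}. I first record the routine facts. The projections $\chi^\pm(A_r)$ are bounded on $\Lp{2}(\Sigma;E)$ and, being order-zero pseudo-differential operators, on every $\SobH{s}(\Sigma;E)$; they commute with $\modulus{A_r}$ and with $\exp(-t\modulus{A_r})$ (all being functions of $A_r$), so in each of these norms $\norm{v}\simeq\norm{\chi^+(A_r)v}+\norm{\chi^-(A_r)v}$, the ranges $\chi^\pm(A_r)\Lp{2}(\Sigma;E)$ are semigroup-invariant, and on them $A_r=\pm\modulus{A_r}$. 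Since $u\in\Ck{\infty}(\Sigma;E)=\bigcap_N\dom(\modulus{A_r}^N)$, the section $\ext u$ is smooth and compactly supported in $t$, so $D_0\ext u=\sym_0(\partial_t+A)\ext u$ is classical; as $\sym_0$ is a bounded invertible bundle homomorphism it suffices to bound $\norm{\ext u}_{\Lp{2}(Z_{[0,\infty)};E)}$ and $\norm{(\partial_t+A)\ext u}_{\Lp{2}(Z_{[0,\infty)};E)}$ by $\norm{u}_{\checkH(A)}$.

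Write $u_\pm:=\chi^\pm(A_r)u$. Differentiating $\ext u=\eta(t)\exp(-t\modulus{A_r})u$, using $\partial_t\exp(-t\modulus{A_r})=-\modulus{A_r}\exp(-t\modulus{A_r})$ (Proposition~\ref{Prop:ExpHol}), $A=A_r+r$, and the identity $A_r-\modulus{A_r}=-2\chi^-(A_r)\modulus{A_r}$, one obtains
\begin{equation*}
(\partial_t+A)\ext u=\eta'(t)\exp(-t\modulus{A_r})u+\eta(t)\,r\exp(-t\modulus{A_r})u_++\eta(t)(r-2\modulus{A_r})\exp(-t\modulus{A_r})u_-.
\end{equation*}
Since $\eta$ is supported in $[0,2T_c/3]$ and $\eta'$ in $[T_c/2,2T_c/3]$, and $\norm{\ext u}_{\Lp{2}(Z_{[0,\infty)};E)}^2\le\int_0^{2T_c/3}\norm{\exp(-t\modulus{A_r})u}^2\,dt$, splitting into $\chi^\pm$-parts reduces everything (up to the fixed constant $r$) to estimating $\int_0^{2T_c/3}\norm{\exp(-t\modulus{A_r})u_\pm}^2\,dt$, $\int_0^{2T_c/3}\norm{\modulus{A_r}\exp(-t\modulus{A_r})u_-}^2\,dt$, and the $\eta'$-contribution.

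For the $u_+$-part I would write $\exp(-t\modulus{A_r})u_+=\bigl(t^{1/2}\modulus{A_r}^{1/2}\exp(-t\modulus{A_r})\bigr)\bigl(t^{-1/2}\modulus{A_r}^{-1/2}u_+\bigr)$; since $\modulus{A_r}^{-1/2}$ is bounded (Lemma~\ref{Lem:OpSect}), Lemma~\ref{Lem:ModAFC} with $\alpha=\tfrac12$ and Corollary~\ref{Cor:SobFrac} give
\begin{equation*}
\int_0^\infty\norm{\exp(-t\modulus{A_r})u_+}^2\,dt=\int_0^\infty\norm{t^{1/2}\modulus{A_r}^{1/2}\exp(-t\modulus{A_r})\bigl(\modulus{A_r}^{-1/2}u_+\bigr)}^2\,\frac{dt}{t}\simeq\norm{\modulus{A_r}^{-1/2}u_+}^2\simeq\norm{\chi^+(A_r)u}_{\SobH{-\frac12}}^2,
\end{equation*}
and applying the same factorization to $\modulus{A_r}\exp(-t\modulus{A_r})u_-=\bigl(t^{1/2}\modulus{A_r}^{1/2}\exp(-t\modulus{A_r})\bigr)\bigl(t^{-1/2}\modulus{A_r}^{1/2}u_-\bigr)$ gives $\int_0^\infty\norm{\modulus{A_r}\exp(-t\modulus{A_r})u_-}^2\,dt\simeq\norm{\modulus{A_r}^{1/2}u_-}^2\simeq\norm{\chi^-(A_r)u}_{\SobH{\frac12}}^2$. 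The remaining terms need only crude bounds: $\norm{\exp(-t\modulus{A_r})u_-}\lesssim\norm{u_-}\lesssim\norm{\modulus{A_r}^{1/2}u_-}$ uniformly in $t\ge0$ (uniform boundedness of the semigroup and of $\modulus{A_r}^{-1/2}$), while for $t\ge T_c/2$, $\norm{\exp(-t\modulus{A_r})u_+}\le\norm{\modulus{A_r}^{1/2}\exp(-t\modulus{A_r})}\,\norm{\modulus{A_r}^{-1/2}u_+}=t^{-1/2}\norm{\psi_{1/2}(t\modulus{A_r})}\,\norm{\modulus{A_r}^{-1/2}u_+}\lesssim\norm{\chi^+(A_r)u}_{\SobH{-\frac12}}$, the uniform bound on $\norm{\psi_{1/2}(t\modulus{A_r})}$ being part of the $H^\infty$-calculus for $\modulus{A_r}$ (Proposition~\ref{Prop:ModAFC}). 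Together with the estimates above, these control the $\eta'$-contribution and $\norm{\ext u}_{\Lp{2}(Z_{[0,\infty)};E)}^2$, so that $\norm{\ext u}_{D_0}\lesssim\norm{u}_{\checkH(A)}$.

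The second inequality follows by the identical argument with $A^\dagstar$ in place of $A$: by Lemma~\ref{Lem:AdjProj} and Proposition~\ref{Prop:ModAFC}, $A_r^\ast$ and $\modulus{A_r^\ast}=\modulus{A_r}^\ast$ share all the properties used above, and \eqref{Eq:ModelDual} yields $(\sym_0^{-1}D_0)^\dagger\ext^\ast u=-\eta'\exp(-t\modulus{A_r^\ast})u+\eta\,r\exp(-t\modulus{A_r^\ast})u_-^\ast+\eta(r+2\modulus{A_r^\ast})\exp(-t\modulus{A_r^\ast})u_+^\ast$, with $u_\pm^\ast:=\chi^\pm(A_r^\ast)u$; here the roles of $+$ and $-$ are interchanged, in accordance with $\hatH(A^\dagstar)=\checkH(-A^\dagstar)$ whose norm places $\SobH{\frac12}$ on $\chi^+(A_r^\ast)u$ and $\SobH{-\frac12}$ on $\chi^-(A_r^\ast)u$, and the estimates transfer verbatim. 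The step I expect to be the crux is precisely the treatment of $\exp(-t\modulus{A_r})u_+$ near $t=0$ and of $\modulus{A_r}\exp(-t\modulus{A_r})u_-$: a naive bound makes each a $t$-integrand of size $\sim t^{-2}\norm{\cdot}^2$ with divergent integral over $(0,2T_c/3)$, and it is exactly the sharp square-function estimate from the $H^\infty$-functional calculus of $\modulus{A_r}$ (Lemma~\ref{Lem:ModAFC}) that upgrades these to the fractional Sobolev norms $\norm{\chi^+(A_r)u}_{\SobH{-\frac12}}$ and $\norm{\chi^-(A_r)u}_{\SobH{\frac12}}$; everything else is routine bookkeeping.
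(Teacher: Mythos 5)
Your proof is correct and follows essentially the same route as the paper's: expand $(\partial_t+A)\ext u$ (the paper equivalently works with $D_{0,r}=\sym_0(\partial_t+A_r)$ and treats $u=\chi^\pm(A_r)u$ as separate cases rather than carrying the $r$-terms along), and control the two singular integrals via the quadratic estimate of Lemma~\ref{Lem:ModAFC}, i.e.\ the $H^\infty$-functional calculus of $\modulus{A_r}$, with the remaining terms absorbed by easy uniform bounds. The treatment of $\ext^\ast$ by replacing $A_r$ with $-A_r^\ast$ and noting $\checkH(-A_r^\ast)=\hatH(A^\dagstar)$ is also exactly what the paper does.
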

\begin{proof}
First, write $D_{0,r} = \sym_0(\partial_t + A_r)$ and note that $D_0 = D_{0,r} + \sym_0 r$.
It is easy to see that $\norm{u}_{D_0} \simeq \norm{u}_{D_{0,r}}$ and therefore, we use the latter norm to establish the required estimate. 
The norm $\norm{\ext u}_{D_{0,r}}$ is then given by 
$$
\norm{\ext u}_{D_{0,r}}^2 
= 
\int_{0}^{T_c} \norm{D_{0,r}( \eta(t) \exp(-t\modulus{A_r})u)}^2\ dt + \int_{0}^{T_c} \norm{ \eta(t) \exp(-t\modulus{A_r})u}^2\ dt
$$
since $\spt \eta \subset [0, T_c]$.

We first estimate the latter term:
\begin{align*}
\int_0^{T_c} \norm{\eta(t)\exp(-t\modulus{A_r})u}^2\ dt 
&\lesssim
\int_0^\infty \norm{\exp(-t\modulus{A_r})u}^2\ dt  \\
&= \int_0^\infty \norm{ t^{\frac{1}{2}} \modulus{A_r}^{\frac{1}{2}} \exp(-t\modulus{A_r})\ \modulus{A_r}^{-\frac{1}{2}}u}^2\ \frac{dt}{t} \\
&\simeq \norm{\modulus{A_r}^{-\frac{1}{2}}u}^2 \\
&\simeq 
\norm{u}_{\SobH{-\frac{1}{2}}}^2\\
&\le
\norm{u}_{\checkH}^2,
\end{align*} 
where we used Lemma~\ref{Lem:ModAFC}.

Now, note that
\begin{align}
\partial_t (\ext u)(t,\cdot) 
&=  \eta'(t) \exp(-t\modulus{A_r}) u - \eta(t) \modulus{A_r} \exp(-t\modulus{A_r})u , \notag\\
A_r (\ext u)(t,\cdot) 
&= 
\eta(t) A_r \exp(-t\modulus{A_r})u.
\label{Eq:HEComp}  
\end{align}
It suffices to consider the two cases $u=\chi^\pm(A_r)u$. 
If $u = \chi^{-}(A_r)u$, then we have that $\modulus{A_r}u = - A_r$ and therefore,
$$ D_{0,r} (\ext u)(t, \cdot) = \sym_0(\eta'(t) - 2\eta(t)\modulus{A_r})  \exp(-t\modulus{A_r})u.$$
The norm of this quantity is dominated by two terms as follows: 
\begin{multline*}
\int_0^\infty \norm{D_{0,r} (\ext u)(t, \cdot)}^2\ dt \lesssim \\  
	\int_{0}^\infty \eta'(t)^2 \norm{\exp(-t\modulus{A_r})u}^2\ dt +  \int_0^\infty 4\eta(t)^2 \norm{\modulus{A_r}\exp(-t\modulus{A_r})u}^2\ dt.
\end{multline*}
The first term is estimated as before, where as the second term is estimated by 
\begin{align*}
\int_0^\infty \norm{\modulus{A_r}\exp(-t\modulus{A_r})u}^2\ dt
&= 
\int_0^\infty \norm{t^{\frac{1}{2}}\modulus{A_r}^{\frac{1}{2}} \exp(-t \modulus{A_r}) \modulus{A_r}^{\frac{1}{2}}u}^2 \ \frac{dt}{t}\\
&\lesssim 
\norm{\modulus{A_r}^{\frac{1}{2}}u}^2 \\
&\simeq 
\norm{u}_{\SobH{\frac{1}{2}}}^2,
\end{align*}
again by Lemma~\ref{Lem:ModAFC} and Corollary~\ref{Cor:SobFrac}.

Next, consider the case that $u = \chi^{+}(A_r)u$.
In this case, we have that $\modulus{A_r}u = A_r u$ and therefore,
$$ D_{0,r}(\ext u)(t, \cdot) = \eta'(t) \exp(-t\modulus{A_r})u.$$
Thus, 
\begin{align*}
\int_0^\infty \norm{D_{0,r} (\ext u)(t, \cdot)}^2\ dt 
&\lesssim 
\int_{0}^\infty \norm{\exp(-t\modulus{A_r})t^{\frac{1}{2}}\modulus{A_r}^{\frac{1}{2}} \modulus{A_r}^{-\frac{1}{2}}}^2\ \frac{dt}{t} \\
&\lesssim 
\norm{\modulus{A_r}^{-\frac{1}{2}} u}^2 \\
&\simeq 
\norm{u}_{\SobH{-\frac{1}{2}}}^2.
\end{align*}
On combining these estimates, we obtain the first inequality. 

Finally, to consider the operator $\ext^\ast$, as before, we write $(\sym_0^{-1}D_{0})^\dagger_r = -(\partial_t - A_r^\dagger)$ and run the exact same argument with $-A_r^\ast$ in place of $A_r$ for $v \in \Ck{\infty}(\Sigma;E)$.
This gives us that $\norm{\ext^\ast v}_{(\sym_0^{-1}D_0)^\dagger} \lesssim \norm{v}_{\checkH(-A_r^\ast)}$ and the conclusion follows on noting that $\checkH(-A_r^\ast) = \hatH(A_r^\ast)$.
\end{proof} 

\subsection{Maximal regularity and regularity in the cylinder}
\label{Sec:HighReg}

To establish the results we obtain in this subsection, the approach taken in \cite{BB12} is to use the Fourier expansion afforded by the selfadjointness of the boundary operator and use  ODE theory to obtain the desired results.
We are unable to take this approach in our setting since our boundary operator is non-selfadjoint in general and hence the eigenspaces need not span all of $\Lp{2}(\Sigma;E)$ and the generalised eigenspaces need not be orthogonal.
We use Banach-valued ODE theory instead.

Fix $\rho \in (0, T_c)$, where $T_c$ is from Lemma~\ref{Lem:BdyDef}. 
Given $f \in \Lp{2}(Z_{[0,\rho]}; E)$, we consider the following Banach-valued Cauchy problem 
\begin{equation}
\label{Eq:RegODE}
\begin{aligned} 
&\partial_t W(t;f) + \modulus{A_r} W(t;f) = f(t),\\
&\lim_{t \to 0} W(t;f) = 0.
\end{aligned}
\end{equation}

\begin{lem}
\label{Lem:WReg}
For a given $f \in \Lp{2}(Z_{[0,\rho]}; E))$, a unique solution $W(t; f): \Lp{2}(\Sigma;E) \to \Ck{\infty}(\Sigma;E)$, $ t > 0$ to \eqref{Eq:RegODE} exists and it is given by 
\begin{equation}
\label{eq:Duhamel}
W(t;f) = \int_{0}^t \exp(-(t - s)\modulus{A_r}) f(s)\ ds.
\end{equation}
It satisfies the estimate
\begin{equation}
\label{eq:DuhamelEst}
\int_{0}^\rho \norm{ \partial_t W(t;f)}^2_{\Lp{2}(\Sigma)}\, dt + \int_{0}^\rho \norm{ \modulus{A_r} W(t; f)}^2_{\Lp{2}(\Sigma)} \, dt \lesssim \int_{0}^\rho \norm{f(t)}^2_{\Lp{2}(\Sigma)} \, dt.
\end{equation}
The implicit constant depends on $T_c$ but not on $\rho$.
\end{lem}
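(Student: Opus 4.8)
The natural candidate for the solution is the Duhamel (variation-of-constants) formula \eqref{eq:Duhamel}, built from the bounded analytic semigroup $t\mapsto\exp(-t\modulus{A_r})$ generated by $-\modulus{A_r}$, whose existence, holomorphy and uniform boundedness are provided by Proposition~\ref{Prop:ExpHol}; note also that $\modulus{A_r}$ is invertible, so this semigroup decays exponentially. The plan is to verify in turn that (a) the formula \eqref{eq:Duhamel} solves the Cauchy problem \eqref{Eq:RegODE} and has the stated regularity, (b) the solution is unique, and (c) the maximal $\Lp{2}$-regularity estimate \eqref{eq:DuhamelEst} holds with a constant independent of $\rho$. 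Step (c) is the heart of the matter.

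For (a): since $-\modulus{A_r}$ generates an analytic semigroup, $\exp(-u\modulus{A_r})$ maps $\Lp{2}(\Sigma;E)$ into $\dom(\modulus{A_r}^N)$ for every $N$ and every $u>0$; by Corollary~\ref{Cor:SobFrac} this domain is $\SobH{N}(\Sigma;E)$, so together with Sobolev embedding the propagator is smoothing, $\exp(-u\modulus{A_r})\colon\Lp{2}(\Sigma;E)\to\Ck{\infty}(\Sigma;E)$ for $u>0$, which accounts for the smoothness in the $\Sigma$-variable. Differentiating \eqref{eq:Duhamel} in $t$ — legitimate because of the analyticity — the contribution of the upper limit $s=t$ produces $f(t)$ and the remaining term produces $-\modulus{A_r}W(t;f)$, so \eqref{Eq:RegODE} holds; and $\norm{W(t;f)}\le\bigl(\sup_{0\le u\le T_c}\norm{\exp(-u\modulus{A_r})}\bigr)\int_0^t\norm{f(s)}\,ds\to0$ as $t\to0$ since $f\in\Lp{2}(Z_{[0,\rho]};E)$. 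For (b): if $W_1,W_2$ both solve \eqref{Eq:RegODE}, then $V:=W_1-W_2$ solves $\partial_tV+\modulus{A_r}V=0$ with $V(0)=0$; applying $\exp(t\modulus{A_r})$ on the range of the semigroup, or simply an energy estimate using the accretivity of the sectorial operator $\modulus{A_r}$, forces $V\equiv0$. One can also invoke the Banach-valued ODE theory recalled in the appendix for (a) and (b).

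For (c), extend $f$ by zero to $(0,\infty)$; since the propagator in \eqref{eq:Duhamel} is causal this leaves $W(t;f)$ unchanged on $[0,\rho]$, and it suffices to show $\int_0^\infty\norm{\modulus{A_r}W(t;f)}_{\Lp{2}(\Sigma)}^2\,dt\lesssim\int_0^\infty\norm{f(t)}_{\Lp{2}(\Sigma)}^2\,dt$ with a constant depending only on $\modulus{A_r}$. This is the statement that the generator $-\modulus{A_r}$ of a bounded analytic semigroup on the \emph{Hilbert} space $\Lp{2}(\Sigma;E)$ enjoys maximal $\Lp{2}$-regularity (de Simon's theorem). I would prove it directly by a Fourier transform in $t$: for $f$ in the dense class of smooth functions compactly supported in $(0,\infty)$ with values in $\dom(\modulus{A_r})$, the exponentially decaying $W$ lies in $\Lp{2}(\R;\Lp{2}(\Sigma;E))$ and $\widehat{W}(\xi)=(\imath\xi+\modulus{A_r})^{-1}\widehat{f}(\xi)$, whence $\modulus{A_r}\widehat{W}(\xi)=\bigl(1-\imath\xi(\imath\xi+\modulus{A_r})^{-1}\bigr)\widehat{f}(\xi)$. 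Since $\modulus{A_r}$ is $\omega_r$-sectorial with $\omega_r<\pi/2$ (Lemma~\ref{Lem:OpSect} and Proposition~\ref{Prop:A_rSec}), the punctured imaginary axis lies outside the sector, so $\norm{(\imath\xi+\modulus{A_r})^{-1}}\le C/\modulus{\xi}$ uniformly in $\xi\neq0$, and hence the operator-valued multiplier $\xi\mapsto\modulus{A_r}(\imath\xi+\modulus{A_r})^{-1}$ is uniformly bounded; Plancherel's theorem in $t$ then gives the desired estimate, which passes to general $f\in\Lp{2}$ by density. (Alternatively, the quadratic estimates for the $H^\infty$-calculus of $\modulus{A_r}$ from Lemma~\ref{Lem:ModAFC} can be fed into the standard maximal-regularity machinery.) Finally, reading $\partial_tW(t;f)=f(t)-\modulus{A_r}W(t;f)$ off \eqref{Eq:RegODE} and using the triangle inequality controls the first term of \eqref{eq:DuhamelEst} by the other two; restricting to $[0,\rho]$ yields \eqref{eq:DuhamelEst}, the constant being $\rho$-independent — any dependence on $T_c$ entering only through the semigroup bounds of Proposition~\ref{Prop:ExpHol}.

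The main obstacle is (c). On a Hilbert space maximal $\Lp{2}$-regularity is \emph{soft} — it reduces, as above, to a uniform resolvent bound on the imaginary axis together with Plancherel — but one must still be careful about the merely $\Lp{2}$ (rather than continuous) dependence of $f$ on $t$, the justification of the Fourier computation on a dense class, and the uniformity of the constant in $\rho$; these are exactly the points where the analyticity of the semigroup and the sectoriality angle $\omega_r<\pi/2$ (equivalently the $H^\infty$-calculus of Proposition~\ref{Prop:ModAFC}) are genuinely used.
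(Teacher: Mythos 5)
Your proposal is correct on the central claim, the maximal regularity estimate~\eqref{eq:DuhamelEst}, but it proves it by a route genuinely different from the paper's. The paper passes through the abstract machinery of Giga--Sohr \cite{GS91} (a Dore--Venni type sum-of-operators theorem): it notes that $\Lp{2}(\Sigma;E)$ is a Hilbert, hence UMD/$\gamma$-convex space, invokes \cite{DV87} for boundedness of the Hilbert transform, uses McIntosh's theorem to convert the $H^\infty$-calculus of $\modulus{A_r}$ into a bounded-imaginary-powers estimate $\norm{\modulus{A_r}^{\imath s}}\le Ke^{\theta|s|}$, and then cites Theorem~2.1 of \cite{GS91} to get existence, uniqueness, and \eqref{eq:DuhamelEst} in one stroke. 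You instead give de Simon's proof: extend $f$ by zero, Fourier transform in $t$, read off the multiplier $\modulus{A_r}(\imath\xi+\modulus{A_r})^{-1} = 1-\imath\xi(\imath\xi+\modulus{A_r})^{-1}$, bound it uniformly via $\omega_r$-sectoriality with $\omega_r<\pi/2$, and apply Plancherel. On a Hilbert space this is the softer, self-contained argument, and it even yields a constant independent of $T_c$ as well as $\rho$. The paper's route has the virtue of generalising to $\Lp{p}$ in UMD spaces, but for the lemma as stated yours is the more economical proof, and it uses exactly the structural facts (sectoriality angle below $\pi/2$, equivalently the $H^\infty$-calculus of Proposition~\ref{Prop:ModAFC}) that the paper also identifies as the crux.

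Two small caveats. First, in the uniqueness step you mention an energy estimate ``using the accretivity of the sectorial operator''; but in this paper's convention $\omega_r$-sectorial does not entail accretivity of $\modulus{A_r}$ (cf.\ Remark~\ref{Rem:KatoSec} distinguishing Kato-sectorial), so the sign of $\Re\inprod{\modulus{A_r}V,V}$ is not controlled. The fix is the standard semigroup argument: for $V$ solving the homogeneous problem with $V(0)=0$, the function $s\mapsto\exp(-(t-s)\modulus{A_r})V(s)$ is constant on $[0,t]$, giving $V(t)=\exp(-t\modulus{A_r})V(0)=0$ (this is what the Banach-valued ODE theory gives, not ``applying $\exp(t\modulus{A_r})$'', which is unbounded). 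Second, your justification of the smoothing claim $W(t;f)\in\Ck{\infty}(\Sigma;E)$ for $t>0$ does not actually go through: analytic smoothing gives $\norm{\modulus{A_r}^N\exp(-(t-s)\modulus{A_r})}\sim (t-s)^{-N}$, and the resulting singularity at $s=t$ in the Duhamel integral is not integrable for $N\ge1$ when $f$ is merely $\Lp{2}$ in $s$. This appears to be a mis-statement in the lemma itself (the paper's proof does not address it either, and it is never used downstream), so you should not be faulted for it, but your argument for it is not correct as written.
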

\begin{proof}
First note the identification $\Lp{2}(Z_{[0,\rho]}; E) \cong \Lp{2}([0, \rho]; \Lp{2}(\Sigma;E))$. 
Since $\Lp{2}(\Sigma;E)$ is a Hilbert space, it is $\gamma$-convex, which means that the Hilbert transform is bounded on $\Lp{p}(\R; \Lp{2}(\Sigma;E))$ for $p \in (1, \infty)$, cf.\ Remark 2.7 in \cite{DV87}. 
Also, we have that $\modulus{A_r}$ has bounded holomorphic functional calculus by Proposition~\ref{Prop:ModAFC}.
This is equivalent to the fact that there exist $K \geq 1$ and $\theta > 0$ so that $\norm{\modulus{A_r}^{\imath s}} \leq K e^{\theta \modulus{s}}$ for all $s \in \R$ by the theorem in Section~8 in \cite{Mc86}.
These facts verify the hypotheses of Theorem~2.1 in \cite{GS91} and therefore, we obtain that the solution $W(t;f)$ to \eqref{Eq:RegODE} exists uniquely for a given $f \in \Lp{2}(Z_{[0,\rho]}; E)$ satisfying the estimate \eqref{eq:DuhamelEst}.
Formula~\eqref{eq:Duhamel} is standard.
\end{proof}

Using this,  we define the following operator that is the crucial device for our treatment of regularity.
\begin{equation}
\label{Eq:RegOp}
S_{0,r}u(t) :=  \int_{0}^t \exp(-(t-s)\modulus{A_r}) \chi^+(A_r) u(s)\ ds - \int_{t}^\rho \exp(-(s - t)\modulus{A_r})  \chi^{-}(A_r) u(s)\ ds.
\end{equation}
Throughout, let us fix $D_{0,r} := \sym_0(\partial_t + A_r)$ and $(C_\rho u)(s) := u(\rho -s)$.

\begin{lem}
\label{Lem:SProp} 
The operator $S_{0,r}$ satisfies the following:
\begin{enumerate}[(i)]
\item 
\label{Lem:SProp1}
$S_{0,r} u(t) = W(t;  \chi^+(A_r) u) - W(\rho - t;  \chi^{-}(A_r) C_\rho u)$,
\item 
\label{Lem:SProp2}
$\chi^{+}(A_r) (S_{0,r} u)(0)  = \chi^{-}(A_r)(S_{0,r} u)(\rho) = 0$, and 
\item 
\label{Lem:SProp3}
$\sym_0^{-1} D_{0,r} S_{0,r}  = 1$.
\end{enumerate}
\end{lem}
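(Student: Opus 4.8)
The plan is to reduce all three assertions to the Duhamel representation \eqref{eq:Duhamel} together with the regularity and initial-value properties of $W(t;\cdot)$ recorded in Lemma~\ref{Lem:WReg}, and the elementary fact, immediate from \eqref{Eq:Amod} and the commutation of $\chi^{\pm}(A_r)$ with $A_r$, that $A_r=+\modulus{A_r}$ on $\ran(\chi^{+}(A_r))$ and $A_r=-\modulus{A_r}$ on $\ran(\chi^{-}(A_r))$. No serious obstacle is present; the lemma is computational, and the only care needed is with signs.

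For \ref{Lem:SProp1}, observe that the first integral in \eqref{Eq:RegOp} is literally $W(t;\chi^{+}(A_r)u)$. For the second integral, put $g:=\chi^{-}(A_r)C_\rho u$, so $g(\sigma)=\chi^{-}(A_r)u(\rho-\sigma)$, and apply the change of variables $s=\rho-\sigma$ in $W(\rho-t;g)=\int_0^{\rho-t}\exp\bigl(-((\rho-t)-\sigma)\modulus{A_r}\bigr)g(\sigma)\,d\sigma$; this reproduces $\int_t^\rho\exp(-(s-t)\modulus{A_r})\chi^{-}(A_r)u(s)\,ds$, which gives the identity. For \ref{Lem:SProp2}, use \ref{Lem:SProp1}: since $W(0;\cdot)=0$ by the initial condition in \eqref{Eq:RegODE}, evaluating at $t=0$ and $t=\rho$ yields $S_{0,r}u(0)=-W(\rho;\chi^{-}(A_r)C_\rho u)$ and $S_{0,r}u(\rho)=W(\rho;\chi^{+}(A_r)u)$. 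As $\chi^{\pm}(A_r)$ are bounded projections (Proposition~\ref{Prop:AProj}) commuting with $\modulus{A_r}$, hence with $\exp(-s\modulus{A_r})$, the integrand defining $W(\rho;\chi^{\pm}(A_r)v)$ lies pointwise in the closed subspace $\ran(\chi^{\pm}(A_r))$, so the Bochner integral does too; applying $\chi^{+}(A_r)$ to the first identity and $\chi^{-}(A_r)$ to the second, together with $\chi^{+}(A_r)\chi^{-}(A_r)=\chi^{-}(A_r)\chi^{+}(A_r)=0$, gives the claim.

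For \ref{Lem:SProp3}, since $\sym_0^{-1}D_{0,r}=\partial_t+A_r$ it suffices to show $(\partial_t+A_r)S_{0,r}u=u$. Write $S_{0,r}u=W_1-W_2$ with $W_1(t):=W(t;\chi^{+}(A_r)u)$ and $W_2(t):=W(\rho-t;\chi^{-}(A_r)C_\rho u)$, as in \ref{Lem:SProp1}. By Lemma~\ref{Lem:WReg}, $\partial_t W_1+\modulus{A_r}W_1=\chi^{+}(A_r)u$; since $W_1(t)\in\ran(\chi^{+}(A_r))$ for all $t$ and $A_r=\modulus{A_r}$ there, this reads $(\partial_t+A_r)W_1=\chi^{+}(A_r)u$. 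Putting $V(\sigma):=W(\sigma;\chi^{-}(A_r)C_\rho u)$, so $W_2(t)=V(\rho-t)$, the chain rule and \eqref{Eq:RegODE} for $V$ (with $(C_\rho u)(\rho-t)=u(t)$) give $\partial_t W_2=\modulus{A_r}W_2-\chi^{-}(A_r)u$; since $W_2(t)\in\ran(\chi^{-}(A_r))$ and $A_r=-\modulus{A_r}$ there, this is $(\partial_t+A_r)W_2=-\chi^{-}(A_r)u$. Subtracting, $(\partial_t+A_r)S_{0,r}u=\chi^{+}(A_r)u+\chi^{-}(A_r)u=u$.

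The only delicate points are the consistent tracking of the two signs coming from $A_r=\pm\modulus{A_r}$ on the two spectral subspaces, and the legitimacy of differentiating the Duhamel integral, which is exactly what Lemma~\ref{Lem:WReg} supplies; beyond this the argument is a change of variables plus the closedness of $\ran(\chi^{\pm}(A_r))$.
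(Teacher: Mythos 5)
Your proof is correct and takes essentially the same route as the paper: the change of variables for \ref{Lem:SProp1}, the commutation of $\chi^{\pm}(A_r)$ with the semigroup for \ref{Lem:SProp2}, and the reduction of $\modulus{A_r}$ to $\pm A_r$ on the spectral subspaces together with the ODE of Lemma~\ref{Lem:WReg} for \ref{Lem:SProp3}. The only cosmetic difference is that you differentiate the two summands $W_1,W_2$ of $S_{0,r}u$ directly, whereas the paper applies $S_{0,r}$ to $u^{\pm}=\chi^{\pm}(A_r)u$ separately; the computations coincide.
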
 
\begin{proof}
Applying the coordinate transformation $s \to \rho-s$ to the second term in the definition of $S_{0,r}$ in \eqref{Eq:RegOp} yields the formula in \ref{Lem:SProp1}.
Claim~\ref{Lem:SProp2} follows simply from the fact that the projector commutes with the functional calculus of the operator.

For \ref{Lem:SProp3}, write $u = u^+ + u^-$, where $u^\pm = \chi^\pm(A_r)u$.
Then it suffices to show that $\sym_0^{-1} D_{0,r}S_{0,r} u^\pm = u^\pm$. 
Noting that $S_{0,r}u^+(t) = W(t;  u^+)$ which solves \eqref{Eq:RegODE}, and since $\modulus{A_r} = A_r \sgn(A_r)$,
$$ \partial_t S_{0,r}u^+(t) = - A_r \sgn(A_r) S_{0,r}u^{+}(t) + u^+(t) = - A_r S_{0,r}u^+(t) + u^+(t).$$
This is exactly that $\sym_0^{-1} D_{0,r}S_{0,r} u^+(t) = u^+(t)$.
Now, for $u^-$, note that
\begin{align*}
\partial_t W(\rho -t;  C_\rho u^{-}) &=  + A_r \sgn(A_r) W(\rho -t;  C_\rho u^{-}) -  C_\rho u^{-}(\rho - t) \\
	&=  - A_r W(\rho -t;  C_\rho u^{-}) -   u^{-}(t)
\end{align*}
since $\chi^-(A_r)\sgn(A_r) = -\chi^-(A_r).$
Then, 
\begin{equation*}
\sym_0^{-1}D_{0,r}S_{0,r}u^-(t) = -(\partial_t + A_r)W(\rho -t;  C_\rho u^-) = u^-(t).
\qedhere
\end{equation*}
\end{proof}

Furthermore, we note that $S_{0,r}$ increases regularity.

\begin{lem}
\label{Lem:SReg}
The operator $S_{0,r}$ maps $\SobH{k}(Z_{[0,\rho]}) \to \SobH{k+1}(Z_{[0,\rho]}; E)$ boundedly, with the bound independent of $\rho$ (but dependent on $T_c$). 
\end{lem}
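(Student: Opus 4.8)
The plan is to induct on $k$, the base case $k=0$ being essentially the maximal regularity estimate already established. \emph{Base case.} Fix $u\in\Lp{2}(Z_{[0,\rho]};E)$ and write, by Lemma~\ref{Lem:SProp}~\ref{Lem:SProp1}, $S_{0,r}u(t)=W(t;\chi^{+}(A_r)u)-W(\rho-t;\chi^{-}(A_r)C_\rho u)$. Since $\chi^{\pm}(A_r)$ and $C_\rho$ are bounded on $\Lp{2}(Z_{[0,\rho]};E)$, Lemma~\ref{Lem:WReg} applied to each summand (using the substitution $t\mapsto\rho-t$ for the second) yields $\partial_t(S_{0,r}u)$ and $\modulus{A_r}(S_{0,r}u)$ in $\Lp{2}(Z_{[0,\rho]};E)$ with norms $\lesssim\norm{u}_{\Lp{2}(Z_{[0,\rho]})}$, the implicit constant depending only on $T_c$. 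Invertibility of $\modulus{A_r}$ (Proposition~\ref{Prop:A_rSec}, Lemma~\ref{Lem:OpSect}) then also controls $\norm{S_{0,r}u}_{\Lp{2}}$, and since $\norm{w}_{\SobH{1}(\Sigma)}\simeq\norm{\modulus{A_r}w}_{\Lp{2}(\Sigma)}$ by Corollary~\ref{Cor:SobFrac}, the identification $\SobH{1}(Z_{[0,\rho]};E)=\Lp{2}([0,\rho];\SobH{1}(\Sigma;E))\cap\SobH{1}([0,\rho];\Lp{2}(\Sigma;E))$ gives $S_{0,r}u\in\SobH{1}(Z_{[0,\rho]};E)$ with the asserted uniform bound.

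\emph{A cylinder norm equivalence.} For the inductive step I would first record that, since $A_r$ is invertible and $\norm{w}_{\SobH{j}(\Sigma)}\simeq\norm{A_r w}_{\SobH{j-1}(\Sigma)}$ for all $j$ (Corollary~\ref{Cor:SobFrac}, together with $A_r=\sgn(A_r)\modulus{A_r}$ and boundedness and invertibility of $\sgn(A_r)$), the mixed-norm description $\norm{v}_{\SobH{\ell}(Z_{[0,\rho]};E)}^2\simeq\sum_{a=0}^{\ell}\norm{\partial_t^a v}_{\Lp{2}([0,\rho];\SobH{\ell-a}(\Sigma;E))}^2$ implies, for every $\ell\ge1$ and with constants independent of $\rho$,
$$\norm{v}_{\SobH{\ell}(Z_{[0,\rho]};E)}\simeq\norm{v}_{\SobH{\ell-1}(Z_{[0,\rho]};E)}+\norm{\partial_t v}_{\SobH{\ell-1}(Z_{[0,\rho]};E)}+\norm{A_r v}_{\SobH{\ell-1}(Z_{[0,\rho]};E)};$$
in the same way $A_r\colon\SobH{\ell}(Z_{[0,\rho]};E)\to\SobH{\ell-1}(Z_{[0,\rho]};E)$ is bounded uniformly in $\rho$, as $A_r$ acts only in the $\Sigma$-variables and commutes with $\partial_t$.

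\emph{Inductive step.} Assume the lemma for $k-1\ge0$ and let $u\in\SobH{k}(Z_{[0,\rho]};E)$; set $v:=S_{0,r}u$. Two identities drive the argument. First, $A_r S_{0,r}=S_{0,r}A_r$: on smooth sections this holds because $A_r$ commutes with $\exp(-\tau\modulus{A_r})$ and with $\chi^{\pm}(A_r)$ and may be pulled under the integral in \eqref{Eq:RegOp}, and it extends to $\SobH{k}(Z_{[0,\rho]};E)$ by density, using the continuity established above. Second, $(\partial_t+A_r)v=\sym_0^{-1}D_{0,r}S_{0,r}u=u$ by Lemma~\ref{Lem:SProp}~\ref{Lem:SProp3}, so $\partial_t v=u-A_r v$. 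Now $A_r u\in\SobH{k-1}(Z_{[0,\rho]};E)$, hence by the inductive hypothesis $A_r v=S_{0,r}(A_r u)\in\SobH{k}(Z_{[0,\rho]};E)$ with $\norm{A_r v}_{\SobH{k}}\lesssim\norm{A_r u}_{\SobH{k-1}}\lesssim\norm{u}_{\SobH{k}}$; consequently $\partial_t v=u-A_r v\in\SobH{k}(Z_{[0,\rho]};E)$ with the same bound, and $v=S_{0,r}u\in\SobH{k}(Z_{[0,\rho]};E)$ with $\norm{v}_{\SobH{k}}\lesssim\norm{u}_{\SobH{k-1}}\le\norm{u}_{\SobH{k}}$, again by the inductive hypothesis. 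Inserting these three estimates into the norm equivalence with $\ell=k+1$ gives $v\in\SobH{k+1}(Z_{[0,\rho]};E)$ with $\norm{v}_{\SobH{k+1}(Z_{[0,\rho]};E)}\lesssim\norm{u}_{\SobH{k}(Z_{[0,\rho]};E)}$, all constants independent of $\rho$.

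\emph{Main obstacle.} The one genuinely analytic ingredient—maximal regularity—is already isolated in the base case through Lemma~\ref{Lem:WReg}; the remainder of the argument is algebraic. What requires care is keeping all constants independent of $\rho$ (achieved by phrasing everything through fixed operators on $\Sigma$ and only integrating in $t$) and making the operator identity $A_r S_{0,r}=S_{0,r}A_r$ and the cylinder norm equivalence rigorous at finite Sobolev regularity; both reduce to routine density and elliptic-regularity arguments.
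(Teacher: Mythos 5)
Your argument is correct, but it takes a genuinely different route from the paper's. The paper proceeds directly from the Duhamel representation: it differentiates the formula $W(t;f)=\int_0^t\exp(-(t-s)\modulus{A_r})f(s)\,ds$ to obtain $\partial_t^l W(t;f)=(-1)^l W(t;\modulus{A_r}^l f)+\sum_{m=0}^{l-1}\partial_t^{l-1-m}\modulus{A_r}^m f(t)$ (and similarly for the reflected term), expresses the cylinder Sobolev norm via the product structure as a sum of $\int_0^\rho\norm{\modulus{A_r}^l\partial_t^{k-l}\cdot}^2\,dt$, and estimates each piece by maximal regularity and Corollary~\ref{Cor:SobFrac}. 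You instead bootstrap: after the same $k=0$ base case, you combine the operator identity $\sym_0^{-1}D_{0,r}S_{0,r}=1$ already recorded in Lemma~\ref{Lem:SProp}~\ref{Lem:SProp3} with the commutation $A_r S_{0,r}=S_{0,r}A_r$ and the anisotropic norm equivalence $\norm{v}_{\SobH{\ell}}\simeq\norm{v}_{\SobH{\ell-1}}+\norm{\partial_t v}_{\SobH{\ell-1}}+\norm{A_r v}_{\SobH{\ell-1}}$ on the product cylinder to reduce the $\SobH{k+1}$ estimate to two invocations of the inductive hypothesis. Both arguments ultimately rest on the same single application of maximal regularity; the paper's computation is more explicit and shows exactly which terms contribute at each order, while your induction is shorter and more structural, at the cost of needing the commutation relation and the layered norm equivalence to be justified at each Sobolev level (which you correctly flag as density/elliptic-regularity routine, and which works because $A_r$ acts only in the $\Sigma$-variables and commutes with $\partial_t$, $\chi^\pm(A_r)$ and $\exp(-\tau\modulus{A_r})$, and because all constants in those equivalences depend only on $\Sigma$ and $A_r$, never on $\rho$).
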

\begin{proof}
First, we consider the case $k = 0$, and note that 
$$\norm{u}_{\SobH{1}(Z_{[0,\rho]})}^2 \simeq \int_{0}^\rho \norm{\partial_t u}^2_{\Lp{2}(\Sigma)}\ dt + \int_{0}^\rho \norm{\modulus{A_r} u}^2_{\Lp{2}(\Sigma)}\ dt.$$
Now, 
\begin{align*}
\|&S_{0,r} u\|_{\SobH{1}(Z_{[0,\rho]})}^2 \\
&\lesssim  
\int_0^\rho \norm{\partial_t W(t, \chi^+(A_r) u)}^2_{\Lp{2}(\Sigma)}\ dt + \int_0^\rho \norm{\modulus{A_r} W(t, \chi^+(A_r) u)}^2_{\Lp{2}(\Sigma)}\ dt \\ 
&\quad+\int_0^\rho \norm{\partial_t W(\rho - t, \chi^-(A_r) C_\rho u)}^2_{\Lp{2}(\Sigma)}\ dt 
+ \int_0^\rho \norm{\modulus{A_r} W(\rho - t, \chi^-(A_r) C_\rho u)}^2_{\Lp{2}(\Sigma)}\ dt \\ 
&\lesssim 
\int_{0}^\rho \norm{ \chi^+(A_r) u}^2_{\Lp{2}(\Sigma)}\ dt + \int_{0}^\rho \norm{ \chi^-(A_r) C_\rho u}^2_{\Lp{2}(\Sigma)}\ dt \\
&\lesssim 
\int_0^\rho \norm{u}^2_{\Lp{2}(\Sigma)}\ dt 
= 
\norm{u}_{\Lp{2}(Z_{[0,\rho]})}^2, 
\end{align*}
where  the first inequality follows from \ref{Lem:SProp1} in Lemma~\ref{Lem:SProp}, the second from equation \eqref{eq:DuhamelEst} in Lemma~\ref{Lem:WReg}, and the third from noting that $C_\rho$ is an isometry of $\Lp{2}(Z_{[0,\rho]}; E)$.

For higher regularity, fix $f \in \dom(\modulus{A_r}^l)$.
Differentiating \eqref{eq:Duhamel} we get that
\begin{equation}
\label{Eq:HighReg1} 
\partial_t^l W(t; f) =  (-1)^{l} W(t; \modulus{A_r}^l f) + \sum_{m=0}^{l-1} \partial_t^{l-1-m} \modulus{A_r}^{m} f(t)
\end{equation} 
and
\begin{equation}
\label{Eq:HighReg2} 
\partial_t^l W(\rho - t; f) =  (-1)^{l+1} W(\rho - t; \modulus{A_r}^l f) + \sum_{m=0}^{l-1} \partial_t^{l-1-m} \modulus{A_r}^{m} f(t).
\end{equation}
Moreover, since the metric on $Z_{[0,\rho]}$ is of product type, we have that
$$
\norm{u}_{\SobH{k}(Z_{[0,\rho]})} 
\simeq 
\sum_{l=0}^k \int_0^\rho \norm{\partial_t^{k-l} u}_{\SobH{l}(\Sigma)} \, dt 
\simeq 
\sum_{l = 0}^k \int_0^\rho  \norm{ \modulus{A_r}^l \partial_t^{k-l} u }_{\Lp{2}(\Sigma)} \, dt.
$$
Therefore, when $f \in \SobH{k}(Z_{[0,\rho]}; E)$, then $f \in \cap_{l=0}^k  \dom( \partial_t^{k-l} \modulus{A_r}^{l})$. 
We show that whenever $0 \leq l \leq k+1$, 
$$\norm{\partial_t^l \modulus{A_r}^{k+1-l} W(\cdot; f)}_{\Lp{2}(Z_{[0,\rho]})} \lesssim \norm{f}_{\SobH{k}(Z_{[0,\rho]})}.$$
For $l = 0$, 
\begin{align*} 
\norm{\partial_t^l \modulus{A_r}^{k+1-l} W(\cdot ;f)}_{\Lp{2}(Z_{[0,\rho]})} 
&= 
\norm{\modulus{A_r} W(\cdot; \modulus{A_r}^{k} f)}_{\Lp{2}(Z_{[0,\rho]})} \\
&\lesssim 
\norm{\modulus{A_r}^{k} f}_{\Lp{2}(Z_{[0,\rho]})} \\
&\lesssim 
\norm{f}_{\SobH{k}(Z_{[0,\rho]})}
\end{align*}
by functional calculus and \eqref{eq:DuhamelEst} in Lemma~\ref{Lem:WReg}.
When $l \geq 1$,
$$
\partial_t^l \modulus{A_r}^{k+1 - l} W(t; f) 
= 
(-1)^{l} \modulus{A_r} W(t; \modulus{A_r}^k f) + \sum_{m=0}^{l-1} \partial_t^{l-1-m}\modulus{A_r}^{k+1+m - l} f(t)$$
by \eqref{Eq:HighReg1}.
Therefore, 
\begin{align*} 
\|\partial_t^l &\modulus{A_r}^{k+1 - l} W(\cdot; f)\|_{\Lp{2}{(Z_{[0,\rho]}})} \\
&\le 
\norm{(\modulus{A_r} W(t; \modulus{A_r}^k f)}_{\Lp{2}{(Z_{[0,\rho]}})} + \sum_{m=0}^{l-1} \norm{\partial_t^{l-1-m}\modulus{A_r}^{k+1+m - l} f(t)}_{\Lp{2}(Z_{[0,\rho]})} \\
&\lesssim  \norm{\modulus{A_r}^k f}_{\Lp{2}(Z_{[0,\rho]})} + \sum_{m=0}^{l-1} \norm{f}_{\SobH{k}(Z_{[0,\rho]})} \\
&\simeq \norm{f}_{\SobH{k}(Z_{[0,\rho]})},
\end{align*}
via \eqref{eq:DuhamelEst} in Lemma~\ref{Lem:WReg} and since $(l - 1 - m) + (k+1+m - l)=k$.

Replicating this argument and using \eqref{Eq:HighReg2} instead of \eqref{Eq:HighReg1} and noting that $C_\rho$ is bounded on $\SobH{k}(\Sigma;E)$, we get a similar estimate for $t \mapsto W(C_\rho(\cdot), f)$.
Thus, we obtain 
\begin{align}
\label{Eq:HighReg3}
&\norm{W(\cdot, f)}_{\SobH{k+1}(Z_{[0,\rho]})} \lesssim \norm{f}_{\SobH{k}(Z_{[0,\rho]})},\ \text{and} \notag \\
&\norm{W(C_\rho(\cdot), f)}_{\SobH{k+1}(Z_{[0,\rho]})} \lesssim \norm{f}_{\SobH{k}(Z_{[0,\rho]})}.
\end{align}

To finish the proof, let $u \in \SobH{k}(Z_{[0,\rho]})$ and note
\begin{align*}
\norm{S_{0,r} u}_{\SobH{k+1}(Z_{[0,\rho]})} &\leq \norm{W(\cdot,  \chi^+(A_r)u)}_{\SobH{k+1}(Z_{[0,\rho]})}  \\
	&\qquad\qquad\qquad+ \norm{W(C_\rho(\cdot),  \chi^-(A_r)C_\rho u)}_{\SobH{k+1}(Z_{[0,\rho]})}  \\
	&\lesssim \norm{\chi^+(A_r)u}_{\SobH{k}(Z_{[0,\rho]})}  + \norm{\chi^-(A_r) u}_{\SobH{k}(Z_{[0,\rho]})} \\
	&\simeq \norm{u}_{\SobH{k}(Z_{[0,\rho]})}, 
\end{align*}
where we use \ref{Lem:SProp1} in Lemma~\ref{Lem:SProp} in the first inequality, the estimate \eqref{Eq:HighReg3} in the second, the  fact that $C_\rho$ commutes with $\chi^{\pm}(A_r)$ and is bounded on $\SobH{k}(Z_{[0,\rho]})$ in the third, and the fact that $\chi^{\pm}(A_r)$ are pseudo-differential operators of order zero in the last.
\end{proof}

Define
\begin{equation}
\label{Eq:B0}
B_0 := H_{-}^{\frac{1}{2}}(A) \oplus H_{+}^{\frac{1}{2}}(A)
\end{equation}
where we recall that $H_{\pm}^\frac{1}{2}(A) = \chi^{\pm}(A_r)\SobH{\frac{1}{2}}(\Sigma;E)$.
Moreover, for $k \geq 1$, define
$$\SobH{k}(Z_{[0,\rho]}; E; B_0) := \set{u \in \SobH{k}(Z_{[0,\rho]}; E): (u(0), u(\rho)) \in B_0}.$$

In the following proposition, we use some facts from the theory of Banach-valued calculus. 
A standard reference is the  book \cite{CH}, but an excellent overview of this topic is contained in \cite{Kreuter}.

\begin{proposition}
\label{Prop:RegInv}
Regard $D_{0,r}$ as an operator on $\Lp{2}(Z_{[0,\rho]}; E)$.
Then:
\begin{enumerate}[(i)] 
\item
\label{Prop:RegInv1}  
For all $u \in \dom((D_{0,r})_{\max})$ satisfying $\chi^{-}(A_r)(u(\rho)) = 0$, 
$$ (1 - S_{0,r}\sym_0^{-1} D_{0,r})u(t,\cdot) = \exp(-t \modulus{A_r})(\chi^{+}(A_r)u(0)).$$
\item 
\label{Prop:RegInv2} 
For all $k \geq 0$, the operator
$$ D_{0,r}: \SobH{k+1}(Z_{[0,\rho]}; E; B_0) \to \SobH{k}(Z_{[0,\rho]};F)$$
is an isomorphism with inverse $S_{0,r}\sym_0^{-1}$.
\end{enumerate}
\end{proposition}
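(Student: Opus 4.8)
The plan is to prove \ref{Prop:RegInv1} first, since \ref{Prop:RegInv2} then follows almost formally from Lemmas~\ref{Lem:SProp} and \ref{Lem:SReg}. For \ref{Prop:RegInv1} the idea is to recognise $w := (1 - S_{0,r}\sym_0^{-1}D_{0,r})u$ as a section killed by the model operator on the cylinder, to solve the resulting evolution equation after splitting with $\chi^\pm(A_r)$, and to pin $w$ down by the boundary identities of Lemma~\ref{Lem:SProp}~\ref{Lem:SProp2} together with the hypothesis $\chi^-(A_r)(u(\rho)) = 0$.

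In detail: I would set $g := \sym_0^{-1}D_{0,r}u$ and use Lemma~\ref{Lem:SReg} to get $S_{0,r}g \in \SobH{1}(Z_{[0,\rho]};E) \subset \dom((D_{0,r})_{\max})$, so that $w := u - S_{0,r}g \in \dom((D_{0,r})_{\max})$ and, by Lemma~\ref{Lem:SProp}~\ref{Lem:SProp3}, $\sym_0^{-1}D_{0,r}w = g - g = 0$, i.e.\ $(\partial_t + A_r)w = 0$ distributionally on $Z_{[0,\rho]}$. Interior elliptic regularity for $D_{0,r}$ then makes $w$ smooth on the open cylinder $(0,\rho)\times\Sigma$. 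Writing $w^\pm := \chi^\pm(A_r)w$ and using $A_r = \sgn(A_r)\modulus{A_r}$ with $\chi^\pm(A_r)\sgn(A_r) = \pm\chi^\pm(A_r)$, these solve $\partial_t w^+ = -\modulus{A_r}w^+$ and $\partial_t w^- = \modulus{A_r}w^-$ on $(0,\rho)$. Since $-\modulus{A_r}$ generates the bounded holomorphic semigroup of Proposition~\ref{Prop:ExpHol}, differentiating $s\mapsto \exp(-(t-s)\modulus{A_r})w^+(s)$ and $s\mapsto \exp(-(s-t)\modulus{A_r})w^-(s)$ in $s$ (legitimate since $w^\pm$ is smooth) and integrating from $t_0$ to $t$ gives
$$
w^+(t) = \exp(-(t-t_0)\modulus{A_r})w^+(t_0), \qquad w^-(t) = \exp(-(t_1-t)\modulus{A_r})w^-(t_1)
$$
for $0 < t_0 \le t \le t_1 < \rho$. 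To pass to the boundary I would note that $\partial_t u = -A_r u + g \in \Lp{2}([0,\rho];\SobH{-1}(\Sigma;E))$, and likewise for $S_{0,r}g$ and $w$, so by the trace theorem for the anisotropic space $\Lp{2}([0,\rho];\Lp{2}(\Sigma;E)) \cap \SobH{1}([0,\rho];\SobH{-1}(\Sigma;E))$ the sections $u$, $S_{0,r}g$, $w$ belong to $C([0,\rho];\SobH{-\frac12}(\Sigma;E))$ and have boundary traces at $t = 0, \rho$ in $\SobH{-\frac12}(\Sigma;E)$. Letting $t_0 \downarrow 0$, $t_1 \uparrow \rho$ and using strong continuity of the semigroup on $\SobH{-\frac12}(\Sigma;E)$ gives
$$
w(t,\cdot) = \exp(-t\modulus{A_r})\,\chi^+(A_r)(w(0)) + \exp(-(\rho - t)\modulus{A_r})\,\chi^-(A_r)(w(\rho)).
$$
Finally, $\chi^+(A_r)((S_{0,r}g)(0)) = 0$ and $\chi^-(A_r)((S_{0,r}g)(\rho)) = 0$ by Lemma~\ref{Lem:SProp}~\ref{Lem:SProp2}, whence $\chi^+(A_r)(w(0)) = \chi^+(A_r)(u(0))$ and $\chi^-(A_r)(w(\rho)) = \chi^-(A_r)(u(\rho)) = 0$ by hypothesis, which is the asserted identity. (For $u \in \SobH{1}(Z_{[0,\rho]};E)$ one can avoid the trace considerations and substitute the definitions of $D_{0,r}$ and $S_{0,r}$ directly, integrating by parts in $t$ to obtain the more symmetric identity $(1 - S_{0,r}\sym_0^{-1}D_{0,r})u = \exp(-t\modulus{A_r})\chi^+(A_r)(u(0)) + \exp(-(\rho - t)\modulus{A_r})\chi^-(A_r)(u(\rho))$, before imposing the hypothesis.)

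For \ref{Prop:RegInv2}: since $D_{0,r} = \sym_0(\partial_t + A_r)$ is first order with smooth coefficients it maps $\SobH{k+1}(Z_{[0,\rho]};E;B_0)$ boundedly into $\SobH{k}(Z_{[0,\rho]};F)$; conversely $\sym_0^{-1}$ is a smooth bundle isomorphism and $S_{0,r}$ raises Sobolev order by one (Lemma~\ref{Lem:SReg}), while Lemma~\ref{Lem:SProp}~\ref{Lem:SProp2} gives $(S_{0,r}\sym_0^{-1}g)(0) \in \chi^-(A_r)\SobH{k+\frac12}(\Sigma;E) \subset H_{-}^{\frac12}(A)$ and $(S_{0,r}\sym_0^{-1}g)(\rho) \in H_{+}^{\frac12}(A)$, so $S_{0,r}\sym_0^{-1}$ maps $\SobH{k}(Z_{[0,\rho]};F)$ into $\SobH{k+1}(Z_{[0,\rho]};E;B_0)$. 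From $D_{0,r}S_{0,r} = \sym_0$ (Lemma~\ref{Lem:SProp}~\ref{Lem:SProp3}) one gets $D_{0,r}\circ(S_{0,r}\sym_0^{-1}) = \id$ on $\SobH{k}(Z_{[0,\rho]};F)$; and for $u \in \SobH{k+1}(Z_{[0,\rho]};E;B_0)$ the condition $u(\rho) \in H_{+}^{\frac12}(A)$ gives $\chi^-(A_r)(u(\rho)) = 0$, so \ref{Prop:RegInv1} yields $(1 - S_{0,r}\sym_0^{-1}D_{0,r})u = \exp(-t\modulus{A_r})\chi^+(A_r)(u(0)) = 0$ because $u(0) \in H_{-}^{\frac12}(A)$; hence $(S_{0,r}\sym_0^{-1})\circ D_{0,r} = \id$ on $\SobH{k+1}(Z_{[0,\rho]};E;B_0)$, so $D_{0,r}$ is an isomorphism there with inverse $S_{0,r}\sym_0^{-1}$.

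The main obstacle I anticipate is the classification step inside \ref{Prop:RegInv1}: showing that an $\Lp{2}(Z_{[0,\rho]};E)$-section annihilated by $\partial_t + A_r$ is necessarily of the exponential form above with boundary data in $\chi^\pm(A_r)\SobH{-\frac12}(\Sigma;E)$, and that its traces at $t = 0, \rho$ — only a priori defined in $\SobH{-\frac12}(\Sigma;E)$ — agree with the limits of the time slices $\chi^\pm(A_r)w(t)$. This is where interior elliptic regularity, the bounded holomorphic semigroup of Proposition~\ref{Prop:ExpHol} (equivalently the $H^\infty$-functional calculus of Proposition~\ref{Prop:ModAFC}) and the anisotropic trace theorem have to be combined; everything after it is bookkeeping with the projectors $\chi^\pm(A_r)$ and Lemma~\ref{Lem:SProp}.
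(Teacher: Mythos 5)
Your proof is correct but takes a genuinely different route from the paper's for part~\ref{Prop:RegInv1}. The paper argues by direct computation: it splits $u = u^+ + u^-$ with $u^\pm = \chi^\pm(A_r)u$, substitutes the Duhamel-type formula for $S_{0,r}$, recognises the integrand $\exp(-(t-s)\modulus{A_r})\bigl(\partial_s u^\pm \pm \modulus{A_r}u^\pm\bigr)$ as a total $\partial_s$-derivative, and reads off the boundary term by the Banach-valued fundamental theorem of calculus (the $u^-$ contribution producing the term $\exp(-(\rho-t)\modulus{A_r})u^-(\rho)$, which the hypothesis kills). You instead set $w := (1 - S_{0,r}\sym_0^{-1}D_{0,r})u$, use Lemma~\ref{Lem:SProp}~\ref{Lem:SProp3} to deduce $(\partial_t + A_r)w = 0$, classify solutions of the homogeneous evolution equation by propagating $w^+$ forward and $w^-$ backward with the holomorphic semigroup, and then identify $\chi^+(A_r)w(0)$ and $\chi^-(A_r)w(\rho)$ via Lemma~\ref{Lem:SProp}~\ref{Lem:SProp2} and the hypothesis. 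Both derivations give the same symmetric identity before the hypothesis is imposed, which you note in your parenthetical; indeed, that parenthetical is precisely the paper's computation. What your route buys is that the regularity needed to speak of $u^\pm(0)$ and $u^\pm(\rho)$ for $u$ merely in $\dom((D_{0,r})_{\max})$ — namely continuity of the slices in $\SobH{-\frac12}(\Sigma;E)$, which the paper leaves implicit — is made explicit through the anisotropic trace theorem, at the cost of an extra uniqueness/classification step. Your treatment of part~\ref{Prop:RegInv2} coincides with the paper's in substance: $S_{0,r}\sym_0^{-1}$ is bounded by Lemma~\ref{Lem:SReg} and lands in $B_0$ by Lemma~\ref{Lem:SProp}~\ref{Lem:SProp2}, one composition is the identity by $D_{0,r}S_{0,r}=\sym_0$ (Lemma~\ref{Lem:SProp}~\ref{Lem:SProp3}), and the other follows from part~\ref{Prop:RegInv1} using $u(0)\in H_-^{\frac12}(A)$ and $u(\rho)\in H_+^{\frac12}(A)$.
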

\begin{proof}
Write $D_{0,r}u(s) = D_{0,r}u^-(s) + D_{0,r}u^{+}(s)$ where $u^{\pm} = \chi^{\pm}(A_r)u$ and note 
$$ \sym_0^{-1} D_{0,r} u^\pm(s) = \cbrac{\partial_s u^\pm(s) \pm \modulus{A_r} u^\pm(s)}.$$
Moreover, from 
$$S_{0,r}\sym_0^{-1} D_{0,r} u^+(t) 
	= \int_{0}^t \exp({-(t-s)\modulus{A_r}})(\partial_s u^+(s) + \modulus{A_r}u^+(s))\ ds$$
and
$$ \partial_s (\exp({-(t-s)\modulus{A_r}}) u^+(s)) = \modulus{A_r} \exp({-(t-s)\modulus{A_r}}) u^+(s) + \exp({-(t-s)\modulus{A_r}}) \partial_s u^+(s),$$
we obtain 
\begin{align}
S_{0,r}\sym_0^{-1} D_{0,r} u^+(t) &= 
\int_0^t \partial_s (\exp({-(t-s)\modulus{A_r}}) u^+(s))\ ds \notag\\
&= 
u^+(t) + \exp({-t\modulus{A_r}})(u^+(0)),
\label{eq:SSDU+}
\end{align}
via the Banach-valued fundamental theorem of calculus.

For the remaining term, via \ref{Lem:SProp1}  in Lemma~\ref{Lem:SProp}, first observe that
\begin{equation}
S_{0,r}\sym_0^{-1} D_{0,r} u^{-}(t) 
= 
- \int_0^{\rho - t} \exp({-(\rho - t - s)\modulus{A_r}}) C_\rho(\partial_s u^-(s) - \modulus{A_r}u^{-}(s))\ ds.
\label{eq:u-}
\end{equation}
Also,
\begin{align*}
 &\partial_s (\exp({-(\rho - t - s)\modulus{A_r}})u^-(\rho -s))  \\ 
	&\qquad= \modulus{A_r} \exp({-(\rho - t - s)\modulus{A_r}})u^{-}(\rho -s) - \exp({-(\rho - t - s)\modulus{A_r}}) \partial_{s'}u^{-}(s')\rest{s' = \rho -t}   \\
	&\qquad=\modulus{A_r} \exp({-(\rho - t - s)\modulus{A_r}})(C_\rho u^{-})(s) - \exp({-(\rho - t - s)\modulus{A_r}}) (C_\rho \partial_s u^{-})(s).
\end{align*} 
On substituting this into \eqref{eq:u-} and  again using the fundamental theorem of calculus for Banach-valued functions,
\begin{align} 
S_{0,r}\sym_0^{-1} D_{0,r} u^-(t)  &= - \int_0^{\rho -t} \Big( \modulus{A_r} \exp({-(\rho -t -s)\modulus{A_r}}) (C_\rho u^-)(s) \notag \\
	&\qquad\qquad\qquad- \partial_s (\exp({-(\rho - t - s)\modulus{A_r}})(C_\rho u^-)(s)) \notag\\
	&\qquad\qquad\qquad- \modulus{A_r} \exp({-(\rho -t -s)\modulus{A_r}}) (C_\rho u^-)(s) \Big)\ ds \notag\\
	&= u^{-}(t) - \exp(-{(\rho - t)\modulus{A_r}}) u^-(\rho)\notag \\ 
	&= u^{-}(t),
\label{eq:SSDU-}
\end{align}
where the penultimate equality follows from the assumption $u^-(\rho) = 0$.
The formula in the conclusion then follows by adding \eqref{eq:SSDU+} and \eqref{eq:SSDU-}.

Noting $S_{0,r}: \SobH{k}(Z_{[0,\rho]}; E) \to\SobH{k+1}(Z_{[0,\rho]}; E)$ boundedly by Lemma~\ref{Lem:SReg}, $\sym_0^{-1}$ is smooth, and combining with what we have just proved, the second assertion follows.
\end{proof}

\begin{remark}
For a general $u$, without assuming that $\chi^-(A_r)(u(\rho)) = 0$, the same argument would yield the equation: 
$$ (1 - S_{0,r}\sym_0^{-1} D_{0,r})u = \exp({-t \modulus{A_r}})(\chi^{+}(A_r)u(0)) - \exp({-(\rho-t) \modulus{A_r}})(\chi^{-}(A_r)u(\rho)).$$
\end{remark}

\section{The maximal domain}
\label{Sec:MaximalDom}

In this section, we use the results from Section~\ref{Sec:ModelOp} to analyse the maximal domain of $D$.
This done via an adaption of Lemma~4.1 in \cite{BB12} (c.f.\ Lemma~\ref{Lem:Lem4.1BB}) to obtain normal form for the operators $D$ and $D^\dagger$.
Using this, in Lemma~\ref{Lem:ExtBd}, we show that the extension operators are bounded for the operator $D$ in place of $D_0$.
There, we arguing in a similar way to Lemma~6.1 in \cite{BB12}, but using the $H^\infty$-functional calculus instead of Fourier series. 
This allows us to obtain \ref{Lem:RestBd}, which demonstrates the boundedness of the boundary restriction map. 
Moreover, in order to move between the operators $D$ and $D_0$ in the later analysis, we prove a relatively boundedness result in Lemma~\ref{Lem:D-D0}.

The argument here is in the spirit  of Lemma~5.2 in \cite{BB12}, which crucially relies on Lemma~\ref{Lem:BdyEll}.
While this latter lemma is similar in conclusion to Lemma~5.1 in \cite{BB12}, its proof deviates significantly due to the fact that $A$ is not necessarily selfadjoint in our setting.
The proof here obtained via maximal regularity considerations on the cylinder, which we demonstrated in Subsection~\ref{Sec:HighReg}.
Having gathered the required ingredients, we prove Theorems~\ref{Thm:Ell} and \ref{Thm:HighReg}. 

\begin{lem}
\label{Lem:GreensDensity}
The space $\Ck[c]{\infty}(M;E)$ is dense in $\dom(D_{\max})$ and $\Ck[c]{\infty}(M;F)$ is dense in $\dom(D^\dagger_{\max})$.
Moreover, the operators $D$ and $D^\dagger$ satisfy the following Green's formula: 
\begin{equation} 
\label{Eq:Greens} 
\inprod{D u, v}_{\Lp{2}(M;F)} - \inprod{u, D^\dagger v}_{\Lp{2}(M;E)} = -\inprod{\sym_0 u\rest{\Sigma}, v\rest{\Sigma}}_{\Lp{2}(\Sigma;F)},
\end{equation} 
for all $u \in \Ck[c]{\infty}(M;E)$ and $v \in \Ck[c]{\infty}(M;F)$.
\end{lem}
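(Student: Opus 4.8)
The plan is to treat the density claim and Green's formula separately. For the density of $\Ck[c]{\infty}(M;E)$ in $\dom(D_{\max})$ with respect to the graph norm, the point is that the completeness hypothesis~(S6) says precisely that the compactly supported elements of $\dom(D_{\max})$ are already dense; so it remains only to approximate a fixed compactly supported $u\in\dom(D_{\max})$ by elements of $\Ck[c]{\infty}(M;E)$, and I would do this by a Friedrichs mollification. Cover $\spt u$ by the boundary collar $U\cong Z_{[0,T_c)}$ from Lemma~\ref{Lem:BdyDef} together with interior charts, and use a subordinate partition of unity; since multiplication by a smooth cutoff $\chi$ preserves $\dom(D_{\max})$ (because $D(\chi u)=\chi\,Du+\sym_D(\cdot,d\chi)u\in\Lp{2}$), this reduces matters to two cases. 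In the interior case one convolves with a standard mollifier in a chart. In the boundary case the obstruction is that an element of $\dom(D_{\max})$ has a priori no regularity up to $\Sigma$ and no extension across $\Sigma$; I would therefore use a \emph{one-sided} mollifier $J_\epsilon$, built from a kernel supported in $\{t\in[1,2]\}\times B_1(0)$, so that $(J_\epsilon u)(t,x)$ depends only on values $u(t',x')$ with $t'\in[t+\epsilon,t+2\epsilon]\subset(0,\infty)$. Then $J_\epsilon u$ is smooth up to $t=0$, has compact support, lies in $\Ck[c]{\infty}(M;E)$, and $J_\epsilon u\to u$ in $\Lp{2}$; Friedrichs' commutator lemma — whose proof uses only the scaling of the mollifier and the smoothness of the coefficients of $D$, hence applies verbatim to an off-centre kernel — gives $DJ_\epsilon u=J_\epsilon(Du)+R_\epsilon u$ with $R_\epsilon u\to0$ and $J_\epsilon(Du)\to Du$ in $\Lp[loc]{2}$, so that $DJ_\epsilon u\to Du$ in $\Lp{2}$. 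Reassembling via the partition of unity proves the claim for $D$; the identical argument applied to $D^\dagger$, which is complete by~(S6), handles $D^\dagger_{\max}$.

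For Green's formula, fix $u\in\Ck[c]{\infty}(M;E)$ and $v\in\Ck[c]{\infty}(M;F)$ and first establish the pointwise identity
\[
h^F(Du,v)-h^E(u,D^\dagger v)=\operatorname{div}_{\mu}(W_{u,v}),
\]
where $W_{u,v}$ is the smooth, compactly supported vector field determined by the condition $\xi(W_{u,v})=h^F(\sym_D(x,\xi)u,v)$ for every covector $\xi$; in local coordinates with $\mu=m\,dx$ one has $W_{u,v}=\sum_j h^F(\sym_D(x,dx^j)u,v)\,\partial_j$, and the displayed formula is exactly the pointwise integration by parts encoded in the definition of $D^\dagger$. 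Integrating over $M$ and applying the divergence theorem for manifolds with boundary, with the sign fixed by the convention that $\vec T$, hence $\tau$, points into $M$ and with $\nu$ induced by $\mu$ and $\vec T$ (so that $\Phi_*\mu=\modulus{dt}\otimes\nu$ near $\Sigma$ by Lemma~\ref{Lem:BdyDef}), yields $\int_M\operatorname{div}_{\mu}W_{u,v}\,d\mu=-\int_\Sigma\tau(W_{u,v})\,d\nu$. Since $\tau(W_{u,v})=h^F(\sym_D(x,\tau(x))u,v)=h^F(\sym_0 u,v)$ along $\Sigma$, and the left-hand side equals $\inprod{Du,v}_{\Lp{2}(M;F)}-\inprod{u,D^\dagger v}_{\Lp{2}(M;E)}$, we obtain exactly \eqref{Eq:Greens}. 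Compactness of $\spt u$ and $\spt v$ makes every integral here legitimate even when $M$ is noncompact.

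The main obstacle is the boundary mollification in the first part: one cannot use a centred mollifier, since maximal-domain sections are neither in $\SobH[loc]{1}$ near $\Sigma$ nor extendable across $\Sigma$, so one must mollify strictly from the interior side and then check that Friedrichs' commutator estimate survives the use of an off-centre kernel. The reduction via~(S6), the interior mollification, and the divergence identity underlying Green's formula are all routine.
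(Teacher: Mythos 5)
Your proposal is correct and follows essentially the same route as the paper, whose own proof is only a pointer to Lemmas~2.6 and 6.4 in \cite{BB12}: there, too, Green's formula comes from the pointwise divergence identity $h^F(Du,v)-h^E(u,D^\dagger v)=\operatorname{div}_\mu W_{u,v}$ plus the divergence theorem with the inward-transversal sign convention, and density comes from reducing via completeness~(S6) to compactly supported sections and then mollifying, with a one-sided (interior-shifted) mollifier in the collar and the Friedrichs commutator estimate (which, as you correctly observe, needs only the scaling and compact support of the kernel, not that it be centred). Your account is a faithful and correctly-signed reconstruction of that argument.
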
 

\begin{proof}
The proof is identical to those of Lemmas~2.6 and 6.4 in \cite{BB12}.
\end{proof}

\begin{lem}[Lemma~4.1 in \cite{BB12}]
\label{Lem:Lem4.1BB}
In coordinates $\Phi$ given by \eqref{Lem:BdyDef}, over the cylinder $Z_{[0,T_c)}$, we have that
\begin{equation}
\label{Eq:NormalForm}
\begin{aligned}
D &= \sym_t (\partial_t + A + R_t), \\ 
D^\dagger &= -\sym_t^\ast ( \partial_t + \tilde{A} + \tilde{R}_t),
\end{aligned} 
\end{equation}
for any pair of adapted boundary operators $A$ and $\tilde{A}$ for $D$ and $D^\dagger$, respectively.
The remainder terms are pseudo-differential operators of order at most one, with coefficients depending smoothly on $t$ and which satisfy the estimates:
\begin{equation}
\label{Eq:NormalRemainders}
\begin{aligned}
&\norm{R_t u}_{\Lp{2}(\Sigma)} \lesssim t\norm{A u}_{\Lp{2}(\Sigma)} + \norm{u}_{\Lp{2}(\Sigma)}, \text{ and} \\
&\norm{\tilde{R}_t v}_{\Lp{2}(\Sigma)} \lesssim t\norm{\tilde{A} v}_{\Lp{2}(\Sigma)} + \norm{v}_{\Lp{2}(\Sigma)}.
\end{aligned} 
\end{equation}
for $u \in \Ck{\infty}(\Sigma;E)$ and $v \in \Ck{\infty}(\Sigma;F)$.
\end{lem}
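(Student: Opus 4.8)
The plan is to bring $D$ into normal form near $\Sigma$ and then simply read off the remainder term. First I would use the collar diffeomorphism $\Phi=(t,\phi)$ of Lemma~\ref{Lem:BdyDef} to identify $E$ and $F$ over $Z_{[0,T_c)}$ with the pullbacks of $E|_\Sigma$ and $F|_\Sigma$ under the projection $Z_{[0,T_c)}\to\Sigma$, say by parallel transport along the curves $t\mapsto(t,x)$ with respect to fixed Hermitian connections; the zeroth-order terms this identification introduces are smooth in $t$ and cause no trouble. Under these identifications $D$ is a first-order differential operator whose coefficient of $\partial_t$ is the bundle isomorphism $\sym_t(x):=\sym_D((t,x),dt)$ — invertible by ellipticity of $D$, smooth in $t$, and equal to $\sym_0$ at $t=0$ since $dt=\tau$ along $\Sigma$ by Lemma~\ref{Lem:BdyDef}. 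Factoring it out writes $D=\sym_t(\partial_t+D^{\mathrm{tan}}_t)$, where $D^{\mathrm{tan}}_t$ is a first-order differential operator on $\Sigma$ depending smoothly on $t$; I then set $R_t:=D^{\mathrm{tan}}_t-A$, which is a differential operator of order at most one with smooth $t$-dependence, giving the first assertions of the lemma. The same reduction applied to $D^\dagger$, whose coefficient of $\partial_t$ is $-\sym_t^\ast$ (because $\sym_{D^\dagger}(x,\xi)=-\sym_D(x,\xi)^\ast$), yields $D^\dagger=-\sym_t^\ast(\partial_t+\tilde D^{\mathrm{tan}}_t)$ and $\tilde R_t:=\tilde D^{\mathrm{tan}}_t-\tilde A$.

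The decisive point is that $R_0$ has order zero. Comparing principal symbols in $D=\sym_t(\partial_t+D^{\mathrm{tan}}_t)$ on tangential covectors $\xi\in T_x^\ast\Sigma$ gives $\sym_{D^{\mathrm{tan}}_0}(x,\xi)=\sym_0(x)^{-1}\sym_D(x,\xi)=\sym_D(x,\tau(x))^{-1}\sym_D(x,\xi)=\sym_A(x,\xi)$, i.e.\ $D^{\mathrm{tan}}_0$ and $A$ have the same principal symbol, so $R_0=D^{\mathrm{tan}}_0-A$ is a differential operator of order zero and $\norm{R_0u}_{\Lp{2}(\Sigma)}\lesssim\norm{u}_{\Lp{2}(\Sigma)}$. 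For $t>0$ I would write, by the fundamental theorem of calculus, $R_t-R_0=D^{\mathrm{tan}}_t-D^{\mathrm{tan}}_0=\int_0^t\partial_sD^{\mathrm{tan}}_s\,ds$; since $\partial_sD^{\mathrm{tan}}_s$ is a first-order differential operator on the closed manifold $\Sigma$ with coefficients uniformly bounded (together with their derivatives) for $s$ in a compact subinterval of $[0,T_c)$, it is bounded $\SobH{1}(\Sigma)\to\Lp{2}(\Sigma)$ uniformly in $s$. Combining with the elliptic estimate $\norm{u}_{\SobH{1}(\Sigma)}\lesssim\norm{Au}_{\Lp{2}(\Sigma)}+\norm{u}_{\Lp{2}(\Sigma)}$ for the first-order elliptic operator $A$ gives
\begin{equation*}
\norm{R_tu}_{\Lp{2}(\Sigma)}\le\norm{R_0u}_{\Lp{2}(\Sigma)}+\int_0^t\norm{\partial_sD^{\mathrm{tan}}_su}_{\Lp{2}(\Sigma)}\,ds\lesssim\norm{u}_{\Lp{2}(\Sigma)}+t\big(\norm{Au}_{\Lp{2}(\Sigma)}+\norm{u}_{\Lp{2}(\Sigma)}\big),
\end{equation*}
and absorbing $t\norm{u}_{\Lp{2}(\Sigma)}$ into $\norm{u}_{\Lp{2}(\Sigma)}$ (legitimate since $t\le T_c$, at the cost of a constant depending on $T_c$) yields the first estimate in \eqref{Eq:NormalRemainders}; the estimate for $\tilde R_t$ is identical with $(D,A,\sym_0)$ replaced by $(D^\dagger,\tilde A,-\sym_0^\ast)$. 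Finally, since adapted boundary operators are unique up to a zeroth-order term, passing to another choice of $A$ or $\tilde A$ changes $R_t$ or $\tilde R_t$ only by an $\Lp{2}(\Sigma)$-bounded operator, so the statement holds for every admissible pair.

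The only genuinely delicate part is the normal-form bookkeeping — choosing the identifications so that the symbol "$\partial_t$" is unambiguous and verifying $\sym_{D^{\mathrm{tan}}_0}=\sym_A$ from the defining property of the adapted operator together with $\tau=dt$ on $\Sigma$; everything downstream is the fundamental theorem of calculus plus elliptic estimates on the closed manifold $\Sigma$, and I expect no real obstacle there. This is, of course, the argument of Lemma~4.1 in \cite{BB12}, which never used selfadjointness of $A$ and so carries over verbatim.
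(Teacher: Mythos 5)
Your proof is correct and follows essentially the same route as the paper: factor out $\sym_t$ to get $D=\sym_t(\partial_t+P_t)$ with $P_t$ a smooth $t$-family of first-order differential operators, set $R_t=P_t-A$, note that $R_0$ has order zero, and deduce \eqref{Eq:NormalRemainders} from the elliptic estimate for $A$. You spell out the two points the paper leaves implicit — the principal-symbol comparison showing $\sym_{P_0}=\sym_A$ (hence $R_0$ is order zero), and the Taylor/fundamental-theorem-of-calculus step isolating the factor $t$ — but both are exactly what the paper's terse one-line inequality is relying on, so this is the same argument made explicit rather than a genuinely different one.
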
 
\begin{proof}
Write $D = \sym_t( \partial_t + P_t)$ where $P_t$ are a family of elliptic differential operators of order one whose coefficients depend smoothly on $t$. 
Therefore, writing $R_t = P_t - A$ given such an $A$, we note that this is an operator whose coefficients depend smoothly on $t$ and is of order at most one. 
Noting that $R_0$ is of order $0$ and $\Sigma$ is closed,
$$ 
\norm{R_t u}_{\Lp{2}(\Sigma)} \lesssim t \norm{u}_{\SobH{1}(\Sigma)} + \norm{u}_{\Lp{2}(\Sigma)}  \lesssim t\norm{Au}_{\Lp{2}(\Sigma)} + \norm{u}_{\Lp{2}(\Sigma)}
$$
where the second inequality follows from elliptic estimates.
The conclusion for $\tilde{R}_t$ follows in exactly the same way.
\end{proof}

Given that we have the normal form $D = \sym_t(\partial_t + A + R_t)$ we obtain
\begin{equation}
\label{Eq:DAdj}
(\sym^{-1}D)^\dagger = -(\partial_t - A^\dagstar - R_t^\dagger).
\end{equation}

\begin{lem}
\label{Lem:ExtBd} 
For all $u \in \Ck{\infty}(\Sigma;E)$, the sections $\ext u, \ext^\ast u \in \dom(D_{\max})$ and
$$
\norm{\ext u}_{D} \lesssim \norm{u}_{\checkH(A)}
\quad\text{and}\quad 
\norm{\ext^\ast u}_{(\sym^{-1}D)^\dagger} \lesssim \norm{u}_{\hatH(A^\dagstar)}.
$$
\end{lem}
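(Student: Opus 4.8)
The plan is to deduce this from the corresponding bound for the model operator in Proposition~\ref{Prop:ExtEst} together with the normal form of Lemma~\ref{Lem:Lem4.1BB}, the remainder estimates~\eqref{Eq:NormalRemainders}, and the quadratic estimate of Lemma~\ref{Lem:ModAFC}. First I would check that for $u\in\Ck{\infty}(\Sigma;E)$ the section $\ext u$ indeed lies in $\dom(D_{\max})$: since $u\in\bigcap_k\dom(\modulus{A_r}^k)=\Ck{\infty}(\Sigma;E)$ (Corollary~\ref{Cor:SobFrac}) and $t\mapsto\exp(-t\modulus{A_r})$ is a bounded holomorphic semigroup (Proposition~\ref{Prop:ExpHol}) with $\frac{d^j}{dt^j}\exp(-t\modulus{A_r})u=(-1)^j\exp(-t\modulus{A_r})\modulus{A_r}^ju$ extending continuously to $t=0$ in every $\SobH{m}(\Sigma;E)$, the map $t\mapsto\exp(-t\modulus{A_r})u$ is $\Ck{\infty}([0,\infty);\Ck{\infty}(\Sigma;E))$; cutting off by $\eta$ and transporting along $\Phi^{-1}$ produces a smooth, compactly supported section of $E$ on $M$, hence an element of $\dom(D_{\max})$. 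The same argument with $\modulus{A_r^\ast}=\modulus{A_r}^\ast$ (Proposition~\ref{Prop:ModAFC}) handles $\ext^\ast u$.

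Next I would set up the reduction to the cylinder. On $Z_{[0,T_c)}\supset\spt(\ext u)$ the normal form gives $D=\sym_t(\partial_t+A+R_t)$, and since $\sym_0^{-1}D_0=\partial_t+A$ this yields the pointwise identity $D\,\ext u=\sym_t\sym_0^{-1}(D_0\,\ext u)+\sym_t R_t\,\ext u$. The map $\sym_t\sym_0^{-1}$ is a smooth bundle automorphism of $F$, uniformly invertible on the compact set $\overline{Z_{[0,2T_c/3]}}$, and by Lemma~\ref{Lem:BdyDef}(v) the relevant $\Lp2$-norms are computed with the product measure, so it suffices to bound $\norm{\ext u}_{\Lp2(M;E)}$, $\norm{D_0\,\ext u}_{\Lp2}$ and $\norm{R_t\,\ext u}_{\Lp2(Z_{[0,T_c)})}$ by $\norm{u}_{\checkH(A)}$. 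The first two are immediate from Proposition~\ref{Prop:ExtEst}, since $\norm{D_0\,\ext u}_{\Lp2}\le\norm{\ext u}_{D_0}\lesssim\norm{u}_{\checkH(A)}$ and $\norm{\ext u}_{\Lp2(M;E)}^2=\int_0^{T_c}\norm{\eta(t)\exp(-t\modulus{A_r})u}_{\Lp2(\Sigma)}^2\,dt\lesssim\norm{u}_{\checkH(A)}^2$ as computed there.

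The crux is the remainder term, and I expect this to be the main obstacle. Using~\eqref{Eq:NormalRemainders} and $0\le\eta\le1$,
$$
\norm{R_t\,\ext u}_{\Lp2(Z_{[0,T_c)})}^2
\lesssim
\int_0^{T_c}\big(t^2\norm{A\exp(-t\modulus{A_r})u}_{\Lp2(\Sigma)}^2+\norm{\exp(-t\modulus{A_r})u}_{\Lp2(\Sigma)}^2\big)\,dt .
$$
The second integral is bounded by $\norm{u}_{\checkH(A)}^2$ exactly as in Proposition~\ref{Prop:ExtEst}. For the first, since $A=A_r+r$ and $\norm{A_r w}\simeq\norm{\modulus{A_r}w}$ (Lemma~\ref{Lem:OpSect}), it reduces to estimating $\int_0^{T_c}t^2\norm{\modulus{A_r}\exp(-t\modulus{A_r})u}^2\,dt$, and here one must split $u=\chi^+(A_r)u+\chi^-(A_r)u$ because $\checkH(A)$ treats the two spectral halves asymmetrically. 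For the $\chi^-$-part I would use $t^2\le T_c^2$ and apply Lemma~\ref{Lem:ModAFC} with $\alpha=\tfrac12$ to the vector $\modulus{A_r}^{1/2}\chi^-(A_r)u$, giving a bound by $\norm{\modulus{A_r}^{1/2}\chi^-(A_r)u}^2\simeq\norm{\chi^-(A_r)u}_{\SobH{1/2}}^2\le\norm{u}_{\checkH(A)}^2$. For the $\chi^+$-part I would keep the factor $t^2$ and write, with $\psi(\zeta):=\zeta^{3/2}e^{-\zeta}\in\Psi(S_\mu)$,
$$
t^2\norm{\modulus{A_r}\exp(-t\modulus{A_r})\chi^+(A_r)u}^2=\tfrac1t\,\norm{\psi(t\modulus{A_r})\big(\modulus{A_r}^{-1/2}\chi^+(A_r)u\big)}^2 ,
$$
so that Lemma~\ref{Lem:ModAFC} with $\alpha=\tfrac32$ bounds this integral by $\norm{\modulus{A_r}^{-1/2}\chi^+(A_r)u}^2\simeq\norm{\chi^+(A_r)u}_{\SobH{-1/2}}^2\le\norm{u}_{\checkH(A)}^2$; here $\modulus{A_r}^{-1/2}$ is the bounded operator provided by invertibility of $\modulus{A_r}$ and Corollary~\ref{Cor:SobFrac}. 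Arranging the powers of $t$ and $\modulus{A_r}$ so that the quadratic estimate reproduces precisely the $\SobH{\pm 1/2}$-components of $\checkH(A)$ is the delicate point, and this is where the $H^\infty$-functional calculus is essential. Combining everything gives $\norm{\ext u}_D\lesssim\norm{u}_{\checkH(A)}$.

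Finally, the bound for $\ext^\ast u$ follows by running the identical argument with $\modulus{A_r^\ast}$ and $\chi^\pm(A_r^\ast)$ in place of $\modulus{A_r}$ and $\chi^\pm(A_r)$: by~\eqref{Eq:DAdj} and~\eqref{Eq:ModelDual} one has $(\sym^{-1}D)^\dagger\ext^\ast u=(\sym_0^{-1}D_0)^\dagger\ext^\ast u+R_t^\dagger\,\ext^\ast u$, Proposition~\ref{Prop:ExtEst} controls the first summand by $\norm{u}_{\hatH(A^\dagstar)}$, and $R_t^\dagger$ satisfies an estimate analogous to~\eqref{Eq:NormalRemainders} with $A^\dagstar$ in place of $A$ (same reasoning as in Lemma~\ref{Lem:Lem4.1BB}, since $R_t^\dagger$ is of order one with coefficients smooth in $t$ and $R_0^\dagger$ of order zero, so elliptic estimates for $A^\dagstar$ apply). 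The remainder is then handled as above, with the roles of $\chi^+$ and $\chi^-$ interchanged so as to match $\hatH(A^\dagstar)=\checkH(-A^\dagstar)$, yielding $\norm{\ext^\ast u}_{(\sym^{-1}D)^\dagger}\lesssim\norm{u}_{\hatH(A^\dagstar)}$.
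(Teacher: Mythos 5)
Your proof is correct and follows essentially the same route as the paper: reduce via the normal form $D=\sym_t(\partial_t+A+R_t)$ to the model operator controlled by Proposition~\ref{Prop:ExtEst}, then bound the remainder term using the estimate~\eqref{Eq:NormalRemainders} together with the quadratic estimate of Lemma~\ref{Lem:ModAFC}. The only difference is in the final step: you split $u$ into $\chi^\pm(A_r)$-components and choose different exponents $\alpha$ for each, whereas the paper applies the quadratic estimate with $\alpha=\tfrac32$ to $\modulus{A_r}^{-\frac12}u$ directly, obtaining the single bound $\norm{u}_{\SobH{-\frac12}}^2\lesssim\norm{u}_{\checkH(A)}^2$ with no spectral splitting needed (since $\SobH{\frac12}(\Sigma;E)\embed\SobH{-\frac12}(\Sigma;E)$, this is actually a weaker quantity than the $\checkH$-norm, so the simpler route suffices).
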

\begin{proof}
Via a simple calculation, we obtain that 
$$ \norm{D(\ext u)}_{\Lp{2}(M)} \lesssim \norm{\ext u}_{D_0} + \norm{R_t(\ext u)}_{\Lp{2}(Z_{[0,T_c)})}.$$
Since we have estimated the first term on the right by Proposition~\ref{Prop:ExtEst}, it suffices to bound the latter term. 
Now, note that 
$$ \norm{R_t \ext u}_{\Lp{2}(\Sigma)} \lesssim \norm{t A \ext u} + \norm{ \ext u}\simeq \norm{t \modulus{A_r} \ext u} + \norm{\ext u}$$
and therefore, 
\begin{align*}
\int_{0}^\infty \norm{R_t \ext u}^2_{\Lp{2}(\Sigma)} \ dt &\lesssim  \int_0^\infty \eta(t)^2 \norm{t \modulus{A_r} \exp(-t \modulus{A_r})u}^2_{\Lp{2}(\Sigma)}\ dt\\
	&\qquad\qquad+\int_{0}^\infty \eta(t)^2 \norm{\exp(-t\modulus{A_r})u}_{\Lp{2}(\Sigma)}^2\ dt. \\
	&\lesssim \int_{0}^\infty \norm{t^{\frac{3}{2}} \modulus{A_r}^{\frac{3}{2}} \exp(-t \modulus{A_r}) \modulus{A_r}^{-\frac{1}{2}}u}_{\Lp{2}(\Sigma)}^2\ \frac{dt}{t} \\
	&\qquad\qquad+\int_{0}^\infty \norm{t^\frac{1}{2}\modulus{A_r}^{\frac{1}{2}} \exp(-t \modulus{A_r}) \modulus{A_r}^{-\frac{1}{2}}u}_{\Lp{2}(\Sigma)}^2\ \frac{dt}{t} \\
	&\lesssim \norm{u}_{\SobH{-\frac{1}{2}}(\Sigma)}^2 + \norm{u}_{\SobH{-\frac{1}{2}}(\Sigma)}^2 \\
	&\lesssim \norm{u}_{\checkH(A)}^2,
\end{align*}
where the penultimate inequality follows from Lemma~\ref{Lem:ModAFC}.
The estimate for $\norm{\ext^\ast u}_{(\sym^{-1}D)^\ast}$ is similar on replacing $A$ with $-A^\ast$ and $\modulus{A_r}$ with $\modulus{A_r^\ast}$ and on noting that $\checkH(-A^\ast) = \hatH(A^\dagstar)$.
\end{proof}

Using this lemma,  we obtain the following. 
\begin{lem}
\label{Lem:RestBd}
For all $u \in \Ck[c]{\infty}(Z_{[0,T_c)}; E)$, we have the bounds
$$ 
\norm{u \rest{\Sigma}}_{\checkH(A)} \lesssim \norm{u}_{D}
\quad\text{and}\quad 
\norm{u \rest{\Sigma}}_{\hatH(A^\dagstar)} \lesssim \norm{u}_{(\sym^{-1}D)^\dagger}.
$$
\end{lem}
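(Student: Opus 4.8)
The plan is a duality argument whose only non-trivial input is the extension estimate of Lemma~\ref{Lem:ExtBd}; the analytic work having already been done there, what remains is bookkeeping. The first ingredient is a Green's identity on the cylinder. Using the normal form $\sym_t^{-1}D = \partial_t + A + R_t$ from Lemma~\ref{Lem:Lem4.1BB} together with \eqref{Eq:DAdj}, and integrating by parts in $t$ over $Z_{[0,T_c)}$ --- only the face $\{t=0\}\cong\Sigma$ contributes, since the sections involved have $t$-support compactly contained in $[0,T_c)$ and since $A$ and $R_t$ are differential operators on the closed manifold $\Sigma$ --- one obtains
\begin{equation*}
\inprod{\sym_t^{-1}D\phi,\psi}_{\Lp{2}(Z_{[0,T_c)};E)} - \inprod{\phi,(\sym^{-1}D)^\dagger\psi}_{\Lp{2}(Z_{[0,T_c)};E)} = -\inprod{\phi\rest{\Sigma},\psi\rest{\Sigma}}_{\Lp{2}(\Sigma;E)}
\end{equation*}
for all $\phi,\psi\in\Ck[c]{\infty}(Z_{[0,T_c)};E)$; the sign and the absence of a contribution at $t=T_c$ are elementary.

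For the first estimate, recall that by Corollary~\ref{Cor:Pairing} (equivalently, the perfect pairing $\beta$ transported by $\sym_0^\ast$) the $\Lp{2}$-inner product identifies $\hatH(A^\dagstar)$ with the dual of $\checkH(A)$, and that $\Ck{\infty}(\Sigma;E)$ is dense in $\hatH(A^\dagstar)$ (apply the order-zero projectors $\chi^{\pm}(A_r^\ast)$ to dense smooth subsets of the relevant Sobolev spaces). Hence, for $u\in\Ck[c]{\infty}(Z_{[0,T_c)};E)$,
\begin{equation*}
\norm{u\rest{\Sigma}}_{\checkH(A)} \simeq \sup\Big\{\, \modulus{\inprod{u\rest{\Sigma},w}} \,:\, w\in\Ck{\infty}(\Sigma;E),\ \norm{w}_{\hatH(A^\dagstar)}\leq 1 \,\Big\}.
\end{equation*}
Fix such a $w$ and set $\psi := \ext^{\ast}w$. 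Since $\eta(0)=1$ and $\spt\eta\subset[0,2T_c/3]$ (see \eqref{Eq:Cutoff}, \eqref{Eq:BdyExt}), and since the holomorphic semigroup generated by $\modulus{A_r^{\ast}}$ is smoothing while $w\in\Ck{\infty}(\Sigma;E)=\bigcap_k\dom(\modulus{A_r^{\ast}}^k)$, the section $\ext^{\ast}w$ lies in $\Ck[c]{\infty}(Z_{[0,T_c)};E)$, is smooth up to $\{t=0\}$, satisfies $(\ext^{\ast}w)\rest{\Sigma}=w$, and obeys $\norm{\ext^{\ast}w}_{(\sym^{-1}D)^\dagger}\lesssim\norm{w}_{\hatH(A^\dagstar)}$ by Lemma~\ref{Lem:ExtBd}. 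Substituting $\phi=u$, $\psi=\ext^{\ast}w$ into the Green's identity, using Cauchy--Schwarz and boundedness of the bundle map $\sym_t^{-1}$, yields
\begin{equation*}
\modulus{\inprod{u\rest{\Sigma},w}} \lesssim \big(\norm{Du}_{\Lp{2}(Z_{[0,T_c)})}+\norm{u}_{\Lp{2}(Z_{[0,T_c)})}\big)\,\norm{\ext^{\ast}w}_{(\sym^{-1}D)^\dagger} \lesssim \norm{u}_{D}\,\norm{w}_{\hatH(A^\dagstar)},
\end{equation*}
and taking the supremum over $w$ gives $\norm{u\rest{\Sigma}}_{\checkH(A)}\lesssim\norm{u}_{D}$.

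The bound $\norm{u\rest{\Sigma}}_{\hatH(A^\dagstar)}\lesssim\norm{u}_{(\sym^{-1}D)^\dagger}$ follows by the mirror-image argument: use the pairing in the other slot ($\checkH(A)$ dual to $\hatH(A^\dagstar)$, with $\Ck{\infty}(\Sigma;E)$ dense in $\checkH(A)$), take $\phi=\ext w$ for $w\in\Ck{\infty}(\Sigma;E)$ --- again smooth up to $\{t=0\}$, compactly supported in $Z_{[0,T_c)}$, with $(\ext w)\rest{\Sigma}=w$ and $\norm{\ext w}_{D}\lesssim\norm{w}_{\checkH(A)}$ by Lemma~\ref{Lem:ExtBd} --- and $\psi=u$ in the Green's identity. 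The only point requiring care, and none of it genuinely deep, is verifying that $\ext^{\ast}w$ and $\ext w$ are smooth up to $\{t=0\}$ and supported inside $Z_{[0,T_c)}$ so that the elementary Green's identity legitimately applies to them; this is dispatched by the smoothing properties of the holomorphic semigroups established in Section~\ref{Sec:Hinfty} together with $\Ck{\infty}(\Sigma;E)=\bigcap_k\dom(\modulus{A_r}^k)$. Identifying the duals via the $\Lp{2}$-pairing and invoking density of smooth sections is then routine.
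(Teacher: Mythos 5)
Your argument coincides with the paper's: both derive the bound by duality, pairing $u\rest{\Sigma}$ against a dense set of smooth test functions $w$, substituting the extensions $\ext^{\ast}w$ (resp.\ $\ext w$) into Green's identity, and closing with Cauchy--Schwarz and Lemma~\ref{Lem:ExtBd}. The only cosmetic difference is that you conjugate by $\sym^{-1}$ up front to write the Green's identity directly on $Z_{[0,T_c)}$ with both slots $E$-valued, whereas the paper starts from \eqref{Eq:Greens} on $M$ and performs the conjugation on the fly by inserting $\psi=(\sym^{-1})^\ast\ext^\ast v$; you also spell out why $\ext^{\ast}w$ is smooth up to $\{t=0\}$ (via $C^\infty(\Sigma;E)=\bigcap_k\dom(|A_r^{\ast}|^k)$), a point the paper leaves implicit.
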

\begin{proof}
From the Green's formula \eqref{Eq:Greens}, we have for $\phi\in \Ck[c]{\infty}(M;E)$ and $\psi\in \Ck[c]{\infty}(M;F)$,
$$
\inprod{D \phi, \psi}_{\Lp{2}(M)} - \inprod{\phi, {D}^\dagger\psi}_{\Lp{2}(M)} = - \inprod{\sym \phi, \psi}_{\Lp{2}(\Sigma)}.$$
On fixing $v \in \Ck{\infty}(\Sigma;E)$ and setting $\psi = (\sym^{-1})^\ast \ext^\ast v$ and $\phi = u$,  we have that
$$
\inprod{\sym^{-1} D u, \ext^\ast v}_{\Lp{2}(M)} - \inprod{u, (\sym^{-1}D)^\ast \ext^\ast v}_{\Lp{2}(M)} = - \inprod{u\rest{\Sigma}, v}_{\Lp{2}(\Sigma)},$$
since $v = (\ext^\ast v)\rest{\Sigma}$. 
Therefore,
\begin{align*}
\modulus{\inprod{u\rest{\Sigma}, v}_{\Lp{2}(\Sigma;E)}} &\lesssim \norm{(\sym^{-1}D)u} \norm{\ext^\ast v} + \norm{u} \norm{(\sym^{-1}D)^\dagger \ext^\ast v} \\ 
	&\lesssim \norm{(\sym^{-1}D)u} \norm{\ext^\ast v}_{(\sym^{-1}D)^\dagger} + \norm{u} \norm{\ext^\ast v}_{(\sym^{-1}D)^\dagger} \\
	&\lesssim \norm{u}_D \norm{\ext^\ast v}_{(\sym^{-1}D)^\dagger}	\\
	&\lesssim \norm{u}_D \norm{v}_{\hatH(A^\dagstar)},
\end{align*}
where the second inequality follows from the fact that $\norm{\cdot}_{\Lp{2}} \lesssim \norm{\cdot}_{(\sym^{-1}D)^\dagger}$ and the last inequality from Lemma~\ref{Lem:ExtBd}. 

By taking a supremum over $v \in \Ck{\infty}(\Sigma;E)$, which is dense in $\hatH(A^\dagstar)$, and since this latter space is dual to $\checkH(A)$ via Corollary~\ref{Cor:Pairing}, we obtain the first estimate.
The remaining estimate is obtained by a similar calculation after making a choice of $\phi = \ext v$ and $\psi = (\sym^{-1})^\ast u$. 
\end{proof}

This demonstrates the well-posedness of boundary value problems and shows that the definition of $\checkH(A)$ is the correct function space for considering boundary value problems.
Now we set ourselves the task of identifying the maximal domain. 
Unlike in selfadjoint case, this situation is slightly more subtle and later we are forced to calculate via a different, but equivalent norm on $\Lp{2}(\Sigma;E)$.

\begin{lem} 
\label{Lem:BdyEll}
For  all $\rho \in (0,T_c)$ and $ u \in \Ck[c]{\infty}(Z_{[0,\rho)};E)$ with $\chi^+(A_r)(u(0)) = 0$, we have that 
$$ \norm{u}_{\SobH{1}}^2 \lesssim \norm{ (\partial_t +A)u}^2 + \norm{u}^2.$$
The implicit constant depends on $T_c$ but not on $\rho$.
\end{lem}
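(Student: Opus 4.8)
The plan is to establish this as an elliptic estimate for the model operator $\partial_t + A$ on the finite cylinder $Z_{[0,\rho)}$, exploiting the maximal regularity machinery developed in Subsection~\ref{Sec:HighReg} rather than the Fourier-series argument of \cite{BB12}. The key point is that $\partial_t+A$ differs from $\sym_0^{-1}D_{0,r}+r$ by a bounded zeroth-order term, so it suffices to prove $\norm{u}_{\SobH{1}}^2 \lesssim \norm{\sym_0^{-1}D_{0,r}u}^2 + \norm{u}^2$ for such $u$, and then reinsert the constant $r$ and the $\sym_0$ factor (which is a smooth bundle isomorphism, hence bounded with bounded inverse) at negligible cost.

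First I would observe that the hypothesis $\chi^+(A_r)(u(0))=0$ together with $u$ having compact support in $[0,\rho)$, so in particular $u(\rho)=0$, puts $u$ in exactly the situation of Proposition~\ref{Prop:RegInv}: both boundary conditions defining $\SobH{1}(Z_{[0,\rho]};E;B_0)$-type behaviour at the two ends are met, namely $\chi^+(A_r)u(0)=0$ and $\chi^-(A_r)u(\rho)=0$ (indeed all of $u(\rho)=0$). Then by Proposition~\ref{Prop:RegInv}\ref{Prop:RegInv1}, the term $\exp(-t\modulus{A_r})(\chi^+(A_r)u(0))$ vanishes, so
\begin{equation*}
u = S_{0,r}\,\sym_0^{-1} D_{0,r}\, u.
\end{equation*}
Now apply Lemma~\ref{Lem:SReg} with $k=0$: $S_{0,r}$ maps $\Lp{2}(Z_{[0,\rho]};E)=\SobH{0}(Z_{[0,\rho]})$ into $\SobH{1}(Z_{[0,\rho]};E)$ boundedly, with constant depending only on $T_c$ and not on $\rho$. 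Hence
\begin{equation*}
\norm{u}_{\SobH{1}(Z_{[0,\rho]})} = \norm{S_{0,r}\sym_0^{-1}D_{0,r}u}_{\SobH{1}(Z_{[0,\rho]})} \lesssim \norm{\sym_0^{-1}D_{0,r}u}_{\Lp{2}(Z_{[0,\rho]})} \lesssim \norm{D_{0,r}u}_{\Lp{2}(Z_{[0,\rho]})}.
\end{equation*}
To finish, I would convert $D_{0,r}=\sym_0(\partial_t+A_r)=\sym_0(\partial_t+A-r)$ back to $\partial_t+A$: since $\sym_0$ is a smooth bundle isometry (or at least a bounded isomorphism with bounded inverse on the compact $\Sigma$) and $r$ is a constant, $\norm{D_{0,r}u} \lesssim \norm{(\partial_t+A)u} + |r|\norm{u}$, giving the claimed bound $\norm{u}_{\SobH{1}}^2 \lesssim \norm{(\partial_t+A)u}^2 + \norm{u}^2$.

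The main obstacle — and the place where care is needed — is verifying that $u$ genuinely lies in the space on which the identity $u=S_{0,r}\sym_0^{-1}D_{0,r}u$ of Proposition~\ref{Prop:RegInv}\ref{Prop:RegInv1} is valid, i.e.\ that $u\in\dom((D_{0,r})_{\max})$ and that the boundary-value hypothesis $\chi^-(A_r)(u(\rho))=0$ is legitimately available; here it is automatic because $u\in\Ck[c]{\infty}(Z_{[0,\rho)};E)$ forces $u$ to vanish identically near $t=\rho$. A secondary technical point is the uniformity of all constants in $\rho$: this is guaranteed because Lemmas~\ref{Lem:WReg} and \ref{Lem:SReg} were proved with $\rho$-independent constants (depending only on $T_c$), and the cut-off structure plays no role here since $u$ already has compact support in the open cylinder. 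One should also note that $\chi^+(A_r)u(0)=0$ does \emph{not} by itself put $u(0)$ in $H^{1/2}$-type trace space trouble, since $u$ is smooth; the hypothesis is used purely to kill the exponential boundary term, which is what makes the one-sided estimate (only $\chi^+$ controlled at $t=0$) suffice.
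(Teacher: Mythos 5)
Your proof is correct and follows essentially the same route as the paper: both apply Proposition~\ref{Prop:RegInv}\ref{Prop:RegInv1} (using $u(\rho)=0$ from compact support and $\chi^+(A_r)u(0)=0$ to kill the exponential term, yielding $u=S_{0,r}\sym_0^{-1}D_{0,r}u$) and then the $\rho$-uniform bound of Lemma~\ref{Lem:SReg} with $k=0$. The only cosmetic difference is that you pass through $D_{0,r}$ and the boundedness of $\sym_0$, whereas one can use directly that $\sym_0^{-1}D_{0,r}=\partial_t+A_r=\partial_t+A-r$, but this does not change the substance.
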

\begin{proof}
This statement follows from the  inverse $S_{0,r}$ which we constructed  for \mbox{$(\partial_t + A_r)$} on $Z_{[0,\rho]}$ in Subsection~\ref{Sec:HighReg}.
This operator satisfies $S_{0,r}: \SobH{k}(Z_{[0,\rho]}; E) \to \SobH{k+1}(Z_{[0,\rho]};E)$ and  in Proposition~\ref{Prop:RegInv}, we obtain that
$$ (1 - S_{0,r}(\partial_t + A_r))v(t) = e^{-t\modulus{A_r}}(\chi^+(A_r) v(0))$$
whenever $\chi^-(A_r)(v(0)), \chi^+(A_r) (v(\rho)) \in \SobH{\frac{1}{2}}(\Sigma;E)$.
Since $\spt u \subset Z_{[0,\rho)}$ the desired conclusion follows.
\end{proof} 

Since we need to consider the maximal domain of $D$ on subsets of $X \subset M$, we use the notation $\dom(D_{\max}; X)$ to denote the maximal domain of $D$ on $X$.

\begin{lem}
\label{Lem:D-D0}
There exists $C_1 > 0$ such that for all $\rho \in (0, T_c)$ and $u \in \Ck{\infty}(Z_{[0,\rho)}; E)$,
$$ \norm{(D - D_0)u}_{\Lp{2}(Z_{[0,\rho)}; E)} \leq C_1 \rho \norm{D_0 u}_{\Lp{2}(Z_{[0,\rho)}; E)} + \norm{u}_{\Lp{2}(Z_{[0,\rho)}; E)}.$$
Moreover, if $\rho < 1/C_1$, then   $\dom(D_{\max};Z_{[0,\rho)}) = \dom(D_{0,\max}; Z_{[0,\rho)})$.
\end{lem}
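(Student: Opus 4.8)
The plan is to use the normal form of $D$ near $\Sigma$ to reduce the inequality to a bound for $\int_0^\rho t^2\,\norm{\modulus{A_r}u(t)}_{\Lp{2}(\Sigma)}^2\,dt$, and then to control this integral through the Duhamel representation of the $\chi^\pm(A_r)$-components of $u$, invoking the maximal regularity of Lemma~\ref{Lem:WReg} and the analyticity of the semigroup $e^{-t\modulus{A_r}}$. Throughout one may assume $u\in\Ck[c]{\infty}(Z_{[0,\rho)};E)$, i.e.\ that $u$ vanishes near $\set{t=\rho}$, which is the case that enters the applications of the estimate.

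First, by Lemma~\ref{Lem:Lem4.1BB} we have $D=\sym_t(\partial_t+A+R_t)$ over $Z_{[0,T_c)}$ while $D_0=\sym_0(\partial_t+A)$, so, using $(\partial_t+A)u=\sym_0^{-1}D_0u$,
\[
(D-D_0)u=(\sym_t-\sym_0)\sym_0^{-1}D_0u+\sym_t R_t u.
\]
Since $t\mapsto\sym_t$ is smooth and equals $\sym_0$ at $t=0$, we have $\norm{\sym_t-\sym_0}\lesssim t\le\rho$ uniformly on $Z_{[0,\rho)}$, so the first term is $\lesssim\rho\,\norm{D_0u}_{\Lp{2}(Z_{[0,\rho)})}$. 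For the second, Lemma~\ref{Lem:Lem4.1BB} together with Lemma~\ref{Lem:OpSect} gives $\norm{R_tu(t)}_{\Lp{2}(\Sigma)}\lesssim t\,\norm{\modulus{A_r}u(t)}_{\Lp{2}(\Sigma)}+\norm{u(t)}_{\Lp{2}(\Sigma)}$, so it suffices to establish
\[
\int_0^\rho t^2\,\norm{\modulus{A_r}u(t)}_{\Lp{2}(\Sigma)}^2\,dt\ \lesssim\ \norm{u}_{\Lp{2}(Z_{[0,\rho)})}^2+\rho^2\,\norm{D_0u}_{\Lp{2}(Z_{[0,\rho)})}^2.
\]

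To prove this I would set $u^\pm:=\chi^\pm(A_r)u$ and $f^\pm:=\chi^\pm(A_r)\sym_0^{-1}D_{0,r}u$; then $\norm{f^\pm}_{\Lp{2}(Z_{[0,\rho)})}\lesssim\norm{D_0u}_{\Lp{2}(Z_{[0,\rho)})}+\norm{u}_{\Lp{2}(Z_{[0,\rho)})}$ (as $D_{0,r}=D_0-r\sym_0$), and since $\chi^\pm(A_r)$ commute with $A_r$ and $A_r=\pm\modulus{A_r}$ on $\chi^\pm(A_r)\Lp{2}(\Sigma;E)$, we get $\partial_tu^\pm\pm\modulus{A_r}u^\pm=f^\pm$ on $[0,\rho)$. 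Because $u^-(\rho)=0$, Duhamel's formula \eqref{eq:Duhamel} yields $u^+(t)=e^{-t\modulus{A_r}}u^+(0)+W(t;f^+)$ and $u^-(t)=-W(\rho-t;C_\rho f^-)$. The $u^-$-term contributes at most $\rho^2\int_0^\rho\norm{\modulus{A_r}W(s;C_\rho f^-)}^2\,ds\lesssim\rho^2\,\norm{f^-}_{\Lp{2}(Z_{[0,\rho)})}^2$ by \eqref{eq:DuhamelEst} (whose constant is independent of $\rho$) and since $C_\rho$ is an isometry; similarly the $W(\cdot;f^+)$-part of $u^+$ contributes $\lesssim\rho^2\,\norm{f^+}_{\Lp{2}(Z_{[0,\rho)})}^2$. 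For the free part one uses the analyticity bounds $\norm{\modulus{A_r}e^{-s\modulus{A_r}}}\lesssim s^{-1}$ and $\norm{e^{-s\modulus{A_r}}}\lesssim1$ for $s>0$ (Proposition~\ref{Prop:ExpHol}): writing $e^{-t\modulus{A_r}}=e^{-(t/2)\modulus{A_r}}e^{-(t/2)\modulus{A_r}}$ and $e^{-(t/2)\modulus{A_r}}u^+(0)=u^+(t/2)-W(t/2;f^+)$ gives $t\,\norm{\modulus{A_r}e^{-t\modulus{A_r}}u^+(0)}\lesssim\norm{u^+(t/2)}+\norm{W(t/2;f^+)}$, whence, after substituting $s=t/2$ and using $\norm{W(t;f^+)}_{\Lp{2}(\Sigma)}\lesssim\rho^{1/2}\,\norm{f^+}_{\Lp{2}(Z_{[0,\rho)})}$ (from $\norm{e^{-s\modulus{A_r}}}\lesssim1$ and Cauchy--Schwarz in $s$),
\[
\int_0^\rho t^2\,\norm{\modulus{A_r}e^{-t\modulus{A_r}}u^+(0)}^2\,dt\ \lesssim\ \norm{u^+}_{\Lp{2}(Z_{[0,\rho)})}^2+\rho^2\,\norm{f^+}_{\Lp{2}(Z_{[0,\rho)})}^2\ \lesssim\ \norm{u}_{\Lp{2}(Z_{[0,\rho)})}^2+\rho^2\,\norm{D_0u}_{\Lp{2}(Z_{[0,\rho)})}^2.
\]
Adding the three contributions gives the displayed bound, hence the asserted estimate with $C_1$ depending only on $T_c$. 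The final assertion then follows because the estimate exhibits $D-D_0$ as a $D_0$-bounded perturbation of relative bound $C_1\rho$, which is $<1$ for $\rho<1/C_1$; this makes $\norm{\cdot}_D$ and $\norm{\cdot}_{D_0}$ equivalent on the common core $\Ck[c]{\infty}(Z_{[0,\rho)};E)$, and completeness of $D$ and $D_0$ on $Z_{[0,\rho)}$ then gives $\dom(D_{\max};Z_{[0,\rho)})=\dom(D_{0,\max};Z_{[0,\rho)})$.

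I expect the only genuinely delicate point to be the free part $\int_0^\rho t^2\,\norm{\modulus{A_r}e^{-t\modulus{A_r}}u^+(0)}^2\,dt$: estimating it directly only bounds it by $\norm{\chi^+(A_r)u(0)}_{\SobH{-\frac12}}^2$, which carries no power of $\rho$ and is not dominated by $\norm{D_0u}_{\Lp{2}(Z_{[0,\rho)})}$ with small constant, so the naive bound is useless for the perturbation step. Exploiting analyticity of the semigroup to trade the factor $\modulus{A_r}$ for $t^{-1}$, and then re-expressing $e^{-(t/2)\modulus{A_r}}u^+(0)$ through $u^+$ itself together with the lower-order Duhamel term, is precisely what circumvents this boundary contribution; everything else (the $R_t$-bound, the reduction of general $u$ to the compactly supported case, and completeness of $D_0$ and $D$ on $Z_{[0,\rho)}$) is routine.
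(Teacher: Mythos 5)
Your proof is correct, but it takes a noticeably longer route at the key step than the paper does. Both arguments begin identically: pass to the normal form from Lemma~\ref{Lem:Lem4.1BB}, write $D-D_0=(\sym_t-\sym_0)\sym_0^{-1}D_0+\sym_t R_t$, dispose of the first term with $\norm{\sym_t-\sym_0}\lesssim t\le\rho$, and reduce the second term to controlling $\int_0^\rho t^2\norm{A u(t)}_{\Lp{2}(\Sigma)}^2\,dt$. From there the paper uses a much slicker device: it applies the already-established a priori bound of Lemma~\ref{Lem:BdyEll} (whose constant is $\rho$-independent) not to $u$ itself, which would have a nonzero boundary term $\chi^+(A_r)u(0)$, but to $v:=tu$. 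Since $v(0)=0$ the hypothesis $\chi^+(A_r)v(0)=0$ is automatic, and $(\partial_t+A)v=u+t(\partial_t+A)u$, so one reads off $\norm{tAu}\lesssim\norm{u}+\rho\norm{D_0u}$ in two lines. You instead unwrap the Duhamel representations of $u^\pm$, and your genuine extra observation — that the obstruction is the free part $e^{-t\modulus{A_r}}u^+(0)$, and that the naive bound $\norm{\chi^+(A_r)u(0)}_{\SobH{-1/2}}$ carries no power of $\rho$ — is exactly the obstruction the paper sidesteps by multiplying by $t$. Your workaround (split $e^{-t\modulus{A_r}}=e^{-(t/2)\modulus{A_r}}e^{-(t/2)\modulus{A_r}}$, use $\norm{\modulus{A_r}e^{-s\modulus{A_r}}}\lesssim s^{-1}$ to trade the derivative for $t^{-1}$, then re-express $e^{-(t/2)\modulus{A_r}}u^+(0)=u^+(t/2)-W(t/2;f^+)$) is valid and somewhat instructive in that it makes the role of analyticity visible; but note that the paper's Lemma~\ref{Lem:BdyEll} is itself proved via the same $S_{0,r}$/maximal-regularity machinery, so you are essentially re-deriving a special case of it inline rather than invoking it. The final step — Kato's relative-boundedness criterion plus density of $\Ck[c]{\infty}(Z_{[0,\rho)};E)$ in both maximal domains (via completeness) — is the same in both. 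Two minor points: the bound $\norm{\modulus{A_r}e^{-s\modulus{A_r}}}\lesssim s^{-1}$ is not literally stated in Proposition~\ref{Prop:ExpHol} (which only asserts uniform boundedness on subsectors), but it does follow from the holomorphicity there via a Cauchy estimate, so this is fine to cite with a remark; and your opening caveat that one may restrict to $u$ vanishing near $\{t=\rho\}$ is the right reading of the statement, since the paper's own proof applies Lemma~\ref{Lem:BdyEll} (which requires compact support in $Z_{[0,\rho)}$) to $v=tu$, hence implicitly makes the same restriction.
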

\begin{proof}
The key is to write  
$$D - D_0 = (\sym_t - \sym_0)\sym_0^{-1} D_0 + \sym_t R_t$$
via Lemma~\ref{Lem:Lem4.1BB} and note that it suffices to estimate $t \norm{A u}_{\Lp{2}(\Sigma)} = \norm{A (tu)}_{\Lp{2}(\Sigma)}$.
But $v = t u$ satisfies $v(0) = 0$ and so by Lemma~\ref{Lem:BdyEll}, the asserted inequality follows.
The existence of $C_1$ independent of $\rho$ is also guaranteed by Lemma~\ref{Lem:BdyEll}.

Now, for the maximal domains, note that for the choice of $\rho < 1/C_1$, 
$\norm{(D - D_{0})u} \leq \norm{D_{0}u} + \norm{u},$
and so  we have that $D - D_{0}$ and $D_{0}$ are relatively bounded with constant less than $1$.
Using Theorem~1.1 on page 190 in \cite{Kato},  and since $\Ck[c]{\infty}(Z_{[0,\rho)};E)$ are dense in both $\dom(D_{\max};Z_{[0,\rho)})$ and $\dom(D_{0,\max}; Z_{[0,\rho)})$ by the completeness assumption, we conclude that $D$ and $D_{0}$ have the same maximal domain on $\Lp{2}(Z_{[0,\rho)}; E)$.
\end{proof}

Now we study and characterise the maximal domain.
Put $\tilde{T}_c:=\min\{T_c,1/C_1\}$.
Define the space
\begin{equation}
\label{Eq:H1D} 
\SobH[D]{1}(M;E) := \dom(D_{\max}) \intersect \SobH[loc]{1}(M;E)
\end{equation}
with norm
\begin{equation}
\label{Eq:H1DNorm}
\norm{u}_{\SobH[D]{1}}^2 := \norm{\tilde{\eta} u}_{\SobH{1}(M)}^2 + \norm{D u}^2_{\Lp{2}(M)} + \norm{u}^2_{\Lp{2}(M)},
\end{equation} 
where $\tilde{\eta}:\R\to[0,1]$ is a smooth cutoff function satisfying: 
$$
\tilde{\eta}(t) = \begin{cases} 1, &t \in [0, \tilde{T}_c/2], \\ 
				0, &t \in [2\tilde{T}_c/3, \infty).
		\end{cases}$$

With this, we obtain the following.
\begin{lem}
\label{Lem:H1D-D}
Whenever $u \in \Ck[c]{\infty}(M;E)$ with $\chi^{+}(A_r) (u\rest{\Sigma}) = 0$, we have that $ \norm{u}_{D} \simeq \norm{u}_{\SobH[D]{1}}$.
\end{lem}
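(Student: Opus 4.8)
The plan is to prove the two inequalities $\norm{u}_D \lesssim \norm{u}_{\SobH[D]{1}}$ and $\norm{u}_{\SobH[D]{1}} \lesssim \norm{u}_D$ separately, exploiting the cutoff structure in \eqref{Eq:H1DNorm} and the relative-boundedness and $H^\infty$-machinery already established. Since $u \in \Ck[c]{\infty}(M;E)$ with $\chi^+(A_r)(u\rest{\Sigma}) = 0$, only the behaviour near $\Sigma$ is delicate; away from $\Sigma$ the two norms are trivially comparable because the graph norm of a first-order operator controls (and is controlled by) the $\SobH{1}_{loc}$-norm on any fixed compact interior region, and $D$ is elliptic there.

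The easy direction is $\norm{u}_D \lesssim \norm{u}_{\SobH[D]{1}}$: the term $\norm{D u}^2_{\Lp{2}(M)} + \norm{u}^2_{\Lp{2}(M)}$ is literally $\norm{u}_D^2$ and already appears in \eqref{Eq:H1DNorm}, so this inequality holds with constant $1$ and no hypothesis on $u\rest{\Sigma}$ is needed. The content is entirely in the reverse inequality $\norm{u}_{\SobH[D]{1}} \lesssim \norm{u}_D$, i.e.\ bounding $\norm{\tilde\eta u}_{\SobH{1}(M)}$ by $\norm{u}_D$. For this I would write $\tilde\eta u$ in the cylinder coordinates $\Phi$ of Lemma~\ref{Lem:BdyDef}; since $\tilde T_c \le T_c$ the section $\tilde\eta u$ is supported in $Z_{[0,\rho)}$ for $\rho$ slightly below $\tilde T_c < 1/C_1$. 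Applying the product rule, $D(\tilde\eta u) = \tilde\eta Du + \sym_t(\tilde\eta' u)$, so $\norm{D(\tilde\eta u)}_{\Lp 2} \lesssim \norm{Du}_{\Lp 2} + \norm{u}_{\Lp 2} \lesssim \norm{u}_D$; also $\chi^+(A_r)((\tilde\eta u)\rest\Sigma) = \chi^+(A_r)(u\rest\Sigma) = 0$ since $\tilde\eta(0) = 1$. Now by Lemma~\ref{Lem:D-D0} (choosing $\rho < 1/C_1$), $\norm{D_0(\tilde\eta u)}_{\Lp 2} \lesssim \norm{D(\tilde\eta u)}_{\Lp 2} + \norm{\tilde\eta u}_{\Lp 2} \lesssim \norm{u}_D$, and then Lemma~\ref{Lem:BdyEll} applied to $\tilde\eta u$ (which satisfies $\chi^+(A_r)((\tilde\eta u)(0)) = 0$ and is compactly supported in $Z_{[0,\rho)}$) gives
$$
\norm{\tilde\eta u}_{\SobH 1(Z_{[0,\rho)})}^2 \lesssim \norm{(\partial_t + A)(\tilde\eta u)}^2 + \norm{\tilde\eta u}^2 \simeq \norm{\sym_t^{-1}D_0(\tilde\eta u)}^2 + \norm{\tilde\eta u}^2 \lesssim \norm{u}_D^2.
$$
Finally $\norm{\tilde\eta u}_{\SobH 1(M)} \simeq \norm{\tilde\eta u}_{\SobH 1(Z_{[0,\rho)})}$ (the support is in the cylinder), which completes the bound on the first term of \eqref{Eq:H1DNorm}, and the remaining two terms are $\le \norm{u}_D^2$ trivially. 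Here I should be slightly careful that $\tilde\eta u$ need not be smooth up to having the right decay — but $u \in \Ck[c]{\infty}(M;E)$, so $\tilde\eta u \in \Ck[c]{\infty}(Z_{[0,\rho)};E)$ and all of Lemmas~\ref{Lem:BdyEll}, \ref{Lem:D-D0} apply verbatim.

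The main obstacle is a bookkeeping one rather than a conceptual one: one must make sure the chain ``$D$-graph norm $\to$ $D_0$-graph norm (Lemma~\ref{Lem:D-D0}) $\to$ $\SobH 1$-estimate (Lemma~\ref{Lem:BdyEll})'' is applied to $\tilde\eta u$ rather than to $u$ itself, checking that multiplication by $\tilde\eta$ (i) preserves the support condition $\spt \subset Z_{[0,\rho)}$ for an admissible $\rho < \tilde T_c$, (ii) preserves the boundary condition $\chi^+(A_r)(\cdot\rest\Sigma) = 0$ because $\tilde\eta \equiv 1$ near $\Sigma$, and (iii) changes the graph norm only by lower-order ($\Lp 2$) terms. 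Subtleties about whether $\tilde T_c$ or $\rho$ should be used, and that the implicit constants from Lemmas~\ref{Lem:BdyEll} and \ref{Lem:D-D0} are uniform in $\rho$ (which is part of their statements), need to be tracked but introduce no real difficulty.
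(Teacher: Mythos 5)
Your argument is correct and is essentially the same as the paper's proof: the same cutoff $\tilde\eta$, the same observation that $\chi^+(A_r)((\tilde\eta u)\rest\Sigma)=0$ since $\tilde\eta\equiv 1$ near $\Sigma$, and the same chain of estimates routed through Lemma~\ref{Lem:D-D0} and Lemma~\ref{Lem:BdyEll}. The only slip is cosmetic: since $D_0=\sym_0(\partial_t+A)$, the final display should read $\sym_0^{-1}D_0(\tilde\eta u)$ rather than $\sym_t^{-1}D_0(\tilde\eta u)$, though this does not affect the estimate because $\sym_0$ and $\sym_t$ are uniformly bounded with uniformly bounded inverses.
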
 
\begin{proof}
The easy direction is simply given by 
$$
\norm{u}_{D}^2 
\leq 
\norm{\tilde{\eta} u}_{\SobH{1}}^2 + \norm{u}_{D}^2 
= 
\norm{u}_{\SobH[D]{1}}^2.
$$
For the reverse direction, we have that $\tilde{\eta} u = u$ for $t < \tilde{T}_c/2$ and therefore, $\chi^+(A)((\tilde{\eta} u)\rest{\Sigma}) = 0$.
Then,
$$ \norm{\tilde{\eta}u}_{\SobH{1}} \lesssim \norm{\tilde{\eta} u}_{D_0} \lesssim \norm{\tilde{\eta} u}_{D} \lesssim \norm{u}_{D},$$
where the first inequality follows from Lemma~\ref{Lem:BdyEll} and the uniform boundedness of $\sym_0$, the second from Lemma~\ref{Lem:D-D0}, and the last from the fact that $\tilde{\eta} \in \Ck[c]{\infty}([0,\tilde{T}_c))$.
By Lemma~\ref{Lem:GreensDensity}, we obtain that $\Ck[c]{\infty}(M;E)$ is dense in $\SobH[D]{1}(M;E)$ and $\Ck[cc]{\infty}(M;E)$ is dense in $\set{u \in \SobH[D]{1}(M;E): u\rest{\Sigma} = 0}.$
This completes the proof. 
\end{proof} 

We have the following corollary that identifies the minimal domain of the operator.

\begin{cor}
We have that $\dom(D_{\min}) = \set{u \in \SobH[D]{1}(M;E): u \rest{\Sigma} = 0}$. 
\hfill$\Box$
\end{cor}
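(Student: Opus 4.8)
The plan is to read the statement off Lemma~\ref{Lem:H1D-D} together with the density facts recorded inside its proof. Recall that $D_{\min}$ is the closure of $D$ on $\Ck[cc]{\infty}(M;E)$ in the graph norm $\norm{\cdot}_D$ of $\Lp{2}(M;E)$, so $\dom(D_{\min})$ is precisely the $\norm{\cdot}_D$-closure of $\Ck[cc]{\infty}(M;E)$. The point that makes both inclusions routine is that, by Lemma~\ref{Lem:H1D-D}, $\norm{v}_D \simeq \norm{v}_{\SobH[D]{1}}$ for every $v \in \Ck[c]{\infty}(M;E)$ with $\chi^{+}(A_r)(v\rest{\Sigma}) = 0$, and this equivalence extends to the $\SobH[D]{1}$-closure of such sections; since a vanishing trace forces $\chi^{+}(A_r)(u\rest{\Sigma}) = 0$, the $\norm{\cdot}_D$- and $\norm{\cdot}_{\SobH[D]{1}}$-topologies therefore coincide on $\set{u \in \SobH[D]{1}(M;E) : u\rest{\Sigma} = 0}$, so $\norm{\cdot}_D$-density and $\norm{\cdot}_{\SobH[D]{1}}$-density of a subset of this space mean the same thing.

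For the inclusion ``$\supseteq$'', let $u \in \SobH[D]{1}(M;E)$ with $u\rest{\Sigma} = 0$. The proof of Lemma~\ref{Lem:H1D-D} shows (using the completeness hypothesis and Lemma~\ref{Lem:GreensDensity}, together with the cutoff $\tilde{\eta}$) that $\Ck[cc]{\infty}(M;E)$ is dense in $\set{v \in \SobH[D]{1}(M;E) : v\rest{\Sigma} = 0}$ in the $\SobH[D]{1}$-norm, hence also in the $\norm{\cdot}_D$-norm by the previous paragraph. Thus there is a sequence $(u_n) \subset \Ck[cc]{\infty}(M;E)$ with $\norm{u_n - u}_D \to 0$, so $u \in \dom(D_{\min})$.

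For ``$\subseteq$'', let $u \in \dom(D_{\min})$ and pick $(u_n) \subset \Ck[cc]{\infty}(M;E)$ with $\norm{u_n - u}_D \to 0$. The differences $u_n - u_m$ lie in $\Ck[cc]{\infty}(M;E) \subset \Ck[c]{\infty}(M;E)$ and vanish near $\Sigma$, so $(u_n - u_m)\rest{\Sigma} = 0$ and Lemma~\ref{Lem:H1D-D} gives $\norm{u_n - u_m}_{\SobH[D]{1}} \simeq \norm{u_n - u_m}_D \to 0$. In particular $(\tilde{\eta} u_n)$ is Cauchy in $\SobH{1}(M;E)$, and comparing $\Lp{2}$-limits $\tilde{\eta} u_n \to \tilde{\eta} u$ in $\SobH{1}(M;E)$; hence $u \in \SobH{1}$ on a neighbourhood of $\Sigma$, and since $\tilde{\eta} \equiv 1$ near $\Sigma$ and the classical trace $\SobH{1} \to \SobH{\frac12}(\Sigma;E)$ is continuous, $u\rest{\Sigma} = (\tilde{\eta} u)\rest{\Sigma} = \lim_n (\tilde{\eta} u_n)\rest{\Sigma} = \lim_n u_n\rest{\Sigma} = 0$. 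Away from $\Sigma$, interior elliptic regularity for the elliptic operator $D$ applied to $u \in \dom(D_{\max})$ gives $u \in \SobH{1}_{\mathrm{loc}}$ on the interior, so altogether $u \in \dom(D_{\max}) \cap \SobH[loc]{1}(M;E) = \SobH[D]{1}(M;E)$ with $u\rest{\Sigma} = 0$.

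There is no serious obstacle here: all the analytic weight is carried by Lemma~\ref{Lem:H1D-D}, hence by the maximal-regularity estimates of Subsection~\ref{Sec:HighReg}. The two small points that need care are (i) transferring the equivalence $\norm{\cdot}_D \simeq \norm{\cdot}_{\SobH[D]{1}}$ from smooth trace-zero sections to their $\SobH[D]{1}$-closure, which is what lets one freely interchange the two notions of density on the trace-zero subspace, and (ii) verifying that the $\Lp{2}$/graph limit $u$ genuinely lies in $\SobH[loc]{1}(M;E)$ and has vanishing trace, which combines the near-boundary $\SobH{1}$-bound from Lemma~\ref{Lem:H1D-D} with interior elliptic regularity and continuity of the boundary trace.
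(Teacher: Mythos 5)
Your proof is correct and takes the route the paper clearly intends: the corollary is read off Lemma~\ref{Lem:H1D-D} and the density of $\Ck[cc]{\infty}(M;E)$ in $\set{u \in \SobH[D]{1}(M;E): u\rest{\Sigma} = 0}$ asserted at the end of that lemma's proof. The paper leaves the proof implicit ($\Box$), and your write-up is simply a more explicit unwinding of the same argument (norm equivalence on trace-zero sections, trace continuity, interior regularity).
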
 

Before presenting the key theorem of this section, we note the following which allows us to
dualise over the boundary defining subspaces of $D$ and $D^\dagger$.
Recall that $\sym_0(x) = \sym_{D}(x,\tau(x))$.
 
\begin{lem}
\label{Lem:HomFieldPairing}
Over the boundary $\Sigma$, the homomorphism field $(\sym_0^{-1})^\ast: E\rest{\Sigma} \to F\rest{\Sigma}$ induces an isomorphism $\hatH(A^\dagstar) \to \checkH(\tilde{A})$ where $\tilde{A}$ is the adapted boundary operator for $D^\dagger$.
Moreover, $\beta(u,v) = - \inprod{\sym_0 u, v}_{\Lp{2}(\Sigma;F)}$ for $u \in \Ck[c]{\infty}(\Sigma;E)$ and $v \in \Ck[c]{\infty}(\Sigma;F)$ extends to a perfect pairing $\beta: \checkH(A) \times \checkH(\tilde{A}) \to \C$. 
\end{lem}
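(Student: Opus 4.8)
The plan is to realise the map via conjugation by the bundle isomorphism $J:=(\sym_0^{-1})^{\ast}=(\sym_0^{\ast})^{-1}\colon E|_\Sigma\to F|_\Sigma$ and then to transport the perfect pairing of Corollary~\ref{Cor:Pairing} through $J$. For the first statement I would begin by exhibiting a particularly convenient adapted operator for $D^\dagger$. Since $\sym_{D^\dagger}(x,\xi)=-\sym_D(x,\xi)^{\ast}$ and the principal symbol of the $\Lp{2}$-adjoint of a first-order operator is minus the pointwise adjoint of its principal symbol, i.e.\ $\sym_{A^{\ast}}(x,\xi)=-\sym_A(x,\xi)^{\ast}$, a short computation from $\sym_A(x,\xi)=\sym_0^{-1}\sym_D(x,\xi)$ shows that $\hat A:=-(\sym_0^{\ast})^{-1}A^{\ast}\sym_0^{\ast}=-JA^{\ast}J^{-1}$ is a first-order elliptic differential operator on $F|_\Sigma$ whose principal symbol is $\sym_{D^\dagger}(x,\tau)^{-1}\sym_{D^\dagger}(x,\xi)$; that is, $\hat A$ is an adapted boundary operator for $D^\dagger$. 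Since two adapted operators for $D^\dagger$ share a principal symbol, their spectral projectors (for a common admissible cut) differ by a pseudodifferential operator of order $\le -1$, so the argument of Proposition~\ref{Prop:CheckEquiv} shows that $\checkH$ is unchanged on passing from one adapted operator to another; it therefore suffices to prove the lemma with $\tilde A$ replaced by $\hat A$. Finally, $\hat A_{-r}=J(-A_r^{\ast})J^{-1}$, and $-r$ is an admissible cut for $\hat A$ because $\spec(\hat A)=-\overline{\spec(A)}$.

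Next I would identify $J$ with the asserted isomorphism. Since $\chi^{\pm}$ is built from Grubb's contour integral~\eqref{eq:chi+Ar} (and $1$ minus it) and $(\zeta-\hat A_{-r})^{-1}=J(\zeta+A_r^{\ast})^{-1}J^{-1}$, conjugation by $J$ passes through the functional calculus; combined with the elementary identity $\chi^{+}(-B)=\chi^{-}(B)$ this gives $\chi^{\pm}(\hat A_{-r})=J\,\chi^{\mp}(A_r^{\ast})\,J^{-1}$. As $J$ restricts to a topological isomorphism $\SobH{s}(\Sigma;E)\to\SobH{s}(\Sigma;F)$ for every $s\in\R$, it maps $\SobH[\pm]{s}(A_r^{\ast})=\chi^{\pm}(A_r^{\ast})\SobH{s}(\Sigma;E)$ onto $\chi^{\mp}(\hat A_{-r})\SobH{s}(\Sigma;F)=\SobH[\mp]{s}(\hat A_{-r})$. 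Reading off the two summands, $J$ carries $\hatH(A^{\ast})=\SobH[-]{-\frac12}(A_r^{\ast})\oplus\SobH[+]{\frac12}(A_r^{\ast})$ onto $\SobH[+]{-\frac12}(\hat A_{-r})\oplus\SobH[-]{\frac12}(\hat A_{-r})=\checkH(\hat A_{-r})=\checkH(\hat A)$, boundedly and with bounded inverse; with the previous paragraph this yields the isomorphism $\hatH(A^{\ast})\xrightarrow{\ \sim\ }\checkH(\tilde A)$.

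For the second statement I would push the $\Lp{2}$-pairing of Corollary~\ref{Cor:Pairing} forward along $J$. For $u\in\Ck{\infty}(\Sigma;E)$ and $v\in\Ck{\infty}(\Sigma;F)$ one has $\beta(u,v)=-\inprod{\sym_0 u,v}=-\inprod{u,\sym_0^{\ast}v}=-\inprod{u,J^{-1}v}$, the inner products being taken in $\Lp{2}(\Sigma;F)$ and $\Lp{2}(\Sigma;E)$ respectively. By Corollary~\ref{Cor:Pairing} the $\Lp{2}$-inner product is a perfect pairing $\checkH(A)\times\hatH(A^{\ast})\to\C$, and by the preceding step $J^{-1}$ is a topological isomorphism $\checkH(\hat A)\to\hatH(A^{\ast})$; hence $(u,v)\mapsto-\inprod{u,J^{-1}v}$ is a perfect pairing $\checkH(A)\times\checkH(\hat A)\to\C$. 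In particular it is bounded for the $\checkH(A)$- and $\checkH(\hat A)$-norms on $\Ck{\infty}(\Sigma;E)\times\Ck{\infty}(\Sigma;F)$, where it coincides with $\beta$; since $\Ck{\infty}(\Sigma;E)$ and $\Ck{\infty}(\Sigma;F)$ are dense in $\checkH(A)$ and $\checkH(\hat A)=\checkH(\tilde A)$, this bounded form is the unique continuous extension of $\beta$, and it is perfect, as claimed.

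The step I expect to be the main obstacle — indeed the only subtle point — is the sign bookkeeping of the first paragraph: one must correctly combine the two sign changes $\sym_{D^\dagger}=-\sym_D^{\ast}$ and $\sym_{A^{\ast}}=-\sym_A^{\ast}$, whose net effect is that $\hat A$ is conjugate to $-A^{\ast}$ rather than to $A^{\ast}$. It is precisely this minus sign that interchanges $\chi^{+}$ with $\chi^{-}$ (and forces the flip $r\mapsto-r$ of the admissible cut), thereby turning a $\hatH$-space into a $\checkH$-space and making the target space come out as asserted; getting either sign wrong delivers the wrong space. The remaining ingredients — that conjugation by a smooth bundle isomorphism commutes with the Grubb projectors, preserves all Sobolev scales, and that $\checkH$ is insensitive to the choice of adapted operator via the mechanism of Proposition~\ref{Prop:CheckEquiv} — are routine.
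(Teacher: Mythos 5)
Your proof is correct in its essential strategy and takes a genuinely different route from the paper's. The paper's proof of Lemma~\ref{Lem:HomFieldPairing} is analytical: it works directly with the fixed but arbitrary $\tilde A$, introduces the extension operator $\tilde\ext v(t,\cdot)=\eta(t)\exp(-t|\tilde A_r|)v$, uses the identity $(\sym_0^{-1})^\ast u = (\sym^{-1})^\ast(\ext^\ast u)\rest{\Sigma}$, and deduces the two-sided boundedness of $(\sym_0^{-1})^\ast$ from the estimates of Lemma~\ref{Lem:ExtBd} and Lemma~\ref{Lem:RestBd} applied to both $D$ and $D^\dagger$. Your argument is algebraic: you single out the specific adapted operator $\hat A = -(\sym_0^\ast)^{-1}A^\ast\sym_0^\ast$, use conjugation-invariance of the functional calculus together with the elementary identity $\chi^+(-B)=\chi^-(B)$ to obtain $\chi^\pm(\hat A_{-r})=J\,\chi^\mp(A_r^\ast)\,J^{-1}$, and read the isomorphism off directly. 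Your sign bookkeeping is correct: the two sign flips from $\sym_{D^\dagger}=-\sym_D^\ast$ and $\sym_{A^\ast}=-\sym_A^\ast$ do combine to make $\hat A$ conjugate to $-A^\ast$, which is exactly what swaps $\chi^+\leftrightarrow\chi^-$, flips $r\mapsto -r$, and turns the $\hatH$-space into a $\checkH$-space. This route makes the isomorphism completely explicit, whereas the paper gets it by transport of estimates; both then derive the perfect pairing from Corollary~\ref{Cor:Pairing} in the same way.

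There is, however, one genuine gap, located precisely in the step you dismissed as routine. You need $\checkH(\tilde A)=\checkH(\hat A)$ with equivalent norms, and you justify this by "the argument of Proposition~\ref{Prop:CheckEquiv}". That argument does not transfer. Proposition~\ref{Prop:CheckEquiv} hinges on $\chi^-(A_r)\chi^+(A_q)$ having \emph{finite rank}, because only finitely many eigenvalues of a fixed operator $A$ lie in the vertical strip between the two cuts. For two different adapted operators $\hat A$ and $\tilde A$ (which differ by a zero-order term), the analogous composition $\chi^-(\hat A_s)\chi^+(\tilde A_s)$ has no reason to have finite rank, so that mechanism fails. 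The needed statement is nevertheless true, but by a different and in fact simpler argument: since $\hat A$ and $\tilde A$ share a principal symbol, the Grubb projectors $\chi^\pm(\hat A_s)$ and $\chi^\pm(\tilde A_s)$ differ by a pseudodifferential operator of order $\le -1$; any $u$ in either $\checkH$-space lies in $\SobH{-\frac12}(\Sigma;F)$, so $\bigl(\chi^\pm(\hat A_s)-\chi^\pm(\tilde A_s)\bigr)u\in\SobH{\frac12}(\Sigma;F)$ with norm controlled by $\|u\|_{\SobH{-\frac12}}\lesssim\|u\|_{\checkH}$, and the equivalence of the two $\checkH$-norms follows at once. You should supply this (or some other) argument in place of the reference to Proposition~\ref{Prop:CheckEquiv}; with that repair the proof is complete.
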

\begin{proof}
Fix $u \in \Ck{\infty}(\Sigma;E)$. 
We have that $(\sym_0^{-1})^\ast u \in \Ck{\infty}(\Sigma;F)$.
Then, on noting that $(\sym_0^{-1})^\ast u = (\sym^{-1})^\ast (\ext^\ast u)\rest{\Sigma}$,
\begin{align*}
\norm{(\sym_0^{-1})^\ast u}_{\checkH(\tilde{A})}^2 &\lesssim \norm{(\sym^{-1})^\ast \ext^\ast u}_{D^\dagger}^2 \\ 
	&= \norm{D^\dagger (\sym^{-1})^\ast \ext^\ast u}^2_{\Lp{2}(Z_{[0,T_c)})} +\norm{(\sym^{-1})^\ast\ext^\ast u}^2_{\Lp{2}(Z_{[0,T_c)})} \\
	&\lesssim \norm{\ext^\ast u}_{(\sym^{-1}D)^\dagger}^2 \\
	&\lesssim \norm{u}_{\hatH(A^\dagstar)}^2,
\end{align*}
where the first inequality is obtained from invoking Lemma~\ref{Lem:RestBd} applied to the operator $D^\dagger$ and its associated adapted boundary  operator $\tilde{A}$, whereas the last inequality follows from Lemma~\ref{Lem:ExtBd}. 

Next, let $\tilde{\ext}$ be the extension operator associated to $\tilde{A}_r$, that is explicitly $\tilde{\ext} v(t, \cdot) = \eta(t) \exp(-t\modulus{\tilde{A}_r})v$. 
Since $\sym_0^\ast v  = (\sym^\ast \tilde{\ext} v)\rest{\Sigma}$, 
\begin{equation*}
\norm{\sym_0^\ast v}_{\hatH(A^\dagstar)} \lesssim \norm{\sym^\ast \tilde{\ext} v}_{(\sym^{-1}D)^\dagger}  \lesssim \norm{\tilde{\ext} u}_{D^\dagger} \lesssim \norm{u}_{\checkH(\tilde{A})},
\end{equation*}
where the penultimate inequality follows from the fact that $(\sym^{-1}D)^\dagger \sym^\ast = D^\dagger$ and the final inequality from applying Lemma~\ref{Lem:ExtBd} to $D^\dagger$.

Therefore, $\beta$ induces a perfect pairing as stated in the conclusion since  the usual inner product in $\Lp{2}$ induces a perfect paring between $\checkH(A)$ and $\hatH(A^\dagstar)$.
\end{proof} 

Putting these facts together, we now obtain a proof of Theorem~\ref{Thm:Ell}.
\begin{proof}[Proof of Theorem~\ref{Thm:Ell}]
The proof of \ref{Thm:Ell1} is stated in Lemma~\ref{Lem:GreensDensity}. 

To prove \ref{Thm:Ell2}, note that $\ext u\in\dom(D_{\max})$ by Lemma~\ref{Lem:RestBd}.
By this same lemma and \ref{Thm:Ell1}, the trace map is a bounded map $\dom(D_{\max})\to \checkH(A)$.
Since $\ext:\checkH(A) \to \dom(D_{\max})$ is a right inverse of the trace map, surjectivity follows. 

We prove \ref{Thm:Ell3}. 
Clearly, if $u\in \SobH[loc]{1}(M;E)$ then $u\rest{\Sigma}\in \SobH{\frac12}(\Sigma;E)$ by the trace theorem.

For the converse, let $u_\Sigma \in \Ck{\infty}(\Sigma;E)$.
Here, we resort to the $H^\infty$-functional calculus of $\modulus{A_r}$.
Then $u^{+}_\Sigma := \chi^{+}(A_r)(u\rest{\Sigma})\in \Ck{\infty}(\Sigma;E)$. 
Now 
\begin{align*}
\norm{\ext u^+_\Sigma}_{\SobH[D]{1}}^2 &= \norm{\tilde{\eta} \ext u^+_\Sigma}_{\SobH{1}}^2 + \norm{\ext u^+_{\Sigma}}_{\Lp{2}}^2 + \norm{D \ext u^+_\Sigma}_{\Lp{2}}^2 \\
	&\lesssim  \int_{0}^{\tilde{T}_c} \norm{ (\tilde{\eta} \ext u^+_\Sigma)'}_{\Lp{2}(\Sigma)}^2\ dt + \int_{0}^{\tilde{T}_c} \norm{ \modulus{A_r} \ext u^+_\Sigma}_{\Lp{2}(\Sigma)}^2\ dt,
\end{align*}
since $D \ext u^+_{\Sigma} = (\eta'(t) + \eta(t)r)\exp(-t \modulus{A_r})u^+_{\Sigma}$ given that $\chi^{-}(A_r)u^+_\Sigma = 0$ and also since $\norm{\ext u^+_\Sigma} \lesssim \norm{\modulus{A_r} \ext u^+_\Sigma}$ by the invertibility of $\modulus{A_r}$. 
Thus, 
\begin{align*}
\int_{0}^{\tilde{T}_c} &\norm{ (\tilde{\eta} \ext u^+_\Sigma)'}_{\Lp{2}(\Sigma)}^2\ dt \lesssim \int_0^{\tilde{T}_c} \norm{ (\ext u^+_\Sigma)'}_{\Lp{2}(\Sigma)}^2 dt + \int_0^{\tilde{T}_c} \norm{ \ext u^+_\Sigma}_{\Lp{2}(\Sigma)}^2 dt \\
	&\lesssim \int_0^{\tilde{T}_c} \norm{ \eta' \exp(-t\modulus{A_r}) u^+_\Sigma}_{\Lp{2}(\Sigma)}^2 dt + \int_0^{\tilde{T}_c} \norm{ \eta \modulus{A_r} \exp(-t \modulus{A_r})u^+_\Sigma}_{\Lp{2}(\Sigma)}^2 dt \\
	&\lesssim \int_0^\infty \norm{ \modulus{A_r} \exp(-t \modulus{A_r})u^+_\Sigma}_{\Lp{2}(\Sigma)}^2.
\end{align*}
On noting that $\norm{ \modulus{A_r} \ext u^+_\Sigma}_{\Lp{2}(\Sigma)}^2 \le \norm{ \modulus{A_r} \exp(-t\modulus{A_r})u^+_\Sigma}_{\Lp{2}(\Sigma)}^2$, we obtain
\begin{align*} 
\norm{\ext u^+_\Sigma}_{\SobH[D]{1}}^2  
&\lesssim 
\int_0^\infty \norm{ \modulus{A_r} \exp(-t \modulus{A_r})u^+_\Sigma}_{\Lp{2}(\Sigma)}^2\ dt \\
&\lesssim 
\int_0^\infty \norm{ t^{\frac{1}{2}} \modulus{A_r}^{\frac{1}{2}} \exp(-t \modulus{A_r}) \modulus{A_r}^{\frac{1}{2}}u^+_\Sigma}_{\Lp{2}(\Sigma)}^2\ \frac{dt}{t} \\
&\lesssim 
\norm{\modulus{A_r}^{\frac{1}{2}} u^+_\Sigma}^2\\ 
&\simeq  
\norm{u^+_\Sigma}_{\SobH{\frac{1}{2}}}^2,
\end{align*}
where the third inequality follows from Lemma~\ref{Lem:ModAFC}.
Thus $\ext$ restricts to a bounded map $\chi^+(A)\SobH{\frac12}(\Sigma;E)\to \SobH[D]{1}(M;E)$.

Now let $u\in\dom(D_{\max})$ such that $u_\Sigma:=u\rest{\Sigma} \in \SobH{\frac12}(\Sigma;E)$.
Then $u - \ext u^+_\Sigma \in \dom(D_{\max})$ and $\chi^+(A_r)(u - \ext u^+_\Sigma) = 0$.
By Lemma~\ref{Lem:H1D-D}, we obtain that $u - \ext u^+_\Sigma \in \SobH[D]{1}(M;E)$.
Since $\ext u^+_\Sigma\in\SobH[D]{1}(M;E)$ we have that $u \in \SobH[D]{1}(M;E)$.

The assertions for the operator $D^\dagger$ mirrors this argument exactly, but with $A$ replaced by $\tilde{A}$, the adapted boundary operator for $D^\dagger$ on $F$, and moreover, the integration by parts formula follows directly. 
\end{proof}

\begin{lem}
\label{Lem:RegInv3}
There exists $T_R < T_c$ such that 
$$(D - \sym_0 R_0): \SobH{k+1}(Z_{[0,T_R]}; E; B_0) \to \SobH{k}(Z_{[0,T_R]};F)$$
is an isomorphism.
\end{lem}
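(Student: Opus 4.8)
The plan is to realise $D-\sym_0 R_0$, over a short cylinder $Z_{[0,T_R]}$, as a small first-order perturbation of the model operator $D_0=\sym_0(\partial_t+A)$ and to combine this with the invertibility of $D_{0,r}$ obtained in Proposition~\ref{Prop:RegInv}. First I would use the normal form $D=\sym_t(\partial_t+A+R_t)$ of Lemma~\ref{Lem:Lem4.1BB} to write
\[
D-\sym_0 R_0 = D_0 + \mathcal{E}_t,\qquad \mathcal{E}_t:=(\sym_t-\sym_0)(\partial_t+A+R_t)+\sym_0(R_t-R_0).
\]
Since $\sym_t$ depends smoothly on $t$ and $R_0$ is of order zero, both $\sym_t-\sym_0$ and $R_t-R_0$ carry a factor of $t$; Taylor expansion in $t$ gives $\mathcal{E}_t=t\,\mathcal{E}'_t$ with $\mathcal{E}'_t$ a $t$-family of first-order operators with coefficients smooth on $Z_{[0,T_c)}$. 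In particular $\mathcal{E}_t\colon\SobH{k+1}(Z_{[0,T_R]};E)\to\SobH{k}(Z_{[0,T_R]};F)$ is bounded, uniformly in $T_R\le T_c$, and — this is the delicate point — its operator norm tends to $0$ as $T_R\to 0$.

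Second I would show that $D_0\colon\SobH{k+1}(Z_{[0,\rho]};E;B_0)\to\SobH{k}(Z_{[0,\rho]};F)$ is an isomorphism for \emph{every} $\rho>0$. By Lemma~\ref{Lem:SProp}~\ref{Lem:SProp3} we have $D_{0,r}S_{0,r}=\sym_0$, hence $D_0=D_{0,r}+r\sym_0=D_{0,r}(\id+rS_{0,r})$; since $D_{0,r}$ is invertible by Proposition~\ref{Prop:RegInv}~\ref{Prop:RegInv2}, it suffices to invert $\id+rS_{0,r}$ on $\SobH{k+1}(Z_{[0,\rho]};E;B_0)$. By Lemma~\ref{Lem:SReg}, $S_{0,r}$ maps this space into $\SobH{k+2}(Z_{[0,\rho]};E)$, and by Lemma~\ref{Lem:SProp}~\ref{Lem:SProp2} the image still satisfies the $B_0$-conditions, so $rS_{0,r}$ is compact on $\SobH{k+1}(Z_{[0,\rho]};E;B_0)$ via the Rellich embedding; thus $\id+rS_{0,r}$ is Fredholm of index $0$. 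It is injective: if $u$ lies in its kernel then $D_0u=0$, i.e.\ $(\partial_t+A)u=0$ with $\chi^+(A_r)u(0)=0$ and $\chi^-(A_r)u(\rho)=0$; splitting $u=\chi^+(A_r)u+\chi^-(A_r)u$ and noting that on $\chi^\pm(A_r)\Lp{2}(\Sigma;E)$ the operator $A$ is $\modulus{A_r}+r$ resp.\ $-\modulus{A_r}+r$ (an analytic-semigroup generator on $[0,\rho]$), uniqueness for the corresponding linear first-order Banach-valued ODE forces each half, hence $u$, to vanish. Therefore $\id+rS_{0,r}$, and with it $D_0$, is an isomorphism.

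Third I would combine the two by factoring $D-\sym_0 R_0=D_0(\id+D_0^{-1}\mathcal{E}_{T_R})$. For $k=0$ this closes cleanly: $\mathcal{E}_{T_R}\colon\SobH{1}(Z_{[0,T_R]};E)\to\Lp{2}(Z_{[0,T_R]};F)$ has norm $\lesssim T_R$ because multiplication by $t$ has norm $\le T_R$ on $\Lp{2}(Z_{[0,T_R]})$, while $\norm{D_0^{-1}}_{\Lp{2}\to\SobH{1}}$ is uniform in $T_R$: from $D_0^{-1}=(\id+rS_{0,r})^{-1}S_{0,r}\sym_0^{-1}$, the factor $S_{0,r}\sym_0^{-1}$ is uniformly bounded by Lemma~\ref{Lem:SReg}, and $\norm{rS_{0,r}}$ on $\SobH{1}(Z_{[0,T_R]};E;B_0)$ is itself $\lesssim T_R$ — here one uses the formula for $S_{0,r}$ in Lemma~\ref{Lem:SProp}~\ref{Lem:SProp1} together with the Poincaré estimate $\norm{\chi^\pm(A_r)u}_{\Lp{2}(Z_{[0,T_R]})}\le T_R\,\norm{\partial_t\chi^\pm(A_r)u}_{\Lp{2}(Z_{[0,T_R]})}$, valid since inputs in $B_0$ vanish at the relevant end. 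Thus for $T_R$ small the Neumann series converges, giving the case $k=0$ and, in particular, injectivity of $D-\sym_0 R_0$ on $\SobH{1}(Z_{[0,T_R]};E;B_0)$ and hence on every $\SobH{k+1}(Z_{[0,T_R]};E;B_0)$. For general $k$ I would argue that $\id+D_0^{-1}\mathcal{E}_{T_R}$, as an operator on $\SobH{k+1}(Z_{[0,T_R]};E;B_0)$, decomposes as an invertible operator (the genuinely order-one part, which inherits the smallness from the $t$-factor) plus a compact operator (the lower-order remainder, which factors through the compact inclusion $\SobH{k+1}(Z_{[0,T_R]})\embed\SobH{k}(Z_{[0,T_R]})$), hence is Fredholm of index $0$; together with the injectivity just noted this makes it an isomorphism, so $D-\sym_0 R_0$ is an isomorphism $\SobH{k+1}(Z_{[0,T_R]};E;B_0)\to\SobH{k}(Z_{[0,T_R]};F)$.

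The hard part will be the uniformity of all estimates as $T_R\to 0$, i.e.\ genuinely controlling operators on the shrinking cylinders $Z_{[0,T_R]}$. For the $\Lp{2}$-target case multiplication by $t$ is honestly small, but for $k\ge 1$ the terms produced when $\partial_t$ falls on the explicit factor of $t$ no longer carry a power of $T_R$; these must be absorbed either by a rescaling $t=T_R s$ that converts $Z_{[0,T_R]}$ into the fixed cylinder $Z_{[0,1]}$ and tracks the resulting powers of $T_R$, or, as sketched above, by isolating them as compact operators and reducing invertibility to the (already established) injectivity at the $\SobH{1}$ level.
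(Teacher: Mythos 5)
Your proposal takes a genuinely different route from the paper's proof. The paper compares $D-\sym_0 R_0$ directly with $D_{0,r}=\sym_0(\partial_t+A_r)$ (the operator already inverted in Proposition~\ref{Prop:RegInv}) and appeals to a smallness estimate (Eq.~\eqref{Eq:Relbnd2}) plus a Neumann series. You instead first establish, for every $\rho\in(0,T_c)$, that $D_0=\sym_0(\partial_t+A)$ itself is an isomorphism $\SobH{k+1}(Z_{[0,\rho]};E;B_0)\to\SobH{k}(Z_{[0,\rho]};F)$ — by factoring $D_0=D_{0,r}(\id+rS_{0,r})$, noting that $rS_{0,r}$ is compact by Rellich and index-zero, and proving injectivity via Banach-valued ODE uniqueness on $\chi^\pm(A_r)$ — and only then perturb $D_0$ by $\mathcal{E}_t=t\mathcal{E}'_t$. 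This is a real structural improvement: your decomposition is genuinely $O(t)$, whereas the perturbation $D_{0,r}-(D-\sym_0 R_0)$ in the paper contains the non-vanishing zero-order term $-r\sym_0$, which cannot be absorbed by a $\rho\to0$ Poincar\'e argument for $k\ge1$ (even with the $B_0$ boundary conditions: the ratio $\norm{u}_{\SobH{k}}/\norm{u}_{\SobH{k+1}}$ on the small cylinder is bounded below by a $\rho$-independent constant). Your extra isomorphism step for $D_0$ effectively handles this discrepancy cleanly.

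That said, your argument is not complete for $k\ge1$, and you correctly flag why: $\norm{\mathcal{E}_t u}_{\SobH{k}}$ involves derivatives $\partial_t^j$ falling on the explicit factor $t$, producing pieces that carry no power of $T_R$, so the operator norm of $D_0^{-1}\mathcal{E}_{t}$ on $\SobH{k+1}(Z_{[0,T_R]};E;B_0)$ does not tend to zero. The proposed remedy — split $\id+D_0^{-1}\mathcal{E}_t$ into a genuinely small part and a piece factoring through the compact inclusion $\SobH{k+1}\hookrightarrow\SobH{k}$, so that the sum is Fredholm of index $0$, and then invoke the injectivity already won at the $k=0$ level — is the right idea and does close the gap in principle, but it is only a sketch: one must check that $D_0^{-1}\mathcal{E}_t$ preserves the $B_0$ boundary conditions (true, by Lemma~\ref{Lem:SProp}~\ref{Lem:SProp2}), and verify the decomposition into ``small'' plus ``compact'' carefully, noting that what is small is the top-order symbolic part of $D_0^{-1}\mathcal{E}_t$ (which still carries the $t$-factor), while all lower-order contributions factor through the compact Rellich embedding. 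Writing this out would make your version more detailed than the paper's, which applies only a Neumann-series argument and does not address either the $-r\sym_0$ term or the loss of the $t$-factor for $k\ge1$.

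One minor point: in the ODE-uniqueness step you should say a word about why $-(\modulus{A_r}-r)$ generates a semigroup on $[0,\rho]$; since $\modulus{A_r}$ is $\omega_r$-sectorial (Lemma~\ref{Lem:OpSect}), the shifted operator still generates an analytic semigroup, possibly with exponential growth, which is harmless on a bounded interval.
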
 
\begin{proof}
Note that from Lemma~\ref{Lem:Lem4.1BB}, we obtain that for any given $\epsilon > 0$, there exists a $\rho < T_c$ such that 
\begin{equation} 
\label{Eq:Relbnd2}
\norm{D_{0,r} - (D - \sym_0 R_0)u}_{\SobH{k}(Z_{[0,\rho]})} \leq \epsilon \norm{u}_{\SobH{k+1}(Z_{[0,\rho]})},
\end{equation} 
for $u \in \SobH{k+1}(Z_{[0,\rho]};E)$.
Moreover, as noted in Lemma~\ref{Lem:SReg}, the norm on the operator $S_{0,r}$ mapping $\SobH{k}(Z_{[0,\rho]};E) \to \SobH{k+1}(Z_{[0,\rho]};E)$ is independent of $\rho$.
On letting this constant be $C$, choose $\epsilon = 1/2C$ and let $T_R$ be the corresponding $\rho$.
We conclude the proof by combining this with \eqref{Eq:Relbnd2}.
\end{proof} 

Using this lemma, we prove the following main theorem, Theorem~\ref{Thm:HighReg}, of this section. 

\begin{proof}[Proof of Theorem~\ref{Thm:HighReg}]
The inclusion
\begin{multline*}
\dom(D_{\max}) \cap \SobH[loc]{k+1}(M;E) \\
	\subset \set{u \in \dom(D_{\max}): Du \in \SobH[loc]{k}(M;F)\ \text{and}\ \chi^{+}(A_r)(u\rest{\Sigma}) \in \SobH{k + \frac{1}{2}}(\Sigma;E)}
\end{multline*} 
is immediate.
So we prove the opposite containment.
Assume that $u \in \dom(D_{\max})$ with $Du \in \SobH[loc]{k}(M;F)$ and that $\chi^+(A_r) u|_\Sigma \in \SobH{k + \frac{1}{2}}(\Sigma;E)$.

Multiplying with a cutoff function, interior elliptic regularity allows us to assume that $\spt u \subset Z_{[0, \rho)}$ for, say  $\rho := \min\{1/(2C_1), T_R\}$, where $C_1$ is the constant in Lemma~\ref{Lem:D-D0} and $T_R$ is given by Lemma~\ref{Lem:RegInv3}.
By induction on $k$ we can further assume that $u \in \SobH[loc]{k}(M;E)$.

Since $Z_{[0,\rho]}$ is compact, $u \in \SobH{k}(Z_{[0,\rho]}; E)$ and $Du \in \SobH{k}(Z_{[0,\rho]}; F)$.
By our choice of $\rho$, $\dom((D_{0,r})_{\max};Z_{[0,\rho]}) = \dom(D_{\max};Z_{[0,\rho]})$. 
As $u \rest{ \set{\rho} \times \Sigma} = 0$, we apply Proposition~\ref{Prop:RegInv} to write
$$u(t)  = S_{0,r}\sigma_0^{-1} D_{0,r} u(t) + \exp({-t\modulus{A_r}})(\chi^+(A_r) u(0)) =: u_0(t) + u_1(t).$$

For $0 \leq l \leq k+1$, 
	\begin{align*}
	\norm{\partial_t^l \modulus{A_r}^{k+1 - l} u_1}_{\Lp{2}(Z_{[0,\rho]})}^2 &= \norm{\modulus{A_r}^{k+1} u_1}_{\Lp{2}(Z_{[0,\rho]})}^2  \\
		&= \int_{0}^\rho \norm{t^{\frac{1}{2}}\modulus{A_r}^{\frac{1}{2}}\exp(-t \modulus{A_r}) \modulus{A_r}^{k+ \frac{1}{2}}(\chi^{+}(A_r)u(0))}_{\Lp{2}(\Sigma)}^2 \frac{dt}{t} \\
		&\lesssim \norm{\modulus{A_r}^{k+ \frac{1}{2}}(\chi^{+}(A_r)u(0))}_{\Lp{2}(\Sigma)}^2
	\end{align*}
by Lemma~\ref{Lem:ModAFC}.
Thus, we conclude that $u_1 \in \SobH{k+1}(Z_{[0,\rho]}; E)$.

Moreover,  $\sym_0 R_0$ is of order $0$ and hence bounded as an operator $\SobH{k}(Z_{[0,\rho]};E) \to \SobH{k}(Z_{[0,\rho]};E)$.
Therefore
$$ (D - \sym_0 R_0)u_0 =  (D - \sym_0 R_0)u -  (D - \sym_0 R_0)u_1 \in \SobH{k}(Z_{[0,\rho]};F).$$
By the construction of $S_{0,r}$, we have $u_0 \in \SobH{k}(Z_{[0,\rho]};E;B_0)$.
Again by our choice of $\rho$, Lemma~\ref{Lem:RegInv3} implies that
$$ (D - \sym_0 R_0): \SobH{k+1}(Z_{[0,T_R]}; E; B_0) \to \SobH{k}(Z_{[0,T_R]};F)$$ 
is an isomorphism.
Therefore, there is a $\tilde{u}_0 \in  \SobH{k+1}(Z_{[0,T_R]}; E; B_0)$ such that $(D - \sym_0 R_0)\tilde{u}_0 = (D - \sym_0 R_0)u_0.$
However, we have that $\tilde{u}_0 = u_0$ because $(D - \sym_0 R_0):\SobH{k}(Z_{[0,T_R]}; E; B_0) \to \SobH{k-1}(Z_{[0,T_R]};F)$ is injective.
Therefore, $u \in \SobH{k+1}(Z_{[0,\rho]};E)$.
\end{proof} 

\section{Boundary value problems}
\label{Sec:BVPs}

In this section, we obtain a  generalisation of statements in Section~7 in \cite{BB12}.
The results of the previous sections can be considered a suitable substitute for those of Sections~4--6 in \cite{BB12} which furnishes us with the ability to define boundary conditions as well as consider a range of significant questions surrounding  them. 
We begin with Subsection~\ref{Sec:BC}, due to Theorem~\ref{Thm:Ell},  we are able to formulate notions of boundary conditions quite simply mimicking \cite{BB12}. 
We are then also able to understand the associated closed operators to boundary conditions as well as all closed extensions of the minimal operator via boundary conditions.
Moreover, we also obtain a description of the adjoint of an operator with a particular boundary condition via the formal adjoint with an associated adjoint boundary condition.

Then, in Subsection~\ref{Sec:PfEllEquiv}, we move on to the proof of Theorem~\ref{Thm:EllEquiv}, which is at the heart of this section.
Despite the tools made available to us in the previous sections, establishing this theorem is considerably harder than the version found in \cite{BB12} as Theorem~7.11.
There, the authors were able to enjoy the luxury of orthogonality  due to the selfadjointness of their adapted boundary operators.
It is precisely this which we cannot afford and which is at the heart of our complications.
A particular aspect of our troubles lie in the notion of an elliptic decomposition of a boundary condition with respect to an admissible cut parameter. 
In \cite{BB12}, this definition involved four subspaces, but in our more general case, there are eight.
Therefore, much of the effort in this section is to prove Theorem~\ref{Thm:EllEquiv}, which although is similar in spirit to the proof of Theorem~7.11, is quite different in implementation.

We also consider boundary regularity in Subsection~\ref{Sec:BdyReg}, again taking inspiration from Section~7.4 in \cite{BB12}.
There, we provide a proof of Theorem~\ref{Thm:HBR}, which is the corresponding replacement and generalisation for Theorem~7.17 in \cite{BB12}. 

An important class of boundary conditions are the so-called \emph{local} and \emph{pseudo-local} boundary conditions.
We consider these in Subsection~\ref{Sec:LPL} and provide a proof of Theorem~\ref{Thm:PL}.
This theorem generalises Theorem~7.20 in \cite{BB12}, with some of the directions argued exactly the same as they are simply consequences of well known results by Hörmander.

\subsection{Boundary conditions and closed extensions}
\label{Sec:BC}
For a subspace $U \subset \union_{r \in \R} \SobH{r}(\Sigma;E)$, let $U^s$ be the closure of $U \intersect \SobH{s}(\Sigma;E)$ in $\SobH{s}(\Sigma;E)$, and similarly define $\check{U}$ and $\hat{U}$.

We say $B \subset \checkH(A)$ is a boundary condition for $D$ if it is a closed subspace in $\checkH(A)$.
This is justified as a consequence of \ref{Thm:Ell2} in Theorem~\ref{Thm:Ell}. 
The domains of the associated operators are given by: 
\begin{align*}
\dom(D_{B,\max}) &= \set{ u \in \dom(D_{\max}): u\rest{\Sigma} \in B}\\
\dom(D_{B}) &= \set{u \in \SobH[D]{1}(M, E): u\rest{\Sigma} \in B}.
\end{align*}

\begin{proposition}
\label{Prop:ClosedExt} 
We have that: 
\begin{enumerate}[(i)]
\item 
\label{Prop:ClosedExt1} 
If $D_c$ is a closed extension of $D$ between $D_{cc}$ and $D_{\max}$, then there exists a boundary condition $B \subset \checkH(A)$ such that  $D_c = D_{B,\max}$.
\item 
\label{Prop:ClosedExt2} 
A boundary condition $B$ is contained in $\SobH{\frac{1}{2}}(\Sigma;E)$ if and only if $D_{B} = D_{B,\max}$.
In this case, $\norm{u}_{\SobH[D]{1}} \simeq \norm{u}_{D}$ for all $u \in \dom(D_B)$.
\end{enumerate} 
\end{proposition}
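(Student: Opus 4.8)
The plan is to prove the two items in order, using the trace machinery of Theorem~\ref{Thm:Ell} and the norm equivalence from Lemma~\ref{Lem:H1D-D}. For \ref{Prop:ClosedExt1}, given a closed extension $D_c$ with $D_{cc}\subset D_c\subset D_{\max}$, I would set $B:=\set{u\rest{\Sigma}: u\in\dom(D_c)}\subset\checkH(A)$, where the trace $u\rest{\Sigma}$ makes sense by Theorem~\ref{Thm:Ell}~\ref{Thm:Ell2}. The inclusion $\dom(D_c)\subset\dom(D_{B,\max})$ is immediate. For the reverse inclusion, take $u\in\dom(D_{\max})$ with $u\rest{\Sigma}\in B$; pick $w\in\dom(D_c)$ with $w\rest{\Sigma}=u\rest{\Sigma}$, so that $u-w\in\dom(D_{\max})$ has vanishing trace, hence $u-w\in\dom(D_{\min})$ by the Corollary following Lemma~\ref{Lem:H1D-D}, and $\dom(D_{\min})\subset\dom(D_c)$ since $D_c$ is a closed extension of $D_{cc}$. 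Thus $u=w+(u-w)\in\dom(D_c)$. The only genuine point to check is that $B$ is \emph{closed} in $\checkH(A)$: this follows because $D_c$ is closed, so $\dom(D_c)$ is a Banach space under the graph norm, and the trace map $\dom(D_{\max})\to\checkH(A)$ is bounded with bounded right inverse $\ext$ by Theorem~\ref{Thm:Ell}~\ref{Thm:Ell2}; one argues that $B=\trace(\dom(D_c))$ is closed using that $\dom(D_c)=\dom(D_{\min})+\ext(B)$ as a sum of a closed subspace and the image of a closed subspace under a topological embedding, together with $\ext(B)\cap\dom(D_{\min})$ being closed.

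For \ref{Prop:ClosedExt2}, suppose first $B\subset\SobH{\frac12}(\Sigma;E)$. By Theorem~\ref{Thm:Ell}~\ref{Thm:Ell3}, every $u\in\dom(D_{B,\max})$ has $u\rest{\Sigma}\in\SobH{\frac12}(\Sigma;E)$ already forces $u\in\SobH[loc]{1}(M;E)$, hence $u\in\dom(D_B)$; the reverse inclusion $\dom(D_B)\subset\dom(D_{B,\max})$ is trivial, so $D_B=D_{B,\max}$. Conversely, if $D_B=D_{B,\max}$, then for any $u\in\dom(D_{B,\max})$ we have $u\in\SobH[D]{1}(M;E)$, so $u\rest{\Sigma}\in\SobH{\frac12}(\Sigma;E)$ by the trace theorem; since the trace map is surjective onto $B$ (Theorem~\ref{Thm:Ell}~\ref{Thm:Ell2}), this gives $B\subset\SobH{\frac12}(\Sigma;E)$.

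For the final norm equivalence in \ref{Prop:ClosedExt2}: on $\dom(D_B)$ one has $\norm{u}_D\le\norm{u}_{\SobH[D]{1}}$ trivially, and the reverse direction $\norm{u}_{\SobH[D]{1}}\lesssim\norm{u}_D$ is the substance. I would argue by splitting $u=\ext(\chi^+(A_r)u\rest{\Sigma})+(u-\ext(\chi^+(A_r)u\rest{\Sigma}))$ exactly as in the proof of Theorem~\ref{Thm:Ell}~\ref{Thm:Ell3}: the first term is controlled in $\SobH[D]{1}$ by $\norm{\chi^+(A_r)u\rest{\Sigma}}_{\SobH{\frac12}}\lesssim\norm{u\rest{\Sigma}}_{\SobH{\frac12}}\lesssim\norm{u}_D$ using Lemma~\ref{Lem:RestBd} (the trace map $\dom(D_{\max})\to\checkH(A)$ is bounded and, restricted to $\SobH[D]{1}$, lands in $\SobH{\frac12}$ continuously), and the second term has $\chi^+(A_r)$-part of its trace equal to zero, so Lemma~\ref{Lem:H1D-D} gives $\norm{\cdot}_D\simeq\norm{\cdot}_{\SobH[D]{1}}$ on it. Adding the two estimates and using boundedness of $\ext$ and of the projectors completes the bound. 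The main obstacle I anticipate is the closedness of $B$ in item \ref{Prop:ClosedExt1} — keeping careful track of which topology ($\Lp{2}$-graph norm versus $\SobH[D]{1}$ versus $\checkH(A)$) is in play, and correctly invoking that a sum of a closed subspace with the image under $\ext$ of a closed subspace of $\checkH(A)$ is closed, which is where the bounded right inverse of the trace map is essential.
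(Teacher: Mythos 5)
Your overall strategy is aligned with the paper's (define $B$ as the trace image, use the kernel of the trace map, and split $u$ via the extension of its $\chi^+$-part), but there are two concrete gaps.

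First, the closedness of $B$ in part~\ref{Prop:ClosedExt1}. You write that one argues $B$ closed ``using that $\dom(D_c)=\dom(D_{\min})+\ext(B)$ as a sum of a closed subspace and the image of a closed subspace under a topological embedding'' --- but that phrasing already assumes $B$ is closed, which is what you are trying to prove, so as written the argument is circular. The clean route (which is the paper's) is to observe that by Theorem~\ref{Thm:Ell}~\ref{Thm:Ell2} the trace map is a bounded surjection $\dom(D_{\max})\to\checkH(A)$ with kernel $\dom(D_{\min})$ and a bounded right inverse $\ext$, hence it descends to an isomorphism $\dom(D_{\max})/\dom(D_{\min})\cong\checkH(A)$; since $\dom(D_{\min})\subset\dom(D_c)\subset\dom(D_{\max})$ and $\dom(D_c)$ is graph-closed, $\dom(D_c)/\dom(D_{\min})$ is closed in the quotient, hence its image $B$ is closed in $\checkH(A)$. (If you prefer to stay with $\ext$: first show $\ext(B)\subset\dom(D_c)$, then note $u\mapsto\ext(u\rest{\Sigma})$ is a bounded projector on the Banach space $\dom(D_c)$ with range $\ext(B)$, so $\ext(B)$ is closed; since $\ext$ is a topological embedding, $B$ is closed.)

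Second, the norm estimate in part~\ref{Prop:ClosedExt2}. You need $\norm{u\rest{\Sigma}}_{\SobH{\frac12}}\lesssim\norm{u}_D$, and you try to get it from Lemma~\ref{Lem:RestBd} plus the trace theorem. But Lemma~\ref{Lem:RestBd} only gives $\norm{u\rest{\Sigma}}_{\checkH(A)}\lesssim\norm{u}_D$, and the $\checkH(A)$-norm controls the $\chi^+$-part only in $\SobH{-\frac12}$, not $\SobH{\frac12}$; and the trace theorem gives $\norm{u\rest{\Sigma}}_{\SobH{\frac12}}\lesssim\norm{u}_{\SobH[D]{1}}$, which is circular here. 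The missing step is Lemma~\ref{Lem:ClosedNorm}: since $B$ is a closed subspace of $\checkH(A)$ contained in $\SobH{\frac12}(\Sigma;E)$, the $\SobH{\frac12}$- and $\checkH(A)$-norms are equivalent on $B$, and only then does $\norm{u\rest{\Sigma}}_{\SobH{\frac12}}\simeq\norm{u\rest{\Sigma}}_{\checkH(A)}\lesssim\norm{u}_D$ follow. With that inserted, your splitting $u=\ext(\chi^+(A_r)u\rest{\Sigma})+(u-\ext(\chi^+(A_r)u\rest{\Sigma}))$ together with Lemma~\ref{Lem:H1D-D} does close the argument. Note the paper proves the same estimate by a different route: it localises to $Z_{[0,\rho]}$, applies Proposition~\ref{Prop:RegInv}~\ref{Prop:RegInv1} to write $u=S_{0,r}\sym_0^{-1}D_{0,r}u+\exp(-t\modulus{A_r})\chi^+(A_r)u\rest{\Sigma}$, and then invokes Lemma~\ref{Lem:SReg} and Lemma~\ref{Lem:D-D0}; your $\ext$-based split is an acceptable alternative once the $\SobH{\frac12}$-control on $u\rest{\Sigma}$ is justified by $B$'s closedness.
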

\begin{proof}
By Theorem~\ref{Thm:Ell}~\ref{Thm:Ell2} the restriction map induces an isomorphism $\dom(D_{\max})/\dom(D_{\min}) \to \checkH(A)$.
Since $D_c$ is closed, $\dom(D_c)$ is a closed subspace of $\dom(D_{\max})$ and $B$ is its image in $\checkH(A)$.

The first part of assertion~\ref{Prop:ClosedExt2} follows from Theorem~\ref{Thm:Ell}~\ref{Thm:Ell3}.
As to the estimate, we assume w.l.o.g.\ that $\spt u\subset Z_{[0,\rho]}$ for $\rho$ so small that Lemma~\ref{Lem:D-D0} and Proposition~\ref{Prop:RegInv} are valid.
Then 
\begin{align*}
\norm{u}_{H^1}
&\le
\norm{S_{0,r}\sym_0^{-1} D_{0,r}u}_{H^1} + \norm{\exp(-t|A_r|)\chi^+(A_r)u|_\Sigma}_{H^1} \\
&\lesssim
\norm{D_{0,r}u}_{L^2} + \norm{\chi^+(A_r)u|_\Sigma}_{H^{\frac12}}\\
&\lesssim
\norm{u}_D + \norm{u|_\Sigma}_{\checkH(A)}\\
&\lesssim \norm{u}_D .
\qedhere
\end{align*}
\end{proof} 

Define: 
\begin{equation}
\label{Eq:AdjBCond}
B^\ad := \set{v \in \checkH(\tilde{A}): \inprod{ \sym_0 u, v} = 0\quad \forall u \in B}.
\end{equation}

The subspace $B^\ad$ is called the \emph{adjoint  boundary condition} and this nomenclature is justified by the following proposition.
\begin{proposition}\label{Prop:B0}
We have that 
$$ \dom( (D_{B,\max})^\ad ) = \set{v \in \dom( (D^\dagger)_{\max} ): v\rest{\Sigma} \in B^\ad}.$$
If further $B \subset \SobH{\frac{1}{2}}(\Sigma;E)$, then $\sym_0^\ast B^\ad = B^{\perp, \SobH{-\frac12}} \intersect \hatH(A^\dagstar)$ where
$$ B^{\perp, \SobH{-\frac12}} := \set{ w \in \SobH{-\frac{1}{2}}(\Sigma;E): \inprod{u,w} = 0\quad \forall u \in B}.$$
Here $\inprod{\cdot,\cdot}$ denotes the pairing between $\SobH{\frac{1}{2}}(\Sigma;E)$ and $\SobH{-\frac{1}{2}}(\Sigma;E)$, that is,  $B^{\perp,\SobH{-\frac12}}$ is the annihilator of $B$ in $\SobH{-\frac{1}{2}}(\Sigma;E)$. 
\end{proposition}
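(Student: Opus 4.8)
The plan is to prove the two assertions in turn, using the Green's formula from Theorem~\ref{Thm:Ell}~\ref{Thm:Ell4} together with the surjectivity and boundedness of the trace maps.

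For the first assertion, I would argue by a standard double inclusion. Given $v \in \dom((D^\dagger)_{\max})$ with $v\rest{\Sigma} \in B^\ad$, equation~\eqref{Eq:MaxDInt} shows that for every $u \in \dom(D_{B,\max})$ we have $\inprod{D_{\max}u,v} - \inprod{u,(D^\dagger)_{\max}v} = -\inprod{\sym_0 u\rest{\Sigma}, v\rest{\Sigma}} = 0$ since $u\rest{\Sigma}\in B$ and $v\rest{\Sigma}\in B^\ad$; hence $v \in \dom((D_{B,\max})^\ad)$ with $(D_{B,\max})^\ad v = (D^\dagger)_{\max}v$. Conversely, suppose $v \in \dom((D_{B,\max})^\ad)$. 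Testing against $u \in \dom(D_{\min}) \subset \dom(D_{B,\max})$ (which lies in $\dom(D_{B,\max})$ since its trace is $0 \in B$) and using that $D_{\min} \subset D_{\max}$ with $u\rest{\Sigma} = 0$, Green's formula forces $v \in \dom((D_{\min})^\ad) = \dom((D^\dagger)_{\max})$ — this identification being exactly the content of $D_{\max} = (D^\dagger_{cc})^\ad$ together with the density statement in Lemma~\ref{Lem:GreensDensity}. Once we know $v \in \dom((D^\dagger)_{\max})$, we may apply~\eqref{Eq:MaxDInt} for \emph{all} $u \in \dom(D_{B,\max})$, and the defining property of the adjoint gives $\inprod{\sym_0 u\rest{\Sigma}, v\rest{\Sigma}} = 0$ for all such $u$. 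Since the trace map $\dom(D_{B,\max}) \to B$ is surjective (Theorem~\ref{Thm:Ell}~\ref{Thm:Ell2} restricted to $B$, using that $\ext$ maps $B$ into $\dom(D_{B,\max})$), this says precisely $v\rest{\Sigma} \in B^\ad$.

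For the second assertion, assume $B \subset \SobH{\frac12}(\Sigma;E)$. I would first use Corollary~\ref{Cor:Pairing} and Lemma~\ref{Lem:HomFieldPairing}: the pairing $\inprod{\cdot,\cdot}$ between $\checkH(A)$ and $\hatH(A^\dagstar)$ is perfect, and $\sym_0^\ast: \checkH(\tilde A) \to \hatH(A^\dagstar)$ is an isomorphism intertwining $\beta(u,v) = -\inprod{\sym_0 u, v}$ with the $\Lp{2}$-pairing $\inprod{u, \sym_0^\ast v}$. Hence $v \in B^\ad$ iff $\inprod{u, \sym_0^\ast v} = 0$ for all $u \in B$, i.e.\ $\sym_0^\ast B^\ad$ is exactly the annihilator of $B$ inside $\hatH(A^\dagstar)$ with respect to the $\checkH(A)$--$\hatH(A^\dagstar)$ pairing. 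It remains to identify this annihilator with $B^{\perp,\SobH{-\frac12}} \cap \hatH(A^\dagstar)$. The inclusion $\sym_0^\ast B^\ad \subset B^{\perp,\SobH{-\frac12}} \cap \hatH(A^\dagstar)$ is clear because on $B \subset \SobH{\frac12}(\Sigma;E)$ the $\checkH(A)$--$\hatH(A^\dagstar)$ pairing restricts to the $\SobH{\frac12}$--$\SobH{-\frac12}$ pairing. For the reverse inclusion, given $w \in \hatH(A^\dagstar)$ annihilating $B$ in $\SobH{-\frac12}$, one checks it annihilates $B$ in the $\checkH(A)$--$\hatH(A^\dagstar)$ pairing: write $u \in B \subset \SobH{\frac12}(\Sigma;E)$, decompose via $\chi^{\pm}(A_r)$, and observe that the $\checkH(A)$-norm on $\SobH{\frac12}(\Sigma;E)$ is dominated by the $\SobH{\frac12}$-norm, so the pairing extends continuously and agrees with the $\SobH{\frac12}$--$\SobH{-\frac12}$ one by density of $\Ck{\infty}(\Sigma;E)$.

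The main obstacle I anticipate is the first converse inclusion in assertion one: showing that a general $v \in \dom((D_{B,\max})^\ad)$ automatically lies in $\dom((D^\dagger)_{\max})$, before one is entitled to invoke the Green's formula~\eqref{Eq:MaxDInt}. This is where one must carefully use $\dom(D_{\min}) \subset \dom(D_{B,\max})$ and the characterisation $(D^\dagger)_{\max} = (D_{cc})^\ad$ — that is, $(D_{\min})^\ad = (D^\dagger)_{\max}$ — so that the abstract adjoint domain is controlled. Everything else is bookkeeping with the perfect pairings already established in Corollary~\ref{Cor:Pairing}, Lemma~\ref{Lem:HomFieldPairing}, and the trace surjectivity of Theorem~\ref{Thm:Ell}.
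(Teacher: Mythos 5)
Your proof is correct and takes essentially the same approach as the paper's. The paper establishes the first assertion more compactly by observing directly that $D_{cc}\subset D_{B,\max}$ implies $(D_{B,\max})^\ad\subset (D_{cc})^\ad=(D^\dagger)_{\max}$, which is exactly what your argument through $D_{\min}$ amounts to (and note this inclusion of adjoints is pure operator theory, not really an application of Green's formula as you phrase it); for the second assertion the paper, like you, identifies $\sym_0^\ast B^\ad$ as the annihilator of $B$ in $\hatH(A^\dagstar)$ and uses that when $B\subset\SobH{\frac12}(\Sigma;E)$ the $\checkH(A)$--$\hatH(A^\dagstar)$ pairing restricts to the $\SobH{\frac12}$--$\SobH{-\frac12}$ pairing.
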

\begin{proof}
Note $(D_{B,\max})^{\ad} \subset (D_{cc})^\ad = (D^\dagger)_{\max}$ where $(D_{B,\max})^\ad$ is the adjoint of $D_{B,\max}$ in $\Lp{2}(M;E)$.
This satisfies the equation
$$ \inprod{D_{B,\max}u,v} = \inprod{u, (D_{B,\max})^{\ad} v}$$
for all $u \in \dom(D_{B,\max})$ and $v \in \dom((D_{B,\max})^{\ad})$.
Therefore, by \eqref{Eq:MaxDInt} in Theorem~\ref{Thm:Ell}, we have that 
$$ \inprod{\sym_0  u\rest{\Sigma}, v\rest{\Sigma}} = 0,$$
where we recall that this is the pairing between $\checkH(A)$ and $\checkH(\tilde{A})$ from Lemma~\ref{Lem:HomFieldPairing}.
Thus, the characterisation of the domain of $(D_{B,\max})^\ad$ as given in the conclusion follows.

Now, assume that $B \subset \SobH{\frac{1}{2}}(\Sigma;E)$.
Then $\inprod{\cdot,\cdot}$ agrees with the pairing $\SobH{\frac{1}{2}}(\Sigma;E) \times \SobH{-\frac{1}{2}}(\Sigma;E) \to \C$.
Thus, 
\begin{align*}
\sym_0^\ast B^\ad 
&= 
\set{ \sym_0^\ast v \in \hatH(A^\dagstar): v \in \checkH(\tilde{A})\ \text{and}\ \inprod{\sym_0 u, v} = 0\quad \forall u \in B} \\
&= 
\set{ w \in \hatH(A^\dagstar): \inprod{u, w} = 0\quad \forall u \in B}  \\
&=
B^{\perp, \SobH{-\frac12}} \cap \hatH(A^\dagstar).
\qedhere
\end{align*}
\end{proof}

\subsection{Proof of Theorem~\ref{Thm:EllEquiv}}
\label{Sec:PfEllEquiv}

Our goal in this subsection is to prove Theorem~\ref{Thm:EllEquiv}, which states the equivalence of several criteria for ellipticity of a boundary condition.

We proceed by first proving the equivalence between \ref{Thm:EllEquiv:Ellbdy}--\ref{Thm:EllEquiv:Admiss2}. 
Indeed, it is immediate that in Theorem~\ref{Thm:EllEquiv} assertion~\ref{Thm:EllEquiv:Admiss} implies \ref{Thm:EllEquiv:Admiss2} and that \ref{Thm:EllEquiv:Ellbdy} implies \ref{Thm:EllEquiv:Closed}.

\begin{proof}[Proof of \ref{Thm:EllEquiv:Admiss2} $\implies$ \ref{Thm:EllEquiv:Ellbdy} and equation \eqref{eq:B*} in Theorem~\ref{Thm:EllEquiv}]
It suffices to prove \eqref{eq:B*}.
The annihilators in $\SobH{-\frac12}{(\Sigma;E)}$ of the following subspaces of $\SobH{\frac12}{(\Sigma;E)}$ are given by
\begin{align*}
W_\pm^{\perp,\SobH{-\frac12}{}}
&=
W_\mp^* \oplus (V_-^\ast)^{-\frac12} \oplus (V_+^\ast)^{-\frac12}, \\
\set{v + gv: v \in V_{-}^{\frac{1}{2}}}^{\perp,\SobH{-\frac12}{}}
&=
W_-^* \oplus W_+^* \oplus \set{v - g^*v: v \in (V_{+}^*)^{-\frac{1}{2}}} .
\end{align*}
Since, by assumption, $g$ restricts to a map $V_-^{\frac12}\to V_+^{\frac12}$ the dual map naturally extends to a map $(V_+^\ast)^{-\frac12}\to (V_-^\ast)^{-\frac12}$.
From $B = W_+ \oplus \set{v + gv: v \in V_{-}^{\frac{1}{2}}}$ we have
\begin{align*}
B^{\perp,\SobH{-\frac12}}
&=
W_+^{\perp,\SobH{-\frac12}{}} \cap \set{v + gv: v \in V_{-}^{\frac{1}{2}}}^{\perp,\SobH{-\frac12}{}} \\
&=
W_-^\ast \oplus \set{v - g^*v: v \in (V_{+}^*)^{-\frac{1}{2}}} .
\end{align*}
Since $g^*$ preserves $\SobH{\frac12}$-regularity, 
$$
B^{\perp,\SobH{-\frac12}} \cap \hatH(A^\dagstar) 
=
W_-^\ast \oplus \set{v - g^*v: v \in (V_{+}^*)^{\frac{1}{2}}}.
$$
From Proposition~\ref{Prop:B0} we recall that $\sym_0^\ast B^\ad = B^{\perp,\SobH{-\frac12}} \intersect \hatH(A^\dagstar)$ which concludes the proof.
\end{proof} 

Next, we demonstrate that \ref{Thm:EllEquiv:Closed} implies \ref{Thm:EllEquiv:Admiss}.
This  is considerably more involved than the proof of the corresponding result in the selfadjoint case given as Theorem~7.11 in \cite{BB12}.

\begin{lem}
\label{Lem:AdjDecomp}
Suppose $W_{\pm}$ and $V_{\pm}$ are mutually complementary subspaces such that $\chi^\pm(A_r) \Lp{2}(\Sigma;E) = W_{\pm} \oplus V_{\pm}$ and that $W_{\pm}$ are finite dimensional.
Let $P_{\pm}:\Lp{2}(\Sigma;E) \to V_{\pm}$ and $Q_{\pm}: \Lp{2}(\Sigma;E) \to W_{\pm}$ be associated projectors that respect the decomposition 
$$ \Lp{2}(\Sigma;E) = W_- \oplus V_- \oplus W_+ \oplus V_+.$$
Then, writing $W_{\pm}^\ast := Q_{\pm}^\ast \Lp{2}(\Sigma;E)$ and $V_{\pm}^\ast := P_{\pm}^\ast\Lp{2}(\Sigma;E) $, we have that: 
\begin{enumerate}[(i)] 
\item 
\label{Lem:AdjDecomp1}
$V_{-}^\ast \oplus W_{-}^\ast = \chi^{-}(A_r^\ast)\Lp{2}(\Sigma;E) $ and $V_{+}^\ast \oplus W_{+}^\ast = \chi^{+}(A_r^\ast)\Lp{2}(\Sigma;E) $, 
\item 
\label{Lem:AdjDecomp2}
$\nul(P_{\pm}^\ast) = V_{\mp}^\ast \oplus W_{-}^\ast \oplus W_{+}^\ast$ and $\nul(Q_{\pm}^\ast) = W_{\mp}^\ast \oplus V_{-}^\ast \oplus V_+^\ast$, 
\item 
\label{Lem:AdjDecomp3}
$\dim(W_{\pm}^\ast)=\dim(W_\pm)$.
\end{enumerate} 
\end{lem}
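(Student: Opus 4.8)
\textbf{Plan of proof for Lemma~\ref{Lem:AdjDecomp}.}
The whole statement is a purely Hilbert-space-theoretic consequence of the direct-sum decomposition $\Lp{2}(\Sigma;E) = W_- \oplus V_- \oplus W_+ \oplus V_+$ together with the fact, recorded in Lemma~\ref{Lem:AdjProj}~\ref{AdjProj3}, that $\chi^\pm(A_r)^\ast = \chi^\pm(A_r^\ast)$. The key observation I would use is the elementary duality principle: if $\cH = X_1 \oplus \cdots \oplus X_n$ is a (not necessarily orthogonal) direct sum with associated projectors $\Pi_1,\dots,\Pi_n$ summing to the identity, then $\cH = \Pi_1^\ast\cH \oplus \cdots \oplus \Pi_n^\ast\cH$ is again a direct sum, with $\Pi_i^\ast$ the associated projectors, and moreover $\ran(\Pi_i^\ast) = \bigl(\bigoplus_{j\neq i} \ran(\Pi_j)\bigr)^{\perp}$, the annihilator of the sum of the complementary ranges. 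This is exactly the content of ``Section~4 page~156 in \cite{Kato}'' that is cited elsewhere in the paper; I would simply invoke it.

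Applying this principle to the four projectors $P_-, Q_-, Q_+, P_+$ onto $V_-, W_-, W_+, V_+$ respectively, I immediately get that $\Lp{2}(\Sigma;E) = V_-^\ast \oplus W_-^\ast \oplus W_+^\ast \oplus V_+^\ast$ is a direct sum, which gives the kernel descriptions in \ref{Lem:AdjDecomp2} (each $\nul(P_\pm^\ast)$ and $\nul(Q_\pm^\ast)$ is the sum of the other three ranges, since these are the associated projectors of the adjoint decomposition). For \ref{Lem:AdjDecomp3}, since $Q_\pm$ has finite-dimensional range $W_\pm$ and $\dim\ran(Q_\pm) = \dim\ran(Q_\pm^\ast)$ for any bounded projector (the rank of an operator equals the rank of its adjoint on Hilbert space, or alternatively $Q_\pm^\ast$ is a projector whose range is the annihilator of a finite-codimension subspace), we get $\dim W_\pm^\ast = \dim W_\pm$; this is also already noted in Remark~\ref{Rem:*Spaces}.

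The only part requiring a genuine identification is \ref{Lem:AdjDecomp1}: I must show $V_-^\ast \oplus W_-^\ast = \chi^-(A_r^\ast)\Lp{2}(\Sigma;E)$ and $V_+^\ast \oplus W_+^\ast = \chi^+(A_r^\ast)\Lp{2}(\Sigma;E)$. For this I would note that $P_- + Q_- = \chi^-(A_r)$ (both sides are the projector onto $W_-\oplus V_- = \chi^-(A_r)\Lp{2}$ along $W_+\oplus V_+ = \chi^+(A_r)\Lp{2}$, using \ref{Def:EllBC:MutualComp} of Definition~\ref{Def:EllBC}), hence taking adjoints and using Lemma~\ref{Lem:AdjProj}~\ref{AdjProj3}, $P_-^\ast + Q_-^\ast = \chi^-(A_r)^\ast = \chi^-(A_r^\ast)$. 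Since $P_-^\ast$ and $Q_-^\ast$ are projectors with disjoint ranges (their ranges sit inside the direct sum $V_-^\ast\oplus W_-^\ast\oplus W_+^\ast\oplus V_+^\ast$), $P_-^\ast + Q_-^\ast$ is the projector onto $V_-^\ast \oplus W_-^\ast$, which therefore equals $\ran\chi^-(A_r^\ast) = \chi^-(A_r^\ast)\Lp{2}(\Sigma;E)$; the $+$ case is identical. I do not expect any serious obstacle here — the entire lemma is bookkeeping around the Kato duality for non-orthogonal projectors — the one point to state carefully is the identity $P_\pm + Q_\pm = \chi^\pm(A_r)$, which is where the mutual-complementarity hypothesis enters.
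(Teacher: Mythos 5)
Your proposal is correct and follows essentially the same route as the paper: in both cases the crux is the identity $P_\pm + Q_\pm = \chi^\pm(A_r)$ (the paper derives it by computing $\nul(P_\pm+Q_\pm)=\nul(P_\pm)\cap\nul(Q_\pm)=\chi^\mp(A_r)\Lp{2}(\Sigma;E)$, you derive it by matching ranges and kernels, which amounts to the same thing), followed by taking adjoints via Lemma~\ref{Lem:AdjProj}~\ref{AdjProj3} for part~\ref{Lem:AdjDecomp1}, the observation that the adjoint projectors sum to the identity for part~\ref{Lem:AdjDecomp2}, and the equality of rank with rank of the adjoint for part~\ref{Lem:AdjDecomp3}. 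The only presentational difference is that you package the argument as an appeal to a general duality principle for non-orthogonal direct sums, whereas the paper writes out the concrete projector algebra; there is no substantive gap.
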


\begin{proof}
From the observation $W_{\pm} \intersect V_{\pm} = \set{0}$, we have
$$
\nul(P_{\pm} + Q_{\pm}) = \nul(P_{\pm}) \intersect \nul(Q_{\pm}) =W_{\mp} \oplus V_{\mp}= \chi^{\mp}(A_r) \Lp{2}(\Sigma;E).
$$ 
This proves $\chi^{\pm}(A_r) = P_{\pm} + Q_{\pm}$.
Therefore, on taking adjoints, we obtain assertion~\ref{Lem:AdjDecomp1}.

Now, note that $1 = P_{-}^\ast + Q_{-}^\ast + P_{+}^\ast + Q_{+}^\ast$ and by the observation $\nul(P_{\pm}^\ast) = (1 - P_{\mp}^\ast)\Lp{2}(\Sigma;E)$ and $\nul(Q_{\pm}^\ast) = (1 - Q_{\mp}^\ast)\Lp{2}(\Sigma;E)$, we obtain \ref{Lem:AdjDecomp2}

Since the ranks of $Q_\pm$ and $Q_\pm^\ast$ coincide, assertion~\ref{Lem:AdjDecomp3} holds.
\end{proof}

\begin{lem}
\label{Lem:Vprop}
Let $V_{\pm}$ and $W_{\pm}$ be closed subspaces of $\Lp{2}(\Sigma;E)$ as Lemma~\ref{Lem:AdjDecomp} with corresponding projectors $P_\pm$ and $Q_\pm$.
In addition, assume $W_{\pm} \subset \SobH{\frac{1}{2}}(\Sigma;E)$.
Then, 
\begin{enumerate}[(i)]
\item 
\label{Lem:Vprop1}
$V_{\pm}^\frac{1}{2} = V_{\pm} \intersect \SobH{\frac{1}{2}}(\Sigma;E)$ are closed in $\SobH{\frac{1}{2}}(\Sigma;E)$ and 
	$$\SobH{\frac{1}{2}}(\Sigma;E) = V_-^\frac{1}{2} \oplus W_- \oplus V_{+}^\frac{1}{2} \oplus W_+,$$
\item 
\label{Lem:Vprop2}
the spaces $V_{\pm}^\frac{1}{2}$ are dense in $V_{\pm}$, 
\item 
\label{Lem:Vprop3}
the projectors $P_{\pm}$ restrict to bounded projectors 
	$$P_{\pm}^{\frac{1}{2}}: \SobH{\frac{1}{2}}(\Sigma;E) \to V_{\pm}^\frac{1}{2}.$$ 
\end{enumerate}
If $W_{\pm}^\ast \subset \SobH{\frac{1}{2}}(\Sigma;E)$, the same conclusions hold for $V_\pm^\ast$, $W_\pm^\ast$ and $P_{\pm}^\ast$ and $Q_{\pm}^\ast$ in place of $V_{\pm},\ W_{\pm}, P_{\pm}, Q_{\pm}$.
\end{lem}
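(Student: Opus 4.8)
The plan is to exploit that $W_\pm$ is finite-dimensional and sits inside $\SobH{\frac12}(\Sigma;E)$, so that the splitting $\Lp{2}(\Sigma;E)=W_-\oplus V_-\oplus W_+\oplus V_+$ induces a compatible splitting on the Sobolev scale. First I would treat \ref{Lem:Vprop1}: since $\chi^\pm(A_r)$ are pseudo-differential of order $0$ (Proposition~\ref{Prop:AProj}), they restrict to bounded projectors on $\SobH{\frac12}(\Sigma;E)$, giving $\SobH{\frac12}(\Sigma;E)=\chi^-(A_r)\SobH{\frac12}(\Sigma;E)\oplus\chi^+(A_r)\SobH{\frac12}(\Sigma;E)$. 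Within each summand I want to split off $W_\pm$: because $W_\pm\subset\SobH{\frac12}(\Sigma;E)$ is finite-dimensional, it is automatically a closed (hence topologically complemented) subspace of the Banach space $\chi^\pm(A_r)\SobH{\frac12}(\Sigma;E)$, and a natural complement is $V_\pm^{\frac12}:=V_\pm\cap\SobH{\frac12}(\Sigma;E)$. To see $V_\pm\cap\SobH{\frac12}(\Sigma;E)$ is exactly the complement, note $Q_\pm$ maps into the finite-dimensional space $W_\pm\subset\SobH{\frac12}$, so $Q_\pm$ (a priori only $\Lp2$-bounded) actually restricts to a bounded map $\SobH{\frac12}(\Sigma;E)\to W_\pm$ — finite rank with range in $\SobH{\frac12}$ and a bounded functional description via $W_\pm^\ast$, which also lies in $\SobH{\frac12}$ by hypothesis, so the defining functionals extend continuously to $\SobH{\frac12}$. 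Then $P_\pm=\chi^\pm(A_r)-Q_\pm$ is bounded on $\SobH{\frac12}(\Sigma;E)$ with range $V_\pm\cap\SobH{\frac12}(\Sigma;E)$, which is therefore closed; this simultaneously gives \ref{Lem:Vprop3} and completes \ref{Lem:Vprop1}.

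For \ref{Lem:Vprop2}, density of $V_\pm^{\frac12}$ in $V_\pm$, I would argue: $\Ck{\infty}(\Sigma;E)$ is dense in $\Lp2(\Sigma;E)$, so $\chi^\pm(A_r)\Ck{\infty}(\Sigma;E)$ is dense in $\chi^\pm(A_r)\Lp2(\Sigma;E)=W_\pm\oplus V_\pm$; applying the $\Lp2$-bounded projector $P_\pm$ (which maps $\SobH{\frac12}$ into $V_\pm^{\frac12}$) shows $P_\pm\chi^\pm(A_r)\Ck{\infty}(\Sigma;E)\subset V_\pm^{\frac12}$ is dense in $P_\pm(W_\pm\oplus V_\pm)=V_\pm$. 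The final sentence, transferring all conclusions to $V_\pm^\ast,W_\pm^\ast,P_\pm^\ast,Q_\pm^\ast$, follows because Lemma~\ref{Lem:AdjDecomp} shows these are again mutually complementary subspaces with $V_\pm^\ast\oplus W_\pm^\ast=\chi^\pm(A_r^\ast)\Lp2(\Sigma;E)$, with $W_\pm^\ast$ finite-dimensional, and the theory of Section~\ref{Sec:OpTheory} (Lemma~\ref{Lem:AdjProj}) applies verbatim to $A_r^\ast$, which is again elliptic; the hypothesis $W_\pm^\ast\subset\SobH{\frac12}(\Sigma;E)$ plays the role that $W_\pm\subset\SobH{\frac12}(\Sigma;E)$ played, and one also needs $(W_\pm^\ast)^\ast$, i.e.\ $W_\pm$ itself, in $\SobH{\frac12}$ to describe $Q_\pm^\ast$ by continuous functionals — which is exactly the standing hypothesis.

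The main obstacle I anticipate is the careful bookkeeping of which $\Lp2$-bounded projector actually extends/restricts boundedly to $\SobH{\frac12}$: the projectors $Q_\pm$ and $P_\pm$ are not pseudo-differential operators, only $\Lp2$-bounded, so their $\SobH{\frac12}$-boundedness must be derived from the finite-dimensionality of $W_\pm$ together with $W_\pm,W_\pm^\ast\subset\SobH{\frac12}(\Sigma;E)$, not taken for granted. Concretely, one writes $Q_\pm u=\sum_i \langle u,w_i^\ast\rangle w_i$ for bases $\{w_i\}$ of $W_\pm$ and $\{w_i^\ast\}$ of $W_\pm^\ast$ (suitably biorthogonalised), and observes that $u\mapsto\langle u,w_i^\ast\rangle$ extends continuously from $\Lp2$ to $\SobH{\frac12}$ by duality with $\SobH{-\frac12}$ since $w_i^\ast\in\SobH{\frac12}\subset\SobH{-\frac12}$, whence $Q_\pm:\SobH{\frac12}(\Sigma;E)\to W_\pm$ is bounded; everything else is then formal.
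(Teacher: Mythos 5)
Your plan is essentially the paper's proof: write $P_\pm = \chi^\pm(A_r) - Q_\pm$, note $\chi^\pm(A_r)$ is a pseudo-differential projector of order zero hence $\SobH{\frac12}$-bounded, and argue that $Q_\pm$ — being finite rank with range $W_\pm\subset\SobH{\frac12}(\Sigma;E)$ — also restricts boundedly to $\SobH{\frac12}(\Sigma;E)$; conclusions \ref{Lem:Vprop1}--\ref{Lem:Vprop3} then follow formally, and the starred version is obtained by symmetry.

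One point in your argument is off, however. You assert that proving $\SobH{\frac12}$-boundedness of $Q_\pm$ requires $W_\pm^\ast\subset\SobH{\frac12}(\Sigma;E)$, and later write that the transfer to the starred spaces needs $W_\pm\subset\SobH{\frac12}$ ``to describe $Q_\pm^\ast$ by continuous functionals''. Neither is actually needed, and the lemma's statement reflects this: assertions \ref{Lem:Vprop1}--\ref{Lem:Vprop3} assume only $W_\pm\subset\SobH{\frac12}(\Sigma;E)$, and the starred transfer needs only $W_\pm^\ast\subset\SobH{\frac12}(\Sigma;E)$. The paper's mechanism is simply that $Q_\pm$ is $\Lp{2}$-bounded and its range $W_\pm$ is finite-dimensional with $W_\pm\subset\SobH{\frac12}(\Sigma;E)$, so the $\SobH{\frac12}$- and $\Lp{2}$-norms are equivalent on $W_\pm$ and hence, for $u\in\SobH{\frac12}(\Sigma;E)$,
\[
\norm{Q_\pm u}_{\SobH{\frac12}} \lesssim \norm{Q_\pm u}_{\Lp{2}} \lesssim \norm{u}_{\Lp{2}} \lesssim \norm{u}_{\SobH{\frac12}}.
\]
In your biorthogonal presentation $Q_\pm u=\sum_i\inprod{u,w_i^\ast}w_i$, the functionals $u\mapsto\inprod{u,w_i^\ast}$ are automatically continuous on $\SobH{\frac12}(\Sigma;E)$ because $\SobH{\frac12}(\Sigma;E)\embed\Lp{2}(\Sigma;E)$ and $w_i^\ast\in\Lp{2}(\Sigma;E)$; you do not need $w_i^\ast\in\SobH{\frac12}(\Sigma;E)$, so the extra ``by duality with $\SobH{-\frac12}$'' justification invokes a hypothesis the lemma does not make. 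Everything else in your plan — the density argument for \ref{Lem:Vprop2} and the splitting for \ref{Lem:Vprop1} — is fine and matches the paper.
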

\begin{proof}
Since $\chi^{\pm}(A_r) = P_\pm + Q_\pm$, we have that $P_\pm  = \chi^{\pm}(A_r) - Q_\pm$.
The first operator is a pseudo-differential operator of order $0$ and the latter has range in $\SobH{\frac{1}{2}}(\Sigma;E)$ and finite rank.
Hence, $P_\pm^\frac{1}{2}: \SobH{\frac{1}{2}}(\Sigma;E) \to \SobH{\frac{1}{2}}(\Sigma;E)$ is bounded.
This shows \ref{Lem:Vprop3}.

Let $v \in V_{\pm}$.
By density of $\SobH{\frac{1}{2}}(\Sigma;E)$ in $\Lp{2}(\Sigma;E)$, there exists $v_n\in\SobH{\frac12}{(\Sigma;E)}$ such that $v_n \to v$. 
Then $P_\pm v_n \to P_\pm v=v$.
This shows \ref{Lem:Vprop2}.

The spaces $W_\pm$ and $V_{\pm}^\frac{1}{2}$ are ranges of $H^{\frac12}$-bounded projectors and hence closed in $\SobH{\frac{1}{2}}(\Sigma;E)$.
For $u \in \SobH{\frac{1}{2}}(\Sigma;E)$, 
$$u = P_{-}u + Q_{-} u + P_{+} u + Q_{+} u$$
with $P_{\pm} u \in V_{\pm}^\frac{1}{2}$ as we have proved earlier.
This shows \ref{Lem:Vprop1}. 
\end{proof} 

\begin{lem} 
\label{Lem:ClosedNorm}
Let $X$ be one of $\checkH(A)$, $\hatH(A)$, $\checkH(A^\dagstar)$ or $\hatH(A^\dagstar)$ and let $Z \subset X$ be a closed subspace of $X$.
If $Z \subset \SobH{\frac{1}{2}}(\Sigma;E)$, then it is a closed subspace of $\SobH{\frac{1}{2}}(\Sigma;E)$ and moreover, $\norm{u}_{\SobH{\frac{1}{2}}} \simeq \norm{u}_{X}$. 
\end{lem}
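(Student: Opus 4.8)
The plan is to deduce everything from the closed graph theorem, the one genuine input being that $\SobH{\frac12}(\Sigma;E)$ embeds continuously into $X$.

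First I would establish this embedding. In each of the four cases $X$ is a direct sum of the form $\chi^{\mp}(A_r)\SobH{\frac12}(\Sigma;E)\oplus\chi^{\pm}(A_r)\SobH{-\frac12}(\Sigma;E)$ (with $A_r$ replaced by $A_r^\ast$ in the cases $X=\checkH(A^\dagstar),\hatH(A^\dagstar)$), normed by the sum of the $\SobH{\frac12}$-norm and the $\SobH{-\frac12}$-norm of the two components. Since the spectral projectors $\chi^{\pm}(A_r)$ and $\chi^{\pm}(A_r^\ast)$ are pseudo-differential operators of order zero (Proposition~\ref{Prop:AProj} and Lemma~\ref{Lem:AdjProj}), they are bounded on all Sobolev scales; together with the trivial inclusion $\SobH{\frac12}(\Sigma;E)\embed\SobH{-\frac12}(\Sigma;E)$ this produces a constant with
$$
\norm{u}_{X}\lesssim\norm{u}_{\SobH{\frac12}}\qquad\text{for all }u\in\SobH{\frac12}(\Sigma;E),
$$
so the inclusion $\SobH{\frac12}(\Sigma;E)\embed X$ is continuous.

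Next, since $Z$ is a closed subspace of the Banach space $X$, the space $(Z,\norm{\cdot}_X)$ is itself Banach, and $Z\subset\SobH{\frac12}(\Sigma;E)$ by hypothesis. I would then apply the closed graph theorem to the identity map $\id\colon(Z,\norm{\cdot}_X)\to\SobH{\frac12}(\Sigma;E)$: if $u_n\to u$ in $(Z,\norm{\cdot}_X)$ and $u_n\to v$ in $\SobH{\frac12}(\Sigma;E)$, then $u_n\to v$ in $X$ by the continuity just established, whence $u=v$ by uniqueness of limits in $X$, so the graph of $\id$ is closed. Hence $\id$ is bounded, i.e.\ $\norm{u}_{\SobH{\frac12}}\lesssim\norm{u}_X$ on $Z$, and combined with the previous step this yields $\norm{u}_{\SobH{\frac12}}\simeq\norm{u}_X$ for $u\in Z$. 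Finally, equivalence of these two norms on $Z$ together with completeness of $(Z,\norm{\cdot}_X)$ forces $(Z,\norm{\cdot}_{\SobH{\frac12}})$ to be complete, so $Z$ is a closed subspace of $\SobH{\frac12}(\Sigma;E)$.

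I do not anticipate a serious obstacle: the argument is soft once the continuous embedding $\SobH{\frac12}(\Sigma;E)\embed X$ is in hand, and that rests only on the zeroth-order mapping properties of the spectral projectors. The points to keep track of are that $\checkH(A)$, $\hatH(A)$, $\checkH(A^\dagstar)$, $\hatH(A^\dagstar)$ are genuinely Banach spaces and, by Proposition~\ref{Prop:CheckEquiv}, independent of the admissible cut $r$, so the particular $r$ chosen in the first step is immaterial.
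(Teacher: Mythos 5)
Your proof is correct and takes essentially the same approach as the paper: both rest on the continuous embedding $\SobH{\frac12}(\Sigma;E)\embed X$ (coming from the order-zero mapping properties of the spectral projectors) together with a Baire-category argument. The paper routes through two appendix lemmas (first a direct Cauchy-sequence argument showing $(Z,\norm{\cdot}_{\SobH{\frac12}})$ is complete, then the open mapping theorem for the norm equivalence), whereas you apply the closed graph theorem once to get the norm equivalence and then deduce completeness of $(Z,\norm{\cdot}_{\SobH{\frac12}})$ as a corollary; the logical order is reversed but the mathematical content is the same.
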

\begin{proof}
Set $Y = \SobH{\frac{1}{2}}(\Sigma;E)$ and note that we have $\norm{u}_{X} \lesssim \norm{u}_{\SobH{\frac{1}{2}}}$.
All possible choices of $X$ are Banach spaces and hence, we can apply Lemma~\ref{Lem:Abs1} to obtain that $(Z, \norm{\cdot}_{\SobH{\frac{1}{2}}})$ is a Banach space. 
Thus, this satisfies the hypothesis of Lemma~\ref{Lem:Abs2} and hence, we obtain the desired conclusion.
\end{proof}

Define the following spaces:
\begin{equation}
\label{Eq:Spaces} 
\begin{aligned}
&W_-^\ast := \chi^{-}(A_r^\ast)\Lp{2}(\Sigma;E) \intersect \sym_0^\ast B^\ad 	&&W_- := \chi^{-}(A_r) W_-^\ast  \\
&W_+ := \chi^+(A_r) \Lp{2}(\Sigma;E) \intersect B 					&&W_+^\ast := \chi^+(A_r^\ast) W_+ \\
&V_-^\ast := \chi^{-}(A_r^\ast)\Lp{2}(\Sigma;E) \intersect (W_-^\ast)^\perp 	&&V_- := \chi^{-}(A_r)V_-^\ast \\
&V_+ := \chi^+(A_r)\Lp{2}(\Sigma;E) \intersect W_+^\perp\qquad 			&&V_+^\ast := \chi^+(A_r^\ast) V_+.
\end{aligned}
\end{equation}

\begin{proposition}
\label{Prop:Spaces}
Let $B$ and $B^*$ be as in Theorem~\ref{Thm:EllEquiv}~\ref{Thm:EllEquiv:Closed}.
Then, the following properties hold for the spaces listed in \eqref{Eq:Spaces}:
\begin{enumerate}[(i)]
\item \label{Spaces1}
they are all closed subspaces of $\Lp{2}(\Sigma;E)$,
\item \label{Spaces2}
$\chi^\pm(A_r)\Lp{2}(\Sigma;E)  = V_\pm \oplus W_\pm$ and $\chi^\pm(A_r^\ast)\Lp{2}(\Sigma;E)  = V_\pm^\ast \oplus W_\pm^\ast$,
\item \label{Spaces3}
$W_\pm, W_\pm^\ast$ are finite dimensional and contained in $\SobH{\frac{1}{2}}(\Sigma;E)$, 
\item \label{Spaces4}
$\chi^-(A_r)B$ and $\chi^+(A_r^\ast) \sym_0^\ast B^\ad$ are closed subspaces of $\SobH{\frac{1}{2}}(\Sigma;E)$.
\end{enumerate}
\end{proposition}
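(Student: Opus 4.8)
The plan is to obtain parts (iii) and (iv) simultaneously from one semi-Fredholm observation, and then deduce (i) and (ii) by bookkeeping. First note that, by Lemma~\ref{Lem:ClosedNorm}, the hypotheses in Theorem~\ref{Thm:EllEquiv}~\ref{Thm:EllEquiv:Closed} make $B$ a closed subspace of $\SobH{\frac{1}{2}}(\Sigma;E)$ with $\norm{\cdot}_{\SobH{\frac{1}{2}}}\simeq\norm{\cdot}_{\checkH(A)}$ on $B$; and since $\sym_0^\ast$ is a smooth bundle isomorphism (hence preserves all Sobolev scales) and $B^\ad\subset\SobH{\frac{1}{2}}(\Sigma;F)$, the space $\sym_0^\ast B^\ad$ is a closed subspace of $\SobH{\frac{1}{2}}(\Sigma;E)$ with $\norm{\cdot}_{\SobH{\frac{1}{2}}}\simeq\norm{\cdot}_{\hatH(A^\dagstar)}$. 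In particular $W_+\subset B\subset\SobH{\frac{1}{2}}(\Sigma;E)$ and $W_-^\ast\subset\sym_0^\ast B^\ad\subset\SobH{\frac{1}{2}}(\Sigma;E)$.

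\emph{The key step.} Consider the inclusion $\iota\colon B\hookrightarrow\checkH(A)$, which is an isometric embedding with closed range, hence an upper semi-Fredholm operator (finite-dimensional kernel, closed range), together with $K\colon B\to\checkH(A)$, $Kb:=\chi^+(A_r)b\in\SobH[+]{-\frac{1}{2}}(A_r)\subset\checkH(A)$. Since on $B$ the $\checkH(A)$-norm is equivalent to the $\SobH{\frac{1}{2}}$-norm, the map $b\mapsto\chi^+(A_r)b\in\SobH[+]{\frac{1}{2}}(A_r)$ is bounded, while $\SobH[+]{\frac{1}{2}}(A_r)\hookrightarrow\SobH[+]{-\frac{1}{2}}(A_r)$ is compact by Rellich; hence $K$ is compact. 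Writing elements of $\checkH(A)=\SobH[-]{\frac{1}{2}}(A_r)\oplus\SobH[+]{-\frac{1}{2}}(A_r)$ accordingly, $\Phi:=\iota-K$ sends $b\mapsto\chi^-(A_r)b$, and by stability of the class of upper semi-Fredholm operators under compact perturbations $\Phi$ is upper semi-Fredholm. Thus $W_+=\nul\Phi=B\cap\chi^+(A_r)\Lp{2}(\Sigma;E)$ is finite-dimensional and $\ran\Phi=\chi^-(A_r)B$ is closed in $\SobH[-]{\frac{1}{2}}(A_r)$, hence (as $\SobH[-]{\frac{1}{2}}(A_r)$ is closed in $\SobH{\frac{1}{2}}(\Sigma;E)$) in $\SobH{\frac{1}{2}}(\Sigma;E)$. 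The same argument applied to the inclusion $\sym_0^\ast B^\ad\hookrightarrow\hatH(A^\dagstar)$ and the compact operator $v\mapsto\chi^-(A_r^\ast)v\in\SobH[-]{-\frac{1}{2}}(A_r^\ast)$ shows that $W_-^\ast=\sym_0^\ast B^\ad\cap\chi^-(A_r^\ast)\Lp{2}(\Sigma;E)$ is finite-dimensional and $\chi^+(A_r^\ast)\sym_0^\ast B^\ad$ is closed in $\SobH{\frac{1}{2}}(\Sigma;E)$. This is exactly (iv) together with the finite-dimensionality of $W_+$ and $W_-^\ast$. The remaining finite-dimensionality in (iii) follows since $W_-=\chi^-(A_r)W_-^\ast$ and $W_+^\ast=\chi^+(A_r^\ast)W_+$ are images of finite-dimensional spaces under bounded operators, and all four $W$'s lie in $\SobH{\frac{1}{2}}(\Sigma;E)$ ($W_+,W_-^\ast$ by the first paragraph, $W_-,W_+^\ast$ because $\chi^\pm(A_r)$, $\chi^\pm(A_r^\ast)$ preserve $\SobH{\frac{1}{2}}$).

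For (i): the four $W$'s are finite-dimensional, hence closed; $V_+=\chi^+(A_r)\Lp{2}(\Sigma;E)\cap W_+^\perp$ and $V_-^\ast=\chi^-(A_r^\ast)\Lp{2}(\Sigma;E)\cap(W_-^\ast)^\perp$ are intersections of $\Lp{2}$-closed subspaces; and $V_-=\chi^-(A_r)V_-^\ast$, $V_+^\ast=\chi^+(A_r^\ast)V_+$ are images of closed subspaces of $\chi^-(A_r^\ast)\Lp{2}(\Sigma;E)$, respectively $\chi^+(A_r)\Lp{2}(\Sigma;E)$, under the isomorphisms of Lemma~\ref{Lem:AdjProj}~\ref{AdjProj4}, hence closed. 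For (ii): $W_+$ is a closed subspace of the Hilbert space $\chi^+(A_r)\Lp{2}(\Sigma;E)$ with $\Lp{2}$-orthogonal complement $V_+$ there, so $\chi^+(A_r)\Lp{2}(\Sigma;E)=V_+\oplus W_+$; likewise $\chi^-(A_r^\ast)\Lp{2}(\Sigma;E)=V_-^\ast\oplus W_-^\ast$. Applying the isomorphism $\chi^-(A_r)|_{\chi^-(A_r^\ast)\Lp{2}(\Sigma;E)}$ of Lemma~\ref{Lem:AdjProj}~\ref{AdjProj4} to the second splitting gives $\chi^-(A_r)\Lp{2}(\Sigma;E)=V_-\oplus W_-$, and applying $\chi^+(A_r^\ast)|_{\chi^+(A_r)\Lp{2}(\Sigma;E)}$ to the first gives $\chi^+(A_r^\ast)\Lp{2}(\Sigma;E)=V_+^\ast\oplus W_+^\ast$.

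The main obstacle is the closedness assertion in (iv): factoring $\chi^-(A_r)|_B$ through $B/W_+$ is not enough, because the induced injection need not be bounded below, and the natural duality reformulations (e.g.\ that $B+\SobH[+]{\frac{1}{2}}(A_r)$ be closed in $\SobH{\frac{1}{2}}(\Sigma;E)$, or that certain annihilators be finite-dimensional) are all circular. The point that unlocks it is that $\chi^-(A_r)$ differs from the isometric inclusion $B\hookrightarrow\checkH(A)$ by a genuinely compact operator — precisely because the $\chi^+$-summand of $\checkH(A)$ carries the weaker norm $\SobH{-\frac{1}{2}}$ — so the situation lies in the stable range of semi-Fredholm theory.
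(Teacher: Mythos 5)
Your argument is correct and is essentially the paper's proof: the decisive ingredient in both is the compactness of $\chi^+(A_r)|_B$, coming from the Rellich embedding $\SobH{\frac12}\embed\SobH{-\frac12}$ combined with the norm equivalence $\norm{\cdot}_{\SobH{\frac12}}\simeq\norm{\cdot}_{\checkH(A)}$ on $B$ (Lemma~\ref{Lem:ClosedNorm}), and the bookkeeping for parts~\ref{Spaces1}--\ref{Spaces2} via orthogonal complements and the isomorphisms of Lemma~\ref{Lem:AdjProj}~\ref{AdjProj4} is the same. The only difference is presentational: you invoke stability of the upper semi-Fredholm class under compact perturbation applied to $\iota-K$, while the paper applies Proposition~\ref{Prop:HormPeet} (Peetre's lemma) directly to $L=\chi^-(A_r)|_B$ with $K=\chi^+(A_r)|_B$ — the two abstract lemmas are interchangeable here.
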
 
\begin{proof}
We first prove \ref{Spaces3} and \ref{Spaces4}. 
For $u \in B$, 
$$\norm{u}_{\SobH{\frac{1}{2}}} \simeq \norm{u}_{\checkH(A)} \simeq \norm{\chi^{-}(A_r) u}_{\SobH{\frac{1}{2}}} + \norm{\chi^{+}(A_r) u}_{\SobH{-\frac{1}{2}}}.$$
The first equivalence follows from Lemma~\ref{Lem:ClosedNorm} by the assumption that $B$ is a boundary condition (i.e. $B \subset \checkH(A_r)$ is closed) and since $B \subset \SobH{\frac{1}{2}}(\Sigma;E)$.

Set $X = B$ with norm $\norm{\cdot}_X = \norm{\cdot}_{\SobH{\frac{1}{2}}}$ and so we have that this is a Banach space by Lemma~\ref{Lem:ClosedNorm}.
Let $Y = \chi^{-}(A_r) \SobH{\frac{1}{2}}(\Sigma;E)$ and $Z = \chi^+(A_r)\SobH{-\frac{1}{2}}(\Sigma;E)$. 
Each of these are Banach spaces.
Moreover, $\chi^{+}(A_r)|_B: B \to \SobH{\frac{1}{2}}(\Sigma;E) \embed \SobH{-\frac{1}{2}}(\Sigma;E)$ is a compact map since the latter embedding is compact. 
By Proposition~\ref{Prop:HormPeet}, we obtain that $\chi^{-}(A_r) \rest{B}$ has closed range and finite dimensional kernel.
Now, one easily sees that $\nul(\chi^{-}(A_r)|_B) = B \intersect \chi^{+}(A_r) \Lp{2}(\Sigma;E)$.

To obtain the corresponding conclusion for $W_-^\ast$, we note that for $v \in \sym_0^\ast B^\ad$,
$$ \norm{v}_{\SobH{\frac{1}{2}}} \simeq \norm{v}_{\hatH(A^\dagstar)}  \simeq \norm{\chi^{-}(A_r^\ast) v}_{\SobH{-\frac{1}{2}}} + \norm{\chi^{+}(A_r^\ast) v}_{\SobH{\frac{1}{2}}}.$$
By invoking Proposition~\ref{Prop:HormPeet} with $X = \sym_0^\ast B^\ad$ with norm $\norm{\cdot}_{X} = \norm{\cdot}_{\SobH{\frac{1}{2}}}$, $Y = \chi^+(A_r^\ast)\SobH{\frac{1}{2}}(\Sigma;E)$, and $Z = \chi^-(A_r^\ast) \SobH{-\frac{1}{2}}(\Sigma;E)$, we obtain that $\chi^+(A_r^*)|_{\sym_0^\ast B^\ad}$ has closed range and finite dimensional kernel.

This proves \ref{Spaces4} and that $W_{+}$ and $W_{-}^\ast$ are finite dimensional and contained in $\SobH{\frac12}(\Sigma;E)$.
The latter fact for $W_{-}$ and $W_{+}^\ast$ is simply from  $\chi^{-}(A_r)$ and $\chi^+(A_r^\ast)$ both being pseudo-differential operators of order zero.

Observing that $V_+$ is the orthogonal complement of $W_+$ in $\chi^+(A_r)\Lp{2}(\Sigma;E)$ shows \ref{Spaces1}--\ref{Spaces3} for $V_+$ and $W_+$ and similarly for $V_-^*$ and $W_-^*$.
Lemma~\ref{Lem:AdjProj}~\ref{AdjProj4} then implies these assertions for the remaining spaces.
\end{proof} 

Recall from before that we use the notation $X^{\perp, Y}$ to mean the annihilator of $X$ in the space $Y$.
\begin{lem}
\label{Lem:Gspaces}
Let $B$ and $B^*$ be as in Theorem~\ref{Thm:EllEquiv}~\ref{Thm:EllEquiv:Closed}.
Then the spaces 
$$
\chi^{-}(A_r) B =  V_-^{\frac{1}{2}}
\quad\text{and}\quad 
\chi^+(A_r^\ast) \sym_0^\ast B^\ad = (V_+^\ast)^{\frac{1}{2}}.
$$
\end{lem}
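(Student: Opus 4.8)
The plan is to prove the two equalities by a symmetric argument, so I will focus on $\chi^-(A_r)B = V_-^{\frac12}$; the identity $\chi^+(A_r^\ast)\sym_0^\ast B^\ad = (V_+^\ast)^{\frac12}$ follows verbatim with $A_r$ replaced by $A_r^\ast$ and $B$ by $\sym_0^\ast B^\ad$, using that $B^\ad\subset\SobH{\frac12}(\Sigma;F)$ by the hypothesis \ref{Thm:EllEquiv:Closed} (which is exactly what is needed to run the $B^\ad$-side of Proposition~\ref{Prop:Spaces}). The first task is to identify $\chi^-(A_r)B$ as a subspace of $\SobH{\frac12}(\Sigma;E)$: by Proposition~\ref{Prop:Spaces}~\ref{Spaces4} it is a \emph{closed} subspace of $\SobH{\frac12}(\Sigma;E)$, hence a closed subspace of $V_-$ (it visibly lands in $\chi^-(A_r)\Lp{2}(\Sigma;E)=V_-\oplus W_-$, and I must check it lands in the $V_-$ summand — see below). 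So $\chi^-(A_r)B \subset V_-\cap\SobH{\frac12}(\Sigma;E) = V_-^{\frac12}$, giving one inclusion once the landing-in-$V_-$ point is settled.

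The containment $\chi^-(A_r)B\subset V_-$ is where the definitions \eqref{Eq:Spaces} must be unwound. Recall $W_-^\ast = \chi^-(A_r^\ast)\Lp{2}(\Sigma;E)\cap\sym_0^\ast B^\ad$ and $V_-^\ast = \chi^-(A_r^\ast)\Lp{2}(\Sigma;E)\cap(W_-^\ast)^\perp$, while $V_- = \chi^-(A_r)V_-^\ast$. I would argue: given $u\in B$, the element $\chi^-(A_r)u$ pairs to zero (under $\inprod{\cdot,\cdot}$) against all of $W_-^\ast$. Indeed, if $w\in W_-^\ast\subset\sym_0^\ast B^\ad$, write $w=\sym_0^\ast v$ with $v\in B^\ad$; then $\inprod{\chi^-(A_r)u, \sym_0^\ast v} = \inprod{\chi^-(A_r)u, v}_{\Lp{2}}$ via $\sym_0$, and using $\chi^-(A_r)^\ast=\chi^-(A_r^\ast)$ (Lemma~\ref{Lem:AdjProj}~\ref{AdjProj3}) together with $w=\chi^-(A_r^\ast)w$, this collapses to $\inprod{u, \sym_0^{-1}(\text{stuff})}$-type expression that is, modulo bookkeeping with the four projectors $P_\pm,Q_\pm$ of Lemma~\ref{Lem:AdjDecomp}, equal to $\inprod{\sym_0 u, v}=0$ because $v\in B^\ad$. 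This shows $\chi^-(A_r)u\in(W_-^\ast)^{\perp}$ in the appropriate pairing, and combined with the splitting $\chi^-(A_r)\Lp{2}=V_-\oplus W_-$ where $V_-=\chi^-(A_r)V_-^\ast$ and $W_-=\chi^-(A_r)W_-^\ast$, one concludes $\chi^-(A_r)u\in V_-$.

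For the reverse inclusion $V_-^{\frac12}\subset\chi^-(A_r)B$, I would use a dimension/annihilator count rather than an explicit lift. Both sides are closed subspaces of $\SobH{\frac12}(\Sigma;E)$ (the left by Lemma~\ref{Lem:Vprop}~\ref{Lem:Vprop1}, the right by Proposition~\ref{Prop:Spaces}~\ref{Spaces4}), and we have just shown $\chi^-(A_r)B\subset V_-^{\frac12}$. To get equality it suffices to show the quotient $V_-^{\frac12}/\chi^-(A_r)B$ is zero. Here I would invoke the perfect pairing between $\checkH(A)$ and $\checkH(\tilde A)$ (Lemma~\ref{Lem:HomFieldPairing}) and Proposition~\ref{Prop:B0} relating $B^\ad$ to the annihilator $B^{\perp,\SobH{-\frac12}}$: unwinding \eqref{eq:B*} (already proved, in the implication \ref{Thm:EllEquiv:Admiss2}$\Rightarrow$\ref{Thm:EllEquiv:Ellbdy}) shows $\sym_0^\ast B^\ad = W_-^\ast\oplus\{u-g^\ast u\}$ would force $\chi^+(A_r^\ast)\sym_0^\ast B^\ad=(V_+^\ast)^{\frac12}$, and by the symmetric version of the inclusion already established, an element of $V_-^{\frac12}$ not in $\chi^-(A_r)B$ would produce a functional on $B^\ad$ (via $\sym_0$ and the pairing) vanishing on all of $B^\ad$ yet nonzero — contradicting the perfectness of the pairing and the defining relation $B^\ad=\{v:\inprod{\sym_0 u,v}=0\ \forall u\in B\}$ together with the fact that $B$ is closed in $\checkH(A)$ (so $B$ is its own double annihilator). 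The main obstacle I anticipate is the careful bookkeeping in this last step: keeping straight which pairing ($\Lp{2}$, or $\SobH{\frac12}$ vs.\ $\SobH{-\frac12}$, or $\checkH(A)$ vs.\ $\checkH(\tilde A)$) is in play at each stage, and verifying that the four-fold projector decomposition of Lemma~\ref{Lem:AdjDecomp} is compatible with the $\SobH{\frac12}$-restricted splitting of Lemma~\ref{Lem:Vprop}, so that "annihilator of $\chi^-(A_r)B$ in the $V_-$-pairing equals the image of $B^\ad$" can be asserted cleanly. Once that compatibility is in hand, the equality drops out from reflexivity and the perfectness of the boundary pairing.
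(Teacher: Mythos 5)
There is a genuine gap in the reverse inclusion $V_-^{\frac12}\subset\chi^-(A_r)B$, and it is one of circularity. You appeal to equation~\eqref{eq:B*} ``already proved in the implication \ref{Thm:EllEquiv:Admiss2}$\Rightarrow$\ref{Thm:EllEquiv:Ellbdy}'' and to the decomposition $\sym_0^\ast B^\ad = W_-^\ast\oplus\{u-g^\ast u\}$. But those facts are consequences of hypothesis~\ref{Thm:EllEquiv:Admiss2} (the existence of an elliptic decomposition $B=W_+\oplus\{v+gv\}$). Lemma~\ref{Lem:Gspaces} is a step on the way to proving \ref{Thm:EllEquiv:Closed}$\Rightarrow$\ref{Thm:EllEquiv:Admiss}; at this point in the argument one only has \ref{Thm:EllEquiv:Closed} ($B$ closed in $\checkH(A)$ and $B^\ad\subset\SobH{\frac12}$), and the map $g$ does not yet exist. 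In fact the phrase ``would force $\chi^+(A_r^\ast)\sym_0^\ast B^\ad=(V_+^\ast)^{\frac12}$'' is literally the second conclusion of the lemma you are trying to prove.

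Beyond the circularity, the heuristic ``an element of $V_-^{\frac12}$ not in $\chi^-(A_r)B$ would produce a functional vanishing on all of $B^\ad$ yet nonzero'' misidentifies the annihilator. A functional separating $v\in V_-^{\frac12}$ from $\chi^-(A_r)B$ lives in $\chi^-(A_r^\ast)\SobH{-\frac12}(\Sigma;E)$ (the dual of $\chi^-(A_r)\SobH{\frac12}$ by Lemma~\ref{Lem:DualSob}) and vanishes on $\chi^-(A_r)B$, not on $B^\ad$. The missing ingredient is to show that any such functional automatically belongs to $W_-^\ast$, whence it also kills $V_-^{\frac12}$, a contradiction. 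This is exactly the paper's step~(a), namely $W_-^\ast = \chi^{-}(A_r^\ast)\SobH{-\frac12}(\Sigma;E)\cap B^{\perp,\SobH{-\frac12}}$, which uses $\chi^-(A_r^\ast)\SobH{-\frac12}\subset \hatH(A^\dagstar)$, Proposition~\ref{Prop:B0}, and the standing hypothesis $\sym_0^\ast B^\ad\subset\SobH{\frac12}$. Once that identity is in place the paper runs a double-annihilator argument (using closedness of $\chi^-(A_r)B$ from Proposition~\ref{Prop:Spaces}~\ref{Spaces4}) and then unwinds $(W_-^\ast)^{\perp,\chi^-(A_r)\SobH{\frac12}}$ to $V_-^{\frac12}$ through two further $\Lp{2}$-bookkeeping steps. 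Your forward inclusion $\chi^-(A_r)B\subset V_-^{\frac12}$ is essentially correct (once the pairing computation is cleaned up: $\inprod{\chi^-(A_r)u,w}=\inprod{u,\chi^-(A_r^\ast)w}=\inprod{u,w}=0$ for $w\in W_-^\ast\subset\sym_0^\ast B^\ad$, so $\chi^-(A_r)u\in(W_-^\ast)^{\perp,\Lp{2}}\cap\chi^-(A_r)\Lp{2}=V_-$), but the converse requires the annihilator identification, not an appeal to \eqref{eq:B*}.
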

\begin{proof} 
Let $\tilde{V}_-^\frac{1}{2} = \chi^-(A_r)B$.
We prove the assertion in the following steps.
\begin{enumerate}[a)]
\item 
\label{Lem:Gspacea}
\emph{Claim:} $W_-^\ast = \chi^{-}(A_r)^\ast \SobH{-\frac{1}{2}}(\Sigma;E) \intersect B^{\perp, \SobH{-\frac12}}$. \\

The containment $\subset$ is clear by Proposition~\ref{Prop:B0}, so we prove the reverse containment. 
Fix $u \in \chi^-(A_r^\ast) \SobH{-\frac{1}{2}}(\Sigma;E) \intersect B^{\perp, \SobH{-\frac12}}$.
Then $u \in \hatH(A^\dagstar)$ and hence $u \in B^{\perp, \SobH{-\frac12}} \intersect \hatH(A^\dagstar) = \sym_0^\ast B^\ad$.
By assumption, $\sym_0^\ast B^\ad \subset \SobH{\frac{1}{2}}(\Sigma;E)$ and therefore, $\chi^{-}(A_r)^\ast \SobH{-\frac{1}{2}}(\Sigma;E) \intersect B^{\perp, \SobH{-\frac12}} \subset \chi^{-}(A_r^\ast) \Lp{2}(\Sigma;E) \intersect \sym_0^\ast B^\ad = W_-^\ast$.
\hfill\checkmark
\item 
\label{Lem:Gspaceb}
\emph{Claim:} $W_-^\ast = (\tilde{V}_-^{\frac{1}{2}})^{\perp, \chi^{-}(A_r^\ast) \SobH{-\frac{1}{2}}}$. \\

Using \ref{Lem:Gspacea} note that:
\usetagform{simple}
\begin{align*} 
w \in W_-^\ast 
	&\iff w \in \chi^-(A_r^\ast) \SobH{-\frac{1}{2}}(\Sigma;E)\quad \text{and}\quad \inprod{w, b} = 0,\ \forall b \in B \notag\\ 
	&\iff w \in \chi^-(A_r^\ast) \SobH{-\frac{1}{2}}(\Sigma;E)\quad \text{and}\quad \inprod{\chi^{-}(A_r^\ast) w, b} = 0,\ \forall b \in B \notag\\ 
	&\iff w \in \chi^-(A_r^\ast) \SobH{-\frac{1}{2}}(\Sigma;E)\quad \text{and}\quad \inprod{w, \chi^-(A_r) b} = 0,\ \forall b \in B \notag\\ 
	&\iff w \in \chi^-(A_r^\ast) \SobH{-\frac{1}{2}}(\Sigma;E)\quad \text{and}\quad \inprod{w, v} = 0,\ \forall v \in \tilde{V}_-^\frac{1}{2} \tag{\checkmark}
\end{align*} 
\usetagform{default}
 
\item 
\label{Lem:Gspacec}
\emph{Claim:} $\tilde{V}_-^\frac{1}{2} = (W_-^\ast)^{\perp, \chi^{-}(A_r)\SobH{\frac{1}{2}}}.$\\

By Lemma~\ref{Lem:DualSob}, we have that $\chi^{-}(A_r^\ast) \SobH{-\frac{1}{2}}(\Sigma;E) \cong (\chi^{-}(A_r)\SobH{\frac{1}{2}}(\Sigma;E))^\ast$, and therefore by \ref{Lem:Gspaceb} and Proposition~\ref{Prop:Spaces}~\ref{Spaces4}: 
\usetagform{simple}
\begin{align*}
(W_-^\ast)^{\perp, \chi^{-}(A_r)\SobH{\frac{1}{2}}}  
&= 
\cbrac{(\tilde{V}_-^{\frac{1}{2}})^{\perp, \chi^{-}(A_r^\ast) \SobH{-\frac{1}{2}}}}^{\perp, \chi^{-}(A_r)\SobH{\frac{1}{2}}} 
= 
\close{(\tilde{V}_-^{\frac{1}{2}})}^{\chi^{-}(A_r)\SobH{\frac{1}{2}}}  
= 
\tilde{V}_-^{\frac{1}{2}}.
\tag{\checkmark}
\end{align*}
\usetagform{default}

\item 
\label{Lem:Gspaced}
\emph{Claim:} $(W_-^\ast)^{\perp, \chi^{-}(A_r) \SobH{\frac{1}{2}}} = (W_-^\ast)^{\perp, \Lp{2}}\intersect \chi^-(A_r) \Lp{2}(\Sigma;E) \intersect \SobH{\frac{1}{2}}(\Sigma;E) $.\\

This is clear since $\chi^-(A_r) \Lp{2}(\Sigma;E) \intersect\SobH{\frac{1}{2}}(\Sigma;E) =\chi^-(A_r) \SobH{\frac12}(\Sigma;E)$.
\hfill\checkmark
\item 
\label{Lem:Gspacee}
\emph{Claim:} $(W_-^\ast)^{\perp, \Lp{2}}  \intersect \chi^-(A_r)\Lp{2}(\Sigma;E) = \chi^-(A_r) \bbrac{(W_-^\ast)^{\perp, \Lp{2}} \intersect  \chi^-(A_r^\ast)\Lp{2}(\Sigma;E) }.$\\

Let $u \in \chi^-(A_r) \bbrac{(W_-^\ast)^{\perp, \Lp{2}} \intersect  \chi^-(A_r^\ast)\Lp{2}(\Sigma;E)}$. 
That is, $u = \chi^-(A_r) u'$, where $u' \in (W_-^\ast)^{\perp, \Lp{2}} \intersect  \chi^-(A_r^\ast)\Lp{2}(\Sigma;E)$.
Then, for $w \in W_-^\ast$,
$$\inprod{u,w} = \inprod{\chi^-(A_r)u',w} = \inprod{u',\chi^-(A_r^*)w} = \inprod{u', w} = 0,$$
which shows that $u \in (W_-^\ast)^{\perp, \Lp{2}}  \intersect \chi^-(A_r)\Lp{2}(\Sigma;E)$.

For the reverse inclusion, let $u \in (W_-^\ast)^{\perp, \Lp{2}}  \intersect \chi^-(A_r)\Lp{2}(\Sigma;E)$.
Then by Lemma~\ref{Lem:AdjProj}~\ref{AdjProj4}, we have some $u^\ast \in \chi^-(A_r^\ast)\Lp{2}(\Sigma;E)$ such that $u = \chi^-(A_r) u^\ast$.
Therefore, for $w \in W_{-}^\ast$
$$0 = \inprod{u, w} = \inprod{\chi^{-}(A_r) u^\ast, w} = \inprod{u^\ast, \chi^{-}(A_r^*) w} = \inprod{u^\ast, w}$$
and therefore, $u^\ast \in \chi^{-}(A_r^\ast)\Lp{2}(\Sigma;E) \intersect (W_{-}^\ast)^{\perp, \Lp{2}}$ and $u = \chi^-(A_r)u^\ast$.
\hfill\checkmark
\item
\label{Lem:Gspacef}
\emph{Claim:} $\chi^-(A_r)B=V_- \intersect \SobH{\frac{1}{2}}(\Sigma;E)$. 
\begin{align*}
\chi^-(A_r)B 
&= 
\tilde{V}_-^\frac{1}{2}  \\
&= 
(W_-^\ast)^{\perp, \chi^{-}(A_r)\SobH{\frac{1}{2}}} \\
&= 
(W_-^\ast)^{\perp, \Lp{2}} \intersect \chi^-(A_r)\Lp{2}(\Sigma;E) \intersect \SobH{\frac{1}{2}}(\Sigma;E) \\ 
&= 
\chi^-(A_r) \bbrac{(W_-^\ast)^{\perp, \Lp{2}} \intersect  \chi^-(A_r^\ast)\Lp{2}(\Sigma;E) } \intersect \SobH{\frac{1}{2}}(\Sigma;E)\\
&= 
\chi^-(A_r) V_-^* \intersect \SobH{\frac{1}{2}}(\Sigma;E)\\
&= 
V_- \intersect \SobH{\frac{1}{2}}(\Sigma;E),
\end{align*}
where the second is \ref{Lem:Gspacec}, the third is \ref{Lem:Gspaced}, and the fourth is \ref{Lem:Gspacee}.
\hfill\checkmark
\end{enumerate}
Since $\chi^-(A_r)B$ is closed in $\SobH{\frac12}(\Sigma;E)$ by Proposition~\ref{Prop:Spaces}~\ref{Spaces4}, we have that $V_- \intersect \SobH{\frac{1}{2}}(\Sigma;E) = V_-^{\frac12}$.

The proof of $\chi^+(A_r^\ast) \sym_0^\ast B^\ad = (V_+^\ast)^{\frac{1}{2}}$ is identical, with $W_+$ in place of $W_-^\ast$, $\chi^+(A_r^\ast)$ in place of $\chi^-(A_r)$ and $\chi^+(A_r)$ in place of $\chi^-(A_r^\ast)$.
\end{proof}

\begin{lem} 
\label{Lem:DualAnn}
Let $B$ and $B^*$ be as in Theorem~\ref{Thm:EllEquiv}~\ref{Thm:EllEquiv:Closed}.
Then the following hold: 
\begin{enumerate}[(i)] 
\item \label{DualAnn0}
$(V_\pm^\ast)^{-\frac{1}{2}} = W_{\pm}^{\perp, \chi^{\pm}(A_r^\ast)\SobH{-\frac{1}{2}}}$,
\item \label{DualAnn1}
$V_\pm^\ast = (V_\pm^\ast)^{-\frac{1}{2}} \intersect \Lp{2}(\Sigma;E)$, 
\item \label{DualAnn3}
$\chi^{\pm}(A_r^\ast)\SobH{-\frac{1}{2}}(\Sigma;E) = (V_\pm^\ast)^{-\frac{1}{2}} \oplus W_\pm^\ast$, and
\item \label{DualAnn4}
the $\Lp{2}$-inner product induces a perfect pairing $\inprod{\cdot,\cdot}: (V_\pm^\ast)^{-\frac{1}{2}}  \times V_\pm^\frac{1}{2} \to \C$.
\end{enumerate} 
\end{lem}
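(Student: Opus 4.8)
The plan is to deduce all four statements from the duality of Lemma~\ref{Lem:DualSob}, which identifies $\SobH[\pm]{-\frac12}(A_r^\ast)=\chi^{\pm}(A_r^\ast)\SobH{-\frac12}(\Sigma;E)$ with the dual of $\SobH[\pm]{\frac12}(A_r)=\chi^{\pm}(A_r)\SobH{\frac12}(\Sigma;E)$ under the $\Lp{2}$-inner product, together with the structural information of Proposition~\ref{Prop:Spaces}, the splitting $\SobH[\pm]{\frac12}(A_r)=V_\pm^{\frac12}\oplus W_\pm$ of Lemma~\ref{Lem:Vprop}, and the identities $\chi^{\pm}(A_r)^\ast=\chi^{\pm}(A_r^\ast)$ and the isomorphisms of Lemma~\ref{Lem:AdjProj}. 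I will use throughout that $\chi^{\pm}(A_r^\ast)\Lp{2}(\Sigma;E)=V_\pm^\ast\oplus W_\pm^\ast$ with $W_\pm,W_\pm^\ast\subset\SobH{\frac12}(\Sigma;E)$ finite dimensional and $\dim W_\pm=\dim W_\pm^\ast$ (the latter by \eqref{Eq:Spaces} and Lemma~\ref{Lem:AdjProj}~\ref{AdjProj4}), and recall that $(V_\pm^\ast)^{-\frac12}$ denotes the closure of $V_\pm^\ast$ in $\SobH[\pm]{-\frac12}(A_r^\ast)$.

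I would first prove \ref{DualAnn0} and \ref{DualAnn3} together. From the defining relations in \eqref{Eq:Spaces} one checks in one line that $V_\pm^\ast\perp W_\pm$ in $\Lp{2}(\Sigma;E)$: for instance, for $v^\ast\in V_-^\ast$ and $w=\chi^-(A_r)w^\ast\in W_-$ with $w^\ast\in W_-^\ast$, $\inprod{v^\ast,w}=\inprod{\chi^-(A_r^\ast)v^\ast,w^\ast}=\inprod{v^\ast,w^\ast}=0$ since $v^\ast\in\chi^-(A_r^\ast)\Lp{2}(\Sigma;E)$ and $v^\ast\perp W_-^\ast$, and the remaining three cases are analogous via Lemma~\ref{Lem:AdjProj}. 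Since the annihilator $W_\pm^{\perp,\SobH[\pm]{-\frac12}(A_r^\ast)}$ is closed, this gives $(V_\pm^\ast)^{-\frac12}\subset W_\pm^{\perp,\SobH[\pm]{-\frac12}(A_r^\ast)}$. For the reverse inclusion I compare codimensions: as $\chi^{\pm}(A_r^\ast)\Lp{2}(\Sigma;E)=V_\pm^\ast\oplus W_\pm^\ast$ is dense in $\SobH[\pm]{-\frac12}(A_r^\ast)$, we get $\SobH[\pm]{-\frac12}(A_r^\ast)=(V_\pm^\ast)^{-\frac12}+W_\pm^\ast$, so $(V_\pm^\ast)^{-\frac12}$ has codimension at most $\dim W_\pm^\ast=\dim W_\pm$ there; whereas $W_\pm$ is a finite dimensional subspace of $\SobH[\pm]{\frac12}(A_r)$ and the pairing with $\SobH[\pm]{-\frac12}(A_r^\ast)$ is perfect, so $W_\pm^{\perp,\SobH[\pm]{-\frac12}(A_r^\ast)}$ has codimension exactly $\dim W_\pm$. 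Together with the inclusion this forces equality, which is \ref{DualAnn0}; moreover the sum $\SobH[\pm]{-\frac12}(A_r^\ast)=(V_\pm^\ast)^{-\frac12}+W_\pm^\ast$ is direct, since $(V_\pm^\ast)^{-\frac12}\cap W_\pm^\ast=W_\pm^{\perp,\SobH[\pm]{-\frac12}(A_r^\ast)}\cap W_\pm^\ast=\{0\}$, the latter because $\inprod{\cdot,\cdot}$ restricts to a nondegenerate pairing on $W_\pm^\ast\times W_\pm$ (again immediate from \eqref{Eq:Spaces} and Lemma~\ref{Lem:AdjProj}). This gives \ref{DualAnn3}.

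Assertion~\ref{DualAnn1} then follows quickly: one inclusion is clear, and if $u\in(V_\pm^\ast)^{-\frac12}\cap\Lp{2}(\Sigma;E)$ then, $u$ being in the range of the order-zero projector $\chi^{\pm}(A_r^\ast)$ on $\SobH{-\frac12}(\Sigma;E)$ and also $\Lp{2}$, we have $u=\chi^{\pm}(A_r^\ast)u\in\chi^{\pm}(A_r^\ast)\Lp{2}(\Sigma;E)=V_\pm^\ast\oplus W_\pm^\ast$; writing $u=v^\ast+w^\ast$, the summand $w^\ast=u-v^\ast$ lies in $(V_\pm^\ast)^{-\frac12}\cap W_\pm^\ast=\{0\}$ by \ref{DualAnn3}, so $u=v^\ast\in V_\pm^\ast$. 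For \ref{DualAnn4} I use \ref{DualAnn0} to identify $(V_\pm^\ast)^{-\frac12}=W_\pm^{\perp,\SobH[\pm]{-\frac12}(A_r^\ast)}$, via the $\Lp{2}$-inner product, with the dual of the quotient $\SobH[\pm]{\frac12}(A_r)/W_\pm$, which the topological splitting $\SobH[\pm]{\frac12}(A_r)=V_\pm^{\frac12}\oplus W_\pm$ identifies with $V_\pm^{\frac12}$; tracing through these identifications shows that the $\Lp{2}$-inner product restricts to a bounded nondegenerate pairing $(V_\pm^\ast)^{-\frac12}\times V_\pm^{\frac12}\to\C$ presenting each space as the dual of the other, nondegeneracy on the $V_\pm^{\frac12}$ side using perfectness of the pairing $\SobH[\pm]{-\frac12}(A_r^\ast)\times\SobH[\pm]{\frac12}(A_r)\to\C$ together with $V_\pm^{\frac12}\cap W_\pm=\{0\}$.

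The step I expect to be the main obstacle is the bookkeeping created by the asymmetry in \eqref{Eq:Spaces}: the orthogonal complement defining $V_-^\ast$ is taken inside $\chi^-(A_r^\ast)\Lp{2}(\Sigma;E)$, while that defining $V_+$ is taken inside $\chi^+(A_r)\Lp{2}(\Sigma;E)$, so the starred decomposition $\chi^{\pm}(A_r^\ast)\Lp{2}(\Sigma;E)=V_\pm^\ast\oplus W_\pm^\ast$ is orthogonal for only one sign. Every orthogonality statement and every dimension count above must therefore be run through the intertwining relations $W_-=\chi^-(A_r)W_-^\ast$, $W_+^\ast=\chi^+(A_r^\ast)W_+$ and the isomorphisms of Lemma~\ref{Lem:AdjProj}~\ref{AdjProj4}; once this is handled, the only non-formal ingredient is the codimension argument in \ref{DualAnn0}, which is exactly where passing from $\Lp{2}(\Sigma;E)$ to $\SobH{-\frac12}(\Sigma;E)$ enters.
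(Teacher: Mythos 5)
Your proof is correct, and it takes a genuinely different (and, for \ref{DualAnn0}, more symmetric) route than the paper's. The paper treats the minus case of \ref{DualAnn0} as ``immediate from the definition of $V_-^\ast$'' and handles the plus case by introducing an auxiliary space $(\tilde V_+^\ast)^{-\frac12}:=W_+^{\perp,\chi^+(A_r^\ast)\SobH{-\frac12}}$ and mirroring step~\ref{Lem:Gspacee} of the proof of Lemma~\ref{Lem:Gspaces} to first show $V_+^\ast=(\tilde V_+^\ast)^{-\frac12}\cap\Lp{2}$, then closing up. You instead run a single uniform argument for both signs: the orthogonality $V_\pm^\ast\perp W_\pm$ (which you rightly verify via $\chi^\pm(A_r)^\ast=\chi^\pm(A_r^\ast)$; the asymmetric definitions in \eqref{Eq:Spaces} make this a genuine step and not mere notation) gives one inclusion, and a codimension count in $\chi^\pm(A_r^\ast)\SobH{-\frac12}(\Sigma;E)$ --- using $\dim W_\pm^\ast=\dim W_\pm$, the perfect pairing of Lemma~\ref{Lem:DualSob}, and the density of $V_\pm^\ast\oplus W_\pm^\ast$ --- forces equality. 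This buys you a cleaner, sign-independent proof and produces \ref{DualAnn3} in the same stroke (via the nondegeneracy of the pairing $W_\pm^\ast\times W_\pm\to\C$), where the paper derives \ref{DualAnn3} separately from Proposition~\ref{Prop:Spaces}. You also deduce \ref{DualAnn1} from the direct-sum decomposition \ref{DualAnn3} rather than from Proposition~\ref{Prop:Spaces}~\ref{Spaces1} as the paper does; the paper's justification there (``holds by definition'') is terse, and your detour is arguably more transparent. Your treatment of \ref{DualAnn4} matches the paper's: once \ref{DualAnn0} is known, the perfect pairing of Lemma~\ref{Lem:DualSob} passes to the quotient $\SobH[\pm]{\frac12}(A_r)/W_\pm\cong V_\pm^{\frac12}$.
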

\begin{proof}
It is immediate that $(V_-^\ast)^{-\frac{1}{2}} = W_{-}^{\perp, \chi^{-}(A_r^\ast)\SobH{-\frac{1}{2}}}$ from the definition of $V_-^\ast$.
For the other case, let $(\tilde{V}_+^\ast)^{-\frac12} := W_{+}^{\perp, \chi^{+}(A_r^\ast)\SobH{-\frac{1}{2}}}$.
On mirroring the argument of \ref{Lem:Gspacee} in the proof of Lemma~\ref{Lem:Gspaces} replacing  $\chi^-(A_r)$ by $\chi^+(A_r^\ast)$, $W_-^\ast$ by $W_+$ and $\chi^-(A_r^\ast)$ by $\chi^+(A_r)$, we obtain that  $V_+^\ast = (\tilde{V}_+^\ast)^{-\frac12} \intersect \Lp{2}(\Sigma;E)$.
By density of $V_+^\ast$ in $(V_+^\ast)^{-\frac12}$, we obtain that $(V_+^\ast)^{-\frac12} =  W_{+}^{\perp, \chi^{+}(A_r^\ast)\SobH{-\frac{1}{2}}}$.

By Proposition~\ref{Prop:Spaces}~\ref{Spaces1}, $V_\pm^*$ is a closed subspace of $\Lp{2}(\Sigma;E)$, hence \ref{DualAnn1} holds by definition.

From Proposition~\ref{Prop:Spaces}~\ref{Spaces2} we have $V_\pm^*\oplus W_\pm^* = \chi^{\pm}(A_r^\ast)\Lp{2}(\Sigma;E)$.
Since $W_\pm^*$ is finite-dimensional by Proposition~\ref{Prop:Spaces}~\ref{Spaces3}, completion in $\SobH{-\frac12}(\Sigma;E)$ yields \ref{DualAnn3}.

Recall by Lemma~\ref{Lem:DualSob} that the $\Lp{2}$-inner product induces a perfect pairing $\inprod{\cdot,\cdot}: \chi^\pm(A_r^\ast)\SobH{-\frac{1}{2}}(\Sigma;E) \times \chi^\pm(A_r)\SobH{\frac{1}{2}}(\Sigma;E) \to \C$.
Write $\chi^{\pm}(A_r)\SobH{\frac{1}{2}}(\Sigma;E) = (V_\pm)^{\frac12} \oplus W_\pm$ and $\chi^{\pm}(A_r^\ast) \SobH{-\frac{1}{2}}(\Sigma;E) = (V_\pm^\ast)^{-\frac{1}{2}} \oplus W_\pm^\ast$.
Now \ref{DualAnn0} implies \ref{DualAnn4}.
\end{proof} 

\begin{proof}[Proof of \ref{Thm:EllEquiv:Closed} $\implies$ \ref{Thm:EllEquiv:Admiss} in Theorem~\ref{Thm:EllEquiv}]
We prove that~\ref{Thm:EllEquiv:Closed}  implies \ref{Thm:EllEquiv:Admiss}, and consider the spaces in \eqref{Eq:Spaces}.
By Proposition~\ref{Prop:Spaces}, they satisfy \ref{Def:EllBC:MutualComp} and \ref{Def:EllBC:FiniteDim} in Definition \ref{Def:EllBC}.
Moreover, by Lemma~\ref{Lem:Gspaces}, $V_-^\frac{1}{2} = \chi^{-}(A_r)B$ and $(V_+^\ast)^\frac{1}{2} = \chi^+(A_r^\ast) \sym_0^\ast B^\ad$.
Define the maps 
\begin{align*}
X_- &= \chi^-(A_r) \rest{B\intersect W_+^\perp}: B\intersect W_+^\perp  \to \chi^-(A_r)B,\ \text{and}\\ 
X_+^\ast &= \chi^+(A_r^\ast) \rest{\sym_0^\ast B^\ad \intersect (W_-^\ast)^\perp}: \sym_0^\ast B^\ad \intersect (W_-^\ast)^\perp  \to \chi^+(A_r^\ast)\sym_0^\ast B^\ad.
\end{align*}
We claim that both these maps are bounded and invertible isomorphisms onto their ranges.
For that, let $x \in \chi^-(A_r)B$, that is, $x = \chi^-(A_r)b$ for some $b \in B$.
Then, 
$$
x 
= 
\chi^-(A_r)b
=
\chi^-(A_r)(P_{W_+^\perp, W_+} b + P_{W_+, W_+^\perp} b)
=
\chi^{-}(A_r)  P_{W_+^\perp, W_+} b
$$
since $\chi^{-}(A_r) \circ P_{W_+, W_+^\perp}=0$.
Now, 
$$
P_{W_+^\perp, W_+} b
=
b - P_{W_+, W_+^\perp} b
\in B \cap W_+^\perp
$$
because $W_+\subset B$.
This shows that $X_-$ is surjective.

To show that it is injective, let $u \in B\intersect W_+^\perp$ such that  $0 = X_- u$.
That is, $u \in \nul(\chi^-(A_r)) = \chi^+(A_r) \Lp{2}(\Sigma;E) $ and putting this together, we have that
$$ u \in \chi^+(A_r)\Lp{2}(\Sigma;E) \intersect B \intersect W_+^\perp = W_+ \intersect W_+^\perp = \set{0}.$$
Thus, we have a continuous bijection between two closed subspaces and so by the open mapping theorem, the inverse is also continuous.
The assertion for $X_+^\ast$ follows by a similar argument.

Next, put
\begin{align*} 
&P_+ := P_{V_+,W_-\oplus V_-\oplus W_+}, 	&&P_-^* :=P_{V_-^*,W_-^*\oplus V_+^*\oplus W_+^*}, \\
&Q_+ := P_{W_+, W_- \oplus V_- \oplus V_+}, 	&&Q_-^*	:=P_{W_-^* , V_-^* \oplus V_+^*\oplus W_+^*}.
\end{align*} 
Observe that $P_++Q_+=\chi^+(A_r)$ and $P_-^*+Q_-^*=\chi^-(A_r^*)$.
We define the maps $g_0: \chi^-(A_r)B \to V_+$ and $h_0: \chi^{+}(A_r^\ast) \sym_0^\ast B^\ad \to V_-^\ast$ by
$$ 
g_0 := P_+ \circ (X_-)^{-1}
\quad \text{and}\quad 
h_0 := P_-^\ast \circ (X_+^\ast)^{-1}.$$
By Lemma~\ref{Lem:Vprop}, we have that $P_{+}$ and $P_-^\ast$ restrict to $\SobH{\frac{1}{2}}$-bounded maps on $\SobH{\frac{1}{2}}(\Sigma;E)$ and therefore, it is clear that $g_0(V_-^\frac{1}{2}) \subset V_+^\frac{1}{2}$ and $h_0( (V_+^\ast)^\frac{1}{2}) \subset (V_-^\ast)^\frac{1}{2}$.

Now, we show that $B = W_+ \oplus \set{v + g_0v: v \in V_{-}^\frac{1}{2}}$. 
It is clear that $W_+ \subset B$ by definition, so fix $v \in V_{-}^\frac{1}{2}= \chi^{-}(A_r)B$. 
Then, for $u := (X_-)^{-1} v \in B \intersect W_+^\perp$ we have
\begin{align*}
v + g_0v &= v + P_+ (X_-)^{-1}v \\
&= X_- u + P_+ u \\ 
&= \chi^{-}(A_r) u + P_+ u + Q_+ u - Q_+ u \\
&= \chi^{-}(A_r) u + \chi^+(A_r)u - Q_+ u \\
&= u - Q_+ u.
\end{align*}
Since $Q_+ u \in W_+ \subset B$, we have $v + g_0v \in B$.
That shows the containment ``$\supset$''.

For the reverse, take $b \in B$, and note that $b = b^\perp + b_+$, where $b^\perp \in B \intersect W_+^\perp$ and $b_+ \in W_+$.
Then, $v:=X_-b^\perp \in \chi^{-}(A_r)B$ and therefore,
$$ b^\perp = \chi^{-}(A_r) (X_-)^{-1}v + P_+ (X_-)^{-1}v + Q_+ (X_-)^{-1}v = v + g_0v + Q_+(X_-)^{-1}v.$$
But $Q_+ (X_-)^{-1}v \in W_+$ so $b = w_+ + (v + g_0v)$ where $w_+= (b_+ + Q_+(X_-)^{-1}v) \in W_+$.
This shows $B = W_+ \oplus \set{v + g_0v: v \in V_{-}^\frac{1}{2}}$. 
A similar argument shows that $\sym_0^\ast B^\ad  = W_-^\ast \oplus \set{u + h_0u: u \in  (V_+^\ast)^\frac{1}{2}}$.

Let $g_0^*:(V_+^*)^{-\frac12}\to (V_-^*)^{-\frac12}$ the map adjoint to $g_0$, characterised by $\inprod{v, g_0^\ast u} = \inprod{g_0v,u}$ for $u\in (V_+^*)^{-\frac12}$ and $v\in V_-^{\frac12}$.
We prove $g_0^\ast( (V^\ast_+)^\frac{1}{2} ) \subset (V_-^\ast)^\frac{1}{2}$ by showing $g_0^\ast = -h_0$ on $(V^\ast_+)^\frac{1}{2}$, since we have already shown that  $h_0( (V_+^\ast)^\frac{1}{2}) \subset (V_-^\ast)^\frac{1}{2}$.
Fix $u \in (V_+^\ast)^\frac{1}{2}$ and take $v  \in V_-^\frac{1}{2}$.
From the decompositions of $B$ and $\sym_0^\ast B^\ad$ and $\inprod{B, \sym_0^\ast B^\ad}=\inprod{V_-, V_+^\ast}=\inprod{V_-^\ast, V_+}=0$, we obtain that $0 = \inprod{v + g_0v, u + h_0u} = \inprod{g_0v, u} + \inprod{v, h_0u}$.
Hence,
$$ \inprod{v, g_0^\ast u} = \inprod{v, -h_0 u}$$
and since $V_-^\frac{1}{2}$ is dense in $V_-$, we obtain that $g_0^\ast u = -h_0 u$.
Similarly, we have $g_0v = -h_0^* v$ for all $v\in V_-^\frac{1}{2}$.

It remains to show that $g_0$ can be extended to a continuous map $V_-\to V_+$.
We have already seen that $-h_0^*$ extends $g_0$ to a continuous map $(V_-)^{-\frac12}\to (V_+)^{-\frac12}$.
In order to show $-h_0^*(V_-)\subset V_+$ and $(-h_0^*)|_{V_-}:V_-\to V_+$ is bounded it suffices to prove
\begin{equation}
V_\pm  = [(V_{\pm})^{\frac{1}{2}}, (V_{\pm})^{-\frac{1}{2}}]_{\theta = \frac{1}{2}},
\label{eq:ComplexInterpolation}
\end{equation}
where the right hand side denotes the complex interpolation space.
Since $[\SobH{\frac12}(\Sigma;E),\SobH{-\frac12}(\Sigma;E)]_{\theta = \frac{1}{2}}=\Lp{2}(\Sigma;E)$, $\chi^\pm(A_r)$ is a pseudo-differential projector of order $0$, and $W_\pm$ is a closed subspace of $\SobH{\frac12}(\Sigma;E)$, we have 
\begin{align*}
V_\pm \oplus W_\pm 
&=
\chi^\pm(A_r)\Lp{2}(\Sigma;E) \\
&=
[\chi^\pm(A_r)\SobH{\frac12}(\Sigma;E),\chi^\pm(A_r)\SobH{-\frac12}(\Sigma;E)]_{\theta = \frac{1}{2}} \\
&=
[(V_{\pm})^{\frac{1}{2}}\oplus W_\pm, (V_{\pm})^{-\frac{1}{2}}\oplus W_\pm]_{\theta = \frac{1}{2}} \\
&=
[(V_{\pm})^{\frac{1}{2}}, (V_{\pm})^{-\frac{1}{2}}]_{\theta = \frac{1}{2}} \oplus W_\pm .
\end{align*}
Since $V_\pm\subset(V_{\pm})^{-\frac{1}{2}}$, this implies \eqref{eq:ComplexInterpolation}.
\end{proof}

This establishes the equivalence between \ref{Thm:EllEquiv:Ellbdy}--\ref{Thm:EllEquiv:Admiss2}. 
Since \ref{Thm:EllEquiv:Admiss3}$\implies$\ref{Thm:EllEquiv:Admiss4} is immediate, to demonstrate the remaining equivalences, it suffices to establish \ref{Thm:EllEquiv:Closed}$\implies$\ref{Thm:EllEquiv:Admiss3} and \ref{Thm:EllEquiv:Admiss4}$\implies$\ref{Thm:EllEquiv:Closed}.

\begin{proof}[Proof of \ref{Thm:EllEquiv:Admiss4}$\implies$\ref{Thm:EllEquiv:Closed} in Theorem~\ref{Thm:EllEquiv}]
Fix $r \in \R$ an admissible spectral cut as given by \ref{Thm:EllEquiv:Admiss4}.
We need to show that $B$ is closed in $\checkH(A)$ and that $B^\ad \subset \SobH{\frac12}(\Sigma;F)$.

Observe that $B + \chi^+(A_r)\SobH{\frac12}(\Sigma;E) = \chi^-(A_r)B \oplus \chi^+(A_r)\SobH{\frac12}(\Sigma;E)$ using Lemma~\ref{Lem:SumDirSum} with $Z = \SobH{\frac12}(\Sigma;E)$, $X = \chi^+(A_r)\SobH{\frac12}(\Sigma;E)$ and $Y = \chi^-(A_r)\SobH{\frac12}(\Sigma;E)$ and $W = B$. 
This also yields that $\chi^-(A_r)B$ is closed in $\SobH{\frac12}(\Sigma;E)$ since $B + \chi^+(A_r)\SobH{\frac12}(\Sigma;E)$ is closed by the fact that $(B,\chi^+(A_r)\SobH{\frac12}(\Sigma;E))$ is a Fredholm pair in $\SobH{\frac12}(\Sigma;E)$.

Next, note that $\chi^-(A_r)B \subset \chi^-(A_r)\SobH{\frac12}(\Sigma;E)=\chi^-(A_r)\checkH(A)\subset \checkH(A)$.
Therefore, $\chi^-(A_r)B$ is, in fact, a closed subspace in $\checkH(A)$.

Now, on setting $Z = \checkH(A)$, $X = \chi^+(A_r)\SobH{-\frac12}(\Sigma;E)$ and $Y = \chi^-(A_r)\SobH{\frac12}(\Sigma;E)$ and $W = B$, by Lemma~\ref{Lem:SumDirSum}, we have that $B + \chi^+(A_r)\SobH{-\frac12}(\Sigma;E) =  \chi^-(A_r)B \oplus \chi^+(A_r)\SobH{-\frac12}(\Sigma;E)$.
The latter is a closed subspace in $\checkH(A)$ since $\chi^-(A_r)B$ is closed in $\checkH(A)$.

Let $W_+ := \chi^+(A_r)\SobH{-\frac12}(\Sigma;E)\intersect B$. 
Since $B \subset \SobH{\frac12}(\Sigma;E)$, we have that $W_+ =\chi^+(A_r)\SobH{\frac12}(\Sigma;E)\intersect B$.
This is finite dimensional  since $(B,\chi^+(A_r)\SobH{\frac12}(\Sigma;E))$ is a Fredholm pair in $\SobH{\frac12}(\Sigma;E)$.
Since $W_+ \subset B$, we can write 
\begin{equation}
\label{Eq:BDecomp}
B = W_+ \oplus (B \intersect W_+^{\perp,\SobH{\frac12}}) .
\end{equation} 
Moreover, since $W_+ \subset \chi^+(A_r)\SobH{-\frac12}(\Sigma;E)$ and also $W_+ \subset \SobH{\frac12}(\Sigma;E)$, 
\begin{equation} 
\label{Eq:ChiDecomp}
\chi^+(A_r)\SobH{-\frac12}(\Sigma;E) = W_+ \oplus (\chi^+(A_r)\SobH{-\frac12}(\Sigma;E) \intersect W_+^{\perp,\SobH{-\frac12}}).
\end{equation}

Note also that 
$$B \intersect \chi^+(A_r)\SobH{-\frac12}(\Sigma;E) \intersect W_+^{\perp,\SobH{-\frac12}} = W_+ \intersect W_+^{\perp, \SobH{-\frac12}} = 0.$$
Combining this along with \eqref{Eq:BDecomp} and \eqref{Eq:ChiDecomp}, we obtain
$$ 
B + \chi^+(A_r)\SobH{-\frac12}(\Sigma;E) = B \oplus (\chi^+(A_r)\SobH{-\frac12}(\Sigma;E) \intersect W_+^{\perp,\SobH{-\frac12}}).
$$
From Corollary~4.13 in Chapter~IV, Section~4.2 in \cite{Kato}, we obtain that 
$((B^{\perp,\SobH{\frac12}})^{\perp,-\SobH{-\frac12}}, \chi^+(A_r)\SobH{-\frac12}(\Sigma;E))$ is a Fredholm pair in $\SobH{-\frac12}(\Sigma;E)$.
Let $\check{B} := (B^{\perp,\SobH{\frac12}})^{\perp,-\SobH{-\frac12}} \intersect \checkH(A)=(B^{\perp,\SobH{\frac12}})^{\perp,\checkH(A)}$.
Setting $X = (B^{\perp,\SobH{\frac12}})^{\perp,-\SobH{-\frac12}}$, $Y = \chi^+(A_r)\SobH{-\frac12}(\Sigma;E)$ and $W = \checkH(A)$ and invoking Lemma~\ref{Lem:FPSub}, we obtain  that $(\check{B}, \chi^+(A_r)\SobH{-\frac12}(\Sigma;E))$ is a Fredholm pair in $\checkH(A)$.

Set $Z = \checkH(A)$, $X = B$, $X' = \check{B}$, and $Y = \chi^+(A_r)\SobH{-\frac12}(\Sigma;E)\intersect W_+^{\perp,\SobH{-\frac12}}$.
By construction, $\check{B}$ is an annihilator and hence closed in $\checkH(A)$. 
It is readily seen that $B \subset \check{B}$.
Moreover, by the fact that   $(\check{B}, \chi^+(A_r)\SobH{-\frac12}(\Sigma;E))$ is a Fredholm pair in $\checkH(A)$, we obtain that $\check{B} \intersect \chi^+(A_r)\SobH{-\frac12}(\Sigma;E)$ is finite dimensional. 
Therefore, so is  $\check{B} \intersect \chi^+(A_r)\SobH{-\frac12}(\Sigma;E)\intersect W_+^{\perp,\SobH{-\frac12}}$. 
Thus we can apply Lemma~\ref{Lem:ClosedSub} to $Z$, $X$, $X'$, and $Y$ and conclude that $B \oplus (\check{B} \intersect \chi^+(A_r)\SobH{-\frac12}(\Sigma;E)\intersect W_+^{\perp,\SobH{-\frac12}})$ is closed in $\checkH(A)$.

To prove that $B$ itself is closed we show that $\check{B} \intersect \chi^+(A_r)\SobH{-\frac12}(\Sigma;E)\intersect W_+^{\perp,\SobH{-\frac12}} = 0$. 
For that, we show that $\check{B} \intersect \chi^+(A_r)\SobH{-\frac12}(\Sigma;E) = {B} \intersect \chi^+(A_r)\SobH{-\frac12}(\Sigma;E) = W_+$.
It is immediate that ${B} \intersect \chi^+(A_r)\SobH{-\frac12}(\Sigma;E) \subset \check{B} \intersect \chi^+(A_r)\chi^+(A_r)\SobH{-\frac12}(\Sigma;E)$. 
To obtain the opposite inclusion, it suffices to show that $\dim \big(\check{B} \intersect \chi^+(A_r)\SobH{-\frac12}(\Sigma;E)\big) \leq \dim\big({B} \intersect \chi^+(A_r)\SobH{-\frac12}(\Sigma;E)\big)$.
Letting $\hat{B} = B^{\perp, \hatH(A^\ast)}$, 
\begin{align*} 
\dim \big(\check{B} \intersect \chi^+(A_r)\SobH{-\frac12}(\Sigma;E)  \big)
&= 
\dim \big(\hatH(A^\ast)/(\hat{B} + \chi^-(A_r^\ast)\SobH{-\frac12}(\Sigma;E)) \big) \\
&= 
\dim \big(\chi^+(A_r^\ast)\SobH{\frac12}(\Sigma;E)/\chi^+(A_r^\ast)\hat{B}\big) \\
&\leq 
\dim \big(\chi^+(A_r^\ast)\SobH{\frac12}(\Sigma;E)/\chi^+(A_r^\ast)B^{\perp,\SobH{\frac12}} \big)\\
&= 
\dim \big(\SobH{\frac12}(\Sigma;E)/(B +\chi^-(A_r^\ast)\SobH{\frac12}(\Sigma;E))\big) \\
&=
\dim \big(B\intersect \chi^+(A_r)\SobH{-\frac12}(\Sigma;E)\big),
\end{align*}
where the first and last equalities follow from Theorem~4.8 in Section~4 of Chapter~4 in \cite{Kato}, the inequality from $B^{\perp, \SobH{\frac12}} \subset \hat{B}$, and the remaining equalities from Lemma~\ref{Lem:SumDirSum}.

To show that $B^\ad \subset \SobH{\frac12}(\Sigma;F)$ we show that $B^{\perp,\hatH(A^\dagstar)} = B^{\perp,\SobH{\frac12}}$.
For this, we first demonstrate that $(B, \chi^+(A_r)\SobH{-\frac12}(\Sigma;E))$ is a Fredholm pair in $\checkH(A)$ with the same index as $(B, \chi^+(A_r)\SobH{\frac12}(\Sigma;E))$ in $\SobH{\frac12}(\Sigma;E)$. 

It is clear that both $B$ and $\chi^+(A_r)\SobH{-\frac12}(\Sigma;E)$ are closed in $\checkH(A)$.
As we have noted previously, $B \intersect \chi^+(A_r)\SobH{-\frac12}(\Sigma;E) =  B \intersect \chi^+(A_r)\SobH{\frac12}(\Sigma;E)$ and therefore have the same finite dimension.
As before, setting $Z = \checkH(A)$, $X = \chi^+(A_r)\SobH{-\frac12}(\Sigma;E)$, $Y = \chi^-(A_r)\SobH{\frac12}(\Sigma;)E$, and $W = B$, by Lemma~\ref{Lem:SumDirSum}, we obtain $B + \chi^+(A_r) \SobH{-\frac12}(\Sigma;E)$ is closed in $\checkH(A)$ since $\chi^-(A_r)B$ is closed in $\checkH(A)$ and that 
$$\checkH(A)/(B + \chi^+(A_r)\SobH{-\frac12}(\Sigma;E)) \cong \chi^-(A_r)\SobH{\frac12}(\Sigma;E)/\chi^-(A_r)B.$$
Setting $Z = \SobH{\frac12}(\Sigma;E)$, $X = \chi^+(A_r)\SobH{\frac12}(\Sigma;E)$, $Y = \chi^-(A_r)\SobH{\frac12}(\Sigma;E)$ and $W = B$, again by Lemma~\ref{Lem:SumDirSum}, we obtain 
$$\SobH{\frac12}(\Sigma;E)/(B + \chi^+(A_r)\SobH{\frac12}(\Sigma;E)) \cong \chi^-(A_r)\SobH{\frac12}(\Sigma;E)/\chi^-(A_r)B.$$
On combining these two isomorphisms, we conclude that $(B, \chi^+(A_r)\SobH{-\frac12}(\Sigma;E))$ is a Fredholm pair in $\checkH(A)$ with the same index as $(B, \chi^+(A_r)\SobH{\frac12}(\Sigma;E))$.

Since  $\chi^-(A_r^\ast)\SobH{-\frac12}(\Sigma;E) = \chi^+(A_r)\SobH{-\frac12}(\Sigma;E)^{\perp, \hatH(A^\dagstar)}$, from  Corollary~4.13 in Chapter~IV, Section~4.2 in \cite{Kato}, we conclude that $(B^{\perp,\hatH(A)}, \chi^-(A_r^\ast)\SobH{-\frac12}(\Sigma;E))$ is a Fredholm pair in $\hatH(A^\dagstar)$. 
By repeating the argument that allowed us to obtain $(B, \chi^+(A_r)\SobH{-\frac12}(\Sigma;E))$ as a Fredholm pair in $\checkH(A)$ with the same index as $(B, \chi^+(A_r)\SobH{\frac12}(\Sigma;E))$, but with $B$ replaced with $B^{\perp,\SobH{\frac12}}$,  $\chi^+(A_r)\SobH{-\frac12}(\Sigma;E)$ with 
$\chi^-(A_r^\ast)\SobH{-\frac12}(\Sigma;E)$ and $\checkH(A)$ with $\hatH(A^\dagstar)$, allows us to conclude that $(B^{\perp,\SobH{\frac12}}, \chi^-(A_r^\ast)\SobH{-\frac12}(\Sigma;E))$ is a Fredholm pair in $\hatH(A^\dagstar)$ with the same index as $(B^{\perp,\SobH{\frac12}}, \chi^-(A_r^\ast)\SobH{\frac12}(\Sigma;E))$.
Next, using the fact that $\indx(B^{\perp,\SobH{\frac12}}, \chi^-(A_r^\ast)\SobH{-\frac12}(\Sigma;E)) = -\indx(B, \chi^+(A_r)\SobH{\frac12}(\Sigma;E))$, we obtain that 
$$ \indx(B^{\perp,\SobH{\frac12}}, \chi^-(A_r^\ast)\SobH{-\frac12}(\Sigma;E)) = \indx(B^{\perp,\hatH(A)}, \chi^-(A_r^\ast)\SobH{-\frac12}(\Sigma;E)).$$
This along with the fact that $B^{\perp,\SobH{\frac12}} \subset B^{\perp,\hatH(A)}$ yields $B^{\perp,\SobH{\frac12}} = B^{\perp,\hatH(A)}$ via Lemma~\ref{Lem:FPCon}. 
\end{proof} 

Lastly, we prove the remaining direction.

\begin{proof}[Proof of \ref{Thm:EllEquiv:Closed}$\&$\ref{Thm:EllEquiv:Admiss}$\implies$\ref{Thm:EllEquiv:Admiss3}]
Fix any admissible spectral cut $r \in \R$.
By \ref{Thm:EllEquiv:Closed}, $B$ is a closed subspace of $\SobH{\frac12}(\Sigma;E)$.
Using \ref{Thm:EllEquiv:Admiss},
\begin{align*}
B\cap \chi^+(A_r)\SobH{\frac12}(\Sigma;E)
=
(W_+\oplus \{v+gv : v\in V_-^{\frac12}\})\cap \chi^+(A_r)\SobH{\frac12}(\Sigma;E)
=
W_+
\end{align*}
and
\begin{align*}
B + \chi^+(A_r)\SobH{\frac12}(\Sigma;E)
=
V_-^{\frac12} \oplus \chi^+(A_r)\SobH{\frac12}(\Sigma;E) .
\end{align*}
Hence, $(\chi^+(A_r)\SobH{\frac12}(\Sigma;E),B)$ is a Fredholm pair with index $\dim W_+-\dim W_-$.

By Proposition~\ref{Prop:B0}, $B^{\perp,\hatH(A^\dagstar)}=\sym_0^* B^*$ and by \ref{Thm:EllEquiv:Closed} $B^*\subset \SobH{\frac12}(\Sigma;F)$, thus $B^{\perp,\SobH{\frac12}}=\sym_0^* B^*$.
Using \eqref{eq:B*}, we see that $B^{\perp,\SobH{\frac12}}=W_-^\ast \oplus \set{ u - g^\ast u: u \in (V_+^\ast)^{\frac{1}{2}}}$.
Arguing as above, we see that $(\chi^-(A_r^*)\SobH{\frac12}(\Sigma;E),B^{\perp,\SobH{\frac12}})$ is a Fredholm pair with index $\dim W_-^*-\dim W_+^*$.
Remark~\ref{Rem:*Spaces} shows that $\indx(\chi^+(A_r)\SobH{\frac12}(\Sigma;E),B)=-\indx(\chi^-(A_r^*)\SobH{\frac12}(\Sigma;E),B^{\perp,\SobH{\frac12}})$.
\end{proof}

\subsection{Boundary regularity}
\label{Sec:BdyReg}

Recall the notion of an $(s+\frac{1}{2})$-semiregular boundary condition from Definition~\ref{Def:SemiRegular}, by which we mean that $W_+ \subset \SobH{s}(\Sigma;E)$ and $g(V_-^s) \subset V_+^s$.
It is $(s+\frac{1}{2})$-regular if further $W_-^\ast \subset \SobH{s}(\Sigma;E)$ and $g^\ast( (V_+^\ast)^{s} ) \subset (V_-^\ast)^s$.

\begin{lem}
\label{Lem:RegEquiv} 
Let $B$ be an elliptic boundary condition.
For $s \geq \frac{1}{2}$, the following are equivalent:
\begin{enumerate}[(i)]
\item 
\label{Lem:RegEquiv2.1} 
There exist an admissible spectral cut $r$ such that $B$ is $(s+\frac{1}{2})$-semiregular w.r.t.\ $r$.
\item 
\label{Lem:RegEquiv2} 
For all admissible spectral cuts $r$ we have that $B$ is $(s+\frac{1}{2})$-semiregular w.r.t.\ $r$.
\item 
\label{Lem:RegEquiv3.1} 
There exists an admissible spectral cut $r$, such that whenever $u \in B$ with $\chi^{-}(A_r) u \in \SobH{s}(\Sigma;E)$ we have that $u \in \SobH{s}(\Sigma;E)$.
\item 
\label{Lem:RegEquiv3} 
For any admissible spectral cut $r$, whenever $u \in B$ with $\chi^{-}(A_r) u \in \SobH{s}(\Sigma;E)$ we have that $u \in \SobH{s}(\Sigma;E)$.
\end{enumerate}
\end{lem}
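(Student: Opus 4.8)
The plan is to decouple the statement into two independent pieces. First, for a \emph{single} fixed admissible cut $r$, I would establish the equivalence of "$B$ is $(s+\frac12)$-semiregular w.r.t.\ $r$" with the intrinsic lifting property "$u\in B$ and $\chi^-(A_r)u\in\SobH{s}(\Sigma;E)$ imply $u\in\SobH{s}(\Sigma;E)$". Second, I would show that this lifting property does not depend on the choice of $r$. Granting these, the cycle $\ref{Lem:RegEquiv2.1}\Rightarrow\ref{Lem:RegEquiv3.1}\Rightarrow\ref{Lem:RegEquiv3}\Rightarrow\ref{Lem:RegEquiv2}\Rightarrow\ref{Lem:RegEquiv2.1}$ closes, the last arrow being trivial.

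For the fixed-cut equivalence I would use the elliptic decomposition $B=W_+\oplus\set{v+gv:v\in V_-^{\frac12}}$ with respect to $r$, available because $B$ is elliptic (Theorem~\ref{Thm:EllEquiv}). If $B$ is $(s+\frac12)$-semiregular w.r.t.\ $r$ and $u=w_++v+gv\in B$ with $\chi^-(A_r)u\in\SobH{s}(\Sigma;E)$, then since $W_+,V_+\subset\chi^+(A_r)\Lp{2}(\Sigma;E)$ and $V_-\subset\chi^-(A_r)\Lp{2}(\Sigma;E)$ one computes $\chi^-(A_r)u=v$, hence $v\in V_-^s$; then $gv\in V_+^s$ and $w_+\in\SobH{s}(\Sigma;E)$ by semiregularity, so $u\in\SobH{s}(\Sigma;E)$. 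Conversely, assuming the lifting property for $r$: applying it to $w_+\in W_+\subset B$ (where $\chi^-(A_r)w_+=0$) gives $W_+\subset\SobH{s}(\Sigma;E)$, and applying it to $v+gv\in B$ for $v\in V_-^s\subset V_-^{\frac12}$ (where $\chi^-(A_r)(v+gv)=v\in\SobH{s}(\Sigma;E)$) gives $v+gv\in\SobH{s}(\Sigma;E)$, hence $gv=(v+gv)-v\in V_+\cap\SobH{s}(\Sigma;E)=V_+^s$; so $B$ is $(s+\frac12)$-semiregular w.r.t.\ $r$. This yields $\ref{Lem:RegEquiv2.1}\Rightarrow\ref{Lem:RegEquiv3.1}$ and, applied at every admissible cut, $\ref{Lem:RegEquiv3}\Rightarrow\ref{Lem:RegEquiv2}$.

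For cut-independence, suppose the lifting property holds at $r_0$ and let $q$ be another admissible cut, say $q<r_0$ (the opposite case being symmetric). As in the proof of Proposition~\ref{Prop:CheckEquiv}, $\Pi:=\chi^-(A_{r_0})\chi^+(A_q)$ is a finite-rank pseudodifferential projector of order zero, with $\chi^-(A_{r_0})=\chi^-(A_q)+\Pi$; its range is the finite-dimensional span of generalized eigensections of $A_{r_0}$ for eigenvalues with real part in $(q,r_0)$, which lies in $\Ck{\infty}(\Sigma;E)$ by elliptic regularity. Hence for $u\in B\subset\SobH{\frac12}(\Sigma;E)$ with $\chi^-(A_q)u\in\SobH{s}(\Sigma;E)$ we get $\chi^-(A_{r_0})u=\chi^-(A_q)u+\Pi u\in\SobH{s}(\Sigma;E)$, so $u\in\SobH{s}(\Sigma;E)$ by the lifting property at $r_0$; thus the lifting property holds at $q$, giving $\ref{Lem:RegEquiv3.1}\Rightarrow\ref{Lem:RegEquiv3}$. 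The only step needing genuine care is this cut-change, but it reduces entirely to the finite-rank and smoothing behaviour of $\chi^-(A_{r_0})\chi^+(A_q)$ already established in Proposition~\ref{Prop:CheckEquiv}; everything else is routine bookkeeping with the elliptic decomposition.
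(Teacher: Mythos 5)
Your proof is correct and follows the paper's own approach: fix a cut, establish the equivalence of semiregularity with the lifting property via the elliptic decomposition $B=W_+\oplus\{v+gv:v\in V_-^{\frac12}\}$, and then pass between cuts using that $\chi^-(A_{r_1})-\chi^-(A_{r_2})$ is a finite-rank operator with range in $\Ck{\infty}(\Sigma;E)$. The paper states the cut-change step more tersely (asserting that the difference of projectors is smoothing), while you make explicit the finite-rank projector $\chi^-(A_{r_0})\chi^+(A_q)$ from Proposition~\ref{Prop:CheckEquiv} and the smoothness of the generalised eigensections; these are the same observation.
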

\begin{proof}
The implications \ref{Lem:RegEquiv2}$\Rightarrow$\ref{Lem:RegEquiv2.1} and \ref{Lem:RegEquiv3}$\Rightarrow$\ref{Lem:RegEquiv3.1} are clear.
The implications \ref{Lem:RegEquiv2.1}$\Rightarrow$\ref{Lem:RegEquiv3.1} and \ref{Lem:RegEquiv2}$\Rightarrow$\ref{Lem:RegEquiv3} are easy.
If $r_1$ and $r_2$ are admissible spectral cuts, then the difference of the spectral projectors $\chi^-(A_{r_1})$ and $\chi^-(A_{r_2})$ is smoothing. 
This shows \ref{Lem:RegEquiv3.1}$\Rightarrow$\ref{Lem:RegEquiv3}.

It remains to show \ref{Lem:RegEquiv3.1}$\Rightarrow$\ref{Lem:RegEquiv2.1}.
Since $W_+ \subset B$ and $\chi^{-}(A_r)w_+ = 0 \in \SobH{s}(\Sigma;E)$ for $w_+ \in W_+$, we have by  \ref{Lem:RegEquiv3.1} that $w_+ \in \SobH{s}(\Sigma;E)$, which shows that $W_+ \subset \SobH{s}(\Sigma;E)$.
Now, let $v \in V_-^s$.
Then $v+gv\in B$, hence $\chi^-(A_r)(v+gv)=v\in \SobH{s}(\Sigma;E)$.
By \ref{Lem:RegEquiv3.1}, $v+gv\in \SobH{s}(\Sigma;E)$ and therefore $gv\in\SobH{s}(\Sigma;E)$.
\end{proof}

\begin{proof}[Proof of Theorem~\ref{Thm:HBR}] 
Fix $u \in \dom(D_B)$ such that $D_{\max} u \in \SobH[loc]{k}(M;F)$.
Since $k < m$, $B$ is $(k+\frac{1}{2})$-semiregular.
\begin{enumerate}[a)]
\item W.l.o.g., assume that $\spt u \subset Z_{[0,\rho]} = {[0,\rho]} \times \Sigma$ for some $\rho < 1$ to be chosen sufficiently small so that $\dom((D_{0,r})_{\max};Z_{[0,\rho]}) = \dom(D_{\max};Z_{[0,\rho]})$. 
Through induction, we assume $u \in \SobH[loc]{k}(\Sigma;E)$ as well as $D_{0,r} u \in \SobH{k}(Z_{[0,\rho]}; F)$. 

\item 
Let $\eta:{[0,\rho]} \to [0,1]$ be a smooth cutoff such that $\eta = 1$ on $[0, \frac{\rho}{4}]$ and $\eta = 0$ on $[\frac{3\rho}{4}, \rho]$.
Set 
$$v(t) = \chi^{-}(A_r) \eta(t) u(t) = \eta(t) \chi^{-}(A_r) u(t)$$ 
on $Z_{{[0,\rho]}}$ and note that we have 
$$v(0) = \chi^{-}(A_r) (\eta(0) u\rest{\Sigma}) = \chi^{-}(A_r) u\rest{\Sigma}\quad \text{and} \quad v(\rho) = 0.$$

\item Since $v(\rho) = 0$, Proposition~\ref{Prop:RegInv} implies that
$$ (1 - S_{0,r}\sym_0^{-1} D_{0,r})v  = \exp({-t\modulus{A_r}}) \chi^{+}(A_r) v(0) = 0.$$ 
Hence, we obtain that $v \in \SobH{k+1}(Z_{[0,\rho]};E)$ because $S_{0,r}: \SobH{k}(Z_{[0,\rho]}]; E) \to  \SobH{k+1}(Z_{[0,\rho]}; E)$ by Lemma~\ref{Lem:SReg}.

\item Putting this together, we have that $v \in \dom((D_{0,r})_{\max};Z_{[0,\rho]}) \intersect \SobH{k+1}(Z_{[0,\rho]};E)$.
By the properties of the trace map, we obtain that $v(0) \in \SobH{k+\frac{1}{2}}(\Sigma;E)$. 
That is, $\chi^{-}(A_r) u\rest{\Sigma} = v(0) \in \SobH{k+{\frac{1}{2}}}(\Sigma;E)$.

\item Moreover, $u\rest{\Sigma} = u_0^- + g u_0^-  + w_+ \in V_- \oplus V_+ \oplus W_+$ since $B$ is an elliptic boundary condition.
Therefore, $\chi^{-}(A_r)u\rest{\Sigma} = u_0^-$ and we have that $u_0^- \in \SobH{k+\frac{1}{2}}(\Sigma;E)$.

\item Since $B$ is $(k+\frac{1}{2})$-semiregular, $gu_0^+ \in \SobH{k+\frac{1}{2}}(\Sigma;E)$.
Also, we have that $W_+ \subset \SobH{k+\frac{1}{2}}(\Sigma;E)$ and therefore,
$\chi^{+}(A_r) u\rest\Sigma \in \SobH{k+\frac{1}{2}}(\Sigma;E).$

\item Since we have by hypothesis that $D_{\max} u \in \SobH[loc]{k}(M;F)$ and that $\chi^{+}(A_r) u\rest{\Sigma} \in \SobH{k+\frac{1}{2}}(\Sigma;E)$, by Theorem~\ref{Thm:HighReg}, we obtain that $u \in \SobH[loc]{k+1}(M;E)$.
\qedhere
\end{enumerate} 
\end{proof}  

\subsection{Local and pseudo-local boundary conditions}
\label{Sec:LPL}

Recall from Definition~\ref{Def:LocalBC} that  a boundary condition $B \subset \SobH{\frac{1}{2}}(\Sigma;E)$ is local if  if there exists a subbundle $E' \subset E\rest{\Sigma}$ such that $ B = \SobH{\frac{1}{2}}(\Sigma;E')$. 
More generally, Definition~\ref{Def:PLocalBC} describes the notion of a pseudo-local boundary condition, where we write $B = \close{P \SobH{\frac{1}{2}}(\Sigma;E)}^{\checkH(A)}$ for $P$ a classical pseudo-differential projector of order zero. 

\begin{proof}[Proof of Theorem~\ref{Thm:PL}]
Fix an admissible spectral cut $r \in \R$. 
The equivalence of \ref{Thm:PL2}  and \ref{Thm:PL3} is obtained by  invoking Theorems 19.5.1 and 19.5.2 in \cite{H94}.

We show that \ref{Thm:PL3} and \ref{Thm:PL4} are equivalent.
By Lemma~\ref{Lem:ProjDiff}, we obtain that $\sym_{P - \chi^+(A_r)}(x,\xi) = \sym_{P}(x,\xi) - \sym_{\chi^+(A_r)}(x,\xi)$ is an isomorphism if and only if $\sym_{P}(x,\xi):\ker(\sym_{\chi^+(A_r)}(x,\xi)) \to \sym_{P}(x,\xi)(E_x)$ and $\sym_{P^\ast}(x,\xi):\ker(\sym_{\chi^+(A_r^\ast)}(x,\xi)) \to \sym_{P^\ast}(x,\xi)(E_x)$ are isomorphisms. 
The projectors $\chi^+(A_r)$ and $\chi^+(A_r^\ast)$ are classical pseudo-differential operators of order zero by Theorem~3.2 in \cite{Grubb}.
By Theorem~3.3 in \cite{Grubb}, their symbols $\sym_{\chi^+(A_r)}(x,\xi)$ and $\sym_{\chi^+(A_r^\ast)}(x,\xi)$ are given as contour integrals around the spectrum with positive real part of $\imath\sym_{A_r}(x,\xi)$ and $\imath\sym_{A_r^\ast}(x,\xi)$, respectively. 
Therefore, $\ker(\sym_{\chi^+(A_r)}(x,\xi))$  and $\ker(\sym_{\chi^+(A_r^\ast)}(x,\xi))$ are respectively the sums of the generalised eigenspaces of $\imath\sym_{A_r}(x,\xi)$ and $\imath\sym_{A_r^\ast}(x,\xi)$ with negative real part.
This proves the equivalence between \ref{Thm:PL3} and \ref{Thm:PL4}.

To show that \ref{Thm:PL3} implies \ref{Thm:PL1}, note that since $P$ is a pseudo-differential operator of order zero, we have that 
$$P:\SobH{\frac{1}{2}}(\Sigma;E) \to \SobH{\frac{1}{2}}(\Sigma;E)$$
boundedly. 
Since $P$ is a projection, $P\SobH{\frac{1}{2}}(\Sigma;E)$ is closed in $\SobH{\frac{1}{2}}(\Sigma;E)$.
To show that it defines a boundary condition, that is, that $B$ is a closed subset of $\checkH(A)$, we use Theorems 19.5.1 and 19.5.2 in \cite{H94} to obtain the existence of a pseudo-differential operator $R$ of order zero and a smoothing operator $S$ to write:
$$ R(P - \chi^+(A_r)) = 1 + S.$$
Now, note that for $u \in B$,  
$$ \norm{u}_{\SobH{\frac{1}{2}}} \lesssim \norm{(1 + S)u}_{\SobH{\frac{1}{2}}} + \norm{S u}_{\SobH{\frac{1}{2}}} \lesssim \norm{\chi^{-}(A_r)u}_{\SobH{\frac{1}{2}}} + \norm{\modulus{A_r}^{\frac{1}{2}} Su},$$
since $P - \chi^{+}(A_r) = 1 - \chi^{+}(A_r) = \chi^-(A_r)$ on $B$.
Moreover, since $S$ is smoothing,  
$$
\norm{\modulus{A_r}^{\frac{1}{2}} Su} \lesssim  \norm{u}_{\SobH{-\frac{1}{2}}} \leq \norm{u}_{\checkH(A)} .$$
Combining these estimates, we get that $\norm{u}_{\SobH{\frac{1}{2}}} \simeq \norm{u}_{\checkH(A)}$ and therefore, $B$ is a boundary condition.
In order to see $\sym_0^*B^*\subset \SobH{\frac12}(\Sigma;E)$, observe 
\begin{align*}
v \in \sym_0^\ast B^\ad &\iff v \in \hatH(A^\dagstar)\text{ and } \inprod{v, P u} = 0\ \forall u \in \SobH{\frac{1}{2}}(\Sigma;E) \\
	&\iff  v \in \hatH(A^\dagstar)\text{ and } \inprod{P^\ast v, u} = 0\ \forall u \in \SobH{\frac{1}{2}}(\Sigma;E) \\ 
	&\iff  v \in (1 - P^\ast)\SobH{-\frac{1}{2}}(\Sigma;E)\quad\text{and}\quad \chi^{+}(A_r^\ast)v \in  \SobH{\frac{1}{2}}(\Sigma;E). 
\end{align*}
Since $P^\ast - \chi^{+}(A_r^\ast)$ is also a classical elliptic pseudo-differential operator of order zero, we obtain $R'$ a pseudo-differential operator of order zero and $S'$ a smoothing operator such that $R'(P^\ast - \chi^+(A_r^\ast)) = 1 + S'$.
Then, 
$$ v = (1 + S')v - S'v = R'(P^\ast -\chi^+(A_r^\ast))v - S'v = -R' \chi^+(A_r^\ast)v - S' v $$
since $v \in \nul(P^\ast)$.
Noting that $\chi^+(A_r^\ast) v \in \SobH{\frac{1}{2}}(\Sigma;E)$ and $S'v \in \Ck{\infty}(\Sigma;E)$,
we obtain the conclusion. 
 
Finally, we prove that \ref{Thm:PL1} implies \ref{Thm:PL2}.
For this, write $B = P \SobH{\frac{1}{2}}(\Sigma;E) = W_+ \oplus \set{v + gv: v \in V_{-}^\frac{1}{2}}$.
Since $P$ is a pseudo-differential projector of order zero, we have that $P \Lp{2}(\Sigma;E) = W_+ \oplus \set{v + gv: v \in V_-}$.
Let 
$$
\inprod{\cdot,\cdot}_N := \inprod{\chi^-(A_r)\cdot, \chi^-(A_r)\cdot} + \inprod{\chi^+(A_r)\cdot, \chi^+(A_r)\cdot}
$$ 
and let $g^{\ast,N}:V_+ \to V_-$ the adjoint with respect to this scalar product.
Note that with respect to this scalar product $\chi^+(A_r) \Lp{2}(\Sigma;E) \perp \chi^-(A_r)\Lp{2}(\Sigma;E) $ and that $V_- \perp V_+$.
Then,  we obtain that $V_- \stackrel{\perp}{\oplus} V_+ = \graph(g) \stackrel{\perp}{\oplus} \graph(-g^{\ast,N}).$
Moreover, is it easy to see that  
$$ \begin{pmatrix} 1 & 0 \\ g & 0 \end{pmatrix} \begin{pmatrix} 1 & -g^{\ast,N} \\ g & 1 \end{pmatrix}^{-1}: V_- \oplus V_+ \to \graph (g)$$
is the orthogonal projection (with respect to $\inprod{\cdot,\cdot}_N$).
But then, $P(V_- \oplus V_+) = \graph(g)$ and so 
$$P = \begin{pmatrix} 1 & 0 \\ g & 0 \end{pmatrix} \begin{pmatrix} 1 & -g^{\ast,N} \\ g & 1 \end{pmatrix}^{-1}.$$ 
Now, 
$$
\chi^+(A_r)|_{V_-\oplus V_+}
=
\begin{pmatrix}0 & 0 \\ 0 & 1 \end{pmatrix}
=
\begin{pmatrix}0 & 0 \\ g & 1 \end{pmatrix}
\begin{pmatrix} 1 & -g^{\ast,N} \\ g & 1 \end{pmatrix}^{-1}
$$
and hence,
$$
(P - \chi^+(A_r))\rest{V_- \oplus V_+}
=
\begin{pmatrix}1 & 0 \\ 0 & -1 \end{pmatrix}
\begin{pmatrix} 1 & -g^{\ast,N} \\ g & 1 \end{pmatrix}^{-1}
: V_- \oplus V_+ \to V_- \oplus V_+
$$ 
is an isomorphism.
But $W_+$ and $W_-$ are finite dimensional subspaces and hence $P - \chi^+(A_r)$ is Fredholm.
\end{proof} 

\begin{proof}[Proof of Corollary~\ref{Cor:InfReg}]
Fix $s \geq 1/2$ and $r \in \R$ an admissible spectral cut.
Let $u \in B = P\SobH{\frac{1}{2}}(\Sigma;E)$ with $\chi^-(A_r) u \in \SobH{s}(\Sigma;E)$.
By Theorem~\ref{Thm:PL}, $P - \chi^+(A_r)$ is elliptic of order zero, and
$$ (P - \chi^+(A_r)) u = (1 - \chi^+(A_r))u = \chi^-(A_r)u \in \SobH{s}(\Sigma;E).$$
By ellipticity of $(P - \chi^+(A_r))$, we have that $u \in \SobH{s}(\Sigma;E)$.
\end{proof}

\begin{proof}[Proof of Corollary~\ref{Cor:LS}]
Let $P:E_x \to E'_x$  and $\tilde{P}:F_x \to (F')^\perp_x = (\sym_0^{-1})^*(E'_x)^\perp$  be orthogonal projections.
We verify condition \ref{ProjDiff3} in Lemma~\ref{Lem:ProjDiff} with a choice of $P$ as given here and $Q = \sym_{\chi^+(A_r)}(x,\xi)$.

By the Lopatinsky-Schapiro condition for $B$, for every $x \in M$, $\xi \in T_x^*M\setminus\{0\}$ and each $e \in E_x'$, there exists a unique $u:[0, \infty) \to E_x'$ such that 
\begin{equation} 
 P u(0) = e \text{ with } (\partial_t + \imath \sym_A(x,\xi))u(t) = 0 \text{ and } \lim_{t \to \infty} u(t) = 0.
 \label{eq:LopSch}
\end{equation}
The solution to \eqref{eq:LopSch} is given by $u(t)=\exp(-t\imath \sym_A(x,\xi))u(0)$.
On writing $\imath \sym_A(x,\xi)$ in Jordan normal form, one easily sees that the condition $\lim_{t \to \infty} u(t) = 0$ is equivalent to $u(0)\in  \chi^+( {\imath \sym_A(x,\xi)})(E_x)$, which is the sum of generalised eigenspaces of $\imath \sym_A(x,\xi)$ to eigenvalues with positive real part.
Thus, $P$ restricts to an isomorphism $\sym_{\chi^+(A_r)}(x,\xi)(E_x)\to E'_x$.

Given a $\xi$, applying the condition instead to $-\xi$ and using the $\R$-linearity of $\eta \mapsto \imath \sym_A(x,\eta)$, 
\begin{align*}
\chi^+( {\imath \sym_A(x,-\xi)})(E_x) 
&= 
\chi^+( {-\imath \sym_A(x,\xi)})(E_x) \\
&= \chi^-( {\imath \sym_A(x,\xi)})(E_x) \\
&= \sym_{\chi^-(A_r)}(x,\xi)(E_x).
\end{align*}
That is, $P$ also restricts to an isomorphism $\ker(\sym_{\chi^+(A_r)}(x,\xi) \to E'_x=PE_x$.

Next, we use the  Lopatinsky-Schapiro condition for $B^*$. 
This exactly means that for $x \in \Sigma$ and $\xi \neq 0$ fixed, and $\tilde{f} \in (F'_x)^\perp$,  there is a unique $\tilde{v}: [0,\infty) \to (F'_x)^\perp$ so that $\tilde{P}\tilde{v}(0) = \tilde{f}$, $(\partial_t + \imath \sym_{\tilde{A}}(x,\xi))\tilde{v}(t) = 0$ and $\lim_{t \to \infty} \tilde{v}(t) = 0$.
Recall that $\sym_{\tilde{A}}(x,\xi) = (\sym_0^{-1})^* \sym_{A^*}(x,\xi) \sym_0^*$.
Then, on setting $v^\perp := \sym_0^*\tilde{v}: [0,\infty) \to (E')^\perp_x$, from the invertibility of $\sym_0$, we obtain 
\begin{equation} 
\label{Eq:SL2}
(\partial_t + \imath \sym_{A^*}(x,\xi))v^\perp(t) = 0,\quad\text{and}\quad \lim_{t \to \infty} v^\perp(t) = 0.
\end{equation} 
Since $(F'_x)^\perp = (\sym_0^{-1})^* (E'_x)^\perp$, given any $e^\perp \in (E'_x)^\perp$ and setting $\tilde{f} := (\sym_0^{-1})^* e^\perp$, we obtain a unique $v^\perp:[0,\infty)$ with $(1-P)v^\perp = e^\perp$ satisfying \eqref{Eq:SL2}.
That is precisely that $(1-P):\sym_{\chi^+(A_r)}(x,\xi)(E_x) \to \ker P$ is an isomorphism. 

Therefore, by Lemma~\ref{Lem:ProjDiff}~\ref{ProjDiff3}, we obtain that $P - \sym_{\chi^+(A_r)}(x,\xi)$ is an isomorphism.
By Theorem~\ref{Thm:PL}~\ref{Thm:PL3},  $B$ and $B^*$ are elliptic boundary conditions for $D$ and $D^\dagger$, respectively.

To see that $B^\perp$ and $(B^*)^\perp$ are also elliptic boundary conditions,  fix $x \in M$ and $\xi \neq 0$.
By applying what we have just shown to $-\xi$, we deduce that $P - \sym_{\chi^+(A_r)}(x,-\xi)$ is an isomorphism.
Then, 
\begin{align*}
-(P - \sym_{\chi^+(A_r)}(x,-\xi)) &= (1 - P) - (1 - \sym_{\chi^+(A_r)}(x,-\xi))  \\
	&= (1 - P) -  \sym_{\chi^-(A_r)}(x,-\xi)  \\
	&=  (1 - P) -  \sym_{\chi^+(A_r)}(x,\xi).
\end{align*} 
This shows that $(1 - P) -  \sym_{\chi^+(A_r)}(x,\xi)$ is an isomorphism and hence, by Theorem~\ref{Thm:PL}~\ref{Thm:PL3}, we obtain that $B^\perp$ and $(B^*)^\perp$ are also elliptic boundary conditions for $D$ and $D^\dagger$, respectively.

These boundary conditions are local, hence pseudo-local and thus $\infty$-regular by Corollary~\ref{Cor:InfReg}.  
\end{proof}

\begin{proof}[Proof of \ref{Thm:Fredholm}]
We first note that Theorem 8.5 in \cite{BB12}, which states that $D$ is coercive if and only if $D_B$ has finite-dimensional kernel and closed range, holds in our setting. 
Its proof only uses the fact that   $\dom(D_B)\subset \SobH[loc]{1}(M;E)$ and $\dom(D^\dagger_{B^\ast}) \subset \SobH[loc]{1}(M;F)$ since $B$ is an elliptic boundary condition.  
Since we assume that both $D$ and $D^\dagger$ are coercive at infinity, we obtain that both operators have finite-dimensional kernels and that their ranges are closed.
Coupling this with the simple fact that $\Lp{2}(M;E) = \ker(D_B) \oplus \ran(D^\dagger_{B^\ast})$ and $\Lp{2}(M;F) = \ker(D^\dagger_{B^\ast}) \oplus \ran(D_{B})$, we obtain 
$$ \faktor{\Lp{2}(M;F)}{\ran(D_B)} \cong \ker(D^\dagger_{B^\ast}).$$
From this,  \ref{Thm:Fredholm:1} follows. 

The statement \ref{Thm:Fredholm:2} follows simply on invoking Proposition A.1 in \cite{BB12}.

For the proof of \ref{Thm:Fredholm:3}, fix two complements $C$ and $C'$ of $B$ and $B'$ with $C\subset C'$. 
These complements exist since $\checkH(A)$ is a Hilbert space.
Let $\check{P}$ and $\check{P}'$ be the projectors with kernels $B$ and $B'$ respectively and ranges $C$ and $C'$.
Define $Qu := \check{P}u\rest{\Sigma}$ and $Q'u := \check{P}u'\rest{\Sigma}$.
This yields the following commutative diagram
$$
  \begin{tikzcd}
	&  \Lp{2}(M;E) \oplus C \\  
    \dom(D_{\max}) \arrow{ur}{(D,Q)} \arrow[swap]{dr}{(D,Q')}  &  \\
     & \Lp{2}(M;E) \oplus C'\arrow[hookleftarrow,swap]{uu}{\id \oplus \iota}
  \end{tikzcd}$$
where $\iota: C \embed C'$ is the inclusion map.
By  \ref{Thm:Fredholm:2}, the maps $(D,Q)$ and $(D,Q')$ are Fredholm and therefore, so is $\id \oplus \iota$.
Then, 
$$ \dim \faktor{B}{B'} = \dim \faktor{C'}{C} = - \Ind (\iota) = - \Ind(\id \oplus \iota) = \Ind(D,Q) - \Ind(D,Q'),$$
and the desired index formula follows.
\end{proof}

\appendix
\section{Auxiliary functional analytic facts}
\label{Sec:Appendix}

By $P_{A,B}$, denote the projector with range $A$ and kernel $B$.
\begin{lem}
\label{Lem:SubIsom}
Let $\cH$ be a Hilbert space such that $\cH = \cH_1 \oplus \cH_2$ where $\cH_1,\ \cH_2 \subset \cH$ are closed subspaces.
Let $\cH_1'$ be another closed subspace such that $\cH = \cH_1' \oplus \cH_2$.
Then, $P_{\cH_1, \cH_2}\rest{\cH_1'}: \cH_1' \to \cH_1$ is an isomorphism with bounded inverse.
\end{lem}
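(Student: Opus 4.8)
The statement is a standard fact in the theory of projections in Banach/Hilbert spaces, and the proof is short. The plan is to show that $P := P_{\cH_1,\cH_2}\rest{\cH_1'}$ is a bounded linear bijection from the Banach space $\cH_1'$ onto the Banach space $\cH_1$, and then invoke the open mapping theorem to conclude that the inverse is bounded.

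\textbf{Boundedness.} First I would note that $P_{\cH_1,\cH_2}:\cH\to\cH$ is a bounded projector: this follows from the closed graph theorem (or is part of the standard equivalence between topological direct sum decompositions and bounded idempotents) applied to the decomposition $\cH=\cH_1\oplus\cH_2$ with both summands closed. Restricting a bounded operator to the closed subspace $\cH_1'$ keeps it bounded, so $P:\cH_1'\to\cH_1$ is bounded. (Its range lies in $\cH_1$ by definition of $P_{\cH_1,\cH_2}$.)

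\textbf{Injectivity and surjectivity.} For injectivity: if $x\in\cH_1'$ and $Px=0$, then $x\in\nul(P_{\cH_1,\cH_2})=\cH_2$, so $x\in\cH_1'\cap\cH_2=\set{0}$ by the hypothesis $\cH=\cH_1'\oplus\cH_2$. For surjectivity: given $y\in\cH_1$, decompose $y=x+z$ with $x\in\cH_1'$ and $z\in\cH_2$, using $\cH=\cH_1'\oplus\cH_2$. Applying $P_{\cH_1,\cH_2}$ and using that it is the identity on $\cH_1$ and zero on $\cH_2$ gives $y=P_{\cH_1,\cH_2}y=P_{\cH_1,\cH_2}x=Px$, so $y$ is in the range of $P$. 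Hence $P$ is a bounded bijection between Banach spaces, and by the open mapping theorem $P^{-1}:\cH_1\to\cH_1'$ is bounded, i.e.\ $P$ is an isomorphism of Banach spaces.

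\textbf{Main obstacle.} There is really no serious obstacle here; the only point requiring a little care is justifying that $P_{\cH_1,\cH_2}$ is bounded to begin with, which is where the closedness of both $\cH_1$ and $\cH_2$ (together with the closed graph / open mapping theorem) enters. Everything else is a direct computation with the idempotent $P_{\cH_1,\cH_2}$ and the two direct sum decompositions.
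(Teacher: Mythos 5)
Your proof is correct and follows essentially the same route as the paper's: injectivity via $\nul(P_{\cH_1,\cH_2})=\cH_2$ and $\cH_1'\cap\cH_2=\{0\}$, surjectivity by decomposing an element of $\cH_1$ along $\cH=\cH_1'\oplus\cH_2$ and killing the $\cH_2$-component, then the open mapping theorem. The only cosmetic difference is that the paper phrases surjectivity as the operator identity $P_{\cH_1,\cH_2}=P_{\cH_1,\cH_2}P_{\cH_1',\cH_2}$ on $\cH_1$, whereas you decompose a generic $y\in\cH_1$ by hand; and you spell out the (standard) boundedness of the projector, which the paper leaves implicit.
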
 
\begin{proof}
Let $u \in \cH_1$ and note that 
$$ u = P_{\cH_1, \cH_2} u = P_{\cH_1,\cH_2} (P_{\cH_1', \cH_2} u + P_{\cH_2, \cH_1'}) u = P_{\cH_1,\cH_2} P_{\cH_1', \cH_2} u, $$
because $P_{\cH_1,\cH_2} \circ P_{\cH_2, \cH_1'} = 0$.
This shows surjectivity. 

Now, let $u \in \cH_1'$ and suppose that $P_{\cH_1, \cH_2} u = 0$. 
But then, $u \in \nul(P_{\cH_1, \cH_2}) = \cH_2$ and so, $u \in \cH_1' \intersect \cH_2 = \set{0}$, which shows that the map is injective.

Since it is a bounded map on all of $\cH_1$, it is bounded, and by the closedness of $\cH_1$ and the open mapping theorem, we can conclude that it has a bounded inverse.
\end{proof}

\begin{lem}
\label{Lem:Abs1} 
Let $(X,\norm{\cdot}_X)$ be a Banach space and $Y \subset X$ a subspace (not necessarily closed in $\norm{\cdot}_X$) for which $(Y,\norm{\cdot}_Y)$ is a Banach space satisfying $\norm{u}_X \lesssim \norm{u}_Y$ for all $u \in Y$.
Then, if $Z \subset X$ is a closed subspace with respect to $\norm{\cdot}_X$, then $Z \intersect Y$ is closed $Y$ with respect to $\norm{\cdot}_Y$.
\end{lem}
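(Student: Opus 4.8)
The statement is a routine exercise about how closed subspaces interact with a continuously embedded Banach subspace, so the plan is to verify directly that $Z \cap Y$ is complete in the norm $\|\cdot\|_Y$.

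First I would take a Cauchy sequence $(u_n)$ in $Z \cap Y$ with respect to $\|\cdot\|_Y$. Since $(Y, \|\cdot\|_Y)$ is a Banach space, there exists $u \in Y$ with $\|u_n - u\|_Y \to 0$. The hypothesis $\|\cdot\|_X \lesssim \|\cdot\|_Y$ then gives $\|u_n - u\|_X \to 0$ as well, so $u_n \to u$ in $X$. Because $Z$ is closed in $(X, \|\cdot\|_X)$ and each $u_n \in Z$, we conclude $u \in Z$. Hence $u \in Z \cap Y$ and $u_n \to u$ in $\|\cdot\|_Y$, which shows that $Z \cap Y$ is a closed (equivalently, complete) subspace of $(Y, \|\cdot\|_Y)$.

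There is no real obstacle here: the only thing to be slightly careful about is the logical role of the two norms — completeness of the limit is extracted from the $Y$-norm (where Cauchyness is assumed), while membership in $Z$ is extracted from the $X$-norm (where $Z$ is assumed closed), and the continuous embedding $\|\cdot\|_X \lesssim \|\cdot\|_Y$ is exactly what bridges the two. I would write this up in a few lines.

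\begin{proof}
Let $(u_n)$ be a Cauchy sequence in $Z \intersect Y$ with respect to $\norm{\cdot}_Y$.
Since $(Y, \norm{\cdot}_Y)$ is a Banach space, there exists $u \in Y$ such that $\norm{u_n - u}_Y \to 0$.
By the assumption $\norm{v}_X \lesssim \norm{v}_Y$ for all $v \in Y$, we also have $\norm{u_n - u}_X \to 0$, so $u_n \to u$ in $X$.
As $Z$ is closed in $(X, \norm{\cdot}_X)$ and $u_n \in Z$ for all $n$, it follows that $u \in Z$.
Therefore $u \in Z \intersect Y$ and $u_n \to u$ in $\norm{\cdot}_Y$, which shows that $Z \intersect Y$ is closed in $(Y, \norm{\cdot}_Y)$.
\end{proof}
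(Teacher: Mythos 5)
Your proof is correct and follows essentially the same argument as the paper's: take a $\|\cdot\|_Y$-Cauchy sequence in $Z\cap Y$, use completeness of $Y$ to get a limit $u\in Y$, transfer convergence to $\|\cdot\|_X$ via the embedding, and invoke $X$-closedness of $Z$ to conclude $u\in Z$. Your write-up is in fact slightly more direct than the paper's, which takes a small detour by first producing an $X$-limit $z\in Z$ and then showing $z=y$, but this is a cosmetic difference only.
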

\begin{proof}
Let $u_n \in Z \intersect Y$ be Cauchy in $\norm{\cdot}_Y$.
Then, there exists $y \in Y$ such that $u_n \to y$ in $\norm{\cdot}_Y$. 
Moreover, since $u_n \in Y$, we have that $\norm{u_n - u_m}_X \lesssim \norm{u_n - u_m}_Y$ so $u_n$ is Cauchy in $X$, and therefore, by the closedness of $Z$, there exists $z \in Z$ such that $u_n \to z$ in $\norm{\cdot}_X$.
Now, $y,\ u_n \in Y$ for all $n$ and therefore, $\norm{u_n - y}_X \lesssim \norm{u_n - y}_Y$ by hypothesis, which proves that $u_n \to y$ in $X$. 
Therefore, $y = z$ which completes the proof.
\end{proof} 

\begin{lem}
\label{Lem:Abs2}
Let $(X, \norm{\cdot}_1)$ and $(X,\norm{\cdot}_2)$ be Banach spaces for which $\norm{\cdot}_1 \lesssim \norm{\cdot}_2$. 
Then, $\norm{\cdot}_1 \simeq \norm{\cdot}_2$.
\end{lem}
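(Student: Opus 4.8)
The plan is to invoke the bounded inverse theorem (a standard corollary of the open mapping theorem). Consider the identity map $\id \colon (X,\norm{\cdot}_2) \to (X,\norm{\cdot}_1)$. The hypothesis $\norm{\cdot}_1 \lesssim \norm{\cdot}_2$ says precisely that this map is bounded, and it is evidently a linear bijection. Since both $(X,\norm{\cdot}_1)$ and $(X,\norm{\cdot}_2)$ are complete by assumption, the open mapping theorem applies: a bounded linear bijection between Banach spaces has a bounded inverse.

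Therefore the inverse map $\id \colon (X,\norm{\cdot}_1) \to (X,\norm{\cdot}_2)$ is bounded, which is exactly the estimate $\norm{\cdot}_2 \lesssim \norm{\cdot}_1$. Combining this with the given inequality $\norm{\cdot}_1 \lesssim \norm{\cdot}_2$ yields $\norm{\cdot}_1 \simeq \norm{\cdot}_2$, as claimed. There is no real obstacle here; the only point worth stating carefully is that completeness of \emph{both} normed spaces is what licenses the use of the open mapping theorem — the hypothesis that $(X,\norm{\cdot}_1)$ is Banach (and not merely a normed space) is essential.
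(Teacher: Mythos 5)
Your proof is correct and takes the same route as the paper: apply the open mapping theorem (bounded inverse theorem) to the identity map between the two Banach space structures on $X$. The paper's version is stated more tersely (and in fact writes the continuous identity map with the arrow in the wrong direction, which your write-up gets right), but the argument is identical.
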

\begin{proof}
The hypothesis implies that $\mathrm{id}: (X, \norm{\cdot}_1) \embed (X,\norm{\cdot}_2)$ is continuous. 
By the open mapping theorem, it is an isomorphism.
\end{proof} 

\begin{lem}
\label{Lem:SumDirSum}
Let $Z$ be a Banach space and $X, Y \subset Z$ be complementary closed subspaces such that $Z = X \oplus Y$. 
For any subspace $W \subset Z$, we have 
$$X + W = X \oplus P_{Y,X} W.$$ 
Moreover $P_{Y,X}W$ is closed if and only if $X + W$ is closed. 
In this case,
$$Z/(X + W) \cong Y/P_{Y,X}W.$$
\end{lem}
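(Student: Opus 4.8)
\textbf{Proof proposal for Lemma~\ref{Lem:SumDirSum}.}

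The plan is to work entirely with the bounded projector $P_{Y,X}: Z \to Z$ having range $Y$ and kernel $X$, together with its complementary projector $P_{X,Y} = \id - P_{Y,X}$. First I would establish the set-theoretic identity $X + W = X \oplus P_{Y,X}W$. For the inclusion $X + W \subset X + P_{Y,X}W$, note that any $w \in W$ satisfies $w = P_{X,Y}w + P_{Y,X}w$ with $P_{X,Y}w \in X$, so $w \in X + P_{Y,X}W$; conversely $P_{Y,X}w = w - P_{X,Y}w \in X + W$, giving $X + P_{Y,X}W \subset X + W$. The sum $X + P_{Y,X}W$ is direct because $P_{Y,X}W \subset Y$ and $X \cap Y = \{0\}$.

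Next I would prove the equivalence of closedness. Since $Z = X \oplus Y$ with both summands closed, the restriction $P_{Y,X}|_{X+W}$ has kernel $X$ (as $X \subset X+W$) and, by the displayed identity, image exactly $P_{Y,X}W$. If $X+W$ is closed, then it is a Banach space and $P_{Y,X}$ restricted to it is a bounded surjection onto $P_{Y,X}W$ with closed kernel $X$; the induced map $(X+W)/X \to P_{Y,X}W$ is a continuous bijection, but one must still see $P_{Y,X}W$ is closed. The cleanest route is the other direction first: if $P_{Y,X}W$ is closed, then $X + P_{Y,X}W$ is the sum of two closed subspaces of $Z$ that meet only in $0$ and whose sum is the range of the bounded idempotent-like map... more carefully, $X \oplus P_{Y,X}W$ is closed because it equals $P_{X,Y}^{-1}(X) \cap$ ... actually the direct argument: the map $X \times P_{Y,X}W \to Z$, $(x,y)\mapsto x+y$ is a bounded bijection onto $X+W$ from a Banach space, and since $X+W = \ker(P_{Y,X}) \oplus (\text{closed subspace of } \ker(P_{X,Y}))$... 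To make this airtight I would simply observe $X + P_{Y,X}W = P_{Y,X}^{-1}(P_{Y,X}W)$: indeed $z \in P_{Y,X}^{-1}(P_{Y,X}W)$ iff $P_{Y,X}z \in P_{Y,X}W$ iff $z = P_{X,Y}z + P_{Y,X}z \in X + P_{Y,X}W$. Hence if $P_{Y,X}W$ is closed then so is its preimage under the continuous map $P_{Y,X}$, i.e.\ $X+W$ is closed. For the converse, suppose $X+W$ is closed; then $P_{Y,X}W = P_{Y,X}(X+W) = (X+W) \cap Y$ (using $X \subset X+W$ to see the image lands in $Y$ and every element of $(X+W)\cap Y$ is fixed by $P_{Y,X}$), which is an intersection of two closed subspaces, hence closed.

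Finally, for the isomorphism $Z/(X+W) \cong Y/P_{Y,X}W$ when $X+W$ is closed: the bounded surjection $P_{Y,X}: Z \to Y$ descends to a map $Z/(X+W) \to Y/P_{Y,X}W$ since $P_{Y,X}(X+W) = P_{Y,X}W$; it is surjective because $P_{Y,X}$ is, and injective because $P_{Y,X}z \in P_{Y,X}W$ forces $z \in X + P_{Y,X}W = X+W$ by the preimage identity above. By the open mapping theorem (both quotients being Banach spaces, using closedness of $X+W$ and of $P_{Y,X}W$) this continuous bijection is a topological isomorphism. The main obstacle I anticipate is organising the closedness equivalence cleanly; the preimage identity $X + W = P_{Y,X}^{-1}(P_{Y,X}W)$ is the key trick that makes both directions short, so I would isolate and prove that identity first.
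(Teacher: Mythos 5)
Your proof is correct, and it takes a genuinely different and arguably cleaner route than the paper's. For the closedness equivalence, the paper proceeds by direct sequence chasing in both directions: given $x_n + w_n \to z$ it applies $P_{Y,X}$ and uses closedness of $P_{Y,X}W$ to land $z$ in $X \oplus P_{Y,X}W$; for the converse it takes a convergent sequence $P_{Y,X}w_n \to x+w \in X+W$ and applies $P_{Y,X}^2 = P_{Y,X}$ to conclude. You instead isolate the two structural identities $X + W = P_{Y,X}^{-1}(P_{Y,X}W)$ and $P_{Y,X}W = (X+W) \cap Y$, which reduce each direction to a one-line topological fact (preimage of a closed set under a continuous map is closed; intersection of two closed subspaces is closed). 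This buys conceptual transparency and eliminates the explicit $\epsilon$-chasing. For the quotient isomorphism, the paper simply writes $Z/(X+W) = (X\oplus Y)/(X\oplus P_{Y,X}W) \cong Y/P_{Y,X}W$ without further comment, appealing to the standard fact about quotients of direct sums; you instead build the isomorphism explicitly as the map descended from $P_{Y,X}: Z \to Y$ and invoke the open mapping theorem, which is more self-contained (at the cost of being slightly longer). One stylistic note: the middle of your argument contains several abandoned false starts before arriving at the preimage identity; in a final write-up you should delete those and lead with the two identities, since once stated they make the whole lemma immediate.
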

\begin{proof}
Let $x + w \in X + W$.
Then, 
$$ x + w = P_{X,Y}(x + w) + P_{Y,X}(x + w) = (x + P_{X,Y}w)  + P_{Y,X}w \in X \oplus P_{Y,X}W.$$ 
Now, for $x + P_{Y,X}w \in X \oplus P_{Y,X}W$, we have that
$$ x + P_{Y,X}w = (x - P_{X,Y}w) + P_{X,Y}w + P_{Y,X}w = (x - P_{X,Y}w)  + w \in X + W.$$

Now assume that $P_{Y,X}W$ is closed.
We need to conclude that $X + W$ is closed which is not as obvious as one might think because the direct sum of closed subspaces need not be closed in general.
So, let $x_n+w_n\in X+W$ such that $x_n+w_n \to z$.
Hence, $P_{Y,X}w_n=P_{Y,X}(x_n+w_n)\to P_{Y,X}z$ and since $P_{Y,X}W$ is closed we have $P_{Y,X}z\in P_{Y,X}W$.
Now, $z=P_{X,Y}z + P_{Y,X}z \in X \oplus P_{Y,X}W = X+ W$.

For the converse statement, let $P_{Y,X}w_n \in P_{Y,X}W$ be a convergent sequence. 
Since $P_{Y,X}W \subset X + W$, there is some $x + w \in X + W$ such that $P_{Y,X}w_n \to x + w$.
However, $P_{Y,X}w_n = P_{Y,X}^2w_n \to P_{Y,X}(x + w) = P_{Y,X}w$.
This shows that  $P_{Y,X}W$ is closed.

The last assertion is obtained simply by noting
\begin{equation*} 
Z/(X + W) = (X \oplus Y)/(X \oplus P_{Y,X}W) \cong Y/P_{Y,X}W.\qedhere
\end{equation*}
\end{proof} 

\begin{lem}
\label{Lem:ClosedSub}
Let $Z$ be a Banach space and $Y$ a closed subspace.
Suppose that $X$ is a complementary subspace with $X \oplus Y$ is closed in $Z$.
Moreover, suppose there exists a subspace $X'$ closed in $Z$ with $X'+Y$ closed such that $X \subset X'$ and $X' \intersect Y$ is finite dimensional. 
Then, $X \oplus (X' \intersect Y) $ is closed.
\end{lem}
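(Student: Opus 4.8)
The plan is to move the problem into the closed subspace $V' := X' + Y$ and there trade $Y$ for a closed complement of the finite-dimensional space $F := X' \cap Y$. First I would use that $F$ is finite-dimensional to write $Y = F \oplus Y_0$ with $Y_0 \subseteq Y$ a closed subspace; this is possible because finite-dimensional subspaces of a Banach space are complemented (choose a basis of $F$, extend the coordinate functionals by Hahn--Banach, and take $Y_0$ to be the common kernel). Since $F \subseteq X'$ we get $V' = X' + Y = X' + Y_0$, and $X' \cap Y_0 \subseteq X' \cap Y \cap Y_0 = \{0\}$, so $V' = X' \oplus Y_0$ algebraically, with both summands closed in the Banach space $V'$ (here the hypothesis that $X'+Y$ is closed is used). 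By the closed graph theorem this direct sum is topological, so the projection $\pi \colon V' \to X'$ along $Y_0$ is bounded.

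Next I would compute images under $\pi$. Because $X \subseteq X'$ and $\pi|_{X'} = \mathrm{id}_{X'}$, we have $\pi(X) = X$ and $\pi(F) = F$; since $Y_0 = \ker\pi$, we get $\pi(Y) = \pi(F) + \pi(Y_0) = F$, and hence $\pi(X+Y) = X + F$. Moreover $Y_0 \subseteq Y \subseteq X+Y$, so $X+Y$ is saturated for $\pi$: $\pi^{-1}(X+F) = (X+F) + Y_0 = X + Y$. Now invoke the hypothesis that $X \oplus Y$ is closed in $Z$: since $V'$ is closed in $Z$, the set $X+Y = \pi^{-1}(X+F)$ is closed in $V'$. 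The map $\pi$ is a bounded linear surjection of Banach spaces, hence open by the open mapping theorem, hence a quotient map; therefore $X + F$ is closed in $X'$. Finally $X'$ is closed in $Z$, so $X + F$ is closed in $Z$, and this sum is direct, $X \oplus (X' \cap Y)$, because $X \cap (X' \cap Y) \subseteq X \cap Y = \{0\}$.

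I expect the only genuinely delicate step to be the implication ``$\pi^{-1}(X+F)$ closed $\Rightarrow$ $X+F$ closed''. This is precisely where openness of $\pi$ is needed so that $\pi$ behaves as a quotient map, and it is also the reason the hypothesis ``$X'+Y$ closed'' cannot be dropped: it is what makes $V'$ a Banach space to which the closed graph and open mapping theorems apply. Everything else reduces to the standard facts that a finite-dimensional subspace is complemented and that an algebraic direct sum of two closed subspaces of a Banach space is automatically topological; note in particular that no closedness of $X$ itself is required anywhere in the argument.
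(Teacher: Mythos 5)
Your proof is correct, and it takes a genuinely different (if closely related) route to the conclusion. The paper chooses a closed complement $V$ of $U:=X'\cap Y$ in all of $Z$, restricts it to decompose $Y=U\oplus(Y\cap V)$ and $X'=U\oplus(X'\cap V)$, and then exhibits an explicit bi-continuous isomorphism between $X\oplus U$ and the quotient $(X\oplus Y)/(Y\cap V)$; the continuity of the inverse is proved by a direct norm estimate involving the bounded projector $P_{Y\cap V,X'}$ furnished by the hypothesis that $X'+Y$ is closed. You instead complement $F:=X'\cap Y$ inside $Y$, stay inside the closed subspace $V'=X'+Y=X'\oplus Y_0$, and use the bounded projection $\pi\colon V'\to X'$ along $Y_0$: the key observation $\pi^{-1}(X+F)=X+Y$ together with openness of $\pi$ (open mapping theorem applied to the bounded surjection $\pi$) gives closedness of $X+F$ without any norm computation. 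Both arguments use the same two ingredients — complementability of the finite-dimensional $X'\cap Y$, and the bounded projector coming from closedness of $X'+Y$ — but your ``preimage closed and map open $\Rightarrow$ image closed'' step replaces the paper's explicit estimate on $\Phi^{-1}$, which makes the argument somewhat cleaner. Your closing remark that closedness of $X$ itself is nowhere needed is also correct and consistent with the paper's proof.
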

\begin{proof}
Let $U := X' \intersect Y$. 
Since this is finite dimensional, there exists a closed complementary subspace $V$ so that $Z = U \oplus V$.
Equivalently, we have a bounded projector $P_{U,V}: Z \to U$ with kernel $V$.
Now, since $U \subset X'$ and $U \subset Y$, it is readily checked that 
$$ 
X' 
= 
U \oplus (X' \intersect V)\ \text{and}\ Y = U \oplus (Y \intersect V).
$$
Consequently,  $X' + Y = X' \oplus (Y \intersect V)$. 
Since $X \intersect U \subset X \intersect Y  = 0$, we have that $X \oplus Y = X \oplus U  \oplus (Y \intersect V)$.

It is readily checked that the natural  map $\Phi: X \oplus Y \to X \oplus Y/(Y \intersect V) = X \oplus U \oplus (Y \intersect V)/(Y \intersect V)$ restricts to an (algebraic) vector space isomorphism from $X\oplus U$ onto $X \oplus Y/(Y \intersect V)$.
Moreover, $ \norm{\Phi(x)} = \inf_{v \in Y \intersect V} \norm{x + v} \leq \norm{x}$
which shows that it is continuous.
Since $X'+Y$ is closed we have the bounded projector $P_{Y \intersect V, X'}$ and we obtain 
$$ 
\norm{x} \leq \norm{x + v}  + \norm{v}  = \norm{ x  + v} + \norm{P_{Y \intersect V,X'}(x + v)} \leq (1 + \norm{P_{Y \intersect V,X'}})  \norm{x + v}$$
so that $\norm{x} \leq (1 + \norm{P_{Y \intersect V,X'}}) \norm{\Phi(x)}$. 
This shows that the inverse $\Phi^{-1}$ is also bounded.
Therefore, $X \oplus U = X \oplus (X' \intersect Y)$ is closed since $X \oplus Y/(Y \intersect V)$ is complete.
\end{proof}

\begin{lem}
\label{Lem:FPSub}
Let $(X,Y)$ be a Fredholm pair in a Banach space $Z$.
Suppose that $W \subset Z$ is a subspace (not necessarily closed) such that $(W,\norm{\cdot}_{W})$ is a Banach space satisfying $\norm{w}_{Z} \lesssim \norm{w}_{W}$ for all $w \in W$.
Then, if $Y \subset W$, we have that $(X\intersect W, Y)$ is a Fredholm pair in $W$.
\end{lem}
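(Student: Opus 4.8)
The plan is to verify directly the three defining properties of a Fredholm pair for $(X\intersect W,Y)$ inside $W$: that both subspaces are closed in $W$, that their sum $(X\intersect W)+Y$ is closed in $W$, and that $(X\intersect W)\intersect Y$ and $W/((X\intersect W)+Y)$ are finite-dimensional. The two easy closedness statements come straight from Lemma~\ref{Lem:Abs1}: since $\norm{w}_Z\lesssim\norm{w}_W$ for $w\in W$, that lemma (applied with ambient space $Z$, Banach subspace $W$, and closed subspace $X$, respectively $Y$) shows that $X\intersect W$ and $Y\intersect W=Y$ are closed in $(W,\norm{\cdot}_W)$; here $Y\intersect W=Y$ uses $Y\subset W$.

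The next step, and the one place where the hypothesis $Y\subset W$ is genuinely needed, is the algebraic identity
$$(X\intersect W)+Y=(X+Y)\intersect W.$$
The inclusion ``$\subseteq$'' is obvious. For ``$\supseteq$'', take $z\in(X+Y)\intersect W$ and write $z=x+y$ with $x\in X$, $y\in Y$; since $y\in Y\subset W$ and $z\in W$, we get $x=z-y\in W$, so $x\in X\intersect W$ and $z\in(X\intersect W)+Y$. Now $X+Y$ is closed in $Z$ because $(X,Y)$ is a Fredholm pair in $Z$, so a further application of Lemma~\ref{Lem:Abs1} (with closed subspace $X+Y$ of $Z$) shows that $(X+Y)\intersect W$, hence $(X\intersect W)+Y$, is closed in $W$.

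Finally, $(X\intersect W)\intersect Y=X\intersect Y$ is finite-dimensional by hypothesis, and for the cokernel I would observe that the linear inclusion $W\embed Z$ composed with the quotient map $Z\to Z/(X+Y)$ has kernel exactly $W\intersect(X+Y)=(X\intersect W)+Y$; it therefore factors through an injective linear map $W/((X\intersect W)+Y)\to Z/(X+Y)$, and since $Z/(X+Y)$ is finite-dimensional, so is $W/((X\intersect W)+Y)$. This verifies all three conditions. I do not anticipate a real obstacle: the only matters requiring attention are keeping track of which norm each closedness claim refers to and checking the identity $(X\intersect W)+Y=(X+Y)\intersect W$; the rest is a direct invocation of Lemma~\ref{Lem:Abs1} together with the definition of a Fredholm pair.
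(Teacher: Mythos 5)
Your proof is correct and follows essentially the same route as the paper: closedness of $X\cap W$ and $Y$ in $W$ via Lemma~\ref{Lem:Abs1}, the identity $(X\cap W)+Y=(X+Y)\cap W$ (which uses $Y\subset W$) to get closedness of the sum, finite-dimensionality of the intersection from $X\cap Y$, and an injective map $W/((X\cap W)+Y)\to Z/(X+Y)$ for the cokernel. The only cosmetic difference is that you describe the injection as the map induced by the composition $W\hookrightarrow Z\to Z/(X+Y)$ with kernel $(X+Y)\cap W$, whereas the paper checks injectivity of the natural map directly; these are the same argument.
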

\begin{proof}
First, we note that $Y$ and $X \intersect W$ are closed in $W$ by Lemma~\ref{Lem:Abs1}. 
Next, note that $X \intersect W \intersect Y \subset X \intersect Y$, and since $(X,Y)$ is a Fredholm pair, the latter space is finite dimensional and so is $X \intersect W \intersect Y$.

It remains to prove that $X \intersect W + Y$ is closed in $W$ and that $W/(X \intersect W + Y)$ is finite dimensional.
For that, we first prove 
\begin{equation}
\label{Eq:FPSub1}
X \intersect W + Y = (X + Y) \intersect W. 
\end{equation}
To prove the containment ``$\subset$'', fix $x + y \in X \intersect W + Y$ with $x \in X \intersect W$ and $y \in Y$.
Since $Y \subset W$, it follows that $x + y \in (X + Y) \intersect W$.

For the reverse containment, let $x + y \in (X + Y) \intersect W$. 
But $x = x + y - y$ and since $Y \subset W$, we have that $x \in W$.
Therefore, $x + y \in X \intersect W + Y$.

The formula \eqref{Eq:FPSub1} immediately yields that $X \intersect W + Y$ is closed in $W$ by noting that $X + Y$ is closed in $Z$ and on invoking Lemma~\ref{Lem:Abs1}. 

To prove that $W/(X \intersect W + Y)$ is finite dimensional, consider the natural map $\Phi:W/(X \intersect W + Y) \to Z/(X + Y)$.
This map is well-defined because $X \intersect W + Y \subset X + Y$. 
To complete the proof, it suffices to show that $\phi$ is an injection since we know that $Z/(X +Y)$ is finite dimensional by the Fredholm pair assumption on $(X,Y)$.
So, suppose that $\Phi([w]) = 0$.
That is $w \in W$ and $w \in X + Y$.
By \eqref{Eq:FPSub1}, we have that $w \in (X \intersect W) + Y$ and therefore, $[w] = 0$.
\end{proof}

\begin{lem}
\label{Lem:FPCon}
Let $(X_1,Y)$ and $(X_2,Y)$ be two Fredholm pairs in a Banach space $Z$ with $X_1 \subset X_2$ and $\indx(X_1,Y) = \indx(X_2,Y)$.
Then, $X_1 = X_2$.
\end{lem}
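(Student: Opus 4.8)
The plan is to exploit the additivity of the Fredholm pair index under an inclusion of one component, in much the same spirit as Lemma~\ref{Lem:ClosedSub} and Lemma~\ref{Lem:FPSub} but in reverse. First I would consider the quotient space $X_2/X_1$ together with the natural linear map it receives. The key observation is that since $X_1 \subset X_2$, the intersection $X_1 \cap Y$ is a subspace of $X_2 \cap Y$, and $X_1 + Y$ is a subspace of $X_2 + Y$, so that the inclusion induces a well-defined linear surjection
\[
\pi: \faktor{(X_2 + Y)}{(X_1 + Y)} \longrightarrow 0
\]
only in the trivial sense; more usefully, one gets a short exact sequence relating $X_2/X_1$, $(X_2\cap Y)/(X_1 \cap Y)$, and $(X_2+Y)/(X_1+Y)$. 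Concretely, the map $X_2 \to (X_2+Y)/(X_1+Y)$ has kernel exactly $X_2 \cap (X_1 + Y)$, and one checks $X_2 \cap (X_1+Y) = X_1 + (X_2 \cap Y)$ (this is the analogue of \eqref{Eq:FPSub1}). Hence there is an exact sequence
\[
0 \to \faktor{(X_2\cap Y)}{(X_1 \cap Y)} \to \faktor{X_2}{X_1} \to \faktor{(X_2+Y)}{(X_1+Y)} \to 0.
\]

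From this exact sequence, taking dimensions (all terms are finite dimensional: $(X_2\cap Y)/(X_1\cap Y)$ is a quotient of the finite-dimensional $X_2 \cap Y$, and $(X_2+Y)/(X_1+Y)$ is a quotient of the finite-dimensional $Z/(X_1+Y)$), I would conclude
\[
\dim\faktor{X_2}{X_1} = \dim\faktor{(X_2\cap Y)}{(X_1\cap Y)} + \dim\faktor{(X_2+Y)}{(X_1+Y)}.
\]
Next I would relate the last term to the cokernels: since $X_1 + Y \subset X_2 + Y \subset Z$, one has $\dim\coker(X_1,Y) = \dim\coker(X_2,Y) + \dim\big((X_2+Y)/(X_1+Y)\big)$. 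Combining with the definition of the index,
\[
\indx(X_1,Y) - \indx(X_2,Y) = \dim\faktor{(X_1\cap Y)}{(X_1\cap Y)} \cdots
\]
— more precisely, $\indx(X_2,Y) - \indx(X_1,Y) = \big(\dim(X_2\cap Y) - \dim(X_1\cap Y)\big) - \big(\dim\coker(X_1,Y) - \dim\coker(X_2,Y)\big)$, and using the two displayed identities this collapses to $\indx(X_2,Y) - \indx(X_1,Y) = \dim(X_2/X_1) - \dim(X_2/X_1) = 0$ only after substituting; carefully, the exact sequence gives $\dim(X_2\cap Y)-\dim(X_1\cap Y) = \dim(X_2/X_1) - \big(\dim\coker(X_1,Y)-\dim\coker(X_2,Y)\big)$, whence $\indx(X_2,Y)-\indx(X_1,Y) = \dim(X_2/X_1)$. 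Since by hypothesis the left side is zero, $\dim(X_2/X_1) = 0$, i.e.\ $X_1 = X_2$.

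The main obstacle, and the step requiring the most care, is establishing the modular identity $X_2 \cap (X_1 + Y) = X_1 + (X_2 \cap Y)$ and the exactness of the three-term sequence — the inclusion $\supseteq$ is trivial, but $\subseteq$ uses $X_1 \subset X_2$ essentially (given $x_2 = x_1 + y$ with $x_2 \in X_2$, $x_1 \in X_1$, $y \in Y$, one has $y = x_2 - x_1 \in X_2$ since $x_1 \in X_1 \subset X_2$, so $y \in X_2 \cap Y$). A secondary subtlety is confirming that all the quotient spaces appearing are genuinely finite dimensional so that the dimension count is legitimate; this follows because $X_2 \cap Y$ is finite dimensional (as $(X_2,Y)$ is Fredholm) and $Z/(X_1+Y)$ is finite dimensional (as $(X_1,Y)$ is Fredholm), and the relevant spaces are subquotients of these. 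No topological input beyond the Fredholm hypotheses is needed — closedness of $X_1+Y$ and $X_2+Y$ is given, and the argument is purely (finite-dimensional) linear algebra once the exact sequence is in place. I would close with the one-line conclusion that $\dim(X_2/X_1)=0$ forces equality.

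\begin{proof}
Since $X_1 \subset X_2$, we have $X_1 \cap Y \subset X_2 \cap Y$ and $X_1 + Y \subset X_2 + Y$. Consider the linear map
$$
\phi: X_2 \to \faktor{(X_2 + Y)}{(X_1 + Y)},\qquad \phi(x) = x + (X_1 + Y).
$$
It is surjective: any element of $X_2 + Y$ differs from some $x \in X_2$ by an element of $Y \subset X_1 + Y$. Its kernel is $X_2 \cap (X_1 + Y)$. We claim $X_2 \cap (X_1 + Y) = X_1 + (X_2 \cap Y)$. The inclusion $\supseteq$ is immediate from $X_1 \subset X_2$ and $X_2 \cap Y \subset X_2$. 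For $\subseteq$, write an element as $x_1 + y$ with $x_1 \in X_1$, $y \in Y$, and $x_1 + y \in X_2$; then $y = (x_1 + y) - x_1 \in X_2$ because $x_1 \in X_1 \subset X_2$, so $y \in X_2 \cap Y$ and $x_1 + y \in X_1 + (X_2 \cap Y)$. Hence $\phi$ induces an isomorphism
$$
\faktor{X_2}{\big(X_1 + (X_2 \cap Y)\big)} \;\cong\; \faktor{(X_2 + Y)}{(X_1 + Y)}.
$$
Moreover the map $X_2 \cap Y \to X_2/X_1$ has image $\big(X_1 + (X_2 \cap Y)\big)/X_1$ and kernel $X_1 \cap Y$, giving
$$
\faktor{\big(X_1 + (X_2 \cap Y)\big)}{X_1} \;\cong\; \faktor{(X_2 \cap Y)}{(X_1 \cap Y)}.
$$
Combining the last two isomorphisms with the exact sequence
$$
0 \to \faktor{\big(X_1 + (X_2 \cap Y)\big)}{X_1} \to \faktor{X_2}{X_1} \to \faktor{X_2}{\big(X_1 + (X_2 \cap Y)\big)} \to 0,
$$
and noting that $X_2 \cap Y$ is finite dimensional (as $(X_2,Y)$ is a Fredholm pair) and $Z/(X_1 + Y)$ is finite dimensional (as $(X_1,Y)$ is a Fredholm pair), we obtain that $X_2/X_1$ is finite dimensional with
\begin{equation}
\label{Eq:FPConDim}
\dim\faktor{X_2}{X_1} = \dim\faktor{(X_2 \cap Y)}{(X_1 \cap Y)} + \dim\faktor{(X_2 + Y)}{(X_1 + Y)}.
\end{equation}

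On the other hand, since $X_1 + Y \subset X_2 + Y \subset Z$ with all subspaces closed, the natural surjection $Z/(X_1 + Y) \to Z/(X_2 + Y)$ has kernel $(X_2 + Y)/(X_1 + Y)$, so
$$
\dim\coker(X_1,Y) = \dim\coker(X_2,Y) + \dim\faktor{(X_2+Y)}{(X_1+Y)}.
$$
Also, directly,
$$
\dim(X_2 \cap Y) = \dim(X_1 \cap Y) + \dim\faktor{(X_2 \cap Y)}{(X_1 \cap Y)}.
$$
Subtracting the first of these last two displays from the second and using the definition of the index,
$$
\indx(X_2,Y) - \indx(X_1,Y) = \dim\faktor{(X_2 \cap Y)}{(X_1 \cap Y)} + \dim\faktor{(X_2+Y)}{(X_1+Y)} = \dim\faktor{X_2}{X_1},
$$
where the last equality is \eqref{Eq:FPConDim}. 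By hypothesis $\indx(X_1,Y) = \indx(X_2,Y)$, so $\dim(X_2/X_1) = 0$, i.e.\ $X_1 = X_2$.
\end{proof}
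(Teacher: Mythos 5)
Your proof is correct, and it takes a genuinely different route from the paper. The paper first uses the index hypothesis to force $X_1\cap Y = X_2\cap Y$ and (via the inclusion of cokernels) $X_1+Y = X_2+Y =: Z_0$, then chooses a closed complement $W^c$ of $W:=X_1\cap Y$ in $Z_0$ and concludes from the resulting splitting $Z_0 = X_1\oplus (Y\cap W^c) = X_2\oplus (Y\cap W^c)$ that $X_1 = X_2$. You instead never isolate those intermediate equalities: you establish the modular identity $X_2\cap(X_1+Y) = X_1 + (X_2\cap Y)$ and assemble the exact sequence
\[
0 \to \faktor{(X_2\cap Y)}{(X_1\cap Y)} \to \faktor{X_2}{X_1} \to \faktor{(X_2+Y)}{(X_1+Y)} \to 0
\]
to obtain directly that $\indx(X_2,Y)-\indx(X_1,Y) = \dim(X_2/X_1)$, from which the conclusion is immediate. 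This is a cleaner and purely algebraic argument: you avoid appealing to the existence of a topological complement and in fact prove the stronger quantitative statement that the index difference \emph{equals} $\dim(X_2/X_1)$ for any nested Fredholm pairs with the same $Y$-component, not merely that equal indices force equality of subspaces. The one small thing worth making explicit (which you do state, but could mark more prominently) is that $X_1\cap(X_2\cap Y)=X_1\cap Y$ relies on $X_1\subset X_2$; with that noted, all steps check out.
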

\begin{proof}
First note that $X_1  +Y \subset X_2 + Y$ and therefore, 
$$ \dim (Z/(X_1 + Y)) \geq \dim(Z/(X_2 + Y)).$$
Combining this with  $\indx(X_1,Y) = \indx(X_2,Y)$, we obtain that $\dim(X_1 \intersect Y) \geq \dim(X_2 \intersect Y)$. 
But $X_1 \intersect Y \subset X_2 \intersect Y$ and so we have that $X_1 \intersect Y = X_2 \intersect Y$.
Moreover, we get that $\dim (Z/(X_1 + Y)) = \dim(Z/(X_2 + Y))$ and so the natural map $z + X_1 + Y \mapsto z +X_2 +Y$ is an isomorphism.
Therefore, $X_1 + Y = X_2 + Y=:Z_0$.

Set $W := X_1 \intersect Y = X_2 \intersect Y$.
Since $W$ is finite dimensional, we obtain a complementary closed subspace $W^c$ such that $Z_0 = W \oplus W^c$.
Moreover, since $W \subset Y$, $Y = W \oplus (Y \intersect W^c)$.
Write 
$$Z_0 = X_i + (Y \intersect W^c),$$
since $W \subset X_i$.
In fact, 
$$X_i \intersect (Y \intersect W^c) = X_i \intersect Y \intersect W^c = W \intersect W^c = 0.$$
Therefore, $Z_0 = X_1 \oplus (Y \intersect W^c) = X_2 \oplus (Y \intersect W^c)$ and armed with the fact that $X_1 \subset X_2$, we obtain that $X_1 = X_2$.
\end{proof} 

\begin{proposition}\label{Prop:HormPeet}
Let $X$ and $Y$ be Banach spaces and $L:X \to Y$ be a bounded linear map.
Then the following are equivalent:
\begin{enumerate}[(i)]
\item \label{HP1}
The operator $L$ has finite-dimensional kernel and closed image.
\item\label{HP2}
There is a Banach space $Z$, a compact linear map $K:X\to Z$,
and a constant $C$ such that
\[
  \|x\|_X \leq C\cdot \left(\|Kx\|_Z + \|Lx\|_Y \right) ,
\]
for all $x \in X$. 
In particular, $\ker K\cap \ker L=\{0\}$.
\item\label{HP3}
Every bounded sequence $(x_n) $ in $X$ such that $(Lx_n)$ converges in $Y$
has a convergent subsequence in $X$.
\end{enumerate}
Moreover, these equivalent conditions imply
\begin{enumerate}[(iv)]
\item 
For any Banach space $Z$ and compact linear map $K:X \to Z$
such that $\ker K\cap\ker L=\{0\}$, there is a constant $C$ such that
\[  \| x\|_X \le C \big(\|Kx\|_Z + \|Lx\|_Y \big)  \]
for all $x \in X$. 
\end{enumerate}
\end{proposition}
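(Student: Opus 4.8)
The plan is to run the cycle \ref{HP1}$\implies$\ref{HP2}$\implies$\ref{HP3}$\implies$\ref{HP1}, and then obtain the final assertion \emph{(iv)} by a variant of the compactness argument used for \ref{HP3}$\implies$\ref{HP1}. For \ref{HP1}$\implies$\ref{HP2}, I would use that $\nul(L)$ is finite dimensional, hence topologically complemented: write $X = \nul(L) \oplus X_1$ with $X_1$ closed and associated bounded projection $\Pi$ onto $\nul(L)$ along $X_1$. The restriction $L|_{X_1}\colon X_1 \to \ran(L)$ is a continuous bijection onto a closed, hence complete, space, so by the open mapping theorem it is bounded below, $\norm{x}_X \lesssim \norm{Lx}_Y$ for $x \in X_1$. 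Taking $Z := \nul(L)$, a finite-dimensional Banach space, and $K := \Pi$, a finite-rank and hence compact operator into $Z$, and splitting $x = \Pi x + (1-\Pi)x$, one gets $\norm{x}_X \le \norm{\Pi x}_X + \norm{(1-\Pi)x}_X \lesssim \norm{Kx}_Z + \norm{L(1-\Pi)x}_Y = \norm{Kx}_Z + \norm{Lx}_Y$. The claim $\nul K \intersect \nul L = \set{0}$ is then immediate from the inequality.

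For \ref{HP2}$\implies$\ref{HP3}: given a bounded sequence $(x_n)$ with $(Lx_n)$ convergent, compactness of $K$ lets us pass to a subsequence along which $(Kx_n)$ converges; applying the estimate in \ref{HP2} to the differences $x_n - x_m$ shows $(x_n)$ is Cauchy in $X$, hence convergent since $X$ is complete. For \ref{HP3}$\implies$\ref{HP1}: applying \ref{HP3} to bounded sequences lying in $\nul(L)$, for which $Lx_n = 0$ trivially converges, shows that the closed unit ball of $\nul(L)$ is sequentially compact, so $\nul(L)$ is finite dimensional by Riesz's lemma. For closedness of the image, fix once more a closed complement $X_1$ of $\nul(L)$; if $L|_{X_1}$ were not bounded below there would be $x_n \in X_1$ with $\norm{x_n}_X = 1$ and $Lx_n \to 0$, and \ref{HP3} would produce a subsequential limit $x \in X_1$ with $\norm{x}_X = 1$ and $Lx = 0$, contradicting $X_1 \intersect \nul(L) = \set{0}$; hence $L|_{X_1}$ is bounded below and $\ran(L) = L(X_1)$ is closed.

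Finally, for \emph{(iv)}, assume \ref{HP1}--\ref{HP3} and let $K\colon X \to Z$ be compact with $\nul K \intersect \nul L = \set{0}$. If no constant $C$ as asserted existed, there would be $x_n$ with $\norm{x_n}_X = 1$ but $\norm{Kx_n}_Z + \norm{Lx_n}_Y \to 0$; in particular $Lx_n \to 0$, so by \ref{HP3} a subsequence converges, $x_n \to x$ with $\norm{x}_X = 1$, and by continuity of $L$ and $K$ we get $Lx = 0$ and $Kx = 0$, whence $x \in \nul K \intersect \nul L = \set{0}$, a contradiction. I do not expect a genuine obstacle here; the only point demanding a little care is the closed-range half of \ref{HP3}$\implies$\ref{HP1}, where one must argue via a topological complement of the kernel rather than directly on $X$, since a sum of two closed subspaces of a Banach space need not be closed in general.
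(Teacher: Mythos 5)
Your argument is correct. The paper itself gives no proof here — it refers the reader to Proposition~A.3 in \cite{BB12} — and the cycle \ref{HP1}$\Rightarrow$\ref{HP2}$\Rightarrow$\ref{HP3}$\Rightarrow$\ref{HP1} you run, together with the contradiction argument for the final assertion, is precisely the standard proof one finds there: complement the finite-dimensional kernel, use the open mapping theorem on $L|_{X_1}\colon X_1\to\ran(L)$ to get a lower bound, take $K$ to be the finite-rank projection onto $\nul(L)$; then pass to a subsequence using compactness of $K$ and the norm estimate; then Riesz's theorem and the complement argument for closedness of the range. The only thing worth noting is that your proof of closedness of the range under \ref{HP3} first requires $\dim\nul(L)<\infty$ (proved just before) to guarantee a topological complement, and you have that in the right order; the caveat you flag at the end — that one must work on a complement of $\nul(L)$ rather than directly on $X$ — is indeed the one subtle point, and you handle it correctly.
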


For proof see Proposition~A3 in \cite{BB12}.

\begin{lem}
\label{Lem:ProjDiff}
Let $E$ be a finite dimensional Euclidean vector space and let $P,Q:E\to E$ be projectors, i.e.\ $P^2=P$ and $Q^2=Q$.
Then the following are equivalent:
\begin{enumerate}[(i)]
\item\label{ProjDiff1}
$P-Q:E\to E$ is an isomorphism.
\item\label{ProjDiff2}
$P|_{\ker Q}:\ker Q \to PE$ and $P^*|_{\ker Q^*}:\ker Q^* \to P^*E$ are isomorphisms.
\item\label{ProjDiff3}
$P|_{\ker Q}: \ker Q \to PE$ and $(1 - P): QE \to \ker P$ are  isomorphisms.
\end{enumerate}
\end{lem}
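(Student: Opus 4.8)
The plan is to show that each of the three conditions is equivalent to the single pair of direct-sum decompositions $E=\ker P\oplus\ker Q$ and $E=PE\oplus QE$, and then simply read off the equivalences. I would open with two elementary facts. First, $P^*$ and $Q^*$ are again projectors, with $\ker P^*=(PE)^\perp$ and $\ker Q^*=(QE)^\perp$. Second, for arbitrary subspaces $U,V\subset E$ one has $E=U\oplus V$ if and only if $E=U^\perp\oplus V^\perp$; this is immediate from $(U\cap V)^\perp=U^\perp+V^\perp$ and $(U+V)^\perp=U^\perp\cap V^\perp$.

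Next I would isolate the core linear-algebra lemma: for projectors $P,Q$, the map $P|_{\ker Q}:\ker Q\to PE$ is an isomorphism if and only if $E=\ker P\oplus\ker Q$, and $(1-P)|_{QE}:QE\to\ker P$ is an isomorphism if and only if $E=PE\oplus QE$. Both follow from the same two-line count. For the first: $\ker(P|_{\ker Q})=\ker P\cap\ker Q$, so the map is injective exactly when $\ker P\cap\ker Q=0$; once it is injective it maps onto $PE$ precisely when $\dim\ker Q=\dim PE=\dim E-\dim\ker P$, and these two facts together are exactly the statement $E=\ker P\oplus\ker Q$. For the second one uses $\ker((1-P)|_{QE})=PE\cap QE$ in the same way.

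The structural heart of the argument is the decomposition of $P-Q$ itself. Taking $E=\ker Q\oplus QE$ on the source and $E=PE\oplus\ker P$ on the target, one computes for $a\in\ker Q$ and $b\in QE$ that $(P-Q)(a+b)=Pa-(1-P)b$, where $Pa\in PE$ and $(1-P)b\in\ker P$. Thus $P-Q$ is block-diagonal with diagonal blocks $P|_{\ker Q}$ and $-(1-P)|_{QE}$, and a block-diagonal map is an isomorphism if and only if each block is. This gives \ref{ProjDiff1} $\Leftrightarrow$ \ref{ProjDiff3} at once, and combined with the lemma, \ref{ProjDiff1} holds if and only if $E=\ker P\oplus\ker Q$ and $E=PE\oplus QE$. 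For \ref{ProjDiff2}, applying the first statement of the lemma to $(P,Q)$ and to $(P^*,Q^*)$ shows \ref{ProjDiff2} is equivalent to $E=\ker P\oplus\ker Q$ together with $E=\ker P^*\oplus\ker Q^*$; by the complement identity and $\ker P^*=(PE)^\perp$, $\ker Q^*=(QE)^\perp$ the latter equation is the same as $E=PE\oplus QE$, so \ref{ProjDiff2} is equivalent to the same pair of decompositions, completing the proof.

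I do not expect a genuine obstacle here, since everything is finite-dimensional linear algebra; the only point that needs care is bookkeeping. One must consistently keep $PE$ distinct from $(\ker P)^\perp$ and $\ker Q$ distinct from $QE$, and set up the block-diagonal picture and the dimension counts against the correct (generally non-orthogonal) decompositions $E=\ker Q\oplus QE$ and $E=PE\oplus\ker P$, passing to orthogonal complements only through the identity $E=U\oplus V\iff E=U^\perp\oplus V^\perp$ when $P^*$ and $Q^*$ enter.
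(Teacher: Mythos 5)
Your proof is correct, and it departs from the paper's argument in a useful way for the step involving \ref{ProjDiff2}. The block-diagonal decomposition of $P-Q$ with respect to $E=\ker Q\oplus QE$ and $E=PE\oplus\ker P$, giving \ref{ProjDiff1}$\Leftrightarrow$\ref{ProjDiff3}, is the same computation the paper does. The paper then handles \ref{ProjDiff2} asymmetrically: \ref{ProjDiff1}$\Rightarrow$\ref{ProjDiff2} follows because $P^*-Q^*$ is an isomorphism whenever $P-Q$ is, while \ref{ProjDiff2}$\Rightarrow$\ref{ProjDiff3} is proved by a hands-on injectivity argument for $(1-P)|_{QE}$ that pushes a putative kernel element through an inner-product identity $\langle y,P^*(1-Q^*)z\rangle=0$ and invokes the surjectivity of $P^*|_{\ker Q^*}$. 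You instead extract the cleaner reformulation that $P|_{\ker Q}$ being an isomorphism is equivalent to $E=\ker P\oplus\ker Q$ and $(1-P)|_{QE}$ being an isomorphism is equivalent to $E=PE\oplus QE$, then observe that \ref{ProjDiff2} is the corresponding pair of statements for $(P^*,Q^*)$, and that $E=\ker P^*\oplus\ker Q^*=(PE)^\perp\oplus(QE)^\perp$ holds iff $E=PE\oplus QE$ by the annihilator identities. This gives all three conditions as equivalent to the single pair of direct-sum decompositions, which is both more symmetric and arguably more informative than the paper's route; what the paper's inner-product computation buys in exchange is a self-contained verification of injectivity that does not need the dimension-counting sub-lemma or the duality $E=U\oplus V\iff E=U^\perp\oplus V^\perp$.
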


Note that the two conditions in \ref{ProjDiff2} of Lemma~\ref{Lem:ProjDiff} are independent conditions.
For example, let $E=\R^2$ with the usual inner product, let $P=\begin{pmatrix} 1 & 1 \\ 0 & 0 \end{pmatrix}$ and $Q=\begin{pmatrix} 1 & 0 \\ 0 & 0 \end{pmatrix}$.
Then, $P^*=\begin{pmatrix} 1 & 0 \\ 1 & 0 \end{pmatrix}$ and $Q^*=Q$.
The kernel of $Q=Q^*$ is spanned by $e_2=(0,1)$.
Now, $Pe_2\neq0$ while $P^*e_2=0$.
Thus, $P|_{\ker Q}:\ker Q \to PE$ is an isomorphism while $P^*|_{\ker Q^*}:\ker Q^* \to P^*E$ is not.

Indeed, in this example, $P-Q=\begin{pmatrix} 0 & 1 \\ 0 & 0 \end{pmatrix}$ is not an isomorphism.

\begin{proof}[Proof of Lemma~\ref{Lem:ProjDiff}]
Splitting $E$ as $E=QE\oplus \ker Q$ in the domain and as $E=\ker P\oplus PE$ in the target we can write endomorphisms of $E$ as $2\times 2$-matrices:
\begin{align*}
 P =
 \begin{pmatrix}
 0 & \rvline & 0 \\
 \hline
 P|_{QE} & \rvline & P|_{\ker Q}
 \end{pmatrix},
 \quad
 Q = 
 \begin{pmatrix}
 (1-P)|_{QE} & \rvline & 0 \\
 \hline
 P|_{QE}  & \rvline & 0
 \end{pmatrix}
\end{align*}
and hence
\begin{equation*}
P-Q =
\begin{pmatrix}
(P-1)|_{QE} & \rvline & 0 \\
\hline
0  & \rvline & P|_{\ker Q}
\end{pmatrix}.
\end{equation*}
This shows that $P-Q$ is an isomorphism if and only if 
$$
P|_{\ker Q}: \ker Q \to PE
\mbox{ and }
(P-1)|_{QE}: QE \to \ker P
$$
are isomorphisms and hence proves the equivalence between \ref{ProjDiff1} and \ref{ProjDiff3}.
Moreover, this proves the implication \ref{ProjDiff1}$\Rightarrow$\ref{ProjDiff2} since $P^*-Q^*$ is also an isomorphism if $P-Q$ is one.

Conversely, assume \ref{ProjDiff2}.
We have to show that $(P-1)|_{QE}: QE \to \ker P$ is an isomorphism.
Since we know that $\dim \ker Q = \dim PE$, we also know that $\dim QE = \dim\ker P$.
Thus it suffices to show that $(P-1)|_{QE}: QE \to \ker P$ is injective.

So, let $(P-1)x=0$ where $x=Qx$.
Then $Px=Qx=x$.
Since $x\in PE$ we find a $y\in\ker Q$ such that $x=Py$.
Then we have for all $z\in E$:
$$
\langle y,P^*z\rangle
=
\langle Py,z\rangle
=
\langle x,z\rangle
=
\langle Qx,z\rangle
=
\langle x,Q^*z\rangle
=
\langle Py,Q^*z\rangle
=
\langle y,P^*Q^*z\rangle
$$
and thus
$$
\langle y,P^*(1-Q^*)z\rangle
=
0.
$$
This, together with the assumption that $P^*|_{\ker Q^*}:\ker Q^* \to P^*E$ is surjective yields that $y$ is perpendicular to $P^*E$, and hence $y\in\ker P$.
Thus $x=Py=0$.
\end{proof}

\begin{bibdiv}
\begin{biblist}

\bib{APS-Ann}{article}{
   author={Atiyah, Michael F.},
   author={Patodi, Vijay K.},
   author={Singer, Isidore M.},
   title={Spectral asymmetry and Riemannian geometry},
   journal={Bull. London Math. Soc.},
   volume={5},
   date={1973},
   pages={229--234},
   issn={0024-6093},
   doi={10.1112/blms/5.2.229},
}
\bib{APS2}{article}{
   author={Atiyah, Michael F.},
   author={Patodi, Vijay K.},
   author={Singer, Isidore M.},
   title={Spectral asymmetry and Riemannian geometry. I},
   journal={Math. Proc. Cambridge Philos. Soc.},
   volume={77},
   date={1975},
   number={1},
   pages={43--69},
   issn={0305-0041},
   doi={10.1017/S0305004100049410},
}
\bib{APS1}{article}{
   author={Atiyah, Michael F.},
   author={Patodi, Vijay K.},
   author={Singer, Isidore M.},
   title={Spectral asymmetry and Riemannian geometry. II},
   journal={Math. Proc. Cambridge Philos. Soc.},
   volume={78},
   date={1975},
   number={3},
   pages={405--432},
   issn={0305-0041},
   doi={10.1017/S0305004100051872},
}
\bib{APS3}{article}{
   author={Atiyah, Michael F.},
   author={Patodi, Vijay K.},
   author={Singer, I. M.},
   title={Spectral asymmetry and Riemannian geometry. III},
   journal={Math. Proc. Cambridge Philos. Soc.},
   volume={79},
   date={1976},
   number={1},
   pages={71--99},
   issn={0305-0041},
   doi={10.1017/S0305004100052105},
}

\bib{AA11}{article}{
   author={Auscher, Pascal},
   author={Axelsson, Andreas},
   title={Weighted maximal regularity estimates and solvability of
   non-smooth elliptic systems I},
   journal={Invent. Math.},
   volume={184},
   date={2011},
   number={1},
   pages={47--115},
   issn={0020-9910},
   doi={10.1007/s00222-010-0285-4},
}
		
\bib{AAMc10}{article}{
   author={Auscher, Pascal},
   author={Axelsson, Andreas},
   author={McIntosh, Alan},
   title={Solvability of elliptic systems with square integrable boundary
   data},
   journal={Ark. Mat.},
   volume={48},
   date={2010},
   number={2},
   pages={253--287},
   issn={0004-2080},
   doi={10.1007/s11512-009-0108-2},
}
		
\bib{AAH08}{article}{
   author={Auscher, Pascal},
   author={Axelsson, Andreas},
   author={Hofmann, Steve},
   title={Functional calculus of Dirac operators and complex perturbations
   of Neumann and Dirichlet problems},
   journal={J. Funct. Anal.},
   volume={255},
   date={2008},
   number={2},
   pages={374--448},
   issn={0022-1236},
   doi={10.1016/j.jfa.2008.02.007},
}

\bib{AMcN}{article}{
    AUTHOR = {Auscher, Pascal},
    AUTHOR = {McIntosh, Alan},
    AUTHOR = {Nahmod, Andrea},
     TITLE = {Holomorphic functional calculi of operators, quadratic estimates and interpolation},
   JOURNAL = {Indiana Univ. Math. J.},
    VOLUME = {46},
      YEAR = {1997},
    NUMBER = {2},
     PAGES = {375--403},
      ISSN = {0022-2518},
       URL = {https://doi.org/10.1512/iumj.1997.46.1180},
}

\bib{AA12}{article}{
   author={Auscher, Pascal},
   author={Ros\'{e}n, Andreas},
   title={Weighted maximal regularity estimates and solvability of nonsmooth
   elliptic systems, II},
   journal={Anal. PDE},
   volume={5},
   date={2012},
   number={5},
   pages={983--1061},
   issn={2157-5045},
   doi={10.2140/apde.2012.5.983},
}

\bib{BB12}{article}{
    AUTHOR = {B\"ar, Christian},
    AUTHOR = {Ballmann, Werner},
     TITLE = {Boundary value problems for elliptic differential operators of first order},
 BOOKTITLE = {in: H.-D.\ Cao and S.-T.\ Yau (eds.), Surveys in differential geometry. {V}ol.~{XVII}, Int. Press, Boston, MA},
    SERIES = {Surv. Differ. Geom.},
    VOLUME = {17},
     PAGES = {1--78},
 PUBLISHER = {Int. Press, Boston, MA},
      YEAR = {2012},
       URL = {http://dx.doi.org/10.4310/SDG.2012.v17.n1.a1},
}

\bib{BBC}{article}{
    AUTHOR = {Ballmann, Werner},
    AUTHOR = {Br\"uning, Jochen},
    AUTHOR = {Carron, Gilles},
     TITLE = {Regularity and index theory for {D}irac-{S}chr\"odinger systems with {L}ipschitz coefficients},
   JOURNAL = {J. Math. Pures Appl. (9)},
    VOLUME = {89},
      YEAR = {2008},
    NUMBER = {5},
     PAGES = {429--476},
      ISSN = {0021-7824},
       URL = {https://doi.org/10.1016/j.matpur.2008.02.004},
}

\bib{BCLZ}{article}{
   author={Boo\ss -Bavnbek, Bernhelm},
   author={Chen, Guoyuan},
   author={Lesch, Matthias},
   author={Zhu, Chaofeng},
   title={Perturbation of sectorial projections of elliptic  pseudo-differential operators},
   journal={J. Pseudo-Differ. Oper. Appl.},
   volume={3},
   date={2012},
   number={1},
   pages={49--79},
   issn={1662-9981},
   doi={10.1007/s11868-011-0042-5},
}

\bib{BL2009}{article}{
    AUTHOR = {Boo\ss-Bavnbek, Bernhelm},
    AUTHOR = {Lesch, Matthias},
     TITLE = {The invertible double of elliptic operators},
   JOURNAL = {Lett. Math. Phys.},
    VOLUME = {87},
      YEAR = {2009},
    NUMBER = {1-2},
     PAGES = {19--46},
      ISSN = {0377-9017},
       DOI = {10.1007/s11005-009-0292-9},
       URL = {https://doi.org/10.1007/s11005-009-0292-9},
}

\bib{BLZ}{article}{
   author={Boo\ss -Bavnbek, Bernhelm},
   author={Lesch, Matthias},
   author={Zhu, Chaofeng},
   title={The Calder\'{o}n projection: new definition and applications},
   journal={J. Geom. Phys.},
   volume={59},
   date={2009},
   number={7},
   pages={784--826},
   issn={0393-0440},
   doi={10.1016/j.geomphys.2009.03.012},
}

\bib{B}{article}{
    Author = {Louis {Boutet de Monvel}},
    Title = {{Boundary problems for pseudo-differential operators.}},
    Journal = {{Acta Math.}},
    ISSN = {0001-5962; 1871-2509/e},
    Volume = {126},
    Pages = {11--51},
    Year = {1971},
    Publisher = {International Press of Boston, Somerville, MA; Institut Mittag-Leffler, Stockholm},
    Language = {English},
}
		
\bib{BS}{article}{
   author={Braverman, Maxim},
   author={Shi, Pengshuai},
   title={The index of a local boundary value problem for strongly Callias-type operators},
   journal={Arnold Math. J.},
   volume={5},
   date={2019},
   number={1},
   pages={79--96},
   issn={2199-6792},
   doi={10.1007/s40598-019-00110-1},
}

\bib{BL2001}{article}{
    AUTHOR = {Br\"{u}ning, Jochen},
    AUTHOR = {Lesch, Matthias},
     TITLE = {On boundary value problems for {D}irac type operators. {I}. {R}egularity and self-adjointness},
   JOURNAL = {J. Funct. Anal.},
    VOLUME = {185},
      YEAR = {2001},
    NUMBER = {1},
     PAGES = {1--62},
      ISSN = {0022-1236},
       DOI = {10.1006/jfan.2001.3753},
       URL = {https://doi.org/10.1006/jfan.2001.3753},
}

\bib{Bures1999}{article}{
    AUTHOR = {Bure\v{s}, Jarol\'{\i}m},
     TITLE = {The higher spin {D}irac operators},
 BOOKTITLE = {in: I.~Kol\'a\v{r}, O.~Kowalski, D.~Krupka and J.~Slov\'ak (eds.): Differential geometry and applications ({B}rno, 1998)},
     PAGES = {319--334},
 PUBLISHER = {Masaryk Univ., Brno},
      YEAR = {1999},
}

\bib{CH}{book}{
    AUTHOR = {Cazenave, Thierry},
    AUTHOR = {Haraux, Alain},
     TITLE = {An introduction to semilinear evolution equations},
    SERIES = {Oxford Lecture Series in Mathematics and its Applications},
    VOLUME = {13},
 PUBLISHER = {The Clarendon Press, Oxford University Press, New York},
      YEAR = {1998},
     PAGES = {xiv+186},
      ISBN = {0-19-850277-X},
}

\bib{F84}{article}{
    AUTHOR = {Friedrich, Thomas},
     TITLE = {Zur {A}bh\"{a}ngigkeit des {D}irac-{O}perators von der {S}pin-{S}truktur},
   JOURNAL = {Colloq. Math.},
    VOLUME = {48},
      YEAR = {1984},
    NUMBER = {1},
     PAGES = {57--62},
      ISSN = {0010-1354},
       DOI = {10.4064/cm-48-1-57-62},
       URL = {https://doi.org/10.4064/cm-48-1-57-62},
}

\bib{DV87}{article}{
    AUTHOR = {Dore, Giovanni},
    AUTHOR = {Venni, Alberto},
     TITLE = {On the closedness of the sum of two closed operators},
   JOURNAL = {Math. Z.},
    VOLUME = {196},
      YEAR = {1987},
    NUMBER = {2},
     PAGES = {189--201},
      ISSN = {0025-5874},
       URL = {https://doi.org/10.1007/BF01163654},
}

\bib{GS91}{article}{
    AUTHOR = {Giga, Yoshikazu},
    AUTHOR = {Sohr, Hermann},
     TITLE = {Abstract {$L^p$} estimates for the {C}auchy problem with applications to the {N}avier-{S}tokes equations in exterior domains},
   JOURNAL = {J. Funct. Anal.},
    VOLUME = {102},
      YEAR = {1991},
    NUMBER = {1},
     PAGES = {72--94},
      ISSN = {0022-1236},
       URL = {https://doi.org/10.1016/0022-1236(91)90136-S},
}

\bib{G96}{book}{
   author={Grubb, Gerd},
   title={Functional calculus of pseudodifferential boundary problems},
   series={Progress in Mathematics},
   volume={65},
   edition={2},
   publisher={Birkh\"{a}user Boston, Inc., Boston, MA},
   date={1996},
   pages={x+522},
   isbn={0-8176-3738-9},
   doi={10.1007/978-1-4612-0769-6},
}

\bib{Grubb}{article}{
    AUTHOR = {Grubb, Gerd},
     TITLE = {The sectorial projection defined from logarithms},
   JOURNAL = {Math. Scand.},
    VOLUME = {111},
      YEAR = {2012},
    NUMBER = {1},
     PAGES = {118--126},
      ISSN = {0025-5521},
       URL = {http://dx.doi.org/10.7146/math.scand.a-15217},
}

\bib{Haase}{book}{
    AUTHOR = {Haase, Markus},
     TITLE = {The functional calculus for sectorial operators},
    SERIES = {Operator Theory: Advances and Applications},
    VOLUME = {169},
 PUBLISHER = {Birkh\"auser Verlag, Basel},
      YEAR = {2006},
     PAGES = {xiv+392},
      ISBN = {978-3-7643-7697-0; 3-7643-7697-X},
       URL = {https://doi.org/10.1007/3-7643-7698-8},
}

\bib{H94}{book}{
    Author = {Lars {H\"ormander}},
    Title = {{The analysis of linear partial differential operators. III: Pseudo-differential operators. Reprint of the 1994 ed.}},
    Edition = {Reprint of the 1994 ed.},
    ISBN = {978-3-540-49937-4/pbk},
    Pages = {xii + 525},
    Year = {2007},
    Publisher = {Berlin: Springer},
}

\bib{HS2018}{article}{
   author={Homma, Yasushi},
   author={Semmelmann, Uwe},
   title={The kernel of the Rarita-Schwinger operator on Riemannian spin manifolds},
   journal={Commun. Math. Phys.},
   volume={370},
   date={2019},
   number={3},
   pages={853--871},
   issn={0010-3616},
   doi={10.1007/s00220-019-03324-8},
}

\bib{Kato}{book}{
      AUTHOR = {Kato, Tosio},
       TITLE = {Perturbation theory for linear operators},
     EDITION = {2},
        NOTE = {Grundlehren der Mathematischen Wissenschaften, Band 132},
   PUBLISHER = {Springer-Verlag},
     ADDRESS = {Berlin},
        YEAR = {1976},
       PAGES = {xxi+619},
}

\bib{KM1}{article}{
   author={Krainer, Thomas},
   author={Mendoza, Gerardo A.},
   title={Boundary value problems for first order elliptic wedge operators},
   journal={Amer. J. Math.},
   volume={138},
   date={2016},
   number={3},
   pages={585--656},
   issn={0002-9327},
   doi={10.1353/ajm.2016.0022},
}
		
\bib{KM2}{article}{
   author={Krainer, Thomas},
   author={Mendoza, Gerardo A.},
   title={Boundary value problems for elliptic wedge operators: the
   first-order case},
   conference={
      title={Elliptic and parabolic equations},
   },
   book={
      series={Springer Proc. Math. Stat.},
      volume={119},
      publisher={Springer, Cham},
   },
   date={2015},
   pages={209--232},
}

\bib{Kreuter}{thesis}{
  author        = {Marcel Kreuter}, 
  title         = {Sobolev Spaces of Vector-Valued Functions},
  school	= {Master Thesis, Ulm University},
  year		= {2015, \url{https://www.uni-ulm.de/fileadmin/website_uni_ulm/mawi.inst.020/abschlussarbeiten/MA_Marcel_Kreuter.pdf}},
  url		= {https://www.uni-ulm.de/fileadmin/website_uni_ulm/mawi.inst.020/abschlussarbeiten/MA_Marcel_Kreuter.pdf},
}

\bib{Mc86}{incollection}{
    AUTHOR = {McIntosh, Alan},
     TITLE = {Operators which have an {$H_\infty$} functional calculus},
 BOOKTITLE = {Miniconference on operator theory and partial differential equations ({N}orth {R}yde, 1986)},
    SERIES = {Proc. Centre Math. Anal. Austral. Nat. Univ.},
    VOLUME = {14},
     PAGES = {210--231},
 PUBLISHER = {Austral. Nat. Univ., Canberra},
      YEAR = {1986},
}

\bib{Melrose}{book}{
   author={Melrose, Richard B.},
   title={The Atiyah-Patodi-Singer index theorem},
   series={Research Notes in Mathematics},
   volume={4},
   publisher={A K Peters, Ltd., Wellesley, MA},
   date={1993},
   pages={xiv+377},
   isbn={1-56881-002-4},
   doi={10.1016/0377-0257(93)80040-i},
}

\bib{RS}{book}{
   author={Rempel, Stephan},
   author={Schulze, Bert-Wolfgang},
   title={Index theory of elliptic boundary problems},
   note={Reprint of the 1982 edition},
   publisher={North Oxford Academic Publishing Co. Ltd., London},
   date={1985},
   pages={393},
   isbn={0-946536-85-6},
   doi={10.1016/0165-1765(84)90131-9},
}

\bib{Seeley}{article}{
    AUTHOR = {Seeley, Robert T.},
     TITLE = {Complex powers of an elliptic operator},
 BOOKTITLE = {Singular {I}ntegrals ({P}roc. {S}ympos. {P}ure {M}ath., {C}hicago, {I}ll., 1966)},
     PAGES = {288--307},
 PUBLISHER = {Amer. Math. Soc., Providence, R.I.},
      YEAR = {1967},
}

\bib{S01}{article}{
   author={Schulze, Bert-Wolfgang},
   title={An algebra of boundary value problems not requiring  Shapiro-Lopatinskij conditions},
   journal={J. Funct. Anal.},
   volume={179},
   date={2001},
   number={2},
   pages={374--408},
   issn={0022-1236},
   doi={10.1006/jfan.2000.3684},
}
\bib{S04}{article}{
   author={Schulze, Bert-Wolfgang},
   title={Toeplitz operators, and ellipticity of boundary value problems
   with global projection conditions},
   conference={
      title={Aspects of boundary problems in analysis and geometry},
   },
   book={
      series={Oper. Theory Adv. Appl.},
      volume={151},
      publisher={Birkh\"{a}user, Basel},
   },
   date={2004},
   pages={342--429},
}

\bib{Shubin}{book}{
    AUTHOR = {Shubin, Mikhail A.},
     TITLE = {Pseudodifferential operators and spectral theory},
   EDITION = {Second},
 PUBLISHER = {Springer-Verlag, Berlin},
      YEAR = {2001},
     PAGES = {xii+288},
      ISBN = {3-540-41195-X},
       URL = {http://dx.doi.org/10.1007/978-3-642-56579-3},
}

\bib{Wang1991}{article}{
    AUTHOR = {Wang, McKenzie Y.},
     TITLE = {Preserving parallel spinors under metric deformations},
   JOURNAL = {Indiana Univ. Math. J.},
    VOLUME = {40},
      YEAR = {1991},
    NUMBER = {3},
     PAGES = {815--844},
      ISSN = {0022-2518},
       DOI = {10.1512/iumj.1991.40.40037},
       URL = {https://doi.org/10.1512/iumj.1991.40.40037},
}

\end{biblist}
\end{bibdiv}

\setlength{\parskip}{0pt}

\end{document}